\documentclass[reqno]{amsart}
\usepackage{amssymb,latexsym}
\usepackage{amsmath}
\usepackage{amsthm}
\usepackage{graphicx}
\usepackage{hyperref}
\usepackage{titletoc}
\numberwithin{equation}{section}
\newtheorem{theorem}{Theorem}[section]

\newtheorem{lemma}[theorem]{Lemma}
\newtheorem{corollary}[theorem]{Corollary}

\theoremstyle{definition}

\newtheorem{remark}{Remark}

\def\l{\langle}
\def\r{\rangle}

\def\XXint#1#2#3{{\setbox0=\hbox{$#1{#2#3}{\int}$}
     \vcenter{\hbox{$#2#3$}}\kern-.5\wd0}}

\def\l{\langle}
\def\r{\rangle}

\begin{document}
\title[Degree counting and Shadow system]{Degree counting and Shadow system for $SU(3)$ Toda system: one bubbling}

\author{ Chang-shou Lin}
\address{ Chang-shou ~Lin,~Taida Institute for Mathematical Sciences, Center for Advanced Study in
Theoretical Sciences, National Taiwan University, Taipei 106, Taiwan}
\email{cslin@math.ntu.edu.tw}
\author{ Juncheng Wei}
\address{ Juncheng ~Wei,~Department of Mathematics, University of British Columbia,
Vancouver, BC V6T 1Z2, Canada, and Department of Mathematics, Chinese
University of Hong Kong, Shatin, NT, Hong Kong}
\email{jcwei@math.ubc.ca}
\author{ Wen Yang}
\address{ Wen ~Yang,~Department of Mathematics, University of British Columbia,
Vancouver, BC V6T 1Z2, Canada}
\email{wyang@math.ubc.ca}
\date{}\maketitle
\begin{abstract}
Here we initiate the program for computing the Leray-Schauder topological degree for $SU(3)$ Toda system. This program still contains a lot of challenging problems for analysts. The first step of our approach is to answer whether concentration phenomena holds or not. In this paper, we prove the concentration phenomena holds while $\rho_1$ crosses $4\pi,$ and $\rho_2\notin 4\pi\mathbb{N}.$ However, for $\rho_1\geq8\pi,$ the question whether concentration holds or not still remains open up to now.  The second step is to study the corresponding shadow system and its degree counting
formula. The last step is to construct bubbling solution of $SU(3)$ Toda system via a non-degenerate solution of the shadow system. Using this
construction, we succeed to calculate the degree for $\rho_1\in(0,4\pi)\cup(4\pi,8\pi)$ and $\rho_2\notin 4\pi\mathbb{N}$.
\end{abstract}

\section{Introduction}
Let $(M,g)$ be a compact Riemann surface with volume $1,$ $h_1^*$ and $h_2^*$ be a $C^1$ positive function on $M$ and $\rho_1,\rho_2\in\mathbb{R}^+.$ We
consider the following $SU(3)$ Toda system on the compact surface $M.$
\begin{equation}
\label{1.1}
\left\{\begin{array}{ll}
\Delta u_1^*+2\rho_1(\frac{h_1^*e^{u_1^*}}{\int_Mh_1^*e^{u_1^*}}-1)-\rho_2(\frac{h_2^*e^{u_2^*}}{\int_{M}h_2^*e^{u_2^*}}-1)=
4\pi\sum_{q\in S_1}\alpha_{q}(\delta_{q}-1),\\
\Delta u_2^*-\rho_1(\frac{h_1^*e^{u_1^*}}{\int_Mh_1^*e^{u_1^*}}-1)+2\rho_2(\frac{h_2^*e^{u_2^*}}{\int_{M}h_2^*e^{u_2^*}}-1)=
4\pi\sum_{q\in S_2}\beta_{q}(\delta_{q}-1),
\end{array}
\right.
\end{equation}
where $\Delta$ is the Beltrami-Laplace operator, $\alpha_{q}\geq0$ for every $q\in S_1,$ $\beta_q\geq0$ for every $q\in S_2$ and $\delta_q$ is the Dirac
measure at $q\in M$.

When the two equations in (\ref{1.1}) are identical, i.e., $S_1=S_2,~\alpha_q=\beta_q$, $u_1^*=u_2^*=u^*,$ $h_1^*=h_2^*=h^*$ and
$\rho_1=\rho_2=\rho$, system (\ref{1.1}) is reduced to the following mean field equation
\begin{equation}
\label{1.2}
\Delta u^*+\rho(\frac{h^*e^{u^*}}{\int_{M}h^*e^{u^*}}-1)=4\pi\sum_{q\in S_1}\alpha_{q}(\delta_{q}-1).
\end{equation}

Equation (\ref{1.1}) and equation (\ref{1.2}) arise in many physical and geometric problems. In physics, (\ref{1.2}) or (\ref{1.1})
are one of the limiting equations of the abelian gauge field theory or non-abelian Chern-Simons gauge field theory,
one can see \cite{d, dj, ly1, nt2, nt3, y} and references therein. In conformal geometry,
a solution $u^*$ of
\begin{equation}
\label{1.3}
\Delta_gu^*+e^{u^*}-2K=4\pi\sum_{q\in S_1}\alpha_{q}\delta_{q}~\mathrm{in}~M,
\end{equation}
where $K(x)$ is the Gaussian curvature of the given metric $g$ at $x\in M,$ is equivalent to saying that the new metric
$e^{2v}g$ (where $2v=u^*-\log2$) has constant Gaussian curvature $\tilde{K}=1$. By integrating (\ref{1.3}), it is easy to see that
(\ref{1.3}) is a special case of (\ref{1.2}) with $\rho=4\pi\sum_{q\in S_1}\alpha_q$. Since $u^*$ has a logarithmic singularity at each $q\in S_1$,
the new metric $e^{2v}g$ has a conic singularity at each $q$. Equation (\ref{1.3}) has been extensively studied in the last three decades,
see \cite{cgy, cl4, lw0, lwy, lz2, t}
and references therein. However, when the number of the singularities is greater than three, there are very few existence results for
equation (\ref{1.3}). Studies on the case of four singularities are referred to \cite{cl4} and \cite{lw0}. For the recent development of the mean field equation (\ref{1.2}), we refer the readers
to \cite{bt1, bclt, bm, cl1, cl2, cl3, cl4, djlw1, l1, m1, nt1, nt2, y}.

For equation (\ref{1.2}), we let the set $\Sigma$ of the critical parameters be defined by
\begin{align*}
\Sigma:&=\big\{8N\pi+\Sigma_{q\in A}8\pi(1+\alpha_{q})\mid A\subseteq S_1,~N\in\mathbb{N}\cup\{0\}\big\}\setminus\{0\}\\
&=\{8\pi \mathfrak{a}_{k}\mid k=1,2,3,\cdots.\},
\end{align*}
where $\mathfrak{a}_{k}$ will be defined in (\ref{1.4}). It was proved that if $\rho\notin\Sigma$, then the a-priori estimate for any solution of (\ref{1.2}) holds
in $C^2_{\mathrm{loc}}(M\setminus S_1).$ This a-priori bound was obtained by Li and Shafrir \cite{ls} for the case without singular sources, and by
Bartolucci and Tarantello \cite{bt1} for the general case with singular sources. After establishing the a-priori bound for a non-critical parameter $\rho$,
it is natural to count the Leray-Schauder topological degree for the equation (\ref{1.2}). It was proved by Li \cite{l} that this degree counting
should depend only on the topology of $M$ for the case without singularities. In a series of papers \cite{cl1}-\cite{cl4}, Chen and Lin has derived the topological degree counting formulas as described below.

We denote the topological degree of (\ref{1.2}) for $\rho\notin\Sigma$ by $d_{\rho}$. By the homotopic invariant of the topological degree, $d_{\rho}$ is
a constant for $8\pi \mathfrak{a}_k<\rho<8\pi \mathfrak{a}_{k+1},~k=0,1,2,\cdots,$ where $\mathfrak{a}_0=0$. Set $d_m=d_{\rho}$ for $8\pi\mathfrak{a}_m<\rho<8\pi\mathfrak{a}_{m+1}$. To
state the result, we introduce the following generating function $\Xi_0:$
\begin{align*}
\Xi_0(x)=&(1+x+x^2+x^3+\cdots)^{-\chi(M)+|S_1|}\Pi_{q\in S_1}(1-x^{1+\alpha_{q}})\nonumber\\
=&1+\mathfrak{c}_1x^{\mathfrak{a}_1}+\mathfrak{c}_2x^{\mathfrak{a}_2}+\cdots+\mathfrak{c}_kx^{\mathfrak{a}_k}+\cdots.
\end{align*}
The degree $d_m$ can be written in terms of $\mathfrak{c}_j$, as shown in the following theorem.

\vspace{0.5cm}
\noindent {\bf{Theorem A}}. (\cite{cl4}) {\em Let $d_{\rho}$ be the Leray-Schauder degree for (\ref{1.2}). Suppose $8\mathfrak{a}_k\pi<\rho<8\mathfrak{a}_{k+1}\pi$. Then
$$d_{\rho}=\sum_{j=0}^{k}\mathfrak{c}_j,$$
where $d_0=1.$}
\vspace{0.5cm}

For the application, it often requires that $\alpha_{q}\in\mathbb{N}$ for all $q\in S_1.$ In this case,
$\Sigma=\{8m\pi\mid m\in\mathbb{N}\}$ and let $d_m=d_{\rho}$ for $\rho\in(8m\pi,8(m+1)\pi).$ Then the generating function
\begin{align}
\label{1.4}
\Xi_1(x)=&\sum_{k=0}^{\infty}d_kx^k=(1+x+x^2+\cdots)^{-\chi(M)+1+|S_1|}\Pi_{q\in S_1}(1-x^{\alpha_q+1})\nonumber\\
=&(1+x+x^2+\cdots)^{-\chi(M)+1}\Pi_{q\in S_1}(1+x+x^2+\cdots+x^{\alpha_q})
\end{align}
Clearly, we have $d_m\geq1,~\forall m$ provided $\chi(M)\leq0$. Hence we can obtain the existence of the solution to (\ref{1.2}) when the genus
of $M$ is nonzero. When $M$ is a torus and $\sum_{q\in S_1}\alpha_q$ is an odd integer, by applying the Theorem A, we can get the
degree formula for (\ref{1.3})
$$d=\frac{\Pi_{q\in S_1}(1+\alpha_q)}{2}.$$
\vspace{0.25cm}

Similarly, we could consider the following Toda system
\begin{align}
\label{1.5}
\left\{\begin{array}{l}
\Delta u_1^*+2e^{u_1^*}-e^{u_2^*}-2K=4\pi\sum_{q\in S_1}\alpha_q\delta_q,\\
\Delta u_2^*+2e^{u_2^*}-e^{u_1^*}-2K=4\pi\sum_{q\in S_2}\beta_q\delta_q,
\end{array}\right.
\end{align}
on $M$, which is a natural generalization of (\ref{1.3}), but is a special case of (\ref{1.1}). In geometry, it is closely related to the classical
Pl$\ddot{u}$cker formula for a holomorphic curve from $M$ to $\mathbb{CP}^2$, the vortex points and $\alpha_q$ are exactly the branch points and
its ramification index of this
holomorphic curve. See \cite{lwy} for more precise formulation and also \cite{bw, bjrw, c, cw, g, les}
for connection with different aspects of geometry. For the past decades, there are many studies for the $SU(3)$ Toda system, or more generally,
system of equations with exponential nonlinearity. We refer the readers to \cite{m5, jw, jlw, ll, l2, lwz0, lwz1, lwz2, ly2, lz1, lz2, lz3, m3, m4, m6, nt3, wzz, y1} and references therein.

In this paper, we want to initiate the program for computing the Leray-Schauder degree formula for the system (\ref{1.1}). However, it seems still a very
challenging problem even now. Hence in this article we shall consider the simplest (but nontrivial) case, described below. We assume
\begin{itemize}
  \item [(i)] $S_1,S_2=\emptyset$,
  \item [(ii)]$\rho_1\in(0,4\pi)\cup(4\pi,8\pi)$ and $\rho_2\notin\Sigma_1=\{4N\pi\mid N\in\mathbb{N}\}.$
\end{itemize}

To eliminate the singularities on the right hand side of (\ref{1.1}), we introduce the Green function $G(x,p)$:
\begin{equation*}
-\Delta G(x,p)=\delta_p-1~\mathrm{in}~M,~\quad\mathrm{with}~\int_MG(x,p)=0.
\end{equation*}
and let
\begin{align*}
u_1(x)=u_1^*(x)-4\pi\sum_{q\in S_1}\alpha_{q}G(x,q),~u_2(x)=u_2^*(x)-4\pi\sum_{q\in S_2}\beta_{q}G(x,q).
\end{align*}
Then (\ref{1.1}) is equivalent to the following system
\begin{equation}
\label{1.6}
\left\{\begin{array}{l}
\Delta u_1+2\rho_1(\frac{h_1e^{u_1}}{\int_Mh_1e^{u_1}}-1)-\rho_2(\frac{h_2e^{u_2}}{\int_{M}h_2e^{u_2}}-1)=0,\\
\Delta u_2-\rho_1(\frac{h_1e^{u_1}}{\int_Mh_1e^{u_1}}-1)+2\rho_2(\frac{h_2e^{u_2}}{\int_{M}h_2e^{u_2}}-1)=0,
\end{array}
\right.
\end{equation}
where $h_1,h_2\geq0$ in $M$ and $h_1(x)=0$ iff $x\in S_1$, $h_2=0$ iff $x\in S_2$. Near each $q\in S_1$, $h_1$ has the form in local coordinate:
\begin{align*}
h_1(x)=h_{1,q}(x)|x-q|^{2\alpha_q},~\mathrm{for}~|x-q|\ll1,~\forall q\in S_1,
\end{align*}
where $h_{1,q}(x)>0$ for any $q\in S_1$. Near each $q\in S_2$, $h_2$ has the form in local coordinate:
\begin{align*}
h_2(x)=h_{2,q}(x)|x-q|^{2\beta_q},~\mathrm{for}~|x-q|\ll1,~\forall q\in S_2,
\end{align*}
where $h_{2,q}(x)>0$ for any $q\in S_2$.\\

We notice that equation (\ref{1.6}) is invariant by adding constant to the solutions. Hence we can always normalize $u_1,u_2$ to satisfy
$\int_Mu_1=\int_Mu_2=0.$ Let $\mathring{H}^1$ be the space:
\begin{align*}
\mathring{H}^1=\{u\in H^1(M):\int_Mu=0\}.
\end{align*}
From now on, we will restrict our discussion in $\mathring{H}^1\times\mathring{H}^1$. In order to compute the Leray-Schauder degree of the system,
we need to get well understand of the blow-up phenomena for (\ref{1.6}). The first main issue for system is to determine the set of critical parameters,
i.e., those $\rho=(\rho_1,\rho_2)$ such that the a-priori bounds for solutions of (\ref{1.1}) fail. In \cite{jlw}, the authors claimed that if
$\rho_i\notin 4\pi\mathbb{N},~i=1,2,$ then the a-priori bound for all solution exists. See Theorem 1.2 in \cite{jlw}. However they did not give a proof
of this fact, but just said that it follows immediately from Proposition 2.4 in \cite{jlw}, where the local masses of solutions $(u_1,u_2)$ at a blow up
point is calculated. In addition to Proposition 2.4 in \cite{jlw}, for their claim of Theorem 1.2, it requires that the concentration phenomena holds, i.e., $\frac{h_ie^{u_{ik}}}{\int_Mh_ie^{u_{ik}}}$
tends to a sum of Dirac measures. However, as far as the authors know, the concentration has not been proved yet. Under the assumption $(i)$ and $(ii)$, we can show that concentration holds, i.e., if $u_{1k}$ blows up at some points, then $\frac{h_1e^{u_{1k}}}{\int_Mh_1e^{u_{1k}}}$ tends to a sum of Dirac measures.

Our first main theorem is the following a-priori estimate.
\begin{theorem}
\label{th1.1}
Suppose $h_i$ are positive smooth functions and the assumption $(i)-(ii)$. Then there exists a positive constant $c$ such that for any solution of
equation
(\ref{1.6}), there holds:
$$|u_1(x)|,|u_2(x)|\leq c,~\forall x\in M,~i=1,2.$$
\end{theorem}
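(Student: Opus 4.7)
The plan is to argue by contradiction. Suppose $(u_{1k},u_{2k})$ is a sequence of solutions of (\ref{1.6}), normalized by $\int_M u_{ik}=0$, with $\max_M u_{1k}+\max_M u_{2k}\to+\infty$. Introduce the blow-up sets
\[
\mathcal{S}_i=\bigl\{p\in M\colon\exists\,x_k\to p \text{ with }u_{ik}(x_k)\to+\infty\bigr\},\qquad \mathcal{S}=\mathcal{S}_1\cup\mathcal{S}_2,
\]
and the local masses $\sigma_i(p)=\lim_{r\to 0}\lim_{k\to\infty}\rho_i\int_{B_r(p)}h_i e^{u_{ik}}\big/\int_M h_i e^{u_{ik}}$ at each $p\in\mathcal{S}$. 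A Brezis--Merle type analysis shows that $\mathcal{S}$ is finite, and Proposition 2.4 of \cite{jlw} classifies the admissible local types:
\[
(\sigma_1(p),\sigma_2(p))\in 4\pi\bigl\{(2,0),\,(0,2),\,(2,4),\,(4,2),\,(4,4)\bigr\}.
\]

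From $\sum_{p\in\mathcal{S}}\sigma_1(p)\le\rho_1<8\pi$ (by hypothesis) together with the fact that every nonzero value of $\sigma_1(p)$ in the list is at least $8\pi$, I conclude that $\sigma_1(p)=0$ at every $p\in\mathcal{S}$. Hence only the purely $u_2$-type $4\pi(0,2)$ can occur, so $\mathcal{S}_1=\emptyset$, $\mathcal{S}=\mathcal{S}_2$, and $\sigma_2(p)=8\pi$ for every $p\in\mathcal{S}_2$. Assuming the \emph{concentration property}
\[
\frac{h_2 e^{u_{2k}}}{\int_M h_2 e^{u_{2k}}}\ \rightharpoonup\ \sum_{p\in\mathcal{S}_2}\frac{\sigma_2(p)}{\rho_2}\,\delta_p,
\]
one would obtain $\rho_2=\sum_{p\in\mathcal{S}_2}\sigma_2(p)=8\pi|\mathcal{S}_2|\in 4\pi\mathbb{N}$, contradicting (ii). A Green's function representation using $\int_M u_{ik}=0$ then upgrades the resulting upper $L^\infty$ bound to the two-sided bound asserted in the theorem.

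The heart of the proof is therefore the concentration step, which is the novel contribution made possible by (i)--(ii). Because $\mathcal{S}_1=\emptyset$, $u_{1k}$ is uniformly bounded from above on $M$, so the coupling term $h_1 e^{u_{1k}}/\!\int_M h_1 e^{u_{1k}}$ appearing in the second equation of (\ref{1.6}) stays uniformly in $L^\infty$. This reduces the $u_2$-equation to a scalar mean-field equation perturbed by an $L^\infty$ source, to which the Brezis--Merle / Li--Shafrir dichotomy \cite{bm, ls} applies: on any compact subset of $M\setminus\mathcal{S}_2$, either $u_{2k}$ is locally uniformly bounded or $u_{2k}\to-\infty$ locally uniformly. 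The plan is to rule out the first alternative; in that scenario, $C^{2,\alpha}_{loc}$-compactness off $\mathcal{S}_2$ produces a strictly positive residual mass, and a Pohozaev identity applied on small balls around each $p\in\mathcal{S}_2$, using the known $C^{2,\alpha}$-behaviour of $u_{1k}$ off the blow-up set, contradicts the rigid local value $\sigma_2(p)=8\pi$ coming from the classification. The surviving alternative then gives $e^{u_{2k}}\to 0$ locally uniformly on $M\setminus\mathcal{S}_2$, yielding concentration.

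The main obstacle is the rigorous exclusion of this residual-mass alternative. Unlike the scalar Liouville equation, the dichotomy is not immediately resolved by the normalization $\int u_{2k}=0$ (the Dirac contributions at $\mathcal{S}_2$ give nothing to $L^1$ averages), and the coupling between $u_{1k}$ and $u_{2k}$ must be tracked carefully to make the Pohozaev test rigorous. Assumptions (i) and (ii) are essential precisely in collapsing the classification to the single type $4\pi(0,2)$: without this, mixed blow-up types such as $(8\pi,16\pi)$, $(16\pi,8\pi)$ or $(16\pi,16\pi)$ would be admissible, neither component could be treated as a bounded perturbation of the other, and the reduction to a scalar dichotomy would collapse.
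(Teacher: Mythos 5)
There is a genuine gap in the first reduction, and it takes you down the wrong branch of the argument.

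You claim that the Jost--Lin--Wang classification gives $(\sigma_1(p),\sigma_2(p))\in 4\pi\{(2,0),(0,2),(2,4),(4,2),(4,4)\}$, so that the smallest admissible nonzero value of $\sigma_1$ is $8\pi$, and hence $\rho_1<8\pi$ forces $\sigma_1\equiv 0$ and $\mathcal{S}_1=\emptyset$. This normalization is off by a factor of $2$. The paper's $\sigma_{ip}$ is defined with a $\tfrac{1}{2\pi}$-normalization in (\ref{2.3}), and the classification it invokes at a joint blow-up point is $(\sigma_{1p},\sigma_{2p})\in\{(2,4),(4,2),(4,4)\}$, i.e.\ absolute local masses in $\{(4\pi,8\pi),(8\pi,4\pi),(8\pi,8\pi)\}$; a blow-up of a single component carries mass $4\pi$, not $8\pi$ (this is exactly the content of (\ref{1.8}) and of (\ref{2.21})--(\ref{2.22}) for pure $u_2$-bubbles). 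Under $\rho_1<8\pi$, the constraint $\sum_p 2\pi\sigma_{1p}\le\rho_1$ rules out $\sigma_{1p}=4$ but \emph{not} $\sigma_{1p}=2$; it is perfectly consistent to have a single point with $4\pi$ of $u_1$-mass. Indeed the paper proves the opposite of your claim: in the first paragraph of the proof of Theorem~\ref{th1.1} it is shown (via the decomposition and the Li--Shafrir theorem) that $\mathfrak{S}_1\ne\emptyset$, and then that $\mathfrak{S}_1\cap\mathfrak{S}_2\ne\emptyset$ with local masses $(\sigma_{1p},\sigma_{2p})=(2,4)$, i.e.\ $(4\pi,8\pi)$.

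Because of this, the remainder of your strategy does not go through as written. Your reduction of the $u_2$-equation to a scalar mean-field equation with a uniformly bounded source term relies on $\mathcal{S}_1=\emptyset$, which is exactly the scenario the assumption $(i)$--$(ii)$ excludes; in the genuine blow-up scenario $u_1$ blows up at the \emph{same} point as $u_2$, so the coupling term $\rho_1 h_1 e^{u_{1k}}/\int_M h_1 e^{u_{1k}}$ is not bounded near $p$, and neither the scalar Brezis--Merle dichotomy nor a straightforward Pohozaev test applies there. This joint $(4\pi,8\pi)$ blow-up is precisely the hard case, and the paper handles it by a different device: Lemma~\ref{le2.3} constructs a comparison function $z_k$ solving a linear problem with right-hand side carrying $12\pi+o(1)$ of mass near $p$, uses the maximum principle to deduce $\tilde u_{2k}\ge z_k$, and then shows that if $\tilde u_{2k}$ did \emph{not} tend to $-\infty$ away from $p$, the limit $z$ would satisfy $z(x)\ge 6\log\frac{1}{|x-p|}+O(1)$, contradicting the integrability of $e^{\tilde u_{2k}}$. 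That barrier argument, not a scalar dichotomy or a Pohozaev identity, is what establishes concentration in the only scenario that survives the mass accounting.
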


For the general case, we shall study the concentration
phenomena for $\rho_1\geq8\pi$ in a future work. By Theorem \ref{th1.1}, the Leray-Schauder degree $d_{\rho_1,\rho_2}^{(2)}$ for (\ref{1.1}), or equivalently (\ref{1.6}), is
well-defined for $\rho_1\in(0,4\pi)\cup(4\pi,8\pi)$ and
$\rho_2\notin4\pi\mathbb{N}$. Clearly, $d_{\rho_1,\rho_2}^{(2)}=d_{\rho_2}^{(1)}$ if $0<\rho_1<4\pi$, and $\rho_2\notin4\pi\mathbb{N}.$ Hence, the main
contribution of our paper is to compute the degree $d_{\rho_1,\rho_2}^{(2)}$ for $4\pi<\rho_1<8\pi$. By the homotopic invariant, for any
fixed $\rho_2\notin4\pi\mathbb{N}$, $d_{\rho_1,\rho_2}^{(2)}$ is a constant for $\rho_1\in(0,4\pi)$, and the same holds true for
$\rho_1\in(4\pi,8\pi)$. For the simplicity, we might let $d_-^{(2)}$ and $d_+^{(2)}$ denotes $d_{\rho_1,\rho_2}^{(2)}$
for $\rho_1\in(0,4\pi)$ and $\rho_1\in(4\pi,8\pi)$. Since $d_-^{(2)}$ is known by Theorem A, computing $d_+^{(2)}$ is equivalent to
computing the difference of $d_+^{(2)}-d_-^{(2)}$, which might be not zero due to the bubbling phenomena of (\ref{1.1}) at $(4\pi,\rho_2)$.

To calculate $d_+^{(2)}-d_-^{(2)}$, we need to compute the topological degree of the bubbling solution of (\ref{1.6}) when
$\rho_1$ crosses $4\pi$, $\rho_2\notin 4\pi\mathbb{N}.$ For convenience, we rewrite (\ref{1.6}) as
\begin{equation}
\label{1.7}
\left\{\begin{array}{l}
\Delta v_{1}+\rho_{1}(\frac{h_1e^{2v_{1}-v_{2}}}{\int_Mh_1e^{2v_{1}-v_{2}}}-1)=0,\\
\Delta v_{2}+\rho_{2}(\frac{h_2e^{2v_{2}-v_{1}}}{\int_Mh_2e^{2v_{2}-v_{1}}}-1)=0,
\end{array}
\right.
\end{equation}
where $v_{1}=\frac{1}{3}(2u_{1}+u_{2}),~v_{2}=\frac{1}{3}(u_{1}+2u_{2}).$ It is known that the Leray-Schauder degree for (\ref{1.6}) and (\ref{1.7}) are
the same. So, our aim is to compute the degree contribution of the bubbling solution of (\ref{1.7}) when $\rho_{1}$ crosses $4\pi$,
$\rho_2\notin 4\pi\mathbb{N}.$ We consider $(v_{1k},v_{2k})$ to be a sequence of solutions of (\ref{1.7}) with $(\rho_{1k},\rho_{2k})\rightarrow(4\pi,\rho_2)$,
and assume $\max_M(v_{1k},v_{2k})\rightarrow\infty$. Then we have the following theorem

\begin{theorem}
\label{th1.2}
Let $(v_{1k},v_{2k})$ be described as above. Then, the followings hold:
\begin{itemize}
  \item [(i)]
  \begin{align}
  \label{1.8}
  \rho_{1k}\frac{h_1e^{2v_{1k}-v_{2k}}}{\int_Mh_1e^{2v_{1k}-v_{2k}}}\rightarrow4\pi\delta_p~\mathrm{for~some}~p\in M,
  \end{align}
  \item [(ii)]
  $v_{2k}\rightarrow\frac12w~\mathrm{in}~C^{2,\alpha}(M)$ where $(p,w)$ satisfies
  \begin{align}
  \label{1.9}
  \nabla\big(\log (h_1e^{-\frac12w})(x)+4\pi R(x,x)\big)\mid_{x=p}=0,
  \end{align}
  and
  \begin{align}
  \label{1.10}
  \Delta w+2\rho_2(\frac{h_2e^{w-4\pi G(x,p)}}{\int_{M}h_2e^{w-4\pi G(x,p)}}-1)=0.
  \end{align}
  Here $R(x,p)$ refers to the regular part of the Green function $G(x,p)$.
\end{itemize}
\end{theorem}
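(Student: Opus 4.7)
The plan is to use the concentration result underlying Theorem \ref{th1.1}, together with the Jost--Lin--Wang classification of local masses, to show that only the first component bubbles at a single point, and then to apply the classical Chen--Lin bubble-location analysis to the effective scalar mean field equation obtained by freezing the second component. Let $S_i=\{q\in M\colon \sigma_i(q)>0\}$ be the blow-up set of the $i$-th component, where $\sigma_i(q)$ is the local mass at $q$. By Proposition~2.4 of \cite{jlw},
\[
(\sigma_1(q),\sigma_2(q))\in\{(4\pi,0),(0,4\pi),(4\pi,8\pi),(8\pi,4\pi),(8\pi,8\pi)\}
\]
at every blow-up point. The concentration property (established en route to Theorem \ref{th1.1}) yields the mass balance $\rho_{ik}=\sum_q\sigma_i(q)+o(1)$, so any nonzero admissible $\sigma_2(q)$ forces $\rho_2\in 4\pi\mathbb{N}$; the hypothesis $\rho_2\notin 4\pi\mathbb{N}$ therefore gives $S_2=\emptyset$. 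Combined with $\rho_{1k}\to 4\pi$ and $\sigma_1(q)\geq 4\pi$ on $S_1$, this forces $S_1=\{p\}$ with $(\sigma_1(p),\sigma_2(p))=(4\pi,0)$, proving (\ref{1.8}).

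Since $S_2=\emptyset$, a Brezis--Merle iteration gives $\|v_{2k}\|_{L^\infty(M)}\leq C$, and elliptic $L^p$ theory applied to the second equation of (\ref{1.7}) yields $v_{2k}\to\tfrac12 w$ in $C^{2,\alpha}(M)$ (after extraction) for some $w$. Integrating
\[
\Delta v_{1k}=\rho_{1k}-\rho_{1k}\frac{h_1e^{2v_{1k}-v_{2k}}}{\int_M h_1e^{2v_{1k}-v_{2k}}}
\]
against the Green's function and using (\ref{1.8}) shows $v_{1k}\to 4\pi G(\cdot,p)$ in $C^{2,\alpha}_{\mathrm{loc}}(M\setminus\{p\})$, while $v_{1k}\to+\infty$ near $p$ drives $e^{-v_{1k}}\to 0$ uniformly on a neighborhood of $p$. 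Hence $h_2e^{2v_{2k}-v_{1k}}\to h_2e^{w-4\pi G(\cdot,p)}$ in $L^1(M)$, and passing to the limit in the second equation of (\ref{1.7}) delivers (\ref{1.10}).

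For the critical-point condition (\ref{1.9}) I would view the first equation of (\ref{1.7}) as the effective scalar mean field equation
\[
\Delta v_{1k}+\rho_{1k}\!\left(\frac{\tilde h_{1k}\,e^{2v_{1k}}}{\int_M \tilde h_{1k}\,e^{2v_{1k}}}-1\right)=0,\qquad \tilde h_{1k}:=h_1e^{-v_{2k}}\longrightarrow \tilde h_1:=h_1e^{-w/2}\ \text{in}\ C^{2,\alpha}(M),
\]
which carries a single simple bubble of mass $4\pi$ at $p$ as $\rho_{1k}\to 4\pi$. For such a bubbling sequence the standard Chen--Lin analysis---matching the inner Liouville profile to the outer Green's-function expansion through a Pohozaev identity on a shrinking ball around $p$---forces
\[
\nabla_x\!\big[\log\tilde h_1(x)+4\pi R(x,x)\big]_{x=p}=0,
\]
which is precisely (\ref{1.9}).

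The most delicate point is the derivation of (\ref{1.9}): although the Pohozaev argument is classical for a mean field equation with a \emph{fixed} coefficient, here the effective weight $\tilde h_{1k}$ depends on $k$, so one must check that the $C^{2,\alpha}$ convergence of $v_{2k}$ is strong enough for the first-order balance terms in the Pohozaev identity to pass to the limit with the required accuracy. A secondary subtle point is the upgrade from the Jost--Lin--Wang pointwise blow-up description to the convergence (\ref{1.8}) of measures with exactly the weight $4\pi\delta_p$; this rests on the arithmetic constraint $\rho_2\notin 4\pi\mathbb{N}$ to rule out the remaining local-mass configurations $(0,4\pi)$, $(4\pi,8\pi)$, $(8\pi,4\pi)$, $(8\pi,8\pi)$ at any candidate blow-up point.
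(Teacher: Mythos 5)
The overall architecture of your proposal matches the paper's: rule out blow-up of the second component via the Jost--Lin--Wang local-mass classification and the concentration arguments from Theorem \ref{th1.1}, identify the single bubble point for $v_{1k}$, and then treat the first equation as an effective scalar mean field equation with a $k$-dependent weight $\tilde h_{1k}=h_1e^{-v_{2k}}$ and invoke scalar blow-up theory for (\ref{1.8}) and (\ref{1.9}).

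The genuine gap is in the sentence claiming that elliptic $L^p$ theory applied to the second equation of (\ref{1.7}) yields $v_{2k}\to\tfrac12w$ in $C^{2,\alpha}(M)$. Once $S_2=\emptyset$, $L^p$ estimates give $v_{2k}$ bounded in $W^{2,\mathfrak p}$ for every $\mathfrak p$, hence convergence only in $C^{1,\alpha}$. Upgrading to $C^{2,\alpha}$ requires a Schauder estimate, which needs the source term $h_2e^{2v_{2k}-v_{1k}}$ to be bounded in $C^{0,\alpha}$ uniformly in $k$. The delicate quantity is $\nabla\bigl(e^{-v_{1k}}\bigr)=-e^{-v_{1k}}\nabla v_{1k}$ inside the bubble, where $e^{-v_{1k}}$ degenerates exponentially while $|\nabla v_{1k}|$ blows up at rate $e^{\lambda_k/2}$; the $C^0$ profile estimate (\ref{2.29}) of Li/BCLT does not control this product pointwise, because a function $C^0$-close to the bubble can still have a much larger gradient. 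This is exactly what the paper handles in Lemma \ref{le2.4}, which invokes the sharp gradient comparison (\ref{2.34}) (Chen--Lin \cite[Lemma 4.1]{cl1}) to conclude $\nabla\bigl(h_2e^{-v_{1k}}\bigr)\in L^\infty(M)$ uniformly, after which Schauder theory gives $C^{2,\alpha}$ convergence. You need to supply this gradient estimate; without it part (ii) as stated is incomplete.

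Your secondary worry at the end — that one needs $C^{2,\alpha}$ convergence of the effective weight $\tilde h_{1k}$ for the Pohozaev balance underlying (\ref{1.9}) — is misplaced. The balance condition (\ref{1.9}) is a first-order constraint and the simple blow-up result of Li (and Bartolucci--Chen--Lin--Tarantello) only requires $C^{1,\alpha}$ convergence of the coefficient, which is available from the $C^{1,\alpha}$ convergence of $v_{2k}$ obtained from $L^p$ theory. The $C^{2,\alpha}$ convergence of $v_{2k}$ is needed solely to make the statement of part (ii) true, not to derive (\ref{1.9}).
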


We write (\ref{1.9}) and (\ref{1.10}) as
\begin{align}
\label{1.11}
\left\{\begin{array}{l}
\Delta w+2\rho_2(\frac{h_2e^{w-4\pi G(x,p)}}{\int_{M}h_2e^{w-4\pi G(x,p)}}-1)=0,\\
\nabla\big(\log (h_1e^{-\frac12w})(x)+4\pi R(x,x)\big)\mid_{x=p}=0.
\end{array}\right.
\end{align}

The system of equation (\ref{1.11}) is called the {\em shadow system} of (\ref{1.7}). This kind of systems also appear while studying the self-dual system \cite{kl} for the Jackiw-Weinberg electroweak theory. After Theorem \ref{th1.2}, it is natural to ask the following question: Given any pair of solution $(p,w)$, can we find a bubbling solution $(v_{1k},v_{2k})$ of (\ref{1.7}) with $(\rho_{1k},\rho_{2k})\rightarrow(4\pi,\rho_2)$ such that (\ref{1.11}) holds and $v_{2k}$ converges to $\frac12w$ in $C^{2,\alpha}(M)$. One of main results in this article is to give an answer of this question.\\

For the application in other problems, we want to consider a more general class of equation than (\ref{1.11}). Let
\begin{align*}
Q=P_w\cup S=\{p_1^0,p_2^0,\cdots,p_m^0\}\cup S,~\mathrm{where}~P_w\cap S_1=\emptyset,~S\subseteq S_1,
\end{align*}
and $(P_w,w)$ be a solution of
\begin{equation}
\label{1.12}
\left\{\begin{array}{ll}
\Delta w+2\rho_2\big(\frac{h_2e^{w-4\pi\sum_{j=1}^mG(x,p_j^0)-4\pi\sum_{q\in S}(1+\alpha_q)G(x,q)}}{\int_Mh_2e^{w-4\pi\sum_{j=1}^mG(x,p_j^0)-
4\pi\sum_{q\in S}(1+\alpha_q)G(x,q)}}-1\big)=0,\\
\nabla_{x_i}f_{Q}(p_1^0,p_2^0,\cdots,p_m^0)=0,
\end{array}\right.
\end{equation}
where
\begin{align*}
f_{Q}(x_1,x_2,\cdots,x_m)=&\sum_{j=1}^m\big[\log(h_1e^{-\frac12w})(x_j)+4\pi R(x_j,x_j)\big]+4\pi\sum_{i,j=1,i\neq j}^mG(x_i,x_j)\nonumber\\
&+8\pi\sum_{q\in S}\sum_{j=1}^m(1+\alpha_{q})G(x_j,q).
\end{align*}

It is clear to see (\ref{1.12}) is a shadow system of (\ref{1.11}) corresponding some more complicate bubbling phenomena. Note that in (\ref{1.12}),
there might allow $(P_w\cup S)\cap S_2\neq\emptyset.$ We say $(P_w,w)$ is called a
{\em non-degenerate solution} of (\ref{1.12}) if the linearized equation. i.e., for $(\phi,\overrightarrow{\nu}),$
$\overrightarrow{\nu}=(\nu_1,\nu_2,\cdots,\nu_m),$ where $\nu_i\in\mathbb{R}^2$
\begin{align}
\label{1.13}
\left\{\begin{array}{l}
\Delta\phi+2\rho_2\frac{\overline{h}_2e^{w-4\pi\sum_{j=1}^mG(x,p_j^0)}}{\int_{M}\overline{h}_2e^{w-4\pi\sum_{j=1}^mG(x,p_j^0)}}\phi\\
\quad\quad-2\rho_2\frac{\overline{h}_2e^{w-4\pi\sum_{j=1}^mG(x,p_j^0)}}{\big(\int_{M}\overline{h}_2e^{w-4\pi\sum_{j=1}^mG(x,p_j^0)}\big)^2}
\int_{M}\big(\overline{h}_2e^{w-4\pi\sum_{j=1}^mG(x,p_j^0)}\phi\big)\\
\quad\quad-8\pi\rho_2\frac{\overline{h}_2e^{w-4\pi\sum_{j=1}^mG(x,p_j^0)}\sum_{j=1}^m(\nabla G(x,p_j^0)\nu_j)}{\int_{M}\overline{h}_2
e^{w-4\pi\sum_{j=1}^mG(x,p_j^0)}}\\
\quad\quad+8\pi\rho_2\frac{\overline{h}_2e^{w-4\pi\sum_{j=1}^mG(x,p_j^0)}\int_{M}\big(\overline{h}_2e^{w-4\pi\sum_{j=1}^mG(x,p_j^0)}
\sum_{j=1}^m(\nabla G(x,p_j^0)\nu_j)\big)}{\big(\int_{M}\overline{h}_2e^{w-4\pi\sum_{j=1}^mG(x,p_j^0)}\big)^2}=0,\\
\nabla^2_{x_i}f_{Q}(p_1^0,p_2^0,\cdots,p_m^0)\nu_i+\mathcal{F}_i-\frac12\nabla\phi(p_i^0)=0,~i=1,2,\cdots,m,~\int_M\phi=0,
\end{array}\right.
\end{align}
admits only trivial solution, i.e., $(\phi,\overrightarrow{\nu})=(0,0)$. Here
\begin{equation}
\label{1.14}
\overline{h}_2=h_2e^{-4\pi\sum_{q\in S}(1+\alpha_{q})G(x,q)}
\end{equation}
and
\begin{equation}
\label{1.15}
\mathcal{F}_i=8\pi\sum_{j=1,j\neq i}^m\nabla_x^2G(p_i^0,x)\mid_{x=p_j^0}\cdot\nu_j.
\end{equation}
\\

For the shadow system  (\ref{1.12}), the set of non-critical parameters $\Sigma_2$ is
defined as
\begin{align}
\label{1.16}
\Sigma_2=\{4N\pi+4\pi\sum_{q\in S_2\cup S}(1+\alpha_q),~N\in\mathbb{N}\}.
\end{align}

Our third main result is the following.
\begin{theorem}
\label{th1.3}
Suppose $P_w=(p_1^0,p_2^0,\cdots,p_m^0)$ and $(P_w,w)$ is a non-degenerate solution of (\ref{1.12}) and the quantity $l(Q)\neq0$ \footnote{The definition of $l(Q)$ is given in section 4.}. Suppose $\alpha_q\notin\mathbb{N}$ for $q\in S$ and
$\rho_2\notin \Sigma_2.$ Then there exists a sequence of solutions $(v_{1k},v_{2k})$ of (\ref{1.7}) with $(\rho_{1k},\rho_{2})$ such that
$\lim_{k\rightarrow+\infty}\rho_{1k}=4m\pi+\sum_{q\in S}4\pi(1+\alpha_{q})$.
Furthermore we have:
\begin{itemize}
  \item [(i)] $\rho_{1k}\frac{h_1e^{2v_{1k}-v_{2k}}}{\int_Mh_1e^{2v_{1k}-v_{2k}}}\rightarrow
  4\pi\sum_{j=1}^m\delta_{p_j^0}+4\pi\sum_{q\in S}(1+\alpha_{q})\delta_{q},$
  \item [(ii)] $v_{2k}\rightarrow \frac12w$ in $C^{1,\alpha}(M).$
\end{itemize}
\end{theorem}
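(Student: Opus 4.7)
The plan is to construct the bubbling sequence $(v_{1k}, v_{2k})$ by a Lyapunov--Schmidt reduction built around the given non-degenerate solution $(P_w, w)$ of the shadow system (\ref{1.12}). The starting ansatz is
\begin{equation*}
v_{1k} \;=\; \tfrac{1}{2} w \;+\; \sum_{j=1}^{m} U_{\lambda_j^k, p_j^k} \;+\; \sum_{q\in S} U^{\alpha_q}_{\mu_q^k, q} \;+\; \phi_{1k}, \qquad v_{2k} \;=\; \tfrac{1}{2} w \;+\; \phi_{2k},
\end{equation*}
where $U_{\lambda, p}$ denotes the standard bubble (a projected entire solution of $\Delta U + e^U = 0$ with mass $8\pi$) centered at $p$ with scale $\lambda$, $U^{\alpha_q}_{\mu, q}$ is the corresponding singular bubble at $q \in S$ with exponent $\alpha_q$, and $(\phi_{1k}, \phi_{2k})$ are small remainders to be determined. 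The key observation motivating the ansatz is that, after substitution into (\ref{1.7}), the rapid concentration of $h_1 e^{2 v_{1k} - v_{2k}}$ at $\{p_j^0\} \cup S$ transforms into Dirac sources $4\pi \delta_{p_j^0}$ and $4\pi(1+\alpha_q)\delta_q$ in the second equation, so the limiting profile of $v_{2k}$ is forced to solve exactly (\ref{1.12}). This explains both why $w$ appears in $v_{1k}$ (through the coupling $-v_2$) and why the Green function combination $-4\pi \sum_j G(\cdot, p_j^0) - 4\pi \sum_q (1+\alpha_q) G(\cdot, q)$ must emerge.

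First I would quantify the error $R_k$ produced by this ansatz in a norm adapted to the bubbling, for instance a weighted $L^\infty$ or Sobolev norm that sees each bubble on its natural scale $\lambda_j^k$ and is uniform on $M$ away from the concentration set. The leading matching conditions at each $p_j^0$ arise from comparing the regular part of $U_{\lambda_j^k, p_j^k}$ with the background $\tfrac{1}{2} w + 4\pi\sum_{i \neq j} G(\cdot, p_i^0) + \cdots$. Expanding around $p_j^0$, the vanishing of the first-order term is precisely the critical point equation $\nabla_{x_j} f_Q(P_w) = 0$, which is hypothesized. The next (scalar) matching, arising from integrating the first equation, relates the sum of scales $\sum_j \log\lambda_j^k + \sum_q \log\mu_q^k$ to $\rho_{1k} - (4m\pi + 4\pi\sum_{q \in S}(1+\alpha_q))$; the solvability of this equation for $\rho_{1k}$ as a function of the scales is exactly where the non-vanishing of $l(Q)$ is used.

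The central analytic step is the uniform invertibility of the linearized operator of (\ref{1.7}) at the approximate solution, modulo the finite-dimensional space spanned by the bubble translation and dilation modes. This splits into two pieces. On each bubble $U_{\lambda_j^k, p_j^k}$ (and $U^{\alpha_q}_{\mu_q^k, q}$) the local linearization is the well-studied operator $\Delta + e^{U}$, whose kernel is known: translations in the regular case and only the dilation mode in the singular case when $\alpha_q \notin \mathbb{N}$ — this is where the assumption $\alpha_q \notin \mathbb{N}$ enters. On the complement, i.e., the limiting problem for $(\phi_{2k}, \{\text{positions } p_j^k\})$, the linearization is precisely the operator in (\ref{1.13}), which is invertible by the non-degeneracy hypothesis. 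A gluing argument (cut-off patching, combined with $\rho_2 \notin \Sigma_2$ to control the second equation globally) assembles these into a global uniformly bounded right inverse $T_k$ on the subspace orthogonal to the approximate kernel.

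With $T_k$ in hand, the infinite-dimensional part reduces to a contraction mapping: writing the projected nonlinear equation as $(\phi_{1k}, \phi_{2k}) = T_k \mathcal{N}_k(\phi_{1k}, \phi_{2k})$ where $\mathcal{N}_k$ collects the initial error and the quadratic and higher nonlinearities, standard estimates on $\mathcal{N}_k$ give a unique small fixed point. It remains to solve the finite-dimensional reduced equations: $m$ two-dimensional systems for the positions $p_j^k$, whose leading terms are $\nabla_{x_j} f_Q(p_1^k, \ldots, p_m^k) = 0$ and which are solvable by the implicit function theorem using the non-degeneracy of $(P_w, w)$; plus the scalar equation determining $\rho_{1k}$, solvable since $l(Q) \neq 0$. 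Conclusions (i) and (ii) then follow from the construction: concentration of $h_1 e^{2v_{1k} - v_{2k}}$ is built into the ansatz, and $v_{2k} \to \tfrac{1}{2} w$ in $C^{1,\alpha}(M)$ follows from the smallness of $\phi_{2k}$ together with elliptic regularity on the second equation of (\ref{1.7}), whose right-hand side converges to $\tfrac{1}{2} \Delta w + \rho_2 \bigl(\tfrac{h_2 e^{w - 4\pi\sum_j G(\cdot, p_j^0) - \cdots}}{\int_M h_2 e^{w - \cdots}} - 1\bigr) = 0$. The hardest step will be the gluing that produces $T_k$ with bounds uniform in $k$: the singular bubbles with non-integer $\alpha_q$ have anisotropic resolvent estimates, and the coupling between the concentrating first equation and the essentially regular second equation has to be controlled through a norm that is simultaneously fine enough to capture bubble decay and coarse enough to match the global $H^1$-type estimates used for the shadow linearization (\ref{1.13}).
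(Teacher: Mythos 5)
Your plan shares the paper's starting point --- bubbles centered at $P_w \cup S$ superposed on the shadow solution $\tfrac12 w$, with the three hypotheses playing the roles you correctly identify (non-degeneracy for invertibility of the linearized shadow operator (\ref{1.13}), $l(Q) \neq 0$ for the scalar scale-matching equation, $\alpha_q \notin \mathbb{N}$ to keep the singular bubble kernel one-dimensional) --- but the closing step is genuinely different: you propose a contraction mapping followed by the implicit function theorem, while the paper closes with Leray--Schauder degree theory. Lemma \ref{le5.1} shows the operator has no zeros on $\partial\big(S_{\rho_1}(Q,w) \times S_{\rho_2}(Q,w)\big)$ along a homotopy to a decoupled operator $T_i^0$, and Theorem \ref{th1.3} then drops out as a one-line consequence of Theorem \ref{th1.4}, since $d_T(Q,w) = (-1)^n d_S(Q,w) \neq 0$. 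Your route could in principle give existence, but it does not directly produce the degree formula of Theorem \ref{th1.4}, which the rest of the paper depends on; this is precisely why the authors say they follow \cite{cl2, cl4}, ``which procedures simultaneously have the advantage in computing the Morse index.''

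There are also two concrete technical problems in your sketch. First, the ansatz $v_{1k} = \tfrac12 w + \sum U + \phi_{1k}$ inserts $w$ with the wrong sign: in (\ref{1.7}) the $v_1$-equation involves $h_1 e^{2v_1 - v_2}$, so with $v_2 = \tfrac12 w + \psi$ the effective weight should be $h_1 e^{-\tfrac12 w}$; accordingly the paper builds the bubble profiles $v_{p_j}$ in the metric $h = h_1 e^{-\frac12 w}$ and takes $v_1 = \tfrac12 v_{P,\Lambda,A} + \phi$ \emph{without} any additive $\tfrac12 w$. Your ansatz instead yields the effective weight $h_1 e^{+\tfrac12 w}$, an $O(1)$ discrepancy the contraction cannot absorb, and in particular the critical-point equation it forces at $p_j^0$ would carry the wrong sign on $w$ and fail to match (\ref{1.12}). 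Second, allocating $\phi_{2k} = \psi$ to the contraction step while reserving the positions $p_j^k$ for a separate IFT does not respect the coupled structure of the non-degeneracy hypothesis: (\ref{1.13}) asserts invertibility of the joint linearized system in $(\phi, \overrightarrow{\nu})$, not of the PDE operator acting on $\phi$ at frozen positions, nor of $\nabla^2 f_Q$ acting on $\nu$ at frozen $\phi$. The paper estimates $\|\psi\|_*$ and $|p_j - p_j^0|$ simultaneously via (\ref{5.16})--(\ref{5.17}) using the full non-degeneracy; if you want to keep an IFT-style closing step, you must bundle $\phi_{2k}$ and the position deviations into a single reduced system before invoking non-degeneracy, otherwise the decomposition can fail whenever the PDE piece of (\ref{1.13}) alone has kernel.
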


The proof of Theorem \ref{th1.3} will be given in section 4-5. The main difficulty for constructing such solutions would be the one for $v_{1k}$ component.
Here we follow the arguments in \cite{cl2}, \cite{cl4} which procedures simultaneously have the advantage in computing the Morse index contributed by the
bubbling solutions $(v_{1k},v_{2k})$. The calculation of the Morse index is the key step towards computing the Leray-Schauder degree for system (\ref{1.7}),
once the degree for the shadow system is known. For a given solution $(P_w,w)$ of (\ref{1.12}), we say $\mu$ is a eigenvalue of the linearized system of
equation (\ref{1.12}) if there exists a nontrivial pair $(\phi,\overrightarrow{\nu})=(\phi;\nu_1,\nu_2,\cdots,\nu_m)$ such that
\begin{align*}
\left\{\begin{array}{l}
\Delta\phi+2\rho_2\frac{\overline{h}_2e^{w-4\pi\sum_{j=1}^mG(x,p_j^0)}}{\int_{M}\overline{h}_2e^{w-4\pi\sum_{j=1}^mG(x,p_j^0)}}\phi\\
\quad\quad-2\rho_2\frac{\overline{h}_2e^{w-4\pi\sum_{j=1}^mG(x,p_j^0)}}{\big(\int_{M}\overline{h}_2e^{w-4\pi\sum_{j=1}^mG(x,p_j^0)}\big)^2}
\int_{M}\big(\overline{h}_2e^{w-4\pi\sum_{j=1}^mG(x,p_j^0)}\phi\big)\\
\quad\quad-8\pi\rho_2\frac{\overline{h}_2e^{w-4\pi\sum_{j=1}^mG(x,p_j^0)}\sum_{j=1}^m(\nabla G(x,p_j^0)\nu_j)}
{\int_{M}\overline{h}_2e^{w-4\pi\sum_{j=1}^mG(x,p_j^0)}}\\
\quad\quad+8\pi\rho_2\frac{\overline{h}_2e^{w-4\pi\sum_{j=1}^mG(x,p_j^0)}\int_{M}\big(\overline{h}_2e^{w-4\pi\sum_{j=1}^mG(x,p_j^0)}
\sum_{j=1}^m(\nabla G(x,p_j^0)\nu_j)\big)}{\big(\int_{M}\overline{h}_2e^{w-4\pi\sum_{j=1}^mG(x,p_j^0)}\big)^2}+\mu\phi=0,\\
\nabla^2_{x_i}f_{Q}(p_1^0,\cdots,p_m^0)\nu_i+\mathcal{F}_i-\frac12\nabla\phi(p_i^0)+\mu\nu_i=0,i=1,2,\cdots,m,\quad\int_M\phi=0.
\end{array}\right.
\end{align*}
The Morse index of the solution $(P_w,w)$ to (\ref{1.12}) is the total number (counting multiplicity) of the negative eigenvalue of the linearized system,
and the Leray-Schauder topological degree contributed by $(P_w,w)$ is given by $(-1)^{N}$, where $N$ is the Morse index.\\

From Theorem \ref{th1.3}, it is known when $\rho_{1k}\rightarrow 4m\pi+4\pi\sum_{q\in S}(1+\alpha_{q}),$ there exists a sequence of solutions
$(v_{1k},v_{2k})$ to (\ref{1.7}) such that $v_{1k}$ blow up at $Q$ and $v_{2k}\rightarrow\frac12w$. Our aim is to compute the topological degree of
 (\ref{1.7}) contributed by those bubbling solutions satisfying the conclusion of Theorem \ref{th1.3}, that is, all the bubbling solutions contained
 in $S_{\rho_1}(Q,w)\times S_{\rho_2}(Q,w)$. For the precise definition of $S_{\rho_i}(Q,w),~i=1,2,$ see section 4 in this paper. Let $d_T(Q,w)$ denotes
 the degree contributed by the solutions $(v_{1k},v_{2k})\in S_{\rho_1}(Q,w)\times S_{\rho_2}(Q,w)$ and $d_S(Q,w)$ denotes the degree of the shadow
 system (\ref{1.12}) contributed by the Morse index of $(P_w,w)$. We have the following theorem
\begin{theorem}
\label{th1.4}
Suppose $\alpha_q\notin \mathbb{N}$ for $q\in S$, $(p_w,w)$ is a non-degenerate solution of (\ref{1.12}) and $l(Q)\neq0$. Let $d_T(Q,w)$ and $d_S(Q,w)$ are defined
above. Then
\begin{equation*}
d_T(Q,w)=(-1)^{n}d_S(Q,w),
\end{equation*}
where $n=|Q|.$
\end{theorem}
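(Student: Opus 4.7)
My plan is to compute the Morse index $N_T$ of each bubbling solution $(v_{1k},v_{2k})\in S_{\rho_1}(Q,w)\times S_{\rho_2}(Q,w)$ of (\ref{1.7}), compare it with the Morse index $N_S$ of the non-degenerate shadow-system solution $(P_w,w)$, and show that $N_T = N_S + n$. Since both solutions are isolated and non-degenerate (the shadow solution by hypothesis, the bubbling solution by the construction carried out in the proof of Theorem \ref{th1.3}), one has $d_T(Q,w)=(-1)^{N_T}$ and $d_S(Q,w)=(-1)^{N_S}$, so the equality $N_T=N_S+n$ immediately yields the desired identity.

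Let $L_k$ denote the linearization of (\ref{1.7}) at $(v_{1k},v_{2k})$, acting on $\mathring{H}^1\times\mathring{H}^1$. I would split the spectrum of $L_k$ into \emph{large} eigenvalues, staying a fixed distance from $0$ as $k\to\infty$, and \emph{small} eigenvalues tending to $0$. The small eigenvalues correspond to the approximate kernel spanned by the $n$ scaling modes of the bubbles concentrating at points of $Q$; the translation modes at the $m$ regular points of $P_w$ are not small, since they have already been absorbed into the variables $\nu_i$ entering the shadow system (\ref{1.13}). By standard blow-up analysis, after projecting an eigenfunction of $L_k$ onto the orthogonal complement of this approximate kernel, rescaling the $v_{1k}$-component according to each bubble profile, and passing to the limit $k\to\infty$, the eigenvalue equation for $L_k$ converges to the eigenvalue equation for the linearized shadow system displayed just before Theorem \ref{th1.4}. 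The hypothesized non-degeneracy of $(P_w,w)$ then forces the number of negative large eigenvalues of $L_k$ to equal exactly $N_S$.

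For the $n$ small eigenvalues I would use the Liapunov--Schmidt reduction from Theorem \ref{th1.3}. The approximate solutions are parametrized by scaling/height parameters $\lambda=(\lambda_1,\dots,\lambda_n)$ and by positions $\xi=(\xi_1,\dots,\xi_m)$ of the regular bubbles; solving away from the approximate kernel produces a reduced finite-dimensional vector field $F_k(\xi,\lambda)$ whose zeros are in one-to-one correspondence with the elements of $S_{\rho_1}(Q,w)\times S_{\rho_2}(Q,w)$. The $\xi$-block of $DF_k$ reproduces, to leading order, the Hessian $\nabla^2_{x_i}f_Q$ coupled to the term $-\tfrac12\nabla\phi(p_i^0)$ of (\ref{1.13}); its signature matches $N_S$. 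The $\lambda$-block is diagonal to leading order, with $i$-th diagonal entry of the schematic form
\begin{equation*}
\partial_{\lambda_i}F_k^{(i)}\;=\;-c_i\,l(Q)\;+\;o(1),\qquad c_i>0,
\end{equation*}
where $l(Q)$ is the quantity defined in section 4 that dictates on which side of $4m\pi+4\pi\sum_{q\in S}(1+\alpha_q)$ the bubbling can occur. Because the construction in Theorem \ref{th1.3} only produces solutions on the side of $\rho_1^\ast:=4m\pi+4\pi\sum_{q\in S}(1+\alpha_q)$ prescribed by the sign of $l(Q)$, these $n$ diagonal entries are all negative, and they supply exactly $n$ additional negative eigenvalues to $L_k$ on top of the $N_S$ coming from the shadow block.

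The principal difficulty lies in this last sign analysis. One must expand the reduced functional to sufficiently high order to isolate the coefficient $l(Q)$, handle the singular profile at each $q\in S$ (where $\alpha_q\notin\mathbb N$), and use the matching between $\rho_{1k}-\rho_1^\ast$ and $\lambda$ to confirm that both the existence of a bubbling branch and the negativity of the corresponding small eigenvalue are governed by the same sign of $l(Q)$. Once this is verified, combining it with the large-eigenvalue count gives $N_T=N_S+n$, and hence $d_T(Q,w)=(-1)^n d_S(Q,w)$.
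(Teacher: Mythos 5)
Your approach --- directly computing the Morse index $N_T$ of the linearization at the bubbling solution by splitting its spectrum into large and small parts --- is genuinely different from the paper's. The paper deforms $(2v_i+T_i)$ through the homotopy $(2v_i+T_i^t)$ of (5.1)--(5.5), shows via Lemma~\ref{le5.1} that no zeros cross $\partial\big(S_{\rho_1}(Q,w)\times S_{\rho_2}(Q,w)\big)$ during the deformation, and then reads the degree off a finite-dimensional Jacobian $\mathcal{M}$ in the reduced variables $(P,\Lambda,A,\psi)$; the block-triangular structure of $\mathcal{M}$ factors out $(-1)^{N(\mathcal{M}_1)}=d_S(Q,w)$ and leaves only the signs of two explicit determinants to compute.

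There is a concrete gap in your treatment of the scaling block. You claim the $\lambda$-block of the reduced Jacobian is diagonal to leading order with $i$-th entry $-c_i\,l(Q)+o(1)$, hence contributing $n$ small negative eigenvalues. This does not match the actual structure. The paper's computation of $\partial\hat\Phi_{Q,2}^j/\partial\lambda_i$ gives diagonal entries $\approx -(B-(1+\alpha_j))$ and off-diagonal entries $\approx(1+\alpha_i)$, where $B=\sum_i(1+\alpha_i)$; all are $O(1)$, so the block is not close to diagonal. Its spectrum consists of $n-1$ eigenvalues near $-B$, bounded away from $0$ and independent of $l(Q)$, plus exactly one small eigenvalue (in the direction of $\sum_j(1+\alpha_j)\delta_j$), and only that single small eigenvalue has sign $\propto -(\rho_1-\rho_*)$, i.e.\ governed by $l(Q)$ through (4.16). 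Your schematic would, if $l(Q)<0$, predict all $n$ small eigenvalues \emph{positive}, producing the wrong degree. In addition, your parametrization $(\xi,\lambda)$ omits the amplitude parameters $A=(a_1,\dots,a_n)$, whose block $\partial\hat\Phi_{Q,3}/\partial A$ supplies $n$ positive eigenvalues and must be included for the count to close, and it omits the Morse index $\sum_{j>m}2(1+[\alpha_j])$ of the bilinear form $\mathfrak{B}$ on $O^{(1)}_{P,\Lambda}$ (Lemma~\ref{le4.5}); consequently the clean identity $N_T=N_S+n$ cannot hold literally when $S\neq\emptyset$, though $N_T\equiv N_S+n\pmod 2$ still does. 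You correctly flag the sign analysis involving $l(Q)$ as the principal difficulty, but the mechanism you propose for resolving it is not the one that actually occurs.
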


We shall apply Theorem \ref{th3.1}, a stronger version of Theorem \ref{th1.2}, and Theorem \ref{th1.4} to calculate the degree of (\ref{1.6}), or equivalently (\ref{1.7}).
Here we assume $S_1=S_2=\emptyset,$ i.e., $h_1,h_2$
are $C^{2,\alpha}$ positive functions on $M$. It is still very difficult for us to compute the topological degree for $SU(3)$ Toda system while
$S_1,S_2\neq\emptyset$. In general, the main difficulties are to prove the concentration and to get the topological degree for system (\ref{1.12}).
Until now, we are only able to over those difficulties under the assumption $(i)$ and $(ii)$. Our approach to obtain the degree of (\ref{1.11}) is to use a homotopic deformation to decouple the system. However, this method can not work for (\ref{1.12}) in general. The main difficulty is due to the collapse of the vortices.\\

In order to state our degree formulas for $SU(3)$ Toda system and the corresponding shadow system (\ref{1.11}), we first introduce the following
generating
function
\begin{align*}
\Xi_1(x)=(1+x+x^2+x^3\cdots)^{-\chi(M)+1}=b_0+b_1x^1+b_2x^2+\cdots+b_kx^k+\cdots,
\end{align*}
which is (\ref{1.4}) provided $\alpha_q=0,~\forall q\in S_1.$ It is easy to see that
\begin{align}
\label{1.17}
b_k=\left(\begin{array}{l}k-\chi(M)\\ ~\quad k\end{array}\right),
\end{align}
where
\begin{align*}
\left(\begin{array}{l}k-\chi(M)\\ ~\quad k\end{array}\right)=
\left\{\begin{array}{ll}
\frac{(k-\chi(M))\cdots(1-\chi(M))}{k!},&\mathrm{if}~k\geq1\\
1,&\mathrm{if}~k=0.
\end{array}
\right.
\end{align*}

\begin{theorem}
\label{th1.5}
Assume $S_1=S_2=\emptyset$ and $\rho_2\notin 4N\pi,N\in\mathbb{N}$. The set of solutions $(p,w)$ for (\ref{1.11}) is pre-compact in the space
$M\times\mathring{H}_1(M)$. Let $d_S$ denotes the topological degree for $(\ref{1.11})$ when $\rho_2\in(4k\pi,4(k+1)\pi).$ Then
\begin{equation}
\label{1.18}
d_S=\chi(M)\cdot (b_k+b_{k-1}),
\end{equation}
where $b_{-1}=0.$
\end{theorem}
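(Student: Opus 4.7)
My plan is to first establish pre-compactness of the solution set, and then to compute the Leray-Schauder degree by a homotopy that decouples the $p$-equation from the $w$-equation of (\ref{1.11}).

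\textbf{Pre-compactness.} Since $p\in M$ already lies in a compact set, it suffices to bound $w$ uniformly. For each fixed $p$ the first equation of (\ref{1.11}) is a mean field equation for $w$ with parameter $2\rho_2$ and exactly one singular source at $p$ of weight $\alpha_p=1$ (arising from $-4\pi G(x,p)$). Its critical parameter set is $\{8\pi k:k\geq 1\}$, avoided precisely when $\rho_2\notin 4\pi\mathbb{N}$. The Bartolucci-Tarantello a priori bound then gives an $L^\infty$ estimate on $w$, uniform in $p\in M$ by continuous dependence of $G(\cdot,p)$ on $p$ and compactness of $M$. Hence the solution set is pre-compact in $M\times\mathring{H}^1(M)$.

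\textbf{Homotopy and reduction to a triangular system.} I will deform (\ref{1.11}) via
\begin{align*}
\begin{cases}
\Delta w+2\rho_2\Bigl(\dfrac{h_2e^{w-4\pi G(x,p)}}{\int_Mh_2e^{w-4\pi G(x,p)}}-1\Bigr)=0,\\[4pt]
\nabla\Bigl(\log h_1(x)-\tfrac{t}{2}w(x)+4\pi R(x,x)\Bigr)\Big|_{x=p}=0,\qquad t\in[0,1].
\end{cases}
\end{align*}
At $t=1$ this is (\ref{1.11}); at $t=0$ the second equation is an autonomous vector field equation on $M$, decoupled from $w$. The $w$-equation is unchanged along the homotopy, so the pre-compactness argument applies uniformly in $t$, and Leray-Schauder degree is homotopy invariant. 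At $t=0$ the system is triangular: the second equation determines critical points $p_1^*,\dots,p_N^*$ of the smooth function $F_0(p):=\log h_1(p)+4\pi R(p,p)$ on $M$, and for each $p_j^*$ the first equation is a standard mean field equation. By the product/excision principle for such triangular systems,
\begin{align*}
d_S=d_w\cdot\sum_{j=1}^N\iota(p_j^*)=\chi(M)\cdot d_w,
\end{align*}
where $\iota(p_j^*)$ is the Poincaré-Hopf index of $p_j^*$ and the last equality uses Poincaré-Hopf on the closed surface $M$ (after a small perturbation of $h_1$, if necessary, to make the critical points of $F_0$ non-degenerate; the degree is of course independent of such a perturbation).

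\textbf{Computation of $d_w$.} Apply Theorem A to the mean field equation for $w$ with one singular source of weight $\alpha=1$ and parameter $2\rho_2$. With $|S_1|=1$ and $\alpha=1$, the Chen-Lin generating function becomes
\begin{align*}
\Xi_0(x)=(1+x+x^2+\cdots)^{-\chi(M)+1}(1-x^2)=\Xi_1(x)(1-x^2),
\end{align*}
so its coefficients satisfy $\mathfrak{c}_j=b_j-b_{j-2}$ with $b_{-1}=b_{-2}=0$. The critical-parameter sequence reduces to $\mathfrak{a}_k=k$, so for $2\rho_2\in(8k\pi,8(k+1)\pi)$, a telescoping sum yields
\begin{align*}
d_w=\sum_{j=0}^{k}\mathfrak{c}_j=b_k+b_{k-1},
\end{align*}
independent of the choice of $p_j^*$. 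Substituting into the formula for $d_S$ yields (\ref{1.18}).

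\textbf{Main obstacle.} The hard part will be verifying the uniform a priori bound on $w$ throughout the homotopy, in particular ruling out any blow-up of $w$ as $p$ varies continuously over $M$. The non-critical assumption $\rho_2\notin 4\pi\mathbb{N}$ combined with compactness of $M$ should make this tractable, but careful attention is needed to the fact that the singular source of the $w$-equation moves with $p$; the natural approach is a Brezis-Merle type concentration analysis together with the Bartolucci-Tarantello estimate, mimicking the compactness portion of Theorem \ref{th1.1}.
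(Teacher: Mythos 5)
Your proposal is correct and takes essentially the same route as the paper: the paper's Section 6 introduces the homotopy $(S_t)$ in which the second equation of (\ref{1.11}) has $h_1e^{-\frac12 w(1-t)}$ in place of $h_1e^{-\frac12 w}$ (the same deformation as yours, with $t\mapsto 1-t$), observes via Lemma \ref{le6.1} that the $w$-equation is unchanged along the homotopy so that pre-compactness is automatic, and at the decoupled endpoint multiplies the Poincar\'e-Hopf degree $\chi(M)$ of the $p$-equation by the degree $b_k+b_{k-1}$ from Theorem A. One modest value added in your write-up is that you actually carry out the generating-function computation $\Xi_0(x)=\Xi_1(x)(1-x^2)$, $\mathfrak{c}_j=b_j-b_{j-2}$, and the telescoping $\sum_{j=0}^k\mathfrak{c}_j=b_k+b_{k-1}$, which the paper simply quotes as a consequence of Theorem A. The ``main obstacle'' you flag at the end (uniformity of the a priori bound as the singular source at $p$ moves) is a legitimate point of care, but note it is not created by the homotopy parameter $t$ at all, since the $w$-equation is $t$-independent; the paper addresses it at the same level of brevity as you do, by invoking the standard Bartolucci--Tarantello/Li-type bound for non-critical $\rho_2$ and continuous dependence on $p$ over the compact $M$.
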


Under the assumption $(i)-(ii)$ and the previous discussion, we can obtain the partial results on computing the Leray-Schauder degree for system
(\ref{1.6}), or equivalently (\ref{1.7}).
\begin{theorem}
\label{th1.6}
Suppose $S_1=S_2=\emptyset$ and $d_{\rho_1,\rho_2}^{(2)}$ denotes the topological degree for (\ref{1.7}) when $\rho_2\in(4k\pi,4(k+1)\pi)$, then
\begin{align*}
d_{\rho_1,\rho_2}^{(2)}=
\left\{\begin{array}{ll}
b_k,&\rho_1\in(0,4\pi),\\
b_k-\chi(M)(b_k+b_{k-1}),&\rho_1\in(4\pi,8\pi).
\end{array}\right.
\end{align*}
\end{theorem}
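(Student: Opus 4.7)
\emph{Plan.} The proof splits at $\rho_1=4\pi$: for $\rho_1\in(0,4\pi)$ I would homotope $\rho_1\searrow 0$ and reduce to a scalar mean-field equation whose degree is given by Theorem~A; for $\rho_1\in(4\pi,8\pi)$ I would compute $d_+^{(2)}-d_-^{(2)}$ by summing the degree contributions of the bubbling branches produced at $\rho_1=4\pi$, using Theorems \ref{th1.4} and~\ref{th1.5}.

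\emph{First range.} By Theorem \ref{th1.1} the degree $d_{\rho_1,\rho_2}^{(2)}$ is constant on $\rho_1\in(0,4\pi)$, and the identity $d_{\rho_1,\rho_2}^{(2)}=d_{\rho_2}^{(1)}$ already noted in the introduction lets me send $\rho_1\to 0^+$: the first equation degenerates to $\Delta u_1=\rho_2(h_2e^{u_2}/\!\int h_2e^{u_2}-1)$, uniquely solvable for $u_1\in\mathring{H}^1$ given $u_2$, and the second decouples into the scalar mean-field equation
\[
\Delta u_2 + 2\rho_2\Bigl(\frac{h_2 e^{u_2}}{\int_M h_2 e^{u_2}}-1\Bigr)=0.
\]
Applying Theorem~A with $S_1=\emptyset$ at effective parameter $2\rho_2\in(8k\pi,8(k+1)\pi)$ gives $d_{\rho_2}^{(1)}=\sum_{j=0}^{k}\mathfrak{c}_j$, where the $\mathfrak{c}_j$ are the Taylor coefficients of $\Xi_0(x)=(1-x)^{\chi(M)}$. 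Since partial summation corresponds to division by $1-x$,
\[
\frac{\Xi_0(x)}{1-x}=(1-x)^{\chi(M)-1}=(1+x+x^2+\cdots)^{-\chi(M)+1}=\Xi_1(x)=\sum_{k\ge 0}b_k x^k,
\]
and hence $d_-^{(2)}=b_k$.

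\emph{Second range and jump formula.} By Theorem \ref{th1.1} again, $d_+^{(2)}-d_-^{(2)}$ records the degree carried by sequences that blow up as $\rho_1\to 4\pi^+$, and Theorem \ref{th1.2} identifies each such sequence with a shadow solution $(p,w)$ of (\ref{1.11}), i.e.\ (\ref{1.12}) with $m=|Q|=1$ and $S=\emptyset$. A standard transversality argument lets me perturb $h_1,h_2$ in $C^{2,\alpha}$ so that every shadow solution is non-degenerate with $l(Q)\neq 0$; the Leray-Schauder degree is invariant under such perturbations. Theorem \ref{th1.4} then gives $d_T(\{p\},w)=(-1)^{|Q|}d_S(\{p\},w)=-d_S(\{p\},w)$ for the degree contributed by the bubbling branch attached to $(p,w)$ via Theorem \ref{th1.3}, and summing over all shadow solutions together with Theorem \ref{th1.5} yields
\[
d_+^{(2)}-d_-^{(2)}=-\sum_{(p,w)}d_S(\{p\},w)=-d_S=-\chi(M)\bigl(b_k+b_{k-1}\bigr),
\]
so that $d_+^{(2)}=b_k-\chi(M)(b_k+b_{k-1})$. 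The entire substantive content is packaged into Theorems \ref{th1.2}--\ref{th1.5}; the hard step is Theorem \ref{th1.4}, which requires showing that the bubbling branches built in Theorem \ref{th1.3} exhaust all blow-up solutions near a given $(p,w)$ and matching the Morse index each one contributes, while the remainder of the present argument is a bookkeeping calculation with the generating function $\Xi_1$.
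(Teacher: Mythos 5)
Your proposal is correct and follows the same logic the paper compresses into its one-line proof (``Theorem \ref{th1.6} is a consequence of Theorem \ref{th1.4}, Theorem \ref{th1.5} and Theorem A''): for $\rho_1\in(0,4\pi)$ use the observation $d_{\rho_1,\rho_2}^{(2)}=d_{\rho_2}^{(1)}$ together with Theorem~A and the identity $\Xi_0/(1-x)=\Xi_1$ to get $b_k$, and for $\rho_1\in(4\pi,8\pi)$ combine Theorem~\ref{th3.1} (to locate all blow-up solutions in $S_{\rho_1}(p,w)\times S_{\rho_2}(p,w)$) with Theorem~\ref{th1.4} ($n=1$, giving a factor $-1$) and Theorem~\ref{th1.5} ($d_S=\chi(M)(b_k+b_{k-1})$). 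You have merely made explicit the bookkeeping the authors leave implicit; no substantive deviation.
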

Set $d_k^{(2)}=d_{\rho_1,\rho_2}^{(2)}$ for $\rho_1\in(4\pi,8\pi)$ and $\rho_2\in(4k\pi,4(k+1)\pi)$. Then the generating function for
$d_{\rho_1,k}^{(2)},\rho_1\in (4\pi,8\pi)$ is
$$\Xi_2(x)=\sum_{k=0}^{\infty}d_{k}^{(2)}x^k=\big[1-\chi(M)(1+x)\big]\Xi_1(x).$$

As a consequence of Theorem \ref{th1.6}, we have the following corollaries.
\begin{corollary}
\label{co1.1}
Suppose $S_1=S_2=\emptyset$, $M$ is the sphere $S^2,$ $\rho_1\in(4\pi,8\pi)$ and $\rho_2\in(4k\pi,4(k+1)\pi)$. Then
\begin{align*}
d_{\rho_1,\rho_2}^{(2)}=
\left\{\begin{array}{ll}
-1,~&\mathrm{if}~k=0,\\
-1,~&\mathrm{if}~k=1,\\
2,~&\mathrm{if}~k=2,\\
0,~&\mathrm{if}~k\geq3.
\end{array}\right.
\end{align*}
\end{corollary}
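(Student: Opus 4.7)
The plan is to reduce Corollary \ref{co1.1} to a direct numerical specialization of Theorem \ref{th1.6}, since no new analysis is required once $M = S^2$ is fixed. First I would compute the generating function $\Xi_1$ at $\chi(M) = \chi(S^2) = 2$. Writing $(1+x+x^2+\cdots) = (1-x)^{-1}$, one gets
\begin{equation*}
\Xi_1(x) = (1-x)^{\chi(M)-1} = (1-x)^{1} = 1 - x,
\end{equation*}
so the coefficients in (\ref{1.17}) collapse to $b_0 = 1$, $b_1 = -1$, and $b_k = 0$ for all $k \geq 2$; the convention $b_{-1} = 0$ is already built into the statement. A quick cross-check with the binomial formula $b_k = \binom{k-\chi(M)}{k} = \binom{k-2}{k}$ confirms these values.

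Next I would plug these values into the second line of Theorem \ref{th1.6}. For $\rho_1 \in (4\pi,8\pi)$ and $\rho_2 \in (4k\pi, 4(k+1)\pi)$, the formula reads
\begin{equation*}
d^{(2)}_{\rho_1,\rho_2} = b_k - \chi(M)(b_k + b_{k-1}) = -b_k - 2\, b_{k-1}.
\end{equation*}
It is now just a matter of evaluating this at $k = 0, 1, 2$ and $k \geq 3$: for $k=0$ the value is $-b_0 - 2b_{-1} = -1$; for $k=1$ it is $-b_1 - 2b_0 = 1 - 2 = -1$; for $k=2$ it is $-b_2 - 2b_1 = 0 + 2 = 2$; and for $k \geq 3$ both $b_k$ and $b_{k-1}$ vanish, so the degree is $0$.

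There is no real obstacle in this argument, so the only thing to be careful about is making sure the hypotheses of Theorem \ref{th1.6} are invoked correctly: one needs $S_1 = S_2 = \emptyset$ (given), $\rho_2 \notin 4\pi\mathbb{N}$ (given by $\rho_2 \in (4k\pi,4(k+1)\pi)$), and $\rho_1 \in (4\pi,8\pi)$ (given). The nontriviality of the generating-function identity $\Xi_2(x) = [1 - \chi(M)(1+x)]\Xi_1(x) = (1-2-2x)(1-x) = -(1+2x)(1-x) = -1 - x + 2x^2$ can serve as a final sanity check: its coefficients are $-1,-1,2,0,0,\ldots$, matching the four cases in the corollary exactly.
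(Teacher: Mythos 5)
Your computation is correct and is exactly the intended argument: the corollary is stated in the paper as an immediate specialization of Theorem \ref{th1.6} to $M=S^2$, where $\chi(M)=2$ forces $\Xi_1(x)=1-x$ and hence $b_0=1$, $b_1=-1$, $b_k=0$ for $k\geq 2$, after which the four cases fall out by plugging into $d^{(2)}_{\rho_1,\rho_2}=b_k-\chi(M)(b_k+b_{k-1})$.
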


\begin{corollary}
\label{co1.2}
Suppose $S_1=S_2=\emptyset,$ $\rho_1,\rho_2\in(4\pi,8\pi),$ and $d_{2,2}^{(2)}$ denotes the topological degree for (\ref{1.6}). We have
\begin{equation}
\label{1.19}
d_{2,2}^{(2)}=(\chi(M))^2-3\chi(M)+1.
\end{equation}
\end{corollary}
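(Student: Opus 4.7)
The plan is to apply Theorem \ref{th1.6} directly with the index $k=1$, since the hypothesis $\rho_2\in(4\pi,8\pi)$ lands $\rho_2$ in the interval $(4k\pi,4(k+1)\pi)$ with $k=1$. The only remaining task is to evaluate the two coefficients $b_0$ and $b_1$ of the generating function $\Xi_1$ using the explicit binomial formula (\ref{1.17}), and then perform the algebraic simplification.

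First I would compute the relevant $b_k$'s from (\ref{1.17}):
\begin{equation*}
b_0 = 1, \qquad b_1 = \binom{1-\chi(M)}{1} = 1-\chi(M).
\end{equation*}
Next I would substitute $k=1$ into the formula of Theorem \ref{th1.6} for $\rho_1\in(4\pi,8\pi)$:
\begin{equation*}
d_{2,2}^{(2)} = b_1 - \chi(M)\bigl(b_1+b_0\bigr) = \bigl(1-\chi(M)\bigr) - \chi(M)\bigl((1-\chi(M))+1\bigr).
\end{equation*}
Finally, expanding the right-hand side gives $1-\chi(M)-2\chi(M)+(\chi(M))^2 = (\chi(M))^2-3\chi(M)+1$, which is the asserted identity (\ref{1.19}).

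There is no genuine obstacle here beyond book-keeping; all of the analytic content (concentration, shadow system, bubbling construction, Morse-index comparison) has already been packaged into Theorem \ref{th1.6}. The one point I would double-check is that both intervals $\rho_1\in(4\pi,8\pi)$ and $\rho_2\in(4\pi,8\pi)$ are simultaneously admissible under the hypotheses of Theorem \ref{th1.6}, i.e., that the role played by $\rho_1$ (the variable crossing $4\pi$) and the role played by $\rho_2$ (any non-critical value) is consistent with $\rho_2=2$ meaning $\rho_2\in(4\pi,8\pi)$ rather than $\rho_2\notin 4\pi\mathbb{N}$ in some stronger sense. Once that is verified, the corollary is an immediate arithmetic consequence.
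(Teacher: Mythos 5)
Your proposal is correct and matches the paper's (implicit) derivation: the corollary is a direct substitution of $k=1$ into Theorem \ref{th1.6}, with $b_0=1$ and $b_1=1-\chi(M)$ read off from (\ref{1.17}), followed by the algebraic expansion you carried out. The sanity check on the roles of $\rho_1$ and $\rho_2$ is fine — in Theorem \ref{th1.6}, $\rho_1\in(4\pi,8\pi)$ is the bubbling-threshold variable and $\rho_2\in(4k\pi,4(k+1)\pi)$ selects the index $k$, so $\rho_2\in(4\pi,8\pi)$ does give $k=1$ as you assumed.
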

A consequence of the degree counting formula is the existence of (\ref{1.6}). Suppose $S_2=\emptyset$ and $\chi(M)\leq0,$ then for any
$(\rho_1,\rho_2)\in(4\pi,8\pi)\times(4k\pi,4(k+1)\pi),$ the system (\ref{1.6}) has a solution.\\

This paper is organized as follows. In section 2, we prove Theorem \ref{th1.1}, Theorem \ref{th1.2} and use the transversality theorem to show
that there exists smooth function $h_1^*$ and $h_2^*$ such that any solution of shadow system (\ref{1.12}) is non-degenerate. In section 3,
we get the a-priori estimate for solutions of (\ref{1.7}) when $\rho_1\rightarrow 4\pi$ and $\rho_2\notin 4\pi\mathbb{N}$. In section 4 and
section 5, we use the solution of shadow system (\ref{1.12}) to get a good approximation of some bubbling solutions of (\ref{1.7}) and thereby
prove Theorem \ref{th1.3} and
Theorem \ref{th1.4} except some important estimates which are shown in section 8. In section 6, we prove Theorem \ref{th1.5} and derive the degree
counting formula for the Shadow system (\ref{1.11}). In section 7, we give a brief account for the
Dirichlet problem on a bounded smooth domain of $\mathbb{R}^2.$

\vspace{1cm}
\section{Shadow system}
We shall prove Theorem \ref{th1.1} and Theorem \ref{th1.2}. As mentioned in the Introduction, this result is not an immediate consequence of Proposition
2.4 in Jost-Lin-Wang \cite{jlw}, due to the fact of concentration has not yet been proved. Therefore, we  want to provide a correct proof of this a-priori estimate. For the concentration phenomena in the general case, we shall discuss it in another paper.

For a sequence of bubbling solution $(u_{1k},u_{2k})$ of (\ref{1.6}). We set $$\tilde{u}_{ik}=u_{ik}-\int_M h_ie^{u_{ik}},~i=1,2.$$ Then $\tilde{u}_{ik}$
satisfy
\begin{equation}
\label{2.1}
\left\{\begin{array}{ll}
\Delta \tilde{u}_{1k}+2\rho_1(h_1e^{\tilde{u}_{1k}}-1)-\rho_2(h_2e^{\tilde{u}_{2k}}-1)=0,\\
\Delta \tilde{u}_{2k}-\rho_1(h_1e^{\tilde{u}_{1k}}-1)+2\rho_2(h_2e^{\tilde{u}_{2k}}-1)=0.
\end{array}\right.
\end{equation}
We define the blow up set for $\tilde{u}_{ik}$
\begin{align}
\label{2.2}
\mathfrak{S}_{i}=\{p\in M\mid \exists \{x_k\},~x_k\rightarrow p,~\lim \tilde{u}_{ik}(x_k)\rightarrow+\infty\}
\end{align}
and define $\mathfrak{S}=\mathfrak{S}_1\cup\mathfrak{S}_2.$  We note that
$$u_{ik}=\tilde{u}_{ik}+\int_Mh_ie^{u_{ik}}\geq\tilde{u}_{ik}+Ce^{\int_Mu_{ik}}\geq\tilde{u}_{ik}+C,$$
where we used the Jensen's inequality and $h_i$ (here $h_i=h_i^*$) is a positive function in $M.$ So, if $p$ is a blow up point of $\tilde{u}_{ik}$, then
$p$ is also a blow up point of $u_{ik}$. For any $p\in \mathfrak{S},$ we define the local mass by
\begin{equation}
\label{2.3}
\sigma_{ip}=\lim_{\delta\rightarrow0}\lim_{k\rightarrow+\infty}\frac{1}{2\pi}\int_{B_{\delta}(p)}\rho_ih_ie^{\tilde{u}_{ik}}.
\end{equation}

\begin{lemma}
\label{le2.1}
If $\sigma_{1p},\sigma_{2p}<\frac{2\pi}{3},$ we have $p\notin\mathfrak{S}.$
\end{lemma}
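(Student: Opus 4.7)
The plan is to argue by contradiction, combining the Brezis--Merle alternative with the Cartan-matrix decoupling of the Toda system. Assume $p\in\mathfrak{S}$; I would choose a small geodesic disk $B_{2\delta}(p)\subset M$ whose closure meets $\mathfrak{S}$ only at $p$, and such that
\begin{equation*}
\int_{B_\delta(p)}\rho_i h_i e^{\tilde u_{ik}}\,dx\leq 2\pi\sigma_{ip}+o(1),\qquad i=1,2.
\end{equation*}

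The key algebraic observation is that the linear combinations $V_{1k}=2\tilde u_{1k}+\tilde u_{2k}$ and $V_{2k}=\tilde u_{1k}+2\tilde u_{2k}$ (which, up to the factor $\tfrac{1}{3}$, are the unknowns appearing in \eqref{1.7}) diagonalize the Cartan matrix of $SU(3)$: a direct computation from \eqref{2.1} yields
\begin{equation*}
-\Delta V_{1k}=3\rho_1\bigl(h_1 e^{\tilde u_{1k}}-1\bigr),\qquad -\Delta V_{2k}=3\rho_2\bigl(h_2 e^{\tilde u_{2k}}-1\bigr).
\end{equation*}
Hence each $V_{ik}$ solves a single mean-field-type equation on $B_\delta(p)$ whose source has $L^1$-mass asymptotically at most $6\pi\sigma_{ip}$. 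Decompose $V_{ik}=f_{ik}+g_{ik}$ on $B_\delta(p)$, where $f_{ik}$ solves the Dirichlet problem with right-hand side $-\Delta V_{ik}$ and zero boundary values, and $g_{ik}$ is harmonic. Since the smallness hypothesis on $\sigma_{ip}$ keeps the $L^1$-mass of the source below the Brezis--Merle critical threshold, their alternative yields $e^{|f_{ik}|}$ uniformly bounded in $L^q(B_\delta)$ for some $q>1$.

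Next I would control $g_{ik}$ from above on $B_{\delta/2}(p)$. By choosing $\delta$ from a good set of radii, one bounds the $L^1$-mean of $V_{ik}^+$ on circles near $\partial B_\delta$ by the global $L^1$-control of $\tilde u_{ik}$ (obtained from the normalization $\int_M\tilde u_{ik}=-\log\int_M h_i e^{u_{ik}}$ and Jensen's inequality) together with the absence of other blow-up points in the annulus $B_{2\delta}\setminus B_\delta$; the mean-value property of harmonic functions then gives $g_{ik}^{+}\leq C$ in $B_{\delta/2}(p)$. Combining this with the preceding $L^q$-estimate, and using the identities $\tilde u_{1k}=\tfrac{1}{3}(2V_{1k}-V_{2k})$ and $\tilde u_{2k}=\tfrac{1}{3}(-V_{1k}+2V_{2k})$, we obtain $e^{\tilde u_{ik}}\in L^{q'}(B_{\delta/2}(p))$ for some $q'>1$. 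Then Calder\'on--Zygmund applied to the diagonalized equations upgrades $V_{ik}$ to $W^{2,q'}$, and Sobolev embedding gives a uniform $L^\infty$-bound on $B_{\delta/4}(p)$, contradicting $p\in\mathfrak{S}$.

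The main obstacle I anticipate is the upper bound on the harmonic parts $g_{ik}$: since neither $\tilde u_{1k}$ nor $\tilde u_{2k}$ is known to be pointwise controlled on $\partial B_\delta$, one must invoke an integrated Harnack-type estimate on a carefully selected annulus. The strength of the smallness hypothesis on both $\sigma_{1p}$ and $\sigma_{2p}$ is precisely what simultaneously keeps the $L^1$-masses of both decoupled sources below the Brezis--Merle threshold, making the resulting constants uniform in $k$.
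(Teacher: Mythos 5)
Your Cartan--matrix diagonalization is an attractive idea, and the Brezis--Merle step for the decoupled sources $3\rho_i(h_ie^{\tilde u_{ik}}-1)$ does carry through (the paper itself gets the analogous estimate $\int e^{(1+\delta)|\tilde u_{1k,1}|}\leq C$ by applying Brezis--Merle directly to the first equation of (\ref{2.1}) on a small ball). The gap is in the recombination step. You establish only one-sided bounds $g_{ik}\leq C$ on each harmonic part, which is all the mean-value argument delivers from the $L^1$-control of $V_{ik}^+$. But then you write $e^{\tilde u_{1k}}=e^{\frac{2}{3}V_{1k}-\frac{1}{3}V_{2k}}=e^{\frac{2}{3}f_{1k}-\frac{1}{3}f_{2k}}\,e^{\frac{2}{3}g_{1k}-\frac{1}{3}g_{2k}}$, and the second factor is \emph{not} controlled: from $g_{1k}\leq C$, $g_{2k}\leq C$ one gets $\frac{2}{3}g_{1k}-\frac{1}{3}g_{2k}\leq\frac{2}{3}C-\frac{1}{3}g_{2k}$, which blows up wherever $g_{2k}\to-\infty$. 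Since at this stage of the argument no lower bound on $g_{2k}$ (equivalently on $V_{2k}|_{\partial B_\delta}$) is available, the H\"older step that is supposed to yield $e^{\tilde u_{ik}}\in L^{q'}$ does not go through.

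The paper avoids this entirely by decomposing the physical unknowns $\tilde u_{ik}$ directly (see (\ref{2.5})): the harmonic part then has boundary data exactly $\tilde u_{1k}|_{\partial B_{r_0}}$, and the combination $e^{\tilde u_{1k}}=e^{\tilde u_{1k,1}}\cdot e^{\tilde u_{1k,2}+\tilde u_{1k,3}}$ only ever needs the Brezis--Merle part two-sided and the other two parts bounded from \emph{above}. If you want to keep the diagonalization, the fix is to observe that the \emph{correct} harmonic combination $\frac{2}{3}g_{1k}-\frac{1}{3}g_{2k}$ is itself harmonic with boundary data $\frac{2}{3}V_{1k}-\frac{1}{3}V_{2k}=\tilde u_{1k}$, whose positive part has bounded $L^1$ norm, so the mean-value argument bounds this specific combination from above. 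Doing so collapses your decomposition back to the one in the paper, which is a sign the diagonalization is not buying anything here. A minor aside: the normalization identity should read $\int_M\tilde u_{ik}=-\int_Mh_ie^{u_{ik}}$ (not $-\log\int_M h_ie^{u_{ik}}$), though this does not affect the $L^1$-bound on $\tilde u_{ik}^+$, which really comes from $\tilde u_{ik}^+\leq e^{\tilde u_{ik}}$ and the small-mass hypothesis on $B_{r_0}(p)$.
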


\begin{proof}
The proof is a standard by using the argument in \cite{bm}. We provide a detail proof for the sake of completeness. Since $\sigma_{ip}\leq\frac{2\pi}{3},$
then we can choose small $r_0$, such that in $B_{r_0}(p),$ the following holds
\begin{equation}
\label{2.4}
\int_{B_{r_0}(p)}\rho_ih_ie^{\tilde{u}_{ik}}<\pi,
\end{equation}
which implies $\int_{B_{r_0}(p)}\tilde{u}_{ik}^+\leq C$, where $C$ is some constant independent of $k$. In the following, $C$ always denotes some generic
constant independent of $k$, and may depend on the domain $B_{r_0}(p).$ For the first equation in (\ref{2.1}), we decompose
$\tilde{u}_{1k}=\sum_{j=1}^3\tilde{u}_{1k,j},$ where $\tilde{u}_{1k,j}$ satisfy the following equation
\begin{equation}
\label{2.5}
\left\{\begin{array}{llll}
-\Delta \tilde{u}_{1k,1}=2\rho_1h_1e^{\tilde{u}_{1k}}-\rho_2h_2e^{\tilde{u}_{2k}}~&\mathrm{in}~B_{r_0}(p),
\quad&\tilde{u}_{1k,1}=0~&\mathrm{on}~\partial B_{r_0}(p),\\
-\Delta \tilde{u}_{1k,2}=-2\rho_1+\rho_2~&\mathrm{in}~B_{r_0}(p),\quad&\tilde{u}_{1k,2}=0~&\mathrm{on}~\partial B_{r_0}(p),\\
-\Delta \tilde{u}_{1k,3}=0~&\mathrm{in}~B_{r_0}(p),\quad&\tilde{u}_{1k,3}=\tilde{u}_{1k}~&\mathrm{on}~\partial B_{r_0}(p).
\end{array}\right.
\end{equation}
For the first equation in (\ref{2.5}), since
$$\int_{B_{r_0}(p)}\Big|2\rho_1h_1e^{\tilde{u}_{1k}}-\rho_2h_2e^{\tilde{u}_{2k}}\Big|<3\pi,$$
By \cite[Theorem 1]{bm}, we have
\begin{equation}
\label{2.6}
\int_{B_{r_0}(p)}\exp((1+\delta)|\tilde{u}_{1k,1}|)dx\leq C,
\end{equation}
where $\delta\in(0,\frac{1}{3})$. Therefore, we have
\begin{equation}
\label{2.7}
\int_{B_{r_0}(p)}|\tilde{u}_{1k,1}|\leq C.
\end{equation}
For the second equation in (\ref{2.5}), we can easily get
\begin{equation}
\label{2.8}
\int_{B_{r_0}(p)}|\tilde{u}_{1k,2}|\leq C,~\mathrm{and}~|\tilde{u}_{1k,2}|\leq C.
\end{equation}
For the third equation in (\ref{2.5}). By the mean value theorem for harmonic function we have
\begin{align}
\label{2.9}
\|\tilde{u}_{1k,3}^+\|_{L^{\infty}(B_{r_0/2}(p))}&\leq C\|\tilde{u}_{1k,3}^+\|_{L^{1}(B_{r_0}(p))}\nonumber\\
&\leq C\Big[\|\tilde{u}^+_{1k}\|_{L^1(B_{r_0}(p))}+\|\tilde{u}_{1k,1}\|_{L^{1}(B_{r_0}(p))}+\|\tilde{u}_{1k,2}\|_{L^{1}(B_{r_0}(p))}\Big]\nonumber\\
&\leq C.
\end{align}
From (\ref{2.8})-(\ref{2.9}), we have
\begin{equation}
\label{2.10}
2\rho_1h_1e^{\tilde{u}_{1k,2}+\tilde{u}_{1k,3}}\leq C~\mathrm{in}~B_{r_0/2}(p).
\end{equation}
By (\ref{2.6}), (\ref{2.10}) and H$\ddot{o}$lder inequality, we obtain
$$e^{\tilde{u}_{1k}}\in L^{1+\delta_1}(B_{r_0}(p))$$
with $\delta_1>0$ independent of $k$. Similarly, we have
$$e^{\tilde{u}_{2k}}\in L^{1+\delta_2}(B_{r_0}(p))$$
with $\delta_2>0$ independent of $k$. By using the standard elliptic estimate for the first equation in (\ref{2.5}), we get
$\|\tilde{u}_{1k,1}\|_{L^{\infty}}(B_{r_0/2}(p))$ is uniformly bounded. Combined with (\ref{2.8}) and (\ref{2.9}), we have $\tilde{u}_{1k}$ is
uniformly bounded above in $B_{\frac{r_0}{2}}(p)$. Following a same process, we can also obtain $\tilde{u}_{2k}$ is uniformly bounded above in
$B_{\frac{r_0}{2}}(p)$. Hence, we finish the proof of the lemma.
\end{proof}

From Lemma \ref{le2.1}, we get if $p\in\mathfrak{S},$ either $\sigma_{1p}\geq\frac{2\pi}{3}$ or $\sigma_{2p}\geq\frac{2\pi}{3}$. Thus
$|\mathfrak{S}|<\infty.$ Therefore $\mathfrak{S}$ is discrete in $M.$ In fact, in next lemma, we shall prove that if $p\in\mathfrak{S}_i,$
$\sigma_{ip}$ must be positive.

\begin{lemma}
\label{le2.2}
If $p\in\mathfrak{S}_i$, $\sigma_{ip}>0.$
\end{lemma}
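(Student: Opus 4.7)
The plan is to argue by contradiction. Assume $p\in\mathfrak{S}_1$ (the case $i=2$ is symmetric) yet $\sigma_{1p}=0$. The goal is to derive a uniform upper bound for $\tilde{u}_{1k}$ in a fixed neighborhood of $p$, which directly contradicts the definition of $\mathfrak{S}_1$. The key conceptual point is that, unlike Lemma \ref{le2.1} where smallness of \emph{both} $\sigma_{1p}$ and $\sigma_{2p}$ was needed, here only the first equation in (\ref{2.1}) has to be analyzed, and in that equation the coupling term $-\rho_2 h_2 e^{\tilde{u}_{2k}}$ enters with a negative sign. This term can therefore only push $\tilde{u}_{1k}$ down and is completely harmless for upper bound estimates, so no hypothesis on $\sigma_{2p}$ will be needed.

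Since $\mathfrak{S}$ is discrete by Lemma \ref{le2.1}, I choose $\delta>0$ so small that $\overline{B_\delta(p)}\cap\mathfrak{S}=\{p\}$; in particular $\tilde{u}_{1k}$ is uniformly bounded above on $\partial B_\delta(p)$. Invoking $\sigma_{1p}=0$, I also arrange $\int_{B_\delta(p)}\rho_1 h_1 e^{\tilde{u}_{1k}}<\varepsilon$ for any prescribed $\varepsilon>0$ and all large $k$. I then reuse the decomposition (\ref{2.5}), writing $\tilde{u}_{1k}=\tilde{u}_{1k,1}+\tilde{u}_{1k,2}+\tilde{u}_{1k,3}$ in $B_\delta(p)$. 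The harmonic piece $\tilde{u}_{1k,3}$ is bounded above by the mean value property. For $\tilde{u}_{1k,2}$ I split off the constant source, which contributes a bounded term, and note that the remaining source $-\rho_2 h_2 e^{\tilde{u}_{2k}}\le 0$ produces a subharmonic function with zero boundary values, hence nonpositive in $B_\delta(p)$; so $\tilde{u}_{1k,2}$ is uniformly bounded from above, independently of any control on $\tilde{u}_{2k}$.

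For $\tilde{u}_{1k,1}$ the source $2\rho_1 h_1 e^{\tilde{u}_{1k}}$ is nonnegative with $L^1$ norm less than $2\varepsilon$, so Brezis--Merle's theorem gives a uniform bound on $\|\exp(\alpha|\tilde{u}_{1k,1}|)\|_{L^1(B_\delta)}$ for some $\alpha>1$ once $\varepsilon$ is sufficiently small. Combined with the upper bounds on $\tilde{u}_{1k,2}$ and $\tilde{u}_{1k,3}$, this shows $e^{\tilde{u}_{1k}}$ is uniformly bounded in $L^p(B_\delta)$ for some $p>1$; feeding this back into $-\Delta\tilde{u}_{1k,1}=2\rho_1 h_1 e^{\tilde{u}_{1k}}$ and using $W^{2,p}$ elliptic regularity together with the two-dimensional Sobolev embedding $W^{2,p}\hookrightarrow L^\infty$ upgrades $\tilde{u}_{1k,1}$ to a uniform $L^\infty$ bound in $B_{\delta/2}(p)$. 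Adding the three pieces, $\tilde{u}_{1k}$ is uniformly bounded above near $p$, contradicting $p\in\mathfrak{S}_1$. The only delicate step is this final bootstrap, which works precisely because the potentially unbounded quantity $e^{\tilde{u}_{2k}}$ never needs to be controlled, thanks to the favorable sign of the coupling term.
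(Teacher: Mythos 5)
Your argument is correct and rests on the same two ingredients as the paper's proof: smallness of the local mass of the component in question, and the favorable sign of the cross term (here $-\rho_2h_2e^{\tilde{u}_{2k}}$ in the first equation of (\ref{2.1})), which makes an upper bound on $\tilde{u}_{1k}$ obtainable without any control on $\tilde{u}_{2k}$. The organization differs, though. The paper (treating $i=2$) does not split $\tilde{u}_{2k}$ directly; it solves an auxiliary Dirichlet problem for $w_k$, whose source comes from the \emph{other} equation and which also carries the boundary data, shows $w_k\le C$ by the same subharmonicity/sign argument, and then applies Brezis--Merle's Corollary~3 to the closed Liouville equation satisfied by $u_{2k}-w_k$, getting $\|f_{k1}\|_{\infty}\le C$ in one step. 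You instead keep the harmonic boundary-data piece and the favorable-sign piece separate and control the small-mass piece $\tilde{u}_{1k,1}$ by Brezis--Merle's Theorem~1 followed by a $W^{2,p}$ bootstrap --- effectively the same route the paper itself takes in the proof of Lemma~\ref{le2.1}, transported here. Two caveats. First, you implicitly modify the decomposition (\ref{2.5}) by moving the source $-\rho_2h_2e^{\tilde{u}_{2k}}$ out of $\tilde{u}_{1k,1}$ and into $\tilde{u}_{1k,2}$; this is exactly the right move and is in fact necessary --- if that term stayed in $\tilde{u}_{1k,1}$, its $L^1$ mass could exceed the Brezis--Merle threshold whenever $\rho_2\ge 4\pi$ --- but you should say so explicitly rather than calling it ``the decomposition (\ref{2.5}).'' Second, the step ``$\overline{B_\delta(p)}\cap\mathfrak{S}=\{p\}$, hence $\tilde{u}_{1k}$ is uniformly bounded above on $\partial B_\delta(p)$'' is not immediate from discreteness alone; it requires the compact-set estimate (\ref{2.11}), which the paper establishes at the start of its proof via the Green representation (alternatively, via a covering argument using the quantitative content of the proof of Lemma~\ref{le2.1}). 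With that estimate supplied, your argument is complete.
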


\begin{proof}
We prove it by contradiction. Without loss of generality, we assume $\sigma_{2p}=0$. First, we claim that there is a constant $C_K>0$ that depends on the
compact set $K$ such that
\begin{equation}
\label{2.11}
|u_{ik}(x)|\leq C_K,~\forall x\in K\subset\subset M\setminus\mathfrak{S},~i=1,2.
\end{equation}
We only prove for $i=1,$ the other one can be obtained similarly
\begin{align*}
u_{1k}(x)=&\int_MG(x,z)\Big(2\rho_1(h_1e^{\tilde{u}_{1k}}-1)-\rho_2(h_2e^{\tilde{u}_{2k}}-1)\Big)\nonumber\\
=&\int_{M_1}G(x,z)\Big(2\rho_1(h_1e^{\tilde{u}_{1k}}-1)-\rho_2(h_2e^{\tilde{u}_{2k}}-1)\Big)\nonumber\\
&+\int_{M\setminus M_1}G(x,z)\Big(2\rho_1(h_1e^{\tilde{u}_{1k}}-1)-\rho_2(h_2e^{\tilde{u}_{2k}}-1)\Big),
\end{align*}
where $M_1=\cup_{p\in\mathfrak{S}}B_{r_0}(p)$ and $r_0$ is small enough to make $K\subset\subset M\setminus M_1.$ It is easy to see that
\begin{align*}
\int_{M_1}G(x,z)\Big(2\rho_1(h_1e^{\tilde{u}_{1k}}-1)-\rho_2(h_2e^{\tilde{u}_{2k}}-1)\Big)=O(1),
\end{align*}
because $G(x,z)$ is bounded due to the distance $d(x,z)\geq\delta_0>0$ for $z\in M_1$, and $x\in K.$ In $M\setminus M_1,$ we can see that $\tilde{u}_{ik}$
 are bounded above by some constant depends on $r_0$, then it is not difficult to obtain that
\begin{align*}
\int_{M\setminus {M_1}}G(x,z)\Big(2\rho_1(h_1e^{\tilde{u}_{1k}}-1)-\rho_2(h_2e^{\tilde{u}_{2k}}-1)\Big)=O(1).
\end{align*}
Therefore, we prove the claim. Since $\sigma_{2p}=0$, we can find some $r_0$, such that
\begin{align}
\label{2.12}
\int_{B_{r_0}(p)}\rho_2h_2e^{\tilde{u}_{2k}}\leq \pi
\end{align}
for all $k$ (passing to a subsequence if necessary) and $r_0\leq\frac12d(p,\mathfrak{S}\setminus\{p\})$. On $\partial B_{r_0}(p)$, by (\ref{2.11})
\begin{equation}
\label{2.13}
|u_{1k}|,~|u_{2k}|\leq C~\mathrm{on}~\partial B_{r_0}(p).
\end{equation}
Let $w_k$ satisfy the following equation
\begin{align}
\label{2.14}
\left\{\begin{array}{ll}
\Delta w_{k}=\rho_1(\frac{h_1e^{u_{1k}}}{\int_Mh_1e^{u_{1k}}}-1)~&\mathrm{in}~B_{r_0}(p),\\
w_k=u_{1k}~&\mathrm{on}~\partial B_{r_0}(p).
\end{array}\right.
\end{align}
We set $w_k=w_{k1}+w_{k2}$ where $w_{k1},w_{k2}$ satisfy
\begin{equation}
\label{2.15}
\left\{\begin{array}{llll}
\Delta w_{k1}=\rho_1\frac{h_1e^{u_{1k}}}{\int_Mh_1e^{u_{1k}}}~&\mathrm{in}~B_{r_0}(p),\quad &w_{k1}=u_{1k}~&\mathrm{on}~\partial B_{r_0}(p),\\
\Delta w_{k2}=-\rho_1~&\mathrm{in}~B_{r_0}(p),\quad &w_{k1}=0~&\mathrm{on}~\partial B_{r_0}(p).
\end{array}\right.
\end{equation}
By maximum principle, we have $w_{k1}\leq\max_{\partial B_{r_0}(p)}u_{1k}\leq C$ by (\ref{2.13}) for $x\in B_{r_0}(p).$ By elliptic estimate, we can
easily get $|w_{k2}|\leq C.$ Therefore,
\begin{equation}
\label{2.16}
w_k\leq C,~\forall x\in B_{r_0}(p).
\end{equation}
We set $u_{2k}=f_{k1}+f_{k2}+w_k,$ where $f_{k1}$ and $f_{k2}$ satisfy
\begin{equation}
\label{2.17}
\left\{\begin{array}{llll}
\Delta f_{k1}=-2\rho_2\frac{h_2e^{u_{2k}}}{\int_Mh_2e^{u_{2k}}}~&\mathrm{in}~B_{r_0}(p),\quad &f_{k1}=0~&\mathrm{on}~\partial B_{r_0}(p),\\
\Delta f_{k2}=2\rho_2~&\mathrm{in}~B_{r_0}(p),\quad &f_{k2}=u_{2k}-w_k~&\mathrm{on}~\partial B_{r_0}(p).
\end{array}\right.
\end{equation}
For the second equation in (\ref{2.17}), we have
\begin{equation*}
|f_{k2}|\leq|u_{2k}|+|w_k|=|u_{2k}|+|u_{1k}|\leq C~\mathrm{on}~\partial B_{r_0}(p),
\end{equation*}
Thus $|f_{k2}|\leq C$ in $B_{r_0}(p).$ We denote $g_k=e^{f_{k2}+w_k},$ then the first equation in (\ref{2.16}) can be written as
\begin{equation}
\label{2.18}
\Delta f_{k1}+2\rho_2\frac{h_2e^{g_k}}{\int_Mh_2e^{u_{2k}}}e^{f_{k1}}=0~\mathrm{in}~B_{r_0}(p),\quad f_{k1}=0~\mathrm{on}~\partial B_{r_0}(p).
\end{equation}
By using the Jensen's inequality, we have $\int_Mh_2e^{u_{2k}}\geq Ce^{\int_Mu_{2k}}\geq C>0.$ We set $V_k=2\rho_2\frac{h_2e^{g_k}}{\int_Mh_2e^{u_{2k}}},$
and have $V_k\leq C,$ this $C$ depends on $r_0$. Using (\ref{2.12}), we get $\int_{B_{r_0}(p)}V_ke^{f_k}\leq2\pi$. By \cite[Corollary 3]{bm}, we
have $|f_{k1}|\leq C$ and
$$u_{2k}\leq f_{k1}+f_{k2}+w_k\leq C.$$
This leads to $\tilde{u}_{2k}=u_{2k}-\int_Mh_2e^{u_{2k}}\leq C,$ which contradicts to the assumption $\tilde{u}_{2k}$ blows up at $p.$ Thus we
finish the proof of this lemma.
\end{proof}

By these two lemmas, we now begin to prove Theorem \ref{th1.1}.\\

\noindent {\em Proof of Theorem \ref{1.1}.} We note that it is enough for us to prove $\tilde{u}_{ik}$ is uniformly bounded above. We shall prove it by
contradiction.

First, we claim $\mathfrak{S}_1\neq\emptyset.$ If not, $\tilde{u}_{1k}$ is uniformly bounded above
and $\tilde{u}_{2k}$ blows up. We decompose $u_{2k}=u_{2k,1}+u_{2k,2},$ where
$u_{2k,1}$ and $u_{2k,2}$
satisfies the following
\begin{align*}
\left\{\begin{array}{ll}
\Delta u_{2k,1}-\rho_1(h_1\tilde{u}_{1k}-1)=0,~&\int_Mu_{2k,1}=0,\\
\Delta u_{2k,2}+2\rho_2(\frac{\tilde{h}_{2k}e^{u_{2k,2}}}{\int_M\tilde{h}_{2k}e^{u_{2k,2}}}-1)=0,~&\int_Mu_{2k,2}=0,\\
\end{array}\right.
\end{align*}
where $\tilde{h}_{2k}=h_2e^{u_{2k,1}}$. By the $L^p$ estimate, $u_{2k,1}$ is bounded in $W^{2,p}$ for any $p>1.$ Thus $u_{1k}$ is bounded in
$C^{1,\alpha}$ for any $\alpha\in(0,1),$ after passing to a subsequence if necessary, we gain $u_{2k,1}$ converges to $u_0$ in $C^{1,\alpha}.$
As a consequence, $\tilde{h}_{2k}\rightarrow h_2e^{u_0}$ in $C^{1,\alpha}$. Since $\tilde{u}_{2k}$ blows up, $u_{2k}$ and $u_{2k,2}$ both blow up. Then
applying the result of Li and Shafrir in \cite{ls}, we have
$\rho_2\in 4\pi\mathbb{N}$, which contradicts to our assumption. Thus $\mathfrak{S}_1\neq\emptyset$. Similarly, we can prove that
$\mathfrak{S}_2\neq\emptyset$.

We note that our argument above can be applied to the local case, which yields $\mathfrak{S}_1\cap\mathfrak{S}_2\neq\emptyset.$ Suppose
$\mathfrak{S}_1\cap\mathfrak{S}_2=\emptyset.$ For any point $p\in \mathfrak{S}_2,$ we consider
the behavior of $u_{1k}$ and $u_{2k}$ in $B_{r_0}(p)$, where $r_0$ is small enough such that $B_{r_0}(p)\cap(\mathfrak{S}\setminus\{p\})=\emptyset.$
We decompose
$\tilde{u}_{2k}=u_{2k,3}+u_{2k,4},$ where $u_{2k,3}$ and $u_{2k,4}$ satisfy
\begin{align}
\label{2.19}
\left\{\begin{array}{llll}
\Delta u_{2k,3}-\rho_1(h_1e^{\tilde{u}_{1k}}-1)=0~&\mathrm{in}~B_{r_0}(p),\quad
&u_{2k,3}=0~&\mathrm{on}~\partial B_{r_0}(p),\\
\Delta u_{2k,4}+2\rho_2(\tilde{h}_{2,k}e^{u_{2k,4}}-1)=0~&\mathrm{in}~B_{r_0}(p),\quad
&u_{2k,4}=\tilde{u}_{2k}~&\mathrm{on}~\partial B_{r_0}(p),
\end{array}\right.
\end{align}
where $\tilde{h}_{2,k}=h_2e^{u_{2k,3}}.$ By using $\tilde{u}_{1k}$ uniformly bounded from above in $B_{r_0}(p)$, we have $\tilde{h}_{2,k}$ converges
in $C^{1,\alpha}(B_{r_0}(p)).$ Since $u_{2k,4}$ blows up simply at $p,$ we have
\begin{align}
\label{2.20}
\Big|u_{2k,4}-\log\Big(\frac{e^{u_{2k,4}(p^{(k)})}}
{(1+\frac{\rho_2\tilde{h}_{2,k}(p^{(k)})e^{u_{2k,4}(p^{(k)})}}{4}|x-p^{(k)}|^{2})^2}\Big)\Big|\leq C,
\end{align}
where $u_{2k,4}(p^{(k)})=\max_{B_{r_0}(p)}u_{2k,4}$. (\ref{2.20}) is proved in \cite{bclt} and \cite{l}. From (\ref{2.20}), we have
\begin{equation}
\label{2.21}
u_{2k,4}\rightarrow-\infty~\mathrm{in}~ B_{r_0(p)}\setminus\{p\}\quad\mathrm{and}\quad \rho_2h_2e^{\tilde{u}_{2k}}\rightarrow 4\pi\delta_p~
\mathrm{in}~B_{r_0}(p),
\end{equation}
which implies
\begin{equation}
\label{2.22}
\rho_2=\lim_{k\rightarrow\infty}\int_M\rho_2h_2e^{\tilde{u}_{2k}}=4\pi|\mathfrak{S}_2|,
\end{equation}
a contradiction to our assumption $\rho_2\notin 4\pi\mathbb{N}$, so $\mathfrak{S}_1\cap\mathfrak{S}_2\neq\emptyset.$

Let $p\in \mathfrak{S}_1\cap\mathfrak{S}_2,$ and $\sigma_{ip},i=1,2$ be the local masses of them at $p.$ Applying the result of Jost-Lin-Wang
(Proposition 2.4 in \cite{jlw}), we have $(\sigma_{1p},\sigma_{2p})$ is one of $(2,4),~(4,2)$ and $(4,4).$ By the assumption $(ii),$
$\sigma_{1p}=2.$ Thus $\sigma_{2p}=4.$

In the following, we claim $\tilde{u}_{2k}$ concentrate, i.e., $\tilde{u}_{2k}\rightarrow-\infty$ uniformly in any compact set of
$M\setminus\mathfrak{S}_2.$ Then,
\begin{equation}
\label{2.23}
\rho_2h_2e^{\tilde{u}_{2k}}\rightarrow4\pi\sum_{q\in\mathfrak{S}_2\setminus\{p\}}\delta_q+8\pi\delta_p~\mathrm{and}~\rho_2\in4\pi\mathbb{N},
\end{equation}
which again yields a contradiction. This completes the proof of Theorem \ref{th1.1}. The proof of this claim is given in Lemma \ref{le2.3} below.
$\square$

\begin{lemma}
\label{le2.3}
Suppose $\tilde{u}_{ik},i=1,2$ both blow up at $p$, and let $2$ and $4$ be the local masses of $\tilde{u}_{1k}$ and $\tilde{u}_{2k}$ respectively.
Then $\tilde{u}_{2k}\rightarrow-\infty$ in $B_{r_0}(p)\setminus\{p\}.$
\end{lemma}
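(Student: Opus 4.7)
The plan is to absorb the coupling term coming from $\tilde u_{1k}$ into a Green-function piece, thereby reducing the equation for $\tilde u_{2k}$ to a singular Liouville equation to which a standard concentration--compactness alternative applies.

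Concretely, I would let $\phi_k\in H^1_0(B_{r_0}(p))$ solve
\begin{align*}
-\Delta \phi_k = -\rho_1 h_1 e^{\tilde u_{1k}} + (\rho_1-2\rho_2) \quad\text{in } B_{r_0}(p),\qquad \phi_k=0 \text{ on }\partial B_{r_0}(p),
\end{align*}
and set $\psi_k:=\tilde u_{2k}-\phi_k$. A direct computation from the second equation of (\ref{2.1}) then gives
\begin{align*}
-\Delta \psi_k = \tilde h_k \,e^{\psi_k} \quad\text{in } B_{r_0}(p),\qquad \tilde h_k:=2\rho_2\, h_2\, e^{\phi_k}.
\end{align*}
Because $\sigma_{1p}=2$, the bounded sequence of measures $\rho_1 h_1 e^{\tilde u_{1k}}$ has (after passing to a subsequence) a weak-$\ast$ limit of the form $4\pi\delta_p + g\,dx$ with $g\in L^1(B_{r_0}(p))$. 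By Green's representation, $\phi_k\to\phi_\ast$ locally uniformly on $\overline{B_{r_0}(p)}\setminus\{p\}$, with $\phi_\ast(x)=2\log|x-p|+R(x)$ near $p$ for some bounded $R$. In particular, $\tilde h_k\to\tilde h_\ast$ where $\tilde h_\ast(x)\asymp |x-p|^2$ near $p$ and $\tilde h_\ast>0$ is smooth away from $p$; moreover the boundary value $\psi_k|_{\partial B_{r_0}(p)}=\tilde u_{2k}|_{\partial B_{r_0}(p)}$ is uniformly bounded, since by the choice of $r_0$, $p$ is the only blow-up point in $\overline{B_{r_0}(p)}$.

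Next I would identify the limiting mass of $\psi_k$ at $p$ via $\sigma_{2p}=4$:
\begin{align*}
\lim_{\delta\to 0}\lim_{k\to\infty}\int_{B_\delta(p)}\tilde h_k\, e^{\psi_k}\,dx = 2\lim_{\delta\to 0}\lim_{k\to\infty}\int_{B_\delta(p)}\rho_2 h_2 e^{\tilde u_{2k}}\,dx = 4\pi\,\sigma_{2p} = 16\pi.
\end{align*}
Thus $\psi_k$ solves a Liouville equation with weight vanishing to order $|x-p|^2$ (conic parameter $\alpha=1$), and the limiting mass $16\pi$ matches exactly the simple-bubble quantum $8\pi(1+\alpha)$ for singular Liouville equations. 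Since $\phi_k$ and $\tilde u_{2k}$ are both bounded above on every compact subset of $B_{r_0}(p)\setminus\{p\}$, $p$ is the only blow-up point of $\psi_k$ in $B_{r_0}(p)$. The Brezis--Merle / Bartolucci--Tarantello concentration-compactness alternative then forces $\psi_k\to -\infty$ uniformly on compacta of $B_{r_0}(p)\setminus\{p\}$. Because $\phi_k$ is locally uniformly bounded away from $p$, one obtains $\tilde u_{2k}=\psi_k+\phi_k\to -\infty$ uniformly on such compacta, which is the desired statement.

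The main obstacle is the concentration step: one needs the singular-Liouville quantization result for sequences satisfying $-\Delta\psi_k=\tilde h_k e^{\psi_k}$ with weight vanishing as $|x-p|^{2\alpha}$ and total mass precisely $8\pi(1+\alpha)$ at the unique blow-up point $p$. This is available in \cite{bt1}, but invoking it requires verifying that no parasitic blow-up of $\psi_k$ occurs at intermediate scales--this is guaranteed here because $\sigma_{1p}=2$ is the simple-bubble mass for the first component, so $\tilde u_{1k}$'s bubble sits at a single clean scale and the reduction above produces a genuine singular Liouville problem with one bubble.
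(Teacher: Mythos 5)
Your Green--potential decomposition is a valid reformulation: subtracting $\phi_k$ (solving $-\Delta\phi_k=-\rho_1h_1e^{\tilde u_{1k}}+(\rho_1-2\rho_2)$ with zero Dirichlet data) from $\tilde u_{2k}$ does produce $-\Delta\psi_k=\tilde h_k\,e^{\psi_k}$ with $\tilde h_k=2\rho_2h_2e^{\phi_k}$, and the local mass of $\tilde h_k e^{\psi_k}$ at $p$ is indeed $16\pi$. This is genuinely different from the paper, which never splits $\tilde u_{2k}$ but instead builds a lower barrier $z_k$ directly from the equation $-\Delta z_k=f_{1k}$ (with $z_k=-C$ on $\partial B_{r_0}$ and $f_{1k}=-\rho_1(h_1e^{\tilde u_{1k}}-1)+2\rho_2(h_2e^{\tilde u_{2k}}-1)$), so that $\tilde u_{2k}\geq z_k$ by the maximum principle; passing to the limit gives $-\Delta z=f_1+12\pi\delta_p$, hence $z\geq 6\log\frac{1}{|x-p|}+O(1)$ near $p$ and $\int e^{z}=\infty$, contradicting $\int e^{\tilde u_{2k}}<\infty$.

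The gap in your plan is the appeal to the Bartolucci--Tarantello concentration/quantization theorem. That machinery is built for weights of the form $|x-p|^{2\alpha}V_k(x)$ with $V_k\to V_0>0$ uniformly (or at least $\log V_k$ converging uniformly). Your weight $\tilde h_k=2\rho_2h_2e^{\phi_k}$ does \emph{not} converge uniformly to $\tilde h_*\asymp|x-p|^2$ in any neighbourhood of $p$: $\phi_k$ is the Newtonian potential of a sequence concentrating at the $\tilde u_{1k}$--blow-up scale, so there is a transition layer near that scale where $\phi_k$ is bounded (and $\tilde h_k$ is bounded away from $0$) while $\phi_*$ already behaves like $2\log|x-p|$. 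Your closing remark that $\sigma_{1p}=2$ makes $\tilde u_{1k}$ ``sit at a single clean scale'' is an assertion, not a proof, and in any case it would not repair the non-uniform convergence of $\tilde h_k$ near $p$. Also, ``forces $\psi_k\to-\infty$'' overstates what Brezis--Merle alone gives: the alternative still allows $\psi_k$ to remain bounded in $L^\infty_{\mathrm{loc}}(B_{r_0}\setminus\{p\})$, and that residual case must be excluded by some separate argument.

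The good news is that the residual case can be excluded elementarily, without the singular-Liouville quantization and without any delicate scale analysis. Since $\tilde h_k$ is uniformly bounded above (because $-\Delta\phi_k\leq\rho_1-2\rho_2$ with zero boundary data gives a uniform upper bound on $\phi_k$) and $\int\tilde h_k e^{\psi_k}\leq C$, Brezis--Merle applies to $\psi_k$. If $\psi_k$ did \emph{not} tend to $-\infty$ locally uniformly on $B_{r_0}\setminus\{p\}$, it would converge to some $\psi_\infty$ there with $-\Delta\psi_\infty=\tilde h_*e^{\psi_\infty}+16\pi\delta_p$; then $\psi_\infty(x)=8\log\frac{1}{|x-p|}+O(1)$ and $\tilde h_*(x)\asymp|x-p|^2$ give $\tilde h_*e^{\psi_\infty}\asymp|x-p|^{-6}\notin L^1_{\mathrm{loc}}$, a contradiction. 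This is essentially the paper's argument seen after the split: your $\psi_\infty+\phi_*$ is the paper's $z$, with $8\log\frac{1}{|x-p|}+2\log|x-p|=6\log\frac{1}{|x-p|}$. Replacing the Bartolucci--Tarantello citation with this direct non-integrability contradiction closes the gap.
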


\begin{proof}
If the claim is not true, we have $\tilde{u}_{2k}$ is bounded by some constant $C$ in $L^{\infty}(\partial B_{r_0}(p))$. Let
$f_{1k}=-\rho_1(h_1e^{\tilde{u}_{1k}}-1)+2\rho_2(h_2e^{\tilde{u}_{2k}}-1)$ and $z_k$ be the solution of
\begin{align}
\label{2.24}
\left\{\begin{array}{ll}
-\Delta z_k=f_{1k}~&\mathrm{in}~B_{r_0}(p),\\
z_k=-C~&\mathrm{on}~\partial B_{r_0}(P).
\end{array}\right.
\end{align}
Note that $f_{1k}\rightarrow f_1$ uniformly in any compact set of $B_{r_0}(p)\setminus\{p\}$ and the integration of the RHS over $B_{r_0}(p)$ is
$12\pi+o(1)$ as $r_0\rightarrow0$. By maximum principle, $\tilde{u}_{2k}\geq z_k$ in $B_{r_0}(p).$ In particular
$$\int_{B_{r_0}(p)}e^{z_k}\leq \int_{B_{r_0}(p)}e^{\tilde{u}_{2k}}<\infty.$$
On the other hand, using Green representation formula for $z_k,$ we have
\begin{align}
\label{2.25}
z_k(x)=-\int_{B_{r_0}(p)}\frac{1}{2\pi}\ln|x-y|\big(-\rho_1(h_1e^{\tilde{u}_{1k}}-1)+2\rho_2(h_2e^{\tilde{u}_{2k}}-1)\big)+O(1),
\end{align}
where we used the regular part of the Green function is bounded. For any $x\in B_{r_0}(p)\setminus\{p\},$ we denote the distance between $x$ and $p$ by
$2r$. From (\ref{2.25}), we have
\begin{align*}
z_k(x)=&-\int_{B_{r_0}(p)}\frac{1}{2\pi}\ln|x-y|\big(-\rho_1(h_1e^{\tilde{u}_{1k}}-1)+2\rho_2(h_2e^{\tilde{u}_{2k}}-1)\big)+O(1)\nonumber\\
=&-\int_{B_{r_0}(p)\cap B_{r}(x)}\frac{1}{2\pi}\ln|x-y|\big(-\rho_1(h_1e^{\tilde{u}_{1k}}-1)+2\rho_2(h_2e^{\tilde{u}_{2k}}-1)\big)\nonumber\\
&-\int_{B_{r_0}(p)\setminus B_r(x)}\frac{1}{2\pi}\ln|x-y|\big(-\rho_1(h_1e^{\tilde{u}_{1k}}-1)+2\rho_2(h_2e^{\tilde{u}_{2k}}-1)\big)+O(1).
\end{align*}
It is easy to see
$$\Big|\int_{B_{r_0}(p)\cap B_{r}(x)}\ln|x-y|\big(-\rho_1(h_1e^{\tilde{u}_{1k}}-1)+2\rho_2(h_2e^{\tilde{u}_{2k}}-1)\big)\Big|\leq C,$$
due to $\tilde{u}_{ik}$ are uniformly bounded above in $B_{r}(x),~i=1,2$. Here $C$ depends only on $x$.
For $y\in B_{r_0}(p)\setminus B_r(x),$ we have $|x-y|\geq r$ and
$$\int_{B_{r_0}(p)\setminus B_r(x)}\ln|x-y|\big(-\rho_1(h_1e^{\tilde{u}_{1k}}-1)+2\rho_2(h_2e^{\tilde{u}_{2k}}-1)\big)=(12\pi+o(1))\ln|x-p|+O(1).$$
Therefore, we get $z_k(x)$ is uniformly bounded below by some constant that depends on $x$ only. Thus, we have
$z_k\rightarrow z$ in $C^2_{loc}(B_{r_0}(p)\setminus\{p\})$, where $z$ satisfies
\begin{align*}
\left\{\begin{array}{ll}
-\Delta z=f_1~&\mathrm{in}~B_{r_0}(p)\setminus\{p\},\\
z=-C~&\mathrm{on}~\partial B_{r_0}(P).
\end{array}\right.
\end{align*}
For $\varphi\in C_0^{\infty}(B_{r_0}(p))$,
\begin{align*}
\lim_{k\rightarrow+\infty}\int_{B_{r_0}(p)}\varphi\Delta z_k=&\int_{B_{r_0}(p)}(\varphi(x)-\varphi(p))\Delta z_k+\varphi(p)(\int_{B_{r_0}(p)}f_1+12\pi)\\
=&\int_{B_{r_0}(p)}\varphi(x)f_1+12\pi\varphi(p).
\end{align*}
Thus $-\Delta z=f_1+12\pi\delta_p.$ Therefore, we have $z(x)\geq 6\log\frac{1}{|x-p|}+O(1)$ as $x\rightarrow p$, which implies
$\int_{B_{r_0}(p)}e^{z}=\infty$, a contradiction. Hence
\begin{align}
\label{2.26}
\tilde{u}_{2k}\rightarrow-\infty~\mathrm{in}~B_{r_0}(p)\setminus \{p\}.
\end{align}
\end{proof}

Next, we prove Theorem \ref{th1.2} and derive the shadow system (\ref{1.11}).\\

\noindent{\em Proof of Theorem \ref{th1.2}.} As $\rho_{1k}\rightarrow 4\pi,\rho_{2k}\rightarrow\rho_2$ and $\rho_2\notin 4\pi\mathbb{N},$
we consider a sequence of solutions $(v_{1k},v_{2k})$ to (\ref{1.7}) such that $\max_{M}(v_{1k},v_{2k})\rightarrow+\infty.$ We claim
$\max_{M}(\tilde{u}_{1k},\tilde{u}_{2k})\rightarrow+\infty.$ Otherwise, $\tilde{u}_{1k},\tilde{u}_{2k}$ are
uniformly bounded above. From Green representation theorem and $L^p$ estimate, we can get $u_{1k},u_{2k}$ are uniformly bounded.
This implies $v_{1k},v_{2k}$ are uniformly bounded, which contradicts to our assumption. Let $\mathfrak{S}_i$ denotes the blow up point of
$\tilde{u}_{ik},~i=1,2$ as before.

We claim $\mathfrak{S}_2=\emptyset$ and $\mathfrak{S}_1$ consists of one point only. Suppose first $\mathfrak{S}_2\neq\emptyset.$
From the proof of Theorem \ref{th1.1}, if $\mathfrak{S}_1\cap\mathfrak{S}_2=\emptyset$, then $\tilde{u}_{2k}$
would concentrate, i.e., $\tilde{u}_{2k}\rightarrow-\infty,~\forall x\in M\setminus\mathfrak{S}_2$, which implies
$\rho_2=\lim_{k\rightarrow+\infty}\int_Mh_2e^{\tilde{u}_{2k}}\in 4\pi\mathbb{N},$ a contradiction. Thus $\mathfrak{S}_1\cap\mathfrak{S}_2\neq\emptyset.$
Suppose $q\in\mathfrak{S}_1\cap\mathfrak{S}_2,$ from Proposition 2.4 in \cite{jlw} and the condition $\rho_{1k}<8\pi,$ we conclude
$\sigma_{1q}=2,\sigma_{2q}=4.$ By Lemma \ref{le2.3}, we have $\tilde{u}_{2k}$ concentrate, which implies $\rho_2\in 4\pi\mathbb{N},$
a contradiction again. Hence $\mathfrak{S}_2=\emptyset$. By Lemma \ref{le2.2}, $\tilde{u}_{2k}$ is uniformly bounded from above in $M.$ Since $\max_{M}(\tilde{u}_{1k},\tilde{u}_{2k})\rightarrow+\infty,$ we get $\mathfrak{S}_1\neq\emptyset.$ By the fact $\rho_{1k}\rightarrow 4\pi,$ we have $\mathfrak{S}_1$ contains only one point.

We write the equation for $v_{ik},~i=1,2$ as
\begin{equation}
\label{2.27}
\left\{\begin{array}{ll}
\Delta v_{1k}+\rho_{1k}(\frac{h_1e^{2v_{1k}-v_{2k}}}{\int_Mh_1e^{2v_{1k}-v_{2k}}}-1)=0,\\
\Delta v_{2k}+\rho_{2k}(\frac{h_2e^{2v_{2k}-v_{1k}}}{\int_Mh_2e^{2v_{2k}-v_{1k}}}-1)=0.
\end{array}\right.
\end{equation}
Since $\tilde{u}_{2k}$ is uniformly bounded above, the second equation of (\ref{2.27}) implies that $v_{2k}$ is uniformly bounded in $M$ and converges to some
function $\frac12w$ in $C^{1,\alpha}(M).$ From the first equation of (\ref{2.27}) and $\rho_{1k}\rightarrow4\pi,$ $v_{1k}$ blows up at only one point, say $p\in M.$

We write the first equation in (\ref{2.27}) as
\begin{equation}
\label{2.28}
\Delta v_{1k}+\rho_{1k}(\frac{\tilde{h}_ke^{2v_{1k}}}{\int_{M}\tilde{h}_ke^{2v_{1k}}}-1)=0,
\end{equation}
where $\tilde{h}_k=h_1e^{-v_{2k}}$. We define $\tilde{v}_{1k}=v_{1k}-\frac12\log\int_{M}\tilde{h}_ke^{2v_{1k}}.$ Due to the $C^{1,\alpha}$ convergence of $\tilde{h}_k,$ $\tilde{v}_{1k}$ simply blows up at $p$ by a result of Li \cite{l} (one can also see \cite{bclt}), i.e., the following inequality holds:
\begin{align}
\label{2.29}
\Big|2\tilde{v}_{1k}-\log\frac{e^{\lambda_{k}}}{\big(1+\frac{\rho_{1k}\tilde{h}_k(p^{(k)})e^{\lambda_{k}}}{4}|x-p^{(k)}|^2\big)^2}\Big|<c~
\mathrm{for}~|x-p^{(k)}|<r_0,
\end{align}
where $\lambda_{k}=2\tilde{v}_{1k}(p^{(k)})=\max_{x\in B_{r_0}(p)}2\tilde{v}_{1k}.$ By using this sharp estimate, we get
\begin{equation}
\label{2.30}
\tilde{v}_{1k}\rightarrow-\infty~\mathrm{in}~M\setminus\{p\},\quad
\rho_{1k}\frac{h_1e^{2v_{1k}-v_{2k}}}{\int_Mh_1e^{2v_{1k}-v_{2k}}}\rightarrow 4\pi\delta_p,
\end{equation}
and
\begin{align}
\label{2.31}
\nabla\big(\log(h_1e^{-\frac12w})+4\pi R(x,x))\mid_{x=p}=0,
\end{align}
which proves (\ref{1.8}) and (\ref{1.9}).

In the following, we claim $v_{2k}\rightarrow\frac12w$ in $C^{2,\alpha}(M)$. From this claim and (\ref{2.28}), it is easy to get
$$v_{1k}\rightarrow 8\pi G(x,p)~\mathrm{in}~C^{2,\alpha}(M\setminus\{p\}).$$
Combined with $v_{2k}\rightarrow\frac12w$ in $C^{2,\alpha}(M)$, we have $w$ satisfies the following equation
\begin{equation}
\label{2.32}
\Delta w+2\rho_2(\frac{h_2e^{w-4\pi G(x,p)}}{\int_Mh_2e^{w-4\pi G(x,p)}}-1)=0.
\end{equation}
This proves (\ref{1.10}). Therefore, we finish the proof of Theorem \ref{1.2}. The proof of the claim is given in the following Lemma \ref{le2.4}. \quad\quad\quad\quad\quad\quad\quad\quad\quad\quad\quad\quad\quad\quad\quad\quad\quad $\square$

\begin{lemma}
\label{le2.4}
Let $v_{1k},v_{2k}$ be a sequence of blow up solutions of (\ref{2.27}), which $v_{1k}$ blows at $p$ and $v_{2k}\rightarrow\frac12w$ in $C^{1,\alpha}(M)$. Then $v_{2k}\rightarrow\frac12w$ in $C^{2,\alpha}(M).$
\end{lemma}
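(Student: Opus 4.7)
The plan is to upgrade the $C^{1,\alpha}(M)$ convergence $v_{2k}\to\tfrac12 w$ to $C^{2,\alpha}(M)$ through a Schauder estimate applied to the second equation of (\ref{2.27}). Writing
\begin{equation*}
\Delta v_{2k}=\psi_k,\qquad \psi_k:=-\rho_{2k}\Bigl(\frac{h_2 e^{2v_{2k}-v_{1k}}}{I_k}-1\Bigr),\qquad I_k:=\int_M h_2 e^{2v_{2k}-v_{1k}},
\end{equation*}
and letting $\psi$ denote the analogous quantity attached to the limit equation (\ref{2.32}), the normalization $\int_M(v_{2k}-\tfrac12 w)=0$ reduces the lemma to proving $\psi_k\to\psi$ in $C^{0,\alpha}(M)$. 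Factoring $h_2 e^{2v_{2k}-v_{1k}}=h_2\cdot e^{2v_{2k}}\cdot e^{-v_{1k}}$, the first two factors already converge in $C^{1,\alpha}(M)$ by hypothesis, so the whole issue concentrates on the single factor $e^{-v_{1k}}$ and on the denominator $I_k$.

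Outside any ball $B_\delta(p)$, the simple-blowup conclusion (\ref{2.30}) together with the first equation of (\ref{2.27}) gives $-\Delta v_{1k}\to 4\pi\delta_p-4\pi$ distributionally, with smooth convergence away from $p$; standard elliptic regularity then yields $v_{1k}\to 4\pi G(\cdot,p)$ in $C^{2,\alpha}_{\mathrm{loc}}(M\setminus\{p\})$, and hence $e^{-v_{1k}}\to e^{-4\pi G(\cdot,p)}$ in $C^{2,\alpha}(M\setminus B_\delta(p))$ for every fixed $\delta>0$. The main obstacle is the uniform control of $e^{-v_{1k}}$ on a fixed bubble-containing ball $B_{r_0}(p)$. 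Here the sharp profile (\ref{2.29}), together with its $C^1$ refinement obtained by the standard rescaling $\tilde v_{1k}(p^{(k)}+e^{-\lambda_k/2}\xi)-\lambda_k/2$ converging in $C^2_{\mathrm{loc}}(\mathbb{R}^2)$ to the usual entire bubble, and together with the matching on $\partial B_{r_0}(p)$ to the limit $4\pi G(\cdot,p)$ (which forces the constant $\tfrac12\log\int_M\tilde h_k e^{2v_{1k}}=\tfrac{\lambda_k}{2}+O(1)$), is expected to produce the twin pointwise estimates
\begin{equation*}
e^{-v_{1k}(x)}\le C\bigl(|x-p^{(k)}|^2+e^{-\lambda_k}\bigr),\qquad |\nabla v_{1k}(x)|\le \frac{C\,e^{\lambda_k}|x-p^{(k)}|}{1+c\,e^{\lambda_k}|x-p^{(k)}|^2},
\end{equation*}
valid on $B_{r_0}(p^{(k)})$. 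Multiplying them yields $|\nabla e^{-v_{1k}}(x)|\le C|x-p^{(k)}|$ uniformly in $k$, so $e^{-v_{1k}}$ is uniformly Lipschitz on $B_{r_0}(p)$; combined with the $C^{2,\alpha}$ control outside, $e^{-v_{1k}}$ is uniformly bounded in $C^{0,1}(M)$.

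Finally, pointwise convergence $e^{-v_{1k}}\to e^{-4\pi G(\cdot,p)}$ (both sides $O(|x-p|^2)$ at the singular point) plus the uniform Lipschitz bound give, via Arzel\`a--Ascoli, uniform convergence on $M$; interpolation between $C^0$ and $C^{0,1}$ then upgrades this to $C^{0,\alpha}(M)$ convergence for every $\alpha<1$. Dominated convergence yields $I_k\to I=\int_M h_2 e^{w-4\pi G(\cdot,p)}>0$, and the product therefore converges in $C^{0,\alpha}(M)$; hence $\psi_k\to\psi$ in $C^{0,\alpha}(M)$ and the Schauder estimate completes the proof. The heart of the argument is the uniform Lipschitz estimate of $e^{-v_{1k}}$ on the bubbling ball, which is the only place where the sharp blowup profile (\ref{2.29}) is genuinely needed; everything else is either standard elliptic theory or the hypotheses already established in the proof of Theorem \ref{th1.2}.
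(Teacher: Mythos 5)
Your proposal is essentially the paper's proof: both arguments isolate $h_2 e^{-v_{1k}}$ as the only problematic factor, use the sharp profile estimate (\ref{2.29}) together with its gradient version (\ref{2.34}) and the normalization (\ref{2.33}) to obtain a uniform Lipschitz bound on $h_2 e^{-v_{1k}}$ over the bubbling ball, and then upgrade the regularity of $v_{2k}$ by elliptic theory (the paper goes through $W^{3,\mathfrak{p}}$ and Sobolev embedding, you go through $C^{0,\alpha}$ and Schauder, which amounts to the same thing).

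One caution on your justification of the gradient estimate. You suggest the pointwise inequality
\begin{equation*}
|\nabla v_{1k}(x)|\le \frac{C\,e^{\lambda_k}|x-p^{(k)}|}{1+c\,e^{\lambda_k}|x-p^{(k)}|^2}
\end{equation*}
on all of $B_{r_0}(p)$ follows from the rescaling limit at scale $e^{-\lambda_k/2}$ plus matching with $4\pi G(\cdot,p)$ at $\partial B_{r_0}(p)$. That is not enough: the rescaling limit only controls the region $|x-p^{(k)}|=O(e^{-\lambda_k/2})$, and the boundary matching only controls $|x-p^{(k)}|\approx r_0$; the uniform pointwise estimate in the intermediate ``neck'' region is precisely the non-trivial content of (\ref{2.34}), which the paper imports from Chen--Lin \cite[Lemma 4.1]{cl1}. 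You should cite that result rather than claim it follows from the blow-up picture. Also, (\ref{2.34}) carries an additional $O(1)$ error which your bound omits, so the clean inequality $|\nabla e^{-v_{1k}}(x)|\le C|x-p^{(k)}|$ should be weakened to $|\nabla e^{-v_{1k}}|=O(1)$ uniformly (which is what the argument actually needs, since $e^{-v_{1k}}\cdot O(1)=O(1)$). With these corrections the proof is complete and coincides with the paper's.
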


\begin{proof}
By (\ref{2.29}), we have
\begin{align}
\label{2.33}
|\lambda_{k}-\log\int_{M}\tilde{h}_ke^{2v_{1k}}|<c.
\end{align}
To prove $v_{2k}\rightarrow\frac12w$ in $C^{2,\alpha},$ we need the following estimate
\begin{align}
\label{2.34}
\Big|2\nabla\tilde{v}_{1k}-\nabla\Big(\log\frac{e^{\lambda_{k}}}{\big(1+\frac{\rho_{1k}h(p)e^{\lambda_{k}}}{4}|x-p|^2\big)^2}\Big)\Big|
<c~\mathrm{for}~|x-p|<r_0,
\end{align}
where (\ref{2.34}) comes from the error estimate of \cite[Lemma 4.1]{cl1}. We write
$$h_2e^{2v_{2k}-v_{1k}}=h_2e^{-v_{1k}}e^{2v_{2k}}.$$
By (\ref{2.29}) and (\ref{2.34}), it is not difficult to show
$$\nabla\big(h_2e^{-v_{1k}}\big)\in L^{\infty}(M).$$
Therefore, by classical elliptic regularity and Sobolev inequality, we can show that
\begin{equation}
\label{2.35}
v_{2k}\rightarrow\frac12w~\mathrm{in}~C^{2,\alpha}~\mathrm{for~any}~\alpha\in(0,1).
\end{equation}
Then we finished the proof of this lemma.
\end{proof}
\vspace{0.25cm}

After deriving the shadow system (\ref{1.11}), we show the non-degeneracy of (\ref{1.12}) by applying the well-known transversality theorem.\\

For convenience, we write (\ref{1.12}) as
\begin{equation}
\label{2.36}
\left\{\begin{array}{l}
\Delta w+2\rho_2\Big(\frac{h_2^*e^{w-4\pi\sum_{j=1}^mG(x,p_j^0)-F_0(x)}}
{\int_Mh_2^*e^{w-4\pi\sum_{j=1}^mG(x,p_j^0)-F_0(x)}}-1\Big)=0,\\
\nabla\big(\log(h_1^*e^{-\frac12w})+4\pi R(x,x)+ F_i(x)\big)\mid_{x=p_i^0}=0,~i=1,2,\cdots,m,
\end{array}\right.
\end{equation}
where
\begin{align*}
F_0(x)=4\pi\sum_{q\in S_2}\beta_qG(x,q)+4\pi\sum_{q\in S}(1+\alpha_q)G(x,q),
\end{align*}
and
\begin{align*}
F_{i}(x)=8\pi\sum_{j=1,j\neq i}^mG(x,p_j^0)+8\pi\sum_{q\in S}(1+\alpha_{q})G(x,q)-4\pi\sum_{q\in S_1}\alpha_qG(x,q).
\end{align*}

\noindent In order to show (\ref{2.36}) has a non-degenerate solution, we need the following theorem, which can be found in \cite{a}, \cite{q} and
references therein. First, we recall that
\begin{theorem}
\label{th2.1}
Let $F:\mathcal{H}\times \mathcal{B}\rightarrow \mathcal{E}$ be a $C^k$ map. $\mathcal{H},~\mathcal{B}$ and $\mathcal{E}$ Banach manifolds with
$\mathcal{H}$ and $\mathcal{E}$ separable. If $0$ is a regular value of $F$ and $F_b=F(\cdot,b)$ is a Fredholm map of index $<k,$ then the set
$\{b\in \mathcal{B}:0~is~a~regular~value~of~F_b\}$ is residual in $\mathcal{B}.$
\end{theorem}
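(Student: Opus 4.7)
The plan is to deduce Theorem \ref{th2.1} from the Sard--Smale theorem applied to the projection from the zero set of $F$ onto $\mathcal{B}$; the proof splits into three steps and a concluding invocation.

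First I would realize $Z := F^{-1}(0)$ as a $C^k$ Banach submanifold of $\mathcal{H}\times\mathcal{B}$. Because $0$ is a regular value, at every $(h,b)\in Z$ the differential $DF(h,b)\colon T_h\mathcal{H}\oplus T_b\mathcal{B}\to\mathcal{E}$ is surjective with split kernel, so the Banach-space implicit function theorem gives $Z$ the structure of a $C^k$ submanifold whose tangent space at $(h,b)$ is exactly $\{(u,v) : D_1 F(h,b)u + D_2 F(h,b)v = 0\}$, where $D_1$ and $D_2$ denote the partial derivatives in the $\mathcal{H}$ and $\mathcal{B}$ factors.

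Next, consider the natural projection $\pi\colon Z\to\mathcal{B}$, $(h,b)\mapsto b$, and analyze its derivative. The kernel of $d\pi_{(h,b)}$ is canonically isomorphic to $\ker D_1 F(h,b)=\ker DF_b(h)$: a tangent vector $(u,v)$ to $Z$ maps to $0$ iff $v=0$, which forces $D_1 F(h,b)u=0$. For the cokernel, a vector $v\in T_b\mathcal{B}$ lies in $\mathrm{Im}\,d\pi$ precisely when there exists $u$ with $D_1 F(h,b)u=-D_2 F(h,b)v$, so modulo $\mathrm{Im}\,D_1 F(h,b)$ one obtains, using surjectivity of $DF(h,b)$, the natural isomorphism
\[
T_b\mathcal{B}\big/\mathrm{Im}\,d\pi_{(h,b)}\;\cong\;\mathcal{E}\big/\mathrm{Im}\,DF_b(h).
\]
Consequently $\pi$ is a $C^k$ Fredholm map of the same index as each $F_b$, and $b\in\mathcal{B}$ is a regular value of $\pi$ if and only if $b$ is a regular value of $F_b$.

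Finally I would invoke the Sard--Smale theorem: for a $C^k$ Fredholm map between separable Banach manifolds of index strictly less than $k$, the set of regular values is residual. Applied to $\pi$, this gives residuality of the set of $b$ that are regular values of $F_b$, completing the argument. The main technical obstacle is the Fredholm/cokernel identification in the middle step---a careful linear-algebra chase that is the only place where the hypothesis ``$0$ is a regular value of $F$'' is genuinely needed; separability of $\mathcal{H}$ and $\mathcal{E}$ enters only through the quoted Sard--Smale theorem, and the implicit function theorem step is standard.
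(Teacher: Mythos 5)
The paper does not actually supply a proof of Theorem \ref{th2.1}: it is quoted from the literature (\cite{a}, \cite{q}, \cite{s}) and Remark \ref{re1} merely notes that the argument is local and can be adapted with minor modifications to the domain $M_*^m\setminus\Gamma^m$. Your proposal reconstructs precisely the standard argument in those references --- form $Z=F^{-1}(0)$ as a $C^k$ submanifold via the implicit function theorem, identify $\ker d\pi$ with $\ker DF_b$ and $\mathrm{coker}\,d\pi$ with $\mathrm{coker}\,DF_b$ for the projection $\pi\colon Z\to\mathcal{B}$, conclude $\pi$ is Fredholm of the same index, and invoke Sard--Smale. The linear-algebra identifications in your middle step are correct, and the observation that regular values of $\pi$ coincide with parameters $b$ for which $0$ is a regular value of $F_b$ is exactly the bridge needed. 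So the content matches the route the paper is pointing to.

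One subtlety you gloss over, however, is the separability hypothesis. You write that ``separability of $\mathcal{H}$ and $\mathcal{E}$ enters only through the quoted Sard--Smale theorem,'' but Smale's theorem in \cite{s} requires the \emph{domain} $Z$ to be second countable, and $Z$ sits inside $\mathcal{H}\times\mathcal{B}$. The theorem as stated does not assume $\mathcal{B}$ is separable --- and indeed in the paper's application $\mathcal{B}=C^{2,\alpha}(M)\times C^{2,\alpha}(M)$, which is \emph{not} separable --- so one cannot simply assert that $Z$ is separable and apply Sard--Smale off the shelf. The cure, as in Quinn \cite{q} and as the paper's Remark \ref{re1} hints, is to exploit the finite index of $\pi$: locally near each point of $Z$ one reduces to a finite-dimensional slice of $\mathcal{B}$ and applies the classical Sard theorem there, using separability of $\mathcal{H}$ (hence of the first-factor data) to patch local residual sets into a global one. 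Your cokernel computation shows exactly which finite-dimensional piece of $T_b\mathcal{B}$ is relevant, so the ingredients are in place, but that localization step deserves to be made explicit rather than folded into a black-box citation of Sard--Smale on a possibly non-separable manifold.
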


We say $y\in \mathcal{E}$ is a regular value if every point $x\in F^{-1}(y)$ is a regular point, where
$x\in \mathcal{H}\times \mathcal{B}$ is a regular point of $F$ if $D_xF:T_x(\mathcal{H}\times \mathcal{B})\rightarrow T_{F(x)}\mathcal{E}$ is onto.
We say a set $A$ is a residual set if $A$ is a countable intersection of open dense sets, see \cite{a}, which implies $A$ is dense in $B$ ($B$ is a
Banach space), see \cite{k}.\\

Following the notations in Theorem \ref{th2.1}, we denote
$$\mathcal{H}=\big(M_*^m\setminus\Gamma^m\big)\times\mathring{W}^{2,\mathfrak{p}}(M),~\mathcal{B}=C^{2,\alpha}(M)\times C^{2,\alpha}(M),
~\mathcal{E}=(\mathbb{R}^2)^m\times\mathring{W}^{0,\mathfrak{p}}(M),$$
where
$$M_*=M\setminus S_1,~\Gamma^m:=\{(x_1,x_2,\cdots,x_m)\mid x_i\in M_*, x_i=x_j~\mathrm{for~some}~i=j\},$$
$$\mathring{W}^{2,\mathfrak{p}}(M):=\{f\in W^{2,\mathfrak{p}}\mid \int_Mf=0\},~
\mathring{W}^{0,\mathfrak{p}}(M):=\{f\in L^{\mathfrak{p}}\mid \int_Mf=0\},$$
and
$$C^{2,\alpha}(M)=\{f\in C^{2,\alpha}(M)\}.$$
\\

\begin{remark}
\label{re1}
Clearly, Theorem \ref{th2.1} is local in nature. Even though $M_*^m\setminus \Gamma_m$ is not a complete manifold, we can follow
the proof of the Transversality Theorem in \cite{q} with minor modification to get Theorem \ref{th2.1}, see \cite{q,s}.
\end{remark}

\noindent We consider the map
\begin{equation}
\label{2.37}
T(w,P_w,h_1^*,h_2^*)=\left[\begin{array}{l}
\Delta w+2\rho_2(\frac{h_2^*e^{w-4\pi\sum_{i=1}^mG(x,p_i^0)-F_0(x)}}
{\int_Mh_2^*e^{w-4\pi\sum_{i=1}^mG(x,p_i^0)-F_0(x)}}-1)\\
\nabla\log\big(h_1^*e^{-\frac12w}+4\pi R(x,x)\big)(p_1^0)+\nabla F_{1}(p_1^0)\\
~\quad\quad\quad\quad\quad\quad\quad\quad\vdots\\
\nabla\log\big(h_1^*e^{-\frac12w}+4\pi R(x,x)\big)(p_m^0)+\nabla F_{m}(p_m^0)
\end{array}\right].
\end{equation}
Clearly, $T$ is $C^1.$ Next, we claim
\begin{enumerate}
\item [(i)] $T(\cdot,\cdot,h_1^*,h_2^*)$ is a Fredholm map of index $0$,
\item [(ii)] $0$ is a regular value of $T.$
\end{enumerate}

\noindent For the first claim, after computation, we get
\begin{equation}
\label{2.38}
T'_{w,P_w}(w,P_w,h_1^*,h_2^*)[\phi,\nu_1,\cdots,\nu_m]=\left[\begin{array}{l}
T_0(w,P_w,h_1^*,h_2^*)[\phi,\nu_1,\cdots,\nu_m]\\
T_1(w,P_w,h_1^*,h_2^*)[\phi,\nu_1,\cdots,\nu_m]\\
~\quad\quad\quad\quad\quad\quad\vdots\\
T_m(w,P_w,h_1^*,h_2^*)[\phi,\nu_1,\cdots,\nu_m]
\end{array}\right],
\end{equation}
where
\begin{align*}
T_0(w,P_w,h_1^*,h_2^*)[\phi,\nu_1,\cdots,\nu_m]=&\Delta\phi+2\rho_2\frac{\hat{h}_2e^w}
{\int_M\hat{h}_2e^w}\phi-2\rho_2\frac{\hat{h}_2e^w}
{(\int_M\hat{h}_2e^w)^2}\int_M\hat{h}_2e^w\phi\\
&-8\pi\rho_2\sum_{i=1}^m\frac{\hat{h}_2e^w}
{\int_M\hat{h}_2e^w}\nabla G(x,p_i^0)\cdot\nu_i\\
&+8\pi\rho_2\sum_{i=1}^m\frac{\hat{h}_2e^w}
{(\int_M\hat{h}_2e^w)^2}\int_M\hat{h}_2e^w\nabla G(x,p_i^0)\cdot\nu_i,
\end{align*}
\begin{align*}
T_i(w,P_w,h_1^*,h_2^*)[\phi,\nu_1,\cdots,\nu_m]=&\nabla^2_x(\log h_1^*e^{-\frac12w}+4\pi R(x,x)+F_{i})\mid_{x=p_i^0}\cdot \nu_i\\
&+\mathcal{F}_i-\frac12\nabla\phi(p_i^0)~\mathrm{for}~i=1,2,\cdots,m,
\end{align*}
where
$$\hat{h}_2=h_2^*e^{-4\pi\sum_{i=1}^mG(x,p_i^0)-F_0(x)}$$
and
$$\mathcal{F}_i=8\pi\sum_{j=1,j\neq i}^m\nabla_x^2G(p_i^0,x)\mid_{x=p_j^0}\cdot\nu_j.$$

We decompose
\begin{align}
\label{2.39}
T'_{w,P_w}[\phi,\nu_1,\cdots,\nu_m]=\left[\begin{array}{l}
T_{01}\\
T_{11}\\
~\vdots\\
T_{m1}
\end{array}\right][\phi,\nu_1,\cdots,\nu_m]+
\left[\begin{array}{l}
T_{02}\\
T_{12}\\
~\vdots\\
T_{m2}
\end{array}\right][\phi,\nu_1,\cdots,\nu_m],
\end{align}
where
\begin{align*}
&T_{01}(w,P_w,h_1^*,h_2^*)[\phi,\nu_1,\cdots,\nu_m]=\Delta\phi+2\rho_2\frac{\hat{h}_2e^w}
{\int_M\hat{h}_2e^w}\phi-2\rho_2\frac{\hat{h}_2e^w}
{(\int_M\hat{h}_2e^w)^2}\int_M\hat{h}_2e^w\phi,\\
&T_{02}(w,P_w,h_1^*,h_2^*)[\phi,\nu_1,\cdots,\nu_m]=-8\pi\rho_2\sum_{i=1}^m\frac{\hat{h}_2e^w}
{\int_M\hat{h}_2e^w}\nabla G(x,p_i^0)\nu_i\\
&\quad\quad\quad\quad\quad\quad\quad\quad\quad\quad\quad\quad\quad\quad\quad
+8\pi\rho_2\sum_{i=1}^m\frac{\hat{h}_2e^w}
{(\int_M\hat{h}_2e^w)^2}\int_M\hat{h}_2e^w\nabla G(x,p_i^0)\nu_i,\\
&T_{i1}=0,\quad T_{i2}=T_i,~\mathrm{for}~i=1,2,\cdots,m.
\end{align*}
We define $\mathfrak{T}_1=\left[\begin{array}{l}
T_{01}\\
T_{11}\\
~\vdots\\
T_{m1}
\end{array}\right]$ and $\mathfrak{T}_2=\left[\begin{array}{ll}
T_{02}\\
T_{12}\\
~\vdots\\
T_{m2}
\end{array}\right].$ We can easily see that $\mathfrak{T}_1$ is symmetric, it follows from the basic theory of elliptic operators that $\mathfrak{T}_1$
is a Fredholm operator of index $0.$ Combining the Sobolev inequality and $(\mathbb{R}^2)^m$ is a finite Euclidean space, we can show that
$\mathfrak{T}_2$ is a compact operator. Therefore, by the standard linear operator theory \cite{k}, we get $\mathfrak{T}_1+\mathfrak{T}_2$ is
also a Fredholm linear operator with index $0$. Hence, we prove the first claim that $T$ is a Fredholm map with index $0.$

It remains to show that $0$ is a regular value. We derive the differentiation of the operator $T$ with respect to $h_1^*$ and $h_2^*,$
\begin{align*}
T_{h_1^*}'(w,P_w,h_1^*,h_2^*)[H_1]=
\left[\begin{array}{l}
\quad\quad\quad\quad\quad0\\
\frac{\nabla H_1}{h_1^*}(p_1^0)-\frac{\nabla h_1^*}{(h_1^*)^2}H_1(p_1^0)\\
\quad\quad\quad\quad\quad\vdots\\
\frac{\nabla H_1}{h_1^*}(p_m^0)-\frac{\nabla h_1^*}{(h_1^*)^2}H_1(p_m^0)\\
\end{array}\right],
\end{align*}
and
\begin{align*}
T_{h_2^*}'(w,P_w,h_1^*,h_2^*)[H_2]=
\left[\begin{array}{l}
2\rho_2\frac{\hat{h}_2e^w}{\int_M\hat{h}_2e^w}\frac{H_2}{h_2^*}-
2\rho_2\frac{\hat{h}_2e^w}{(\int_M\hat{h}_2e^w)^2}\int_M\hat{h}_2e^w\frac{H_2}{h_2^*}\\
\quad\quad\quad\quad\quad\quad\quad\quad\quad0\\
\quad\quad\quad\quad\quad\quad\quad\quad\quad\vdots\\
\quad\quad\quad\quad\quad\quad\quad\quad\quad0\\
\end{array}\right].
\end{align*}
By choosing $\nu_1=\nu_2=\cdots=\nu_m=0,$ and $H_1$ such that $\frac{\nabla H_1}{h_1^*}-\frac{\nabla h_1^*}{(h_1^*)^2}H_1=\frac12\nabla\phi$ at
$p_i^0,i=1,2,\cdots,m.$ We get
\begin{align*}
&T'_{w,P_w}(w,P_w,h_1^*,h_2^*)[\phi,\nu_1,\cdots,\nu_m]+T_{h_1}'(w,P_w,h_1^*,h_2^*)[H_1]\\
&=\left[\begin{array}{l}
\Delta\phi+2\rho_2\frac{\hat{h}_2e^w}{\int_M\hat{h}_2e^w}\phi-
2\rho_2\frac{\hat{h}_2e^w}{(\int_M\hat{h}_2e^w)^2}\int_M\hat{h}_2e^w\phi\\
\quad\quad\quad\quad\quad\quad\quad\quad\quad\quad0\\
\quad\quad\quad\quad\quad\quad\quad\quad\quad\quad\vdots\\
\quad\quad\quad\quad\quad\quad\quad\quad\quad\quad0\\
\end{array}\right].
\end{align*}
Next, we claim that the vector space spanned by $T'_{w,P_w}(w,P_w,h_1^*,h_2^*)[\phi,\nu_1,\cdots,\nu_m]$, $T_{h_1^*}'(w,P_w,h_1^*,h_2^*)[H_1]$ and
$T_{h_2^*}'(w,P_w,h_1^*,h_2^*)[H_2]$ contains $\left[\begin{array}{l}f\\0\\ \vdots\\0\end{array}\right]$ for all $f\in \mathring{W}^{0,p}.$ It is enough
for us to prove that only $\phi=0$ can satisfy
$$\phi\in\mathrm{Ker}\Big\{\Delta\cdot+2\rho_2\frac{\hat{h}_2e^w}{\int_M\hat{h}_2e^w}\cdot
-2\rho_2\frac{\hat{h}_2e^w}{(\int_M\hat{h}_2e^w)^2}\int_M\hat{h}_2e^w\cdot\Big\}$$
and
$$\Big\l\phi,~2\rho_2\frac{\hat{h}_2e^w}{\int_M\hat{h}_2e^w}\frac{H_2}{h_2^*}-2\rho_2\frac{\hat{h}_2e^w}
{(\int_M\hat{h}_2e^w)^2}\int_M\hat{h}_2e^w\frac{H_2}{h_2^*}\Big\r=0,$$
for all $H_2\in C^{2,\alpha}(M).$ We set
$$L=\Delta\cdot+2\rho_2\frac{\hat{h}_2e^w}{\int_M\hat{h}_2e^w}\cdot
-2\rho_2\frac{\hat{h}_2e^w}{(\int_M\hat{h}_2e^w)^2}\int_M\hat{h}_2e^w\cdot.$$
Using $\phi\in\mathrm{Ker}(L),$ we obtain that for any
$\overline{H}_2\in W^{0,p}(M),$
\begin{align}
\label{2.40}
\int_{M}\Big(\Delta\phi+2\rho_2\frac{\hat{h}_2e^w}{\int_M\hat{h}_2e^w}\phi
-2\rho_2\frac{\hat{h}_2e^w}{(\int_M\hat{h}_2e^w)^2}\int_M\hat{h}_2e^w\phi\Big)\cdot\overline{H}_2=0,
\end{align}
Since $C^{2,\alpha}(M)$ is dense in $W^{0,p}(M)$ and
$$\Big\l\phi,2\rho_2\frac{\hat{h}_2e^w}{\int_M\hat{h}_2e^w}\overline{H}_2-2\rho_2\frac{\hat{h}_2e^w}
{(\int_M\hat{h}_2e^w)^2}\int_M\hat{h}_2e^w\overline{H}_2\Big\r=0,$$
we deduce
\begin{equation}
\label{2.41}
\int_M\Delta\phi\cdot\overline{H}_2=0,\quad \forall~\overline{H}_2\in W^{0,p}(M).
\end{equation}
Thus
\begin{equation}
\label{2.42}
\Delta\phi=0~\mathrm{in}~M,\quad\int_M\phi=0.
\end{equation}
So $\phi\equiv0.$ Therefore the claim is proved.
\vspace{0.25cm}

On the other hand, we choose two functions, $H_{1,1}$ and $H_{1,2}$ such that
$$H_{1,i}(p_j^0)=0,\nabla H_{1,i}(p_j^0)=0,~2\leq j\leq m,~i=1,2.$$
Based on this choice, we can further make such $H_{1,i},i=1,2$ that
$$\frac{\nabla H_{1,1}}{h_1^*}(p_1^0)-\frac{\nabla h_1^*}{(h_1^*)^2}H_{1,1}(p_1^0)=(1,0),$$
and
$$\frac{\nabla H_{1,2}}{h_1^*}(p_1^0)-\frac{\nabla h_1^*}{(h_1^*)^2}H_{1,2}(p_1^0)=(0,1).$$
Then it is not difficult to see that (By setting $\phi=0,\nu_1=\nu_2=\cdots=\nu_m=0$)
$$\left[\begin{array}{l}0\\c\\0\\ \vdots\\0\end{array}\right]\subset DT(w,P_w,h_1^*,h_2^*)[\phi,\nu_1,\cdots,\nu_m,H_1,H_2]$$
for all $c\in \mathbb{R}^2.$ Similarly, we can show
$$\left[\begin{array}{l}0\\c_1\\c_2\\\vdots\\c_m\end{array}\right]\subset DT(w,P_w,h_1^*,h_2^*)[\phi,\nu_1,\cdots,\nu_m,H_1,H_2]$$
for all $c_i\in \mathbb{R}^2,~i=1,2,\cdots,m.$ Therefore, we proved that the differential map is onto. As a consequence, $0$ is a regular point of $T.$
By Theorem \ref{th2.1}, we have
$$\Big\{(h_1^*,h_2^*)\in \mathcal{B}:0~\mathrm{is~a~regular~value~of}~T(\cdot,\cdot,h_1^*,h_2^*)\Big\}$$
is residual in $\mathcal{B}.$ Since $T(w,P_w,h_1^*,h_2^*)$ is a Fredholm map of index $0$ for fixed $h_1^*,h_2^*,$ we have
$$\Big\{(h_1^*,h_2^*)\in \mathcal{B}:~\mathrm{the~solution}~(w,P_w)~\mathrm{of}~T(\cdot,\cdot,h_1^*,h_2^*)=0~\mathrm{is~nondegenerate}\Big\}$$
is residual in $\mathcal{B}.$ Thus, we can choose $h_1^*,h_2^*>0$ such that the solution of (\ref{1.12}) is non-degenerate.

\vspace{1cm}
\section{Apriori estimate}
In this section, we shall prove that all the blow up solutions of (\ref{1.7}) must be contained in the set $S_{\rho_1}(p,w)\times S_{\rho_2}(p,w)$ when $\rho_1\rightarrow4\pi,$ $\rho_2\notin4\pi\mathbb{N}$, where the definition of $S_{\rho_i}(p,w),i=1,2$ is given in (\ref{3.14}) and
(\ref{3.15}) of this section.\\

To simplify our description, we may assume $M$ has a flat metric near a neighborhood of each blow up point. Of course we can modify our arguments without any difficulty for the general case, as in \cite{cl2}.\\

We start to define the set $S_{\rho_i}(p,w)$. For any given non-degenerate solution $(p,w)$ of (\ref{1.11}), we set
\begin{equation}
\label{3.1}
h=h_1e^{-\frac12w}.
\end{equation}
Note that
\begin{equation}
\label{3.2}
\nabla_x(\log h+4\pi R(x,x))\mid_{x=p}=\nabla_x\big(\log h(x)+8\pi R(x,p)\big)\mid_{x=p}=0,
\end{equation}
whenever $(p,w)$ is a solution of shadow system (\ref{1.11}). For $q$ such that $|q-p|\ll1$ and large $\lambda>0,$ we set
\begin{align}
\label{3.3}
U(x)=\lambda-2\log\big(1+\frac{\rho_1h(q)}{4}e^{\lambda}|x-q|^{2}\big),
\end{align}
and $U(x)$ satisfies the following equation
\begin{align}
\label{3.4}
\Delta U(x)+2\rho_1h(q)e^{U}=0~\mathrm{in}~\mathbb{R}^2,\quad U(q)=\max_{\mathbb{R}^2}U(x)=\lambda.
\end{align}
Let
\begin{align}
\label{3.5}
H(x)=\exp\Big\{\log\frac{h(x)}{h(q)}+8\pi R(x,q)-8\pi R(q,q)\Big\}-1,
\end{align}
and
\begin{equation}
\label{3.6}
s=\lambda+2\log\Big(\frac{\rho_1h(q)}{4}\Big)+8\pi R(q,q)+\frac{\Delta H(q)}{\rho_1h(q)}\frac{\lambda^2}{e^{\lambda}}.
\end{equation}

Let $\sigma_0(t)$ be a cut-off function:
\begin{equation*}
\sigma_0(t)=\left\{
\begin{array}{ll}
1,~&\mathrm{if}~|t|<r_0,\\
0,~&\mathrm{if}~|t|\geq 2r_0.
\end{array}\right.
\end{equation*}
Set $\sigma(x)=\sigma_0(|x-q|)$ and
\begin{align*}
J(x)=\left\{\begin{array}{ll}
\big(H(x)-\nabla H(p)\cdot(x-p)\big)\sigma,~&x\in B_{2r_0}(q),\\
0,&x\notin B_{2r_0}(q).
\end{array}\right.
\end{align*}
Let $\eta(x)$ satisfy
\begin{equation}
\label{3.7}
\left\{\begin{array}{l}
\Delta\eta+2\rho_1 h(q)e^{U}(\eta+J(x))=0\quad\mathrm{on}~\mathbb{R}^2,\\
\eta(q)=0,\nabla\eta(q)=0.
\end{array}\right.
\end{equation}
The existence of $\eta$ was proved in \cite{cl2}. Furthermore, we have the following lemma
\begin{lemma}
\label{le3.1}
Let $R=\sqrt{\frac{\rho_1h(q)}{4}e^{\lambda}}.$ For $h\in C^{2,\alpha}(M)$ and large $\lambda.$ there exists a solution $\eta$ satisfying (\ref{3.7}) and
the following
\begin{itemize}
  \item [(i)] $\eta(x)=-\frac{4\Delta H(q)}{\rho_1h(q)}e^{-\lambda}[\log(R|x-q|+2)]^2+O(\lambda e^{-\lambda})~\mathrm{on}~B_{2r_0}(q)$,
  \item [(ii)] $\eta,\nabla_x\eta,\partial_q\eta,\partial_{\lambda}\eta,\nabla_x\partial_q\eta,\nabla_x\partial_{\lambda}\eta=
              O(\lambda^2e^{-\lambda})~\mathrm{on}~B_{2r_0}(q)$.
\end{itemize}
\end{lemma}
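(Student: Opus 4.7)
The strategy is to treat (\ref{3.7}) as the inhomogeneous linear problem $L\eta = -2\rho_1 h(q)e^U J$ on all of $\mathbb{R}^2$, where $L := \Delta + 2\rho_1 h(q)e^U$ is the linearization of (\ref{3.4}) around the standard bubble $U$. After translating $q$ to the origin and rescaling by $y = R(x-q)$, the bounded kernel of $L$ on $\mathbb{R}^2$ is three-dimensional and spanned by the dilation mode $\phi_0(y) = (1-|y|^2)/(1+|y|^2)$ and the two translation modes $\phi_i(y) = y_i/(1+|y|^2)$, $i=1,2$, coming from the scaling and translation invariance of (\ref{3.4}). In suitable weighted H\"older spaces (as in \cite{cl1,cl2}), $L$ is Fredholm of index zero, so $L\eta = f$ admits a decaying solution exactly when $f$ is orthogonal to $\phi_0,\phi_1,\phi_2$ against the natural $L^2$-pairing. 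I would build $\eta$ by inverting $L$ modulo this three-dimensional kernel, then restore orthogonality by additive adjustments from the kernel, and finally use the remaining freedom to enforce $\eta(q) = 0$ and $\nabla\eta(q) = 0$.

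\textbf{Reduction of the forcing and leading asymptotic.} Expand $H(x) = \nabla H(q)\cdot(x-q) + \tfrac12 D^2H(q)\bigl((x-q)^{\otimes 2}\bigr) + O(|x-q|^3)$, using $H(q) = 0$. The shadow-system relation (\ref{3.2}) yields $\nabla H(p) = 0$ at $q = p$; hence subtracting $\nabla H(p)\cdot(x-p)$ in the definition of $J$ cancels the linear part of $H$ at $q$ up to an $O(|q-p|)$ error, restoring orthogonality of the forcing against $\phi_1,\phi_2$ modulo lower order. The traceless part of $D^2 H(q)$ is automatically orthogonal to all three kernel modes against the radial weight $e^U$, while the trace part $\tfrac12\Delta H(q)|x-q|^2$ is orthogonal to $\phi_1,\phi_2$ by parity. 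To leading order the equation therefore becomes $L\eta \approx -\rho_1 h(q)\Delta H(q)\, e^U|x-q|^2$, and one-dimensional Green's-function integration on each radial mode produces the profile
$$\eta(x) \sim -\frac{4\Delta H(q)}{\rho_1 h(q)}\, e^{-\lambda}\bigl[\log(R|x-q|+2)\bigr]^2$$
on $B_{2r_0}(q)$, with a remainder of order $O(\lambda e^{-\lambda})$. This gives part (i).

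\textbf{Normalization, derivative estimates, and main obstacle.} The orthogonality against $\phi_0$ and the $O(|q-p|)$ residual against $\phi_1,\phi_2$ are cleared by adding an explicit linear combination of these kernel modes; the coefficients are themselves $O(\lambda e^{-\lambda})$ and so are absorbed into the error term. The normalization $\eta(q) = \nabla\eta(q) = 0$ then determines the remaining three-parameter freedom through the invertible $3\times 3$ system formed by the values and first derivatives of $\phi_0,\phi_1,\phi_2$ at $y = 0$. For part (ii), differentiate (\ref{3.7}) in $x$, in the base point $q$, and in $\lambda$; derivatives of $U$ in $q$ and $\lambda$ contribute at most a bounded multiple of $e^U$ times a polynomial in $\lambda$, and $J$ is smooth and uniformly bounded, so the resulting linear equations admit solutions by the same scheme, yielding the uniform pointwise bound $O(\lambda^2 e^{-\lambda})$ on $B_{2r_0}(q)$. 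The main obstacle is the careful bookkeeping of the solvability conditions: one must check that using $\nabla H(p)$ rather than $\nabla H(q)$ in $J$ contributes only an $O(|q-p|\lambda e^{-\lambda})$ correction, and one must execute the explicit radial integration that pins down the precise constant $-4/(\rho_1 h(q))$ in front of $[\log(R|x-q|+2)]^2$; together these make the expansion sharp enough for the later error analysis in (\ref{1.11})--(\ref{1.15}).
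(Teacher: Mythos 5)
The paper offers no proof of this lemma; it simply refers the reader to \cite{cl2}, and your plan --- linearize (\ref{3.4}) around the bubble $U$, reduce (\ref{3.7}) to $L\eta=-2\rho_1 h(q)e^U J$ with $L=\Delta+2\rho_1 h(q)e^U$, decompose in Fourier modes, integrate the radial ODE explicitly, and fix the normalization $\eta(q)=\nabla\eta(q)=0$ with the three-dimensional kernel --- is indeed the strategy used there. So in spirit your proposal matches the cited reference.

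There is, however, a conceptual wrinkle in your opening framing that is worth flagging. You write that in weighted spaces ``$L\eta=f$ admits a decaying solution exactly when $f$ is orthogonal to $\phi_0,\phi_1,\phi_2$,'' and then that ``the orthogonality against $\phi_0$ \dots\ are cleared by adding an explicit linear combination of these kernel modes.'' Adding kernel modes to $\eta$ leaves $L\eta$ unchanged, so it cannot restore a solvability condition on the forcing. And in fact the radial (mode-$0$) forcing coming from the trace part of $D^2H(q)$, i.e.\ $-2\rho_1 h(q)e^U\cdot\tfrac14\Delta H(q)|x-q|^2=-\tfrac12\rho_1 h(q)\Delta H(q)e^U|x-q|^2$ (note the factor $\tfrac12$, not your stated coefficient), is \emph{not} orthogonal to $\phi_0$: the pairing against $e^U$ is logarithmically divergent. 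That failure of orthogonality is not an obstruction to be cleared --- it is exactly what produces the $[\log(R|x-q|+2)]^2$ growth of $\eta$ in part (i). The correct mechanism, which you do invoke in your next sentence, is the explicit variation-of-parameters integration in the radial mode, with the two homogeneous solutions $\phi_0$ and its $\log$-growing companion $\tilde\phi_0$; the two variation-of-parameters terms each contribute a $[\log(Rr)]^2$ piece, and their sum gives the coefficient $-4\Delta H(q)/(\rho_1 h(q))\,e^{-\lambda}$ of part (i). Once you replace the ``restore orthogonality by adding kernel modes'' step with this explicit radial integration (kernel modes are used only for the normalization $\eta(q)=\nabla\eta(q)=0$, via the invertible system $\phi_0(0)=1$, $\nabla\phi_i(0)=\delta_{ij}$), the rest of your outline --- the parity arguments for modes $1$ and $2$, the $O(|q-p|)$ residual in $\nabla H$, and differentiating the equation in $x,q,\lambda$ for part (ii) --- is sound and consistent with \cite{cl2}.
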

\noindent The proof of Lemma \ref{le3.1} was given in \cite{cl2}. \\

We set
\begin{align}
\label{3.8}
\left\{\begin{array}{ll}
v_q(x)=\Big(U(x)+\eta(x)+8\pi(R(x,q)-R(q,q))+s\Big)\sigma(x)\\
\quad\quad\quad\quad+8\pi G(x,q)(1-\sigma(x)),\\
\overline{v}_q=\frac{1}{|M|}\int_Mv_q,\\
v_{q,\lambda,a}=a(v_q-\overline{v}_q).
\end{array}\right.
\end{align}

Note that $v_q(x)$ depends on $q$ and $\lambda.$ Next, we define $O^{(1)}_{q,\lambda}$
and $O^{(2)}_{q,\lambda}$:
\begin{align}
\label{3.9}
O^{(1)}_{q,\lambda}=&\Big\{\phi\in\mathring{H}^1(M)~\Big{|}~\int_M\nabla\phi\cdot\nabla v_{q}=\int_M\nabla\phi\cdot\nabla\partial_{q}v_{q}=
\int_M\nabla\phi\cdot\nabla\partial_{\lambda}v_{q}=0\Big\},
\end{align}
and
\begin{equation}
\label{3.10}
O^{(2)}_{q,\lambda}=\Big\{\psi\in W^{2,\mathfrak{p}}(M)~\Big{|}~\int\psi=0\Big\},\quad \mathfrak{p}>2.
\end{equation}
For each $(q,\lambda)$, we define
\begin{equation}
\label{3.11}
t=\lambda+8\pi R(q,q)+2\log \frac{\rho_1h(q)}{4}+\frac{\Delta H(q)}{\rho_1 h(q)}\lambda^2e^{-\lambda}-\overline{v}_q.
\end{equation}
For $\rho_1\neq 4\pi$, we define $\lambda(\rho_1)$ such that
\begin{align}
\label{3.12}
\rho_1-4\pi=\frac{\Delta\log h(p)+8\pi-2K(p)}{h(p)}\lambda(\rho_1)e^{-\lambda(\rho_1)},
\end{align}
where $(p,w)$ is the non-degenerate solution of (\ref{1.11}) and $K(p)$ denotes the Gaussian curvature of $p$. By using the equation (\ref{1.10}), we have $e^{-4\pi G(x,p)}\mid_{x=p}=0$ and $\Delta w(p)=2\rho_2.$ Thus
\begin{align}
\label{3.13}
\Delta\log h(p)+8\pi-2K(p)=&\Delta\log h_1(p)-\rho_2+8\pi-2K(p).
\end{align}
Obviously, $\lambda(\rho_1)$ can be well-defined only if
$$\Delta\log h_1(p)-\rho_2+8\pi-2K(p)\neq0.$$

Let $c$ be a positive constant, which will be chosen later. By using $\rho_1$, we set
\begin{align}
\label{3.14}
S_{\rho_1}(p,w)=\Big\{&v_1=\frac12v_{q,\lambda,a}+\phi~\Big{|}~|q-p|\leq c\lambda(\rho_1)e^{-\lambda(\rho_1)},
|\lambda-\lambda(\rho_1)|\leq c\lambda(\rho_1)^{-1},\nonumber\\
&|a-1|\leq c\lambda(\rho_1)^{-\frac12}e^{-\lambda(\rho_1)},\phi\in O_{q,\lambda}^{(1)}
~\mathrm{and}~\|\phi\|_{H^1(M)}\leq c\lambda(\rho_1)e^{-\lambda(\rho_1)}\Big\},
\end{align}
and
\begin{equation}
\label{3.15}
S_{\rho_2}(p,w)=\Big\{v_2=\frac12w+\psi~\Big{|}~\psi\in O_{q,\lambda}^{(2)}~\mathrm{and}~\|\psi\|_*\leq c\lambda(\rho_1)e^{-\lambda(\rho_1)}\Big\},
\end{equation}
where $\|\psi\|_*=\|\psi\|_{W^{2,\mathfrak{p}}(M)}.$\\

Now suppose $(v_{1k},v_{2k})$ is a sequence of bubbling solutions of (\ref{1.7}) such that $v_{1k}$ blows up at $p$ and weakly converges to
$4\pi G(x,p)$, while $v_{2k}\rightarrow\frac12w$ in $C^{2,\alpha}(M)$. Then we want to prove that
$$(v_{1k},v_{2k})\in S_{\rho_1}(p,w)\times S_{\rho_2}(p,w).$$
First of all, we prove the following lemma.

\begin{lemma}
\label{le4.1}
Let $(v_{1k},v_{2k})$ be a sequence of blow up solutions of (\ref{1.7}), which $v_{1k}$ blows up at $p$, weakly converges to $4\pi G(x,p)$ and
$v_{2k}\rightarrow\frac12w$ in $C^{2,\alpha}(M).$ Suppose $(p,w)$ is a non-degenerate solution of (\ref{1.11}) and
\begin{align}
\label{3.16}
\Delta\log h_1(p)-\rho_2+8\pi-2K(p)\neq 0.
\end{align}
Then there exist $c>0,$ $q_k$, $\lambda_{k},$ $a_k,$ $\phi_k,$ $\psi_k$ such that
\begin{equation}
\label{3.17}
v_{1k}=\frac12 v_{q_k,\lambda_k,a_k}+\phi_k,~\quad v_{2k}=\frac12w+\psi_k,
\end{equation}
and
$(v_{1k},v_{2k})\in S_{\rho_1}(p,w)\times S_{\rho_2}(p,w)$.
\end{lemma}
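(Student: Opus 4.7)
The plan is to establish (3.17) by an implicit function theorem argument built on the sharp blow-up profile supplied by Theorem 1.2 and Lemma 2.4. I split the task into three stages: (a) choose the parameters $(q_k,\lambda_k,a_k)$ enforcing the three orthogonality conditions that define $O^{(1)}_{q_k,\lambda_k}$; (b) bound $\psi_k:=v_{2k}-\tfrac12 w$ in $W^{2,\mathfrak p}$; (c) verify the four quantitative rates placing $(v_{1k},v_{2k})$ in $S_{\rho_1}(p,w)\times S_{\rho_2}(p,w)$.

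For stage (a), given a candidate $(q,\lambda,a)\in M\times\mathbb{R}\times\mathbb{R}$ near $(p,\lambda(\rho_1),1)$, consider the map
\[
\mathcal{J}_k(q,\lambda,a):=\Bigl(\int_M\nabla\phi\cdot\nabla v_{q,\lambda},\ \int_M\nabla\phi\cdot\nabla\partial_q v_{q,\lambda},\ \int_M\nabla\phi\cdot\nabla\partial_\lambda v_{q,\lambda}\Bigr),
\]
with $\phi=v_{1k}-\tfrac12 v_{q,\lambda,a}$; this vanishes precisely when $\phi\in O^{(1)}_{q,\lambda}$. After rescaling at $q$, the Jacobian of $\mathcal{J}_k$ at $(p,\lambda(\rho_1),1)$ is, to leading order in $\lambda(\rho_1)^{-1}$, the classical non-singular Gram matrix associated with the kernel of the bubble on $\mathbb{R}^2$. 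The sharp estimate (2.29) combined with Lemma 3.1 shows that $\mathcal{J}_k(p,\lambda(\rho_1),1)=o(1)$, so the implicit function theorem yields unique $(q_k,\lambda_k,a_k)$ near $(p,\lambda(\rho_1),1)$ with $\mathcal{J}_k(q_k,\lambda_k,a_k)=0$.

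For stage (b), the difference $\psi_k$ satisfies
\[
\Delta\psi_k=-\rho_{2k}\Bigl(\frac{h_2 e^{2v_{2k}-v_{1k}}}{\int_M h_2 e^{2v_{2k}-v_{1k}}}-1\Bigr)+\rho_2\Bigl(\frac{h_2 e^{w-4\pi G(\cdot,p)}}{\int_M h_2 e^{w-4\pi G(\cdot,p)}}-1\Bigr).
\]
Replacing $v_{1k}$ by its bubble profile via (2.29) and using Lemma 3.1(i) to estimate the refined error, the right-hand side is bounded in $L^{\mathfrak p}(M)$ by $O(\lambda(\rho_1)e^{-\lambda(\rho_1)})$, and standard elliptic $W^{2,\mathfrak p}$-regularity delivers the claimed bound on $\|\psi_k\|_*$.

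The main obstacle is stage (c), in particular the parameter rate $|\lambda_k-\lambda(\rho_1)|\le c\lambda(\rho_1)^{-1}$. This rate encodes the implicit relation (3.12) between $\rho_{1k}-4\pi$ and $\lambda$; to extract it, one integrates the first equation of (1.7) over $M$ (reading off $\rho_{1k}$), expands $\int_M h_1 e^{2v_{1k}-v_{2k}}$ around the ansatz via (3.8) and (3.11) using Lemma 3.1, and compares with the defining relation (3.12) for $\lambda(\rho_1)$. The non-vanishing hypothesis (3.16) is exactly the invertibility of the leading coefficient of this implicit relation at the required rate. The bounds $|q_k-p|\le c\lambda(\rho_1)e^{-\lambda(\rho_1)}$ and $|a_k-1|\le c\lambda(\rho_1)^{-1/2}e^{-\lambda(\rho_1)}$ come analogously from testing the first equation of (1.7) against $\nabla v_{q_k,\lambda_k}$ and $\partial_\lambda v_{q_k,\lambda_k}$; here the non-degeneracy of $(p,w)$ for (1.11) provides the invertibility of the linearized operator along the $q$-direction. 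The $H^1$-bound on $\phi_k$ then follows from subtracting the PDEs satisfied by $v_{1k}$ and by the ansatz, exploiting Lemma 3.1 together with the parameter rates already established and the orthogonality conditions to close the estimate.
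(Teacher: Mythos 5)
Your overall architecture (fix a bubble ansatz, estimate the discrepancy $\psi_k$, then read off parameter rates) matches the paper's, and you correctly identify that hypothesis (\ref{3.16}) is what makes the relation (\ref{3.12}) between $\rho_{1k}-4\pi$ and $\lambda$ invertible. But stage (b) has a genuine gap that propagates through the rest.

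You claim that after replacing $v_{1k}$ by its bubble profile, the right-hand side of the equation for $\psi_k$ is bounded in $L^{\mathfrak p}$ by $O(\lambda(\rho_1)e^{-\lambda(\rho_1)})$, so elliptic regularity gives $\|\psi_k\|_*\le c\lambda(\rho_1)e^{-\lambda(\rho_1)}$. This cannot work as stated. The quantity
\[
\rho_2\Bigl(\tfrac{h_2e^{w-4\pi G(\cdot,p)}}{\int_M h_2e^{w-4\pi G(\cdot,p)}}-1\Bigr)-\rho_{2k}\Bigl(\tfrac{h_2e^{2v_{2k}-v_{1k}}}{\int_M h_2e^{2v_{2k}-v_{1k}}}-1\Bigr)
\]
is \emph{not} a priori $O(\lambda e^{-\lambda})$: to leading order it is $\mathcal L\psi_k$ plus terms linear in $q_k-p$, where $\mathcal L$ is the linearization of (\ref{1.10}) at $w$, and these are exactly of the size one is trying to bound. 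So the argument is circular — you would only recover $\psi_k=o(1)$, which is already given, not the refined rate. The paper's proof (3.33)–(3.37) instead \emph{isolates} the linear operator $\mathcal L$ acting on $\psi_k$ and the linear term in $q_k-p$, expresses the genuine remainder (the $\mathbb I_1,\mathbb I_2,\mathbb I_3$ terms) as $O(\lambda_ke^{-\lambda_k})+o(1)\|\psi_k\|_*+O(|q_k-p|^2)$, and then inverts the full \emph{coupled} linearized shadow operator (\ref{1.13}) — whose non-degeneracy is the hypothesis on $(p,w)$ — to extract $\|\psi_k\|_*$ and $|q_k-p|$ simultaneously. You cannot get $|q_k-p|$ from testing the first equation alone, nor $\|\psi_k\|_*$ from elliptic regularity alone: the two are coupled through (\ref{1.13}), and it is the non-degeneracy of that $2\times 2$ block system, together with $\nabla H(p)=0$ and $|\nabla H_k(q_k)|=O(\lambda_ke^{-\lambda_k})$, that closes the estimate. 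Your stage (c) hints at this (``non-degeneracy provides invertibility along the $q$-direction'') but treats the two invertibilities as separate, which is not how the estimate closes. Once the coupled inversion is recognized, your stage (a) implicit-function choice of $(q_k,\lambda_k,a_k)$ is fine and corresponds to the paper's final invocation of Chen--Lin's Lemma 3.2.
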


\begin{remark}
\label{re2}
Because the proof of this lemma is very long, we describe the process briefly. First of all, we have to obtain a good approximation of $v_{1k}$. Since $v_{2k}$ converges to $\frac12w$ in $C^{2,\alpha}(M),$ this fine estimate can be obtained by the same proof in Lemma \cite{cl1}. Next, we substitute $v_{1k}$ into the second equation of $v_{2k}.$ Then we use the non-degeneracy of (\ref{1.11}) to get the sharp estimates of $\psi_k$ and $|\tilde{q}_k-p|,$ where $\psi_k=v_{2k}-\frac12w$ and $\tilde{q}_k$ is the point where $v_{1k}$ obtains its maximal value. After that, we get the lemma. In the following proof, we use the same notation as the proof of Theorem \ref{th1.2}.
\end{remark}

\begin{proof} Let $v_{1k}$ and $v_{2k}$ be a sequence of blow up solutions of (\ref{1.7}).
\begin{equation}
\label{3.18}
\left\{\begin{array}{l}
\Delta v_{1k}+\rho_{1k}(\frac{h_1e^{2v_{1k}-v_{2k}}}{\int_{M}h_1e^{2v_{1k}-v_{2k}}}-1)=0,\\
\Delta v_{2k}+\rho_{2k}(\frac{h_2e^{2v_{2k}-v_{1k}}}{\int_{M}h_2e^{2v_{2k}-v_{1k}}}-1)=0.
\end{array}\right.
\end{equation}
For convenience, we write the first equation in (\ref{3.18}) as,
\begin{equation}
\label{3.19}
\Delta v_{1k}+\rho_{1k}(\frac{\tilde{h}_ke^{2v_{1k}}}{\int_{\Omega}\tilde{h}_ke^{2v_{1k}}}-1)=0,
\end{equation}
where
\begin{equation}
\label{3.20}
\tilde{h}_k=h_1e^{-v_{2k}}=he^{-\psi_k}~\mathrm{and}~\psi_k=v_{2k}-\frac12w.
\end{equation}
Since $\tilde{h}_k\rightarrow h$ in $C^{2,\alpha}(M)$, all the estimates in \cite{cl1} can be applied to our case here, although in \cite{cl1} the coefficient $\tilde{h}_k$ is independent of $k$. In the followings (up to (\ref{3.28}) below), we sketch the estimates in \cite{cl1, cl2} which will be used here.
We denote $\tilde{q}_k$ to be the maximal point of $\tilde{v}_{1k}$ near $p$, where
$\tilde{v}_{1k}=v_{1k}-\frac12\log\int_M\tilde{h}_ke^{2v_{1k}}.$ Let
\begin{align*}
\lambda_{k}=2\tilde{v}_{1k}(\tilde{q}_{k})-\log\int_M\tilde{h}_ke^{2v_{1k}}.
\end{align*}
In the local coordinate near $\tilde{q}_{k},$ we set
\begin{equation*}
\tilde{U}_{k}(x)=\log\frac{e^{\lambda_k}}{(1+\frac{\rho_{1k}\tilde{h}_k(q_{k})}{4}e^{\lambda_{k}}|x-q_{k}|^2)^2}.
\end{equation*}
where $q_k$ is chosen such that
$$\nabla\tilde{U}_k(\tilde{q}_k)=\nabla\log h(\tilde{q}_k),$$
clearly $|q_k-\tilde{q}_k|=O(e^{-\lambda_k}).$
Then the error term inside $B_{r_0}(q_k)$ is set by
\begin{equation}
\label{3.21}
\tilde{\eta}_{k}(x)=2\tilde{v}_{1k}-\tilde{U}_{k}(y)-(8\pi R(x,q_k)-8\pi R(q_k,q_{k})),
\end{equation}
and the error term outside $B_{r_0}(q_k)$ is set by
\begin{equation}
\label{3.22}
\xi_k(x)=2v_{1k}(x)-8\pi G(x,q_{k}).
\end{equation}
By Green's representation for $v_{1k}$, it is not difficult to obtain
\begin{equation}
\label{3.23}
\xi_k(x)=O(\lambda_ke^{-\lambda_k})~\mathrm{for}~x\in M\setminus B_{r_0}(q_k).
\end{equation}
By a straightforward computation, the error term $\tilde{\eta}_k$ satisfies
\begin{equation}
\label{3.24}
\Delta\tilde{\eta}_{k}+2\rho_{1k}\tilde{h}_k(q_{k})e^{\tilde{U}_{k}}\tilde{H}_{k}(x,\tilde{\eta}_{k})=0,
\end{equation}
where
\begin{align*}
\tilde{H}_{k}(x,t)=&\exp\{\log\frac{\tilde{h}_k(x)}{\tilde{h}_k(q_{k})}+8\pi\big(R(x,q_k)-R(q_k,q_{k})\big)+t\}-1\\
=&H_{k}(x)+t+O(|t|^2),
\end{align*}
and
\begin{align*}
H_k(x)=\exp\Big\{\log \frac{\tilde{h}_k(x)}{\tilde{h}_k(q_k)}+8\pi R(x,q_k)-8\pi R(q_k,q_k)\Big\}-1.
\end{align*}
We see that except for the higher-order term $O(|\tilde{\eta}_{k}|^2)$, equation (\ref{3.24}) is exactly like (\ref{3.7}). By Lemma \ref{le3.1}, we can prove
\begin{equation}
\label{3.25}
\tilde{\eta}_{k}(x)=-\frac{4}{\rho_{1k}\tilde{h}_k(q_{k})}\Delta\log H_k(q_{k})e^{-\lambda_{k}}[\log(R_{k}|x-q_{k}|+2)]^2
+O(\lambda_{k}e^{-\lambda_{k}})
\end{equation}
for $x\in B_{2r_0}(q_{k}),$ where $R_k=\sqrt{\frac{\rho_{1k}\tilde{h}_k(q_{k})}{4}e^{\lambda_{k}}}.$

From \cite[Theorem 1.1, Theorem 1.4 and Lemma 5.4]{cl1}, we have
\begin{equation}
\label{3.26}
\rho_{1k}-4\pi=\frac{\Delta\log\tilde{h}_k(q_k)+8\pi-2K(q_k)}{\tilde{h}_k(q_{k})}\lambda_{k}e^{-\lambda_{k}}+O(e^{-\lambda_{k}}),
\end{equation}
\begin{align}
\label{3.27}
2\overline{\tilde{v}}_{1k}+\lambda_{k}+2\log\frac{\rho_{1k}\tilde{h}_k(q_{k})}{4}+8\pi R(q_k,q_{k})+
\frac{\Delta H_k(q_{k})}{\rho_{1k}\tilde{h}_k(q_{k})}\lambda_k^2e^{-\lambda_{k}}=O(\lambda_{k}e^{-\lambda_{k}}),
\end{align}
and
\begin{equation}
\label{3.28}
|\nabla H_{k}(q_{k})|=O(\lambda_{k}e^{-\lambda_{k}}).
\end{equation}

Now we let $\eta_{k}$ be defined as in (\ref{3.7}), $v_{q_k}$ and $v_{q_k,\lambda_k,a_k}$ be defined as in (\ref{3.8}) with $q=q_k$, $\lambda=\lambda_k$ and $a=a_k=1$. By
Lemma \ref{le3.1}, (\ref{3.25}) and (\ref{3.28}), we have
\begin{align}
\label{3.29}
\eta_{k}(x)=\tilde{\eta}_{k}+O(\lambda_{k}e^{-\lambda_{k}})~\mathrm{for}~x\in B_{2r_0}(q_k).
\end{align}
Note that for $x\in B_{r_0}(q_k),$
\begin{align*}
v_{q_k,\lambda_k,a_k}=&\tilde{U}_{k}(x)+\eta_{k}(x)+\big(8\pi R(x,q_k)-8\pi R(q_k,q_{k})\big)+\lambda_{k}+2\log\frac{\rho_{1k}\tilde{h}_k(q_{k})}{4}\\
&+8\pi R(q_k,q_{k})+\frac{\Delta H_k(q_{k})}{\rho_{1k}\tilde{h}_k(q_{k})}\lambda_{k}^2e^{-\lambda_{k}}-\overline{v}_{q_k},
\end{align*}
where $\overline{v}_{q_k}$ denotes the average of $v_{q_k}$. From \cite[Lemma 2.2 and Lemma 2.3]{cl2}, we have
\begin{align}
\label{3.30}
v_{q_k}-4\pi G(x,q_k)=O(\lambda_ke^{-\lambda_k})~\mathrm{in}~M\setminus B_{2r_0}(q_k),~\mathrm{and}~\overline{v}_{q_k}=O(\lambda_ke^{-\lambda_k}).
\end{align}
By (\ref{3.21}), (\ref{3.27}), (\ref{3.29}) and (\ref{3.30}), we have
\begin{align}
\label{3.31}
2v_{1k}-v_{q_k,\lambda_k,a_k}=&2\tilde{v}_{1k}+\int_{M}\tilde{h}_ke^{2v_{1k}}-v_{q_k,\lambda_k,a_k}\nonumber\\
=&2\tilde{v}_{1k}-\tilde{U}_{k}-\big(8\pi R(x,x)-8\pi R(x,q_{k})\big)-\eta_{k}(x)+O(\lambda_{k}e^{-\lambda_{k}})\nonumber\\
=&\tilde{\eta}_{k}(x)-\eta_{k}(x)+O(\lambda_{k}e^{-\lambda_{k}})=O(\lambda_{k}e^{-\lambda_{k}})
\end{align}
for $x\in B_{r_0}(q_k)$. For $x\in M\setminus B_{2r_0}(q_k),$ by (\ref{3.22}) and (\ref{3.30}), we get
\begin{align*}
2v_{1k}-v_{q_k,\lambda_k,a_k}=2v_{1k}-8\pi G(x,q_k)-(v_{q_k}-8\pi G(x,q_k))+\overline{v}_{q_k}=O(\lambda_ke^{-\lambda_k}).
\end{align*}
For the intermediate domain $B_{2r_0}(q_k)\setminus B_{r_0}(q_k),$ following a similar way,
we can obtain that $2v_{1k}-v_{q_k,\lambda_k,a_k}=O(\lambda_{k}e^{-\lambda_{k}}).$ Thus, we find a good approximation
$\frac12v_{q_k,\lambda_k,a_k}$ for $v_{1k}.$ For convenience, we write
\begin{align}
\label{3.32}
v_{1k}=\frac12v_{q_k,\lambda_k,a_k}+\phi_{k},~\mathrm{where}~\|\phi_{k}\|_{L^{\infty}(M)}<\tilde{c}\lambda_ke^{-\lambda_k},
\end{align}
where $\tilde{c}$ is independent of $\psi_k.$

Next, we substitute (\ref{3.32}) and $v_{2k}=\frac12w+\psi_k$ into the second equation of (\ref{3.18}),
after computation, we obtain
\begin{align}
\label{3.33}
\mathcal{L}\psi_{k}=\mathbb{I}_1+\mathbb{I}_2+\mathbb{I}_3,\quad \int_M\psi_k=0,
\end{align}
where
\begin{align*}
L\psi_k=&\Delta\psi_{k}+2\rho_2\frac{h_2e^{w-4\pi G(x,p)}}{\int_{M}h_2e^{w-4\pi G(x,p)}}\psi_{k}\\
&-2\rho_2\frac{h_2e^{w-4\pi G(x,p)}}
{(\int_Mh_2e^{w-4\pi G(x,p)})^2}\int_{M}(h_2e^{w-4\pi G(x,p)}\psi_{k})\\
&-4\pi\rho_2\frac{h_2e^{w-4\pi G(x,p)}}{\int_{M}h_2e^{w-4\pi G(x,p)}}\big(\nabla G(x,p)(q_{k}-p)\big)\\
&+4\pi\rho_2\frac{h_2e^{w-4\pi G(x,p)}}{(\int_{M}h_2e^{w-4\pi G(x,p)})^2}
\int_{M}\big(h_2e^{w-4\pi G(x,p)}(\nabla G(x,p)(q_{k}-p))\big),
\end{align*}
\begin{align*}
\mathbb{I}_1=&-\rho_2\frac{h_2e^{w+2\psi_{k}-v_{1k}}}{\int_{M}h_2e^{w+2\psi_{k}-v_{1k}}}
+\rho_2\frac{h_2e^{w+2\psi_{k}-4\pi G(x,q_k)}}
{\int_{M}h_2e^{w+2\psi_{k}-4\pi G(x,q_k)}},
\end{align*}
\begin{align*}
\mathbb{I}_2=&\rho_2\frac{h_2e^{w-4\pi G(x,p)}}{\int_{M}h_2e^{w-4\pi G(x,p)}}
-\rho_2\frac{h_2e^{w+2\psi_{k}-4\pi G(x,p)}}{\int_{M}h_2e^{w+2\psi_{k}-4\pi G(x,p)}}\\
&+2\rho_2\frac{h_2e^{w-4\pi G(x,p)}}{\int_{M}h_2e^{w-4\pi G(x,p)}}\psi_{k}\\
&-2\rho_2\frac{h_2e^{w-4\pi G(x,p)}}
{(\int_{M}h_2e^{w-4\pi G(x,p)})^2}\int_{M}(h_2e^{w-4\pi G(x,p)}\psi_{k}),
\end{align*}
and
\begin{align*}
\mathbb{I}_3=&-\rho_2\frac{h_2e^{w+2\psi_{k}-4\pi G(x,q_k)}}{\int_{M}h_2e^{w+2\psi_{k}-4\pi G(x,q_k)}}
+\rho_2\frac{h_2e^{w+2\psi_{k}-4\pi G(x,p)}}{\int_{M}h_2e^{w+2\psi_{k}-4\pi G(x,p)}}\\
&-4\pi\rho_2\frac{h_2e^{w-4\pi G(x,p)}}{\int_{M}h_2e^{w-4\pi G(x,p)}}(\nabla G(x,p)(q_{k}-p))\\
&+4\pi\rho_2\frac{h_2e^{w-4\pi G(x,p)}}{(\int_{M}h_2e^{w-4\pi G(x,p)})^2}
\int_{M}\big(h_2e^{w-4\pi G(x,p)}(\nabla G(x,p)(q_{k}-p))\big).
\end{align*}
We shall analyze the right hand side of (\ref{3.33}) term by term in the following. For $\mathbb{I}_1,$ we set
\begin{align*}
\mathfrak{E}_1=\exp\big(w+2\psi_k-4\pi G(x,q_{k})\big)-\exp\big(w+2\psi_k-\frac12 v_{q_k,\lambda_k,a_k}-\phi_k\big).
\end{align*}

For $x\in M\setminus B_{r_0}(q_{k}).$ We see that the difference between $4\pi G(x,q_{k})$ and $v_{q_k,\lambda_k,a_k}$ is of order
 $\lambda_ke^{-\lambda_k}.$ As a consequence, $\mathfrak{E}_1=O(\lambda_ke^{-\lambda_k}).$

For $x\in B_{r_0}(q_{k}),$
\begin{align*}
4\pi G(x,q_{k})-\frac12v_{q_k,\lambda_k,a_k}=&4\pi G(x,q_{k})-4\pi R(x,q_{k})-\log(\frac{\rho_1\tilde{h}_k(q_{k})e^{\lambda_k}}{4})\\
&+\log(1+\frac{\rho_1\tilde{h}_k(q_{k})e^{\lambda_{k}}}{4}|x-q_{k}|^2)-\lambda_k\\
&-\frac12(\eta_{k}+\frac{\Delta H_k(q_{k})}{\rho_1\tilde{h}_k(q_{k})}\frac{\lambda_{k}^2}{e^{\lambda_{k}}})+O(\lambda_{k}e^{-\lambda_{k}})\\
=&\log(\frac{4}{\rho_1\tilde{h}_k(q_{k})e^{\hat{\lambda}_{k}}|x-q_{k}|^2}+1)\\
&-\frac12(\eta_{k}+\frac{\Delta H_k(q_{k})}{\rho_1\tilde{h}_k(q_{k})}\frac{\lambda_{k}^2}{e^{\lambda_{k}}})+O(\lambda_{k}e^{-\lambda_{k}}).
\end{align*}
Since $\phi_k=O(\lambda_{k}e^{-\lambda_{k}}),$
\begin{align*}
\exp\big(w+2\psi_k-\frac12 v_{q_k,\lambda_k,a_k}-\phi_k\big)=\exp\big(w+2\psi_k-\frac12 v_{q_k,\lambda_k,a_k}\big)+O(\lambda_{k}e^{-\lambda_{k}}).
\end{align*}
Then, we have
\begin{align*}
&\exp\big(w+2\psi_k-4\pi G(x,q_{k})\big)-\exp\big(w+2\psi_k-\frac12 v_{q_k,\lambda_k,a_k}-\phi_k\big)\\
&~=\exp\big(w+2\psi_k-4\pi G(x,q_{k})\big)-\exp\big(w+2\psi_k-\frac12 v_{q_k,\lambda_k,a_k}\big)+O(\lambda_{k}e^{-\lambda_{k}})\\
&~=\exp\big(w+2\psi_{k}-4\pi G(x,q_{k})\big)\big(1-\exp\big(4\pi G(x,q_{k})-\frac12 v_{q_k,\lambda_k,a_k}\big)\big)+O(\lambda_{k}e^{-\lambda_{k}})\\
&~=\exp\big(w+2\psi_{k}-4\pi G(x,q_{k})\big)\Big(1-\exp\Big[\log(1+\frac{4}{\rho_1\tilde{h}_ke^{\lambda_k}|x-q_k|^2})\\
&\quad\quad+O(\eta_k+
\frac{\Delta H_k(q_k)}{\rho_1\tilde{h}_k(q_k)}\frac{\lambda_k^2}{e^{\lambda_k}})+O(\lambda_{k}e^{-\lambda_{k}})\Big]\Big)
\end{align*}
When $|x-q_k|=O(e^{-\frac{\lambda_k}{2}}),$ we have
\begin{align*}
\exp\big(w+2\psi_k-4\pi G(x,q_k)\big)=O(e^{-\lambda_k}),
\end{align*}
and
\begin{align*}
\log(1+\frac{4}{\rho_1\tilde{h}_ke^{\lambda_k}|x-q_k|^2})+O(\eta_k+
\frac{\Delta H_k(q_k)}{\rho_1\tilde{h}_k(q_k)}\frac{\lambda_k^2}{e^{\lambda_k}})=O\big(\log(e^{-\lambda_k}|x-q_k|^{-2})\big),
\end{align*}
hence
\begin{align*}
\mathfrak{E}_1=O(\lambda_{k}e^{-\lambda_{k}})~\mathrm{for}~|x-q_k|=O(e^{-\frac{\lambda_k}{2}}).
\end{align*}
When $|x-q_k|\gg e^{-\frac{\lambda_k}{2}},$ then
\begin{align*}
1-\exp\big(\log(1+\frac{4}{\rho_1\tilde{h}_ke^{\lambda_k}|x-q_k|^2})+&O(\eta_k+
\frac{\Delta H_k(q_k)}{\rho_1\tilde{h}_k(q_k)}\frac{\lambda_k^2}{e^{\lambda_k}})\big)+O(\lambda_{k}e^{-\lambda_{k}})\\
&=O(\frac{4}{\rho_1\tilde{h}_ke^{\lambda_k}|x-q_k|^2}+\lambda_ke^{-\lambda_k}),
\end{align*}
as a result, we have
$$\mathfrak{E}_1=O(\lambda_{k}e^{-\lambda_{k}})~\mathrm{for}~r_0\geq|x-q_k|\gg e^{-\frac{\lambda_k}{2}}.$$
Thus, $\|\mathfrak{E}_1\|_{L^{\infty}(M)}=O(\lambda_{k}e^{-\lambda_{k}}).$
This implies $\mathbb{I}_1=O(\lambda_{k}e^{-\lambda_{k}}).$

For the second term, it is easy to see that $\mathbb{I}_2=O(\|\psi_k\|_{*}).$ It remains to estimate $\mathbb{I}_3.$ We divide it into three parts.
\begin{align*}
\mathbb{I}_3=\mathbb{I}_{31}+\mathbb{I}_{32}+\mathbb{I}_{33},
\end{align*}
where
\begin{align*}
\mathbb{I}_{31}=&-\rho_2\frac{h_2e^{w+2\psi_{k}-4\pi G(x,q_{k})}}
{\int_{M}h_2e^{w+2\psi_{k}-4\pi G(x,q_{k})}}+\rho_2\frac{h_2e^{w+2\psi_{k}-4\pi G(x,p)}}
{\int_{M}h_2e^{w+2\psi_{k}-4\pi G(x,p)}}\\
&-4\pi\rho_2\frac{h_2e^{w+2\psi_k-4\pi G(x,p)}}{\int_{M}h_2e^{w+2\psi_k-4\pi G(x,p)}}\big(\nabla G(x,p)(q_{k}-p)\big),\\
&+4\pi\rho_2\frac{h_2e^{w+2\psi_k-4\pi G(x,p)}}{(\int_{M}h_2e^{w+2\psi_k-4\pi G(x,p)})^2}
\int_{M}\Big(h_2e^{w+2\psi_k-4\pi G(x,p)}(\nabla G(x,p)(q_{k}-p))\Big),
\end{align*}
\begin{align*}
\mathbb{I}_{32}=&4\pi\rho_2\frac{h_2e^{w-4\pi G(x,p)}}{(\int_{M}h_2e^{w-4\pi G(x,p)})^2}
\int_{M}\Big(h_2e^{w-4\pi G(x,p)}(\nabla G(x,p)(q_{k}-p))\Big)\\
&-4\pi\rho_2\frac{h_2e^{w+2\psi_k-4\pi G(x,p)}}{(\int_{M}h_2e^{w+2\psi_k-4\pi G(x,p)})^2}
\int_{M}\Big(h_2e^{w+2\psi_k-4\pi G(x,p)}(\nabla G(x,p)(q_{k}-p))\Big),
\end{align*}
and
\begin{align*}
\mathbb{I}_{33}=&4\pi\rho_2\frac{h_2e^{w+2\psi_k-4\pi G(x,p)}}{\int_{M}h_2e^{w+2\psi_k-4\pi G(x,p)}}\big(\nabla G(x,p)(q_{k}-p)\big)\\
&-4\pi\rho_2\frac{h_2e^{w-4\pi G(x,p)}}{\int_{M}h_2e^{w-4\pi G(x,p)}}\big(\nabla G(x,p)(q_{k}-p)\big).
\end{align*}
It is not difficult to see
$$\mathbb{I}_{31}=O(|q_k-p|^2),~\mathbb{I}_{32}=O(1)\|\psi\|_*|q_k-p|,~\mathbb{I}_{33}=O(1)\|\psi\|_*|q_k-p|.$$
Then (\ref{3.33}) can be written as
\begin{align}
\label{3.34}
\mathcal{L}(\psi_k)=o(1)\|\psi_k\|_*+O(\|\psi_k\|^2_{*}+\lambda_ke^{-\lambda_k})+O(|p-q_k|^2).
\end{align}
By the definition of $H_k$ and (\ref{3.28}), we have
\begin{equation}
\label{3.35}
\nabla H_k(q_k)=\nabla\log h(q_k)-\nabla\psi(q_k)+8\pi\nabla R(q_k,q_k)=O(\lambda_ke^{-\lambda_k}).
\end{equation}
By (\ref{3.2}) and (\ref{3.35}), we have
\begin{align}
\label{3.36}
\nabla^2\big{(}\log h(p)+8\pi R(p,p)\big{)}(q_{k}-p)-\nabla\psi_k(p)=&\nabla\log h(q_k)-\nabla\psi(q_k)+8\pi\nabla R(q_k,q_k)\nonumber\\
&-\big(\nabla\log h(p)+8\pi\nabla R(p,p)\big)\nonumber\\
&+\nabla\psi(q_k)-\nabla\psi_k(p)\nonumber\\
&+O(|p-q|^2)\nonumber\\
=&\nabla H_k(q_k)-\nabla H(p)+O(|p-q|^{\gamma}\|\psi_k\|_*)\nonumber\\
&+O(|p-q|^2),
\end{align}
where $\gamma$ depends on $\mathfrak{p}$. We note that $\nabla H(p)=0$. From (\ref{3.34})-(\ref{3.36}) and the non-degeneracy of $p,w$, we obtain
\begin{equation}
\label{3.37}
\|\psi_k\|_*+|p-q_k|\leq C(\lambda_ke^{-\lambda_k}+o(1)\|\psi_k\|_*+\|\psi_k\|^2_{*}+|p-q_k|^2),
\end{equation}
where $C$ is a generic constant, independent of $k$ and $\psi_k.$ Therefore, we have
\begin{equation}
\label{3.38}
\psi_{k}=O(\lambda_ke^{-\lambda_k}),~|p-q_k|=O(\lambda_ke^{-\lambda_k}).
\end{equation}
As a conclusion of (\ref{3.12}), (\ref{3.26}) and (\ref{3.38}), we have
\begin{equation}
\label{3.39}
\lambda_k-\lambda(\rho_1)=O(\lambda(\rho_1)^{-1}),~\tilde{h}_k=h+O(\lambda(\rho_1)e^{-\lambda(\rho_1)}),~|q_{k}-p|=O(\lambda(\rho_1)e^{-\lambda(\rho_1)})
\end{equation}
and
\begin{equation}
\label{3.40}
v_{2k}-\frac12w=O(\lambda(\rho_1)e^{-\lambda(\rho_1)}).
\end{equation}
We replace $\tilde{h}_k$ by $h$ in the definition of $v_{q}$, we denote the new terms by $v_{q}.$
By (\ref{3.38}), we have
\begin{align*}
v_{q_k}-v_{q}=O(\lambda(\rho_1)e^{-\lambda(\rho_1)}).
\end{align*}
We set
\begin{align}
\label{3.41}
v_{q,\lambda,a}=v_{q}-\overline{v}_{q}.
\end{align}
By (\ref{3.32}) and (\ref{3.41}), we gain
\begin{align}
\label{3.42}
v_{1k}-\frac12v_{q,\lambda,a}=O(\lambda(\rho_1)e^{-\lambda(\rho_1)}).
\end{align}
By \cite[Lemma 3.2]{cl2}, if we choose $c$ in $S_{\rho_1}(p,w)$ big enough, there exists triplet $(q^*,\lambda^*,a^*)$ and
$\phi^*\in O^{(1)}_{q^*,\lambda^*}$ such that
\begin{equation}
\label{3.43}
v_{1k}=\frac12v_{q^*,\lambda^*,a^*}+\phi_k^*,
\end{equation}
where $q^*,~\lambda^*,~a^*$ satisfy the condition in $S_{\rho_1}(p,w)$.  Therefore, we proved
$$(v_{1k},v_{2k})\in S_{\rho_1}(p,w)\times S_{\rho_2}(p,w).$$
\end{proof}
\vspace{0.25cm}

In conclusion, we have the
following Theorem,
\begin{theorem}
\label{th3.1}
Suppose $h_1,h_2$ are two positive $C^{2,\alpha}$ function on $M$ such that any solution $(p,w)$ of (\ref{1.11}) is non-degenerate and
$\Delta\log h_1(p)-\rho_2+8\pi\neq0.$ Then there exists $\varepsilon_0>0$ and $C>0$ such that
for any solution of (\ref{1.7}) with $\rho_1\in(4\pi-\varepsilon_0,4\pi+\varepsilon_0),\rho_2\notin4\pi\mathbb{N}$, either
$|v_1|,|v_2|\leq C,\forall x\in M$ or $(v_1,v_2)\in S_{\rho_1}(p,w)\times S_{\rho_2}(p,w)$ for some solution $(p,w)$ of (\ref{1.11}).
\end{theorem}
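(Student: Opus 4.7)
The plan is to argue by contradiction, reducing the statement to a combined application of Theorem \ref{th1.2} and Lemma \ref{le4.1}. Suppose the conclusion fails. Then I can extract a sequence $\rho_{1k}\to 4\pi$ and $\rho_{2k}\to\rho_2\notin 4\pi\mathbb{N}$ together with solutions $(v_{1k},v_{2k})$ of (\ref{1.7}) that are neither uniformly bounded in $L^{\infty}(M)$ nor lying in $S_{\rho_1}(p,w)\times S_{\rho_2}(p,w)$ for any solution $(p,w)$ of (\ref{1.11}). Passing to a subsequence, I may assume $\max_M(v_{1k},v_{2k})\to+\infty$, so the sequence genuinely bubbles and the hypotheses of Theorem \ref{th1.2} are met.

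Applying Theorem \ref{th1.2} to this bubbling sequence yields, after a further subsequence, a blow-up point $p\in M$ and a function $w\in C^{2,\alpha}(M)$ such that $v_{1k}$ concentrates at $p$ in the sense of (\ref{1.8}), $v_{2k}\to\tfrac12 w$ in $C^{2,\alpha}(M)$, and $(p,w)$ solves (\ref{1.11}). The key supplementary input is that the set $\mathcal{S}$ of solutions $(p,w)$ of (\ref{1.11}) is compact in $M\times W^{2,\mathfrak{p}}(M)$: any non-compact sequence in $\mathcal{S}$ would force a blow-up for the scalar equation (\ref{1.10}), whose local mass quantization via the Brezis--Merle / Li--Shafrir analysis would place $\rho_2$ in $4\pi\mathbb{N}$, against our assumption. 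Since by hypothesis every element of $\mathcal{S}$ is non-degenerate, an implicit function theorem argument makes $\mathcal{S}$ discrete, hence finite.

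With the particular limit $(p,w)\in\mathcal{S}$ identified along the chosen subsequence, I would invoke Lemma \ref{le4.1}. Its hypotheses are now exactly matched, using the flat-metric normalization near $p$ recorded at the beginning of this section (so the $-2K(p)$ term drops out and the condition $\Delta\log h_1(p)-\rho_2+8\pi\neq 0$ suffices). The lemma supplies parameters $(q_k,\lambda_k,a_k)$ and error functions $\phi_k,\psi_k$ satisfying the quantitative bounds that define $S_{\rho_1}(p,w)$ and $S_{\rho_2}(p,w)$, contradicting the standing assumption that $(v_{1k},v_{2k})$ lies outside every such product set. Taking $\varepsilon_0$ smaller than the thresholds produced by Lemma \ref{le4.1} for each of the finitely many members of $\mathcal{S}$ yields the uniform $\varepsilon_0$ and $C$ in the statement.

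The step I expect to be the main obstacle is the compactness of $\mathcal{S}$ together with the attendant uniformity of constants across its elements. Ruling out blow-up for (\ref{1.10}) on the basis of $\rho_2\notin 4\pi\mathbb{N}$ requires mass-quantization arguments applied to an equation carrying the singular source $-4\pi G(\cdot,p)$; the logarithmic singularity must be absorbed into an effective weight $\bar h_2=h_2 e^{-4\pi G(x,p)}$ so that the standard Li--Shafrir / Bartolucci--Tarantello a priori bound applies with quantized local mass $4\pi\mathbb{N}$, and one must verify that the point $p$ itself moves only inside a compact subset of $M$ (which follows since the critical point equation (\ref{1.9}) has a closed zero set). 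Once this compactness is in hand, combining it with non-degeneracy gives the finiteness of $\mathcal{S}$, and the remainder of the proof is a routine application of Lemma \ref{le4.1} component by component.
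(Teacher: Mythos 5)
Your proposal is correct and takes essentially the same route the paper intends: the paper states Theorem~\ref{th3.1} as the ``in conclusion'' consequence of Theorem~\ref{th1.2} (which identifies the blow-up limit $(p,w)$ and shows it solves~(\ref{1.11})) together with the lemma in Section~3 (labeled \ref{le4.1} there), which places the bubbling sequence inside $S_{\rho_1}(p,w)\times S_{\rho_2}(p,w)$, and you argue by contradiction exactly along these lines. The extra step you add — pre-compactness of the solution set of~(\ref{1.11}) when $\rho_2\notin 4\pi\mathbb{N}$, leading to finiteness via non-degeneracy — is correct (it is the content of Theorem~\ref{th1.5} via Lemma~\ref{le6.1} later in the paper), and it does clarify the uniformity of $\varepsilon_0$ and the constants entering the sets $S_{\rho_i}$, but it is not strictly required for the contradiction argument: once Theorem~\ref{th1.2} produces \emph{some} limiting $(p,w)\in\mathcal{S}$ along the chosen subsequence, the section-3 lemma already contradicts the standing assumption. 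Your remark on absorbing $-2K(p)$ into the flat-metric normalization, so that the hypothesis $\Delta\log h_1(p)-\rho_2+8\pi\neq0$ matches condition~(\ref{3.16}), is the right reading of the paper's convention.
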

\vspace{1cm}

\section{Approximate blow up solution}
In the following two sections. We shall construct the blow up solutions of (\ref{1.7}) in general case when
\begin{equation}
\label{4-0}
\rho_1\rightarrow\rho_*=4m\pi+4\pi\sum_{q\in S}\alpha_q~\mathrm{and}~\rho_2\notin\Sigma_2.
\end{equation}
The construction of such bubbling solution is based on a
non-degenerate solution of (\ref{1.12}). For a given non-degenerate solution $(P_w,w)$ of (\ref{1.12}). We define the space
$S_{\rho_1}(Q,w)$, $S_{\rho_2}(Q,w)$ for $v_{1}$ and $v_{2}$ respectively, where the definition of $S_{\rho_i}(Q,w)$ is given in
(\ref{4.19})-(\ref{4.20}). These two sets are generalization of the one defined in the previous section.
Our aim is to compute the degree of the following nonlinear operator
$$\left(\begin{array}{l}v_1\\v_2\end{array}\right)=
(-\Delta)^{-1}\left(\begin{array}{l}
2\rho_1\big(\frac{h_1e^{2v_1-v_2}}{\int_{M}h_1e^{2v_1-v_2}}-1\big)\\
2\rho_2\big(\frac{h_2e^{2v_2-v_1}}{\int_Mh_2e^{2v_2-v_1}}-1\big)
\end{array}\right)$$
in the space $S_{\rho_1}(Q,w)\times S_{\rho_2}(Q,w)$.

For a give non-degenerate solution $(p_w,w)$ of (\ref{1.12}). We define
$$Q=P_w\cup S=\{p_1^0,p_2^0,\cdots,p_m^0\}\cup S,~P_w\cap S_1=\emptyset,~S\subseteq S_1.$$
In order to simplify our notation, we relabel the index in $Q$ and set
$$Q=\{p_{1}^0,p_{2}^0,\cdots,p_{n}^0\},~n=m+|S|$$
and
$$\alpha_i=0~\mathrm{for}~1\leq i\leq m,~\alpha_{m+i}=\alpha_{p_{m+i}^0},~1\leq i\leq n-m.$$

For each point $p_j^0$ in $Q,$ we set
\begin{align}
\label{4.1}
G_j^*(x)=8\pi\Big((1+\alpha_j)R(x,p_j^0)+\sum_{1\leq i\leq n,i\neq j}(1+\alpha_i)G(x,p_i^0)\Big),~j=1,2,\cdots,n.
\end{align}
We only consider the case $S\neq\emptyset,$ because the construction for the case $S=\emptyset$ is similar. we may assume
(relabeling the index if necessary)
\begin{align*}
\alpha_{m+1}=\cdots=\alpha_{m+l}>\alpha_{m+l+1}\geq\cdots\geq\alpha_{m+n},
\end{align*}
We use $G_j^*$ associate with $Q$ to define $l(Q)$ as follows.

\begin{align}
\label{4.2}
l(Q)=\sum_{j={m+1}}^{m+l}\big(\frac{h_{p_j}(p_{j}^0)\rho_*}{4(1+\alpha_{j})^2}\big)^{\frac{1}{1+\alpha_{j}}}
e^{\frac{G_j^*(p_j^0)}{1+\alpha_{j}}}\big(\Delta\log(h_1^*e^{-\frac12w})(p_j^0)+2\rho_*-N^*-2K(p_j^0)\big),
\end{align}
where $K(p)$ is the Gaussian curvature at $p$, $\rho_*=4\pi\sum_{j=1}^n(1+\alpha_j)$, $N^*=4\pi\sum_{q\in S_1}\alpha_q,$ and
\begin{align*}
h_{p_j}(x)=\left\{\begin{array}{ll}
h(x),~&\mathrm{if}~1\leq j\leq m,\\
|x-p_j^0|^{-2\alpha_j}h(x),~&\mathrm{if}~m+1\leq j\leq n.
\end{array}\right.
\end{align*}

Let $P=(p_1,p_2,\cdots,p_n)$ with $|p_i-p_i^0|\ll1$ for $1\leq i\leq m$ and $p_i=p_i^0$ for $m< i\leq n.$ For large
$\lambda_{j}>0,~j=1,2,\cdots,n,$ we set
\begin{align}
\label{4.3}
U_{j}(x)=\lambda_{j}-2\log\big(1+\frac{\rho_1h_{p_j}(p_j)}{4(1+\alpha_j)^2}e^{\lambda_{j}}|x-p_j|^{2(1+\alpha_j)}\big).
\end{align}

These $U_{j}(x)$ satisfy the following equation
\begin{align}
\label{4.4}
\Delta U_{j}(x)+2\rho_1h_{p_j}(p_j)e^{U_j}=0~\mathrm{in}~\mathbb{R}^2,\quad U_j(p_j)=\max_{\mathbb{R}^2}U_{j}(x)=\lambda_{j},
\end{align}
By using $h_{p_j}(x)$, we define $H_j(x,t)$,
\begin{align}
\label{4.5}
H_{j}(x,t)=\exp\big\{\log\frac{h_{p_j}(x+p_j)}{h_{p_j}(p_j)}+(G_{j}^*(x+p_j)-G_{j}^*(p_j))+t\big\}-1,
\end{align}
For convenience, we set
\begin{align*}
J=\{1,2,\cdots,n\},\quad J_1=\{1,2,\cdots,m\},\quad J_2=\{m+1,m+2,\cdots,m+l\}.
\end{align*}

Next, we construct the error terms near each point $p_j$ for $j\in J_2.$ With out loss of generality, we may assume
$\nabla H_{j}(0,0)=(e_j,0)$. Let $Q(x)=\frac12\big(\nabla^2\log[H_j(0,0)+1]x,x\big).$ Then the Taylor expansion gives
\begin{equation}
\label{4.6}
\log\frac{h_{p_j}(x+p_j)}{h_{p_j}(p_j)}+G_{j}^*(x+p_j)-G_{j}^*(p_j)=e_jx_1+Q(x)+\mathrm{higher~order},
\end{equation}
and
\begin{equation}
\label{4.7}
H_{j}(x,t)=e_jx_1+t+Q(x)+\frac12(e_jx_1+t)^2+O(|x|^3+t^3).
\end{equation}
For $j\in J_2$, we let $\zeta_{1,j}(y)$ and $\zeta_{2,j}(y)$ be the solutions of
\begin{equation}
\label{4.8}
\left\{\begin{array}{l}
\Delta\zeta_{1,j}(y)+2\rho_1h_{p_j}(p_j)|y|^{2\alpha_j}e^{U'_j(y)}(\zeta_{1,j}(y)+e_jy_1)=0~\mathrm{in}~\mathbb{R}^2,\\
\zeta_{1,j}(0)=0,\quad|\zeta_{1,j}(y)|=O(|y|^{-2\alpha-1})~\mathrm{at}~\infty,
\end{array}\right.
\end{equation}
and
\begin{equation}
\label{4.9}
\left\{\begin{array}{l}
\Delta\zeta_{2,j}(y)+2\rho_1h_{p_j}(p_j)|y|^{2\alpha_j}e^{U_j'(y)}
\Big(\zeta_{2,j}(y)+Q(y)+\frac12(e_jy_1+\zeta_{1,j}(y))^2\Big)=0~\mathrm{in}~\mathbb{R}^2,\\
\zeta_{2,j}(0)=0,\quad|\zeta_{2,j}(y)|=O(\log|y|)~\mathrm{at}~\infty,
\end{array}\right.
\end{equation}
where
$$U_j'(y)=-2\log\Big(1+\frac{\rho_1h_{p_j}(p_j)}{4(1+\alpha_j)^2}|y|^{2(1+\alpha_j)}\Big).$$
The existence of $\zeta_{1,j}$ and $\zeta_{2,j}$ has been proved in section 3 of \cite{cl3}. Set $\epsilon_{j}=e^{-\frac{\lambda_{j}}{2(1+\alpha_j)}}.$ For $j\in J\setminus J_1$, we define
\begin{equation}
\label{4.10}
\eta_{j}(x)=\epsilon_{j}\zeta_{1,j}(\epsilon_{j}^{-1}x)+\epsilon_{j}^2\zeta_{2,j}(\epsilon_{j}^{-1}x)~\mathrm{for}~|x|\leq2r_0.
\end{equation}
By (\ref{4.7}), (\ref{4.8}) and (\ref{4.9}), $\eta_{j}$ satisfies,
\begin{equation}
\label{4.11}
\Delta\eta_{j}+2\rho_1h_{p_j}(p_j)|x-p_j|^{2\alpha_j}e^{U_{j}(x)}\tilde{H}_{j}(x,\eta_{j})=0,
\end{equation}
where
\begin{align*}
\tilde{H}_{j}(x,\eta_{j})=&e_jx_1+\eta_{j}+Q(x)+\frac12(e_jx_1+\eta_{j})^2
-\frac12(\epsilon_{j}^2\zeta_{2,j}(\epsilon_{j}^{-1}x))^2\\
&-\epsilon_{j}^2\zeta_{2,j}(\epsilon_{j}^{-1}x)\big(e_jx_1+\epsilon_j\zeta_{1,j}(\epsilon_j^{-1}x)\big).
\end{align*}
If $j\in J_1,$ we set $\eta_j\equiv 0.$\\

For $j\in J$, we let
\begin{equation}
\label{4.12}
s_{j}=\lambda_{j}+2\log\Big(\frac{\rho_1h_{p_j}(p_j)}{4(1+\alpha_j)^2}\Big)+8\pi(1+\alpha_j)R(p_j,p_j)+\frac{d_j}{2(1+\alpha_j)}
\lambda_je^{-\frac{\lambda_j}{1+\alpha_j}},
\end{equation}
where
$$d_{j}=\frac{\pi}{(1+\alpha_j)\sin(\frac{\pi}{1+\alpha_j})}\Big(\frac{4(1+\alpha_j)^2}{\rho_1h_{p_j}(p_j)}\Big)^{\frac{1}{1+\alpha_j}}
\times\big(\Delta\log(h_1^*e^{-\frac12w})(p_j)+2\rho_*-N^*-2K(p_j)\big),$$
if $j\in J\setminus J_1$, and $d_j=0$ for $j\in J_1.$\\

Let $\sigma(x)$ be a cut-off function:
\begin{equation*}
\sigma(x)=\left\{
\begin{array}{ll}
1,~&\mathrm{if}~|x|<r_0,\\
0,~&\mathrm{if}~|x|\geq 2r_0,
\end{array}\right.
\end{equation*}
and $\sigma_{j}(x)=\sigma(x-p_j)$. We set
\begin{align}
\label{4.13}
v_{p_j}(x)=&\Big(U_{j}(x)+\eta_{j}(x)+8\pi(1+\alpha_j)(R(x,p_j)-R(p_j,p_j))+s_{j}\Big)\sigma_{j}(x)\nonumber\\
&+8\pi(1+\alpha_j)G(x,p_j)(1-\sigma_{j}).
\end{align}
We note that $\eta_j(x)+\frac{d_j}{2(1+\alpha_j)}\lambda_je^{-\frac{\lambda_j}{1+\alpha_j}}=O(e^{-\frac{\lambda_j}{1+\alpha_j}})$ if $m+1\leq j\leq n$ and
$|x|\geq\delta>0$ for some $\delta>0.$

Next, we state two lemmas that shall be used later. One can see \cite{cl3} for a proof
\begin{lemma}
\label{le4.1}
Let $\xi_{j}(x)=v_{p_j}(x)-8\pi(1+\alpha_j)G(x,p_j).$ Then for $r_0\leq|x-p_j|\leq2r_0,$ the followings hold,
\begin{enumerate}
  \item For $m+1\leq j\leq n,$ $\xi_j(x),\partial_{\lambda_{j}}\xi_{j}(x),\nabla_x\xi_{j}(x),\Delta_x\xi_{j}(x)$ are
$O(e^{-\frac{\lambda_{j}}{1+\alpha_j}}).$
  \item For $1\leq j\leq m,$ $\xi_j(x),\partial_{p_j}\xi_j(x),\partial_{\lambda_{j}}\xi_{j}(x),\nabla_x\xi_{j}(x),\Delta_x\xi_{j}(x)$ are $O(\lambda_je^{-\lambda_j}).$
\end{enumerate}
\end{lemma}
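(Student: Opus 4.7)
The plan is to directly expand $\xi_j$ in the annular region $r_0\le |x-p_j|\le 2r_0$ and identify the precise cancellations built into the definitions of $s_j$, $d_j$, and $\eta_j$. On this annulus the second piece in $v_{p_j}$ contributes only through the cutoff, so
\begin{equation*}
\xi_j(x)=\sigma_j(x)\bigl[U_j(x)+\eta_j(x)+8\pi(1+\alpha_j)(R(x,p_j)-R(p_j,p_j))+s_j-8\pi(1+\alpha_j)G(x,p_j)\bigr].
\end{equation*}
Using $G(x,p_j)=-\tfrac{1}{2\pi}\log|x-p_j|+R(x,p_j)$ (valid since we work in a local flat chart) and writing $A_j=\frac{\rho_1 h_{p_j}(p_j)}{4(1+\alpha_j)^2}$, the direct algebraic identity
\begin{equation*}
U_j(x)+4(1+\alpha_j)\log|x-p_j|=-\lambda_j-2\log A_j-2\log\!\Bigl(1+\tfrac{1}{A_je^{\lambda_j}|x-p_j|^{2(1+\alpha_j)}}\Bigr)
\end{equation*}
holds, and on $r_0\le|x-p_j|\le 2r_0$ the last logarithm is $O(e^{-\lambda_j})$. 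Substituting the definition of $s_j$ from (4.12) makes the terms $\lambda_j$, $2\log A_j$, and $8\pi(1+\alpha_j)R(p_j,p_j)$ all telescope, leaving
\begin{equation*}
\xi_j(x)=\sigma_j(x)\Bigl[\eta_j(x)+\tfrac{d_j}{2(1+\alpha_j)}\lambda_j e^{-\lambda_j/(1+\alpha_j)}+O(e^{-\lambda_j})\Bigr].
\end{equation*}

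For $j\in J_1$ the claim is now immediate: $\eta_j\equiv 0$ and $d_j=0$, so $\xi_j=O(e^{-\lambda_j})$, which is $O(\lambda_j e^{-\lambda_j})$. Differentiating the same identity in $\lambda_j$, $p_j$, or $x$ only produces terms of the same order because the $x$-dependence is supported in a bounded region and $\log(1+\tfrac{1}{A_je^{\lambda_j}|x-p_j|^{2(1+\alpha_j)}})$ and its derivatives remain $O(e^{-\lambda_j})$ (possibly with an extra $\lambda_j$-factor absorbed into the stated bound) once $|x-p_j|\ge r_0$.

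For $j\in J\setminus J_1$ the obstacle is to show that the bracketed quantity is $O(e^{-\lambda_j/(1+\alpha_j)})$, i.e.\ that the $\lambda_j$-growth coming from $\eta_j$ is exactly killed by the $d_j$-correction. Here I would use the Green-function representation for $\zeta_{2,j}$ in (4.9): at infinity
\begin{equation*}
\zeta_{2,j}(y)=-\tfrac{1}{2\pi}\Bigl(\int_{\mathbb{R}^2}2\rho_1 h_{p_j}(p_j)|z|^{2\alpha_j}e^{U_j'(z)}\bigl(Q(z)+\tfrac12(e_jz_1+\zeta_{1,j}(z))^2\bigr)\,dz\Bigr)\log|y|+O(1),
\end{equation*}
while $\zeta_{1,j}(y)=O(|y|^{-2\alpha_j-1})$. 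Plugging $y=\epsilon_j^{-1}(x-p_j)$ with $\epsilon_j=e^{-\lambda_j/(2(1+\alpha_j))}$ gives
\begin{equation*}
\eta_j(x)=\epsilon_j^2\zeta_{2,j}(\epsilon_j^{-1}(x-p_j))+O(\epsilon_j^{2+2\alpha_j})=c_j\,\tfrac{\lambda_j}{2(1+\alpha_j)}e^{-\lambda_j/(1+\alpha_j)}+O(e^{-\lambda_j/(1+\alpha_j)}),
\end{equation*}
where $c_j$ is $-\tfrac{1}{2\pi}$ times the above integral. A direct computation of this integral (done in \cite{cl3} by passing to polar coordinates and using $\int_0^\infty\frac{s^{k}}{(1+s^{1+\alpha_j})^{2}}\,ds=\tfrac{1}{1+\alpha_j}B(\cdot,\cdot)$) shows that $c_j=-\frac{d_j}{1+\alpha_j}$ with $d_j$ precisely as in (4.12); this is where the prefactor $\Delta\log(h_1^*e^{-w/2})(p_j)+2\rho_*-N^*-2K(p_j)$ enters via the Taylor coefficients $Q$ and $e_j$ that drive (4.9). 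Hence the $\lambda_j$-terms cancel and the bracket is indeed $O(e^{-\lambda_j/(1+\alpha_j)})$.

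The derivative bounds follow by differentiating each piece separately. Differentiating $\eta_j$ in $\lambda_j$ uses $\partial_{\lambda_j}\epsilon_j^{-1}=\tfrac{1}{2(1+\alpha_j)}\epsilon_j^{-1}$, and with the same Green-type asymptotic expansion (differentiated once) the identical cancellation by $\partial_{\lambda_j}s_j$ occurs. Spatial derivatives are even easier because $\zeta_{2,j}'(y)=O(|y|^{-1})$, giving an extra $\epsilon_j$ factor; the Laplacian is handled by inserting (4.11), which on $r_0\le|x-p_j|\le 2r_0$ reduces the relevant expression to the exponentially small factor $e^{U_j}=O(e^{-\lambda_j})$ times a bounded quantity. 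For $j\in J_1$ one additionally needs the $p_j$-derivative, which differentiates only $U_j$ and $R(x,p_j)-R(p_j,p_j)$, both manifestly of order $O(\lambda_j e^{-\lambda_j})$ in the annular region by the same estimate.

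The main obstacle is the asymptotic matching of $\zeta_{2,j}$ with $d_j$: everything else is either a bookkeeping exercise or a direct consequence of the large-$\lambda_j$ expansion of $U_j$. Once that identity $c_j=-d_j/(1+\alpha_j)$ is verified from (4.9), the rest of the proof is mechanical.
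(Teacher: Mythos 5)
Your expansion of $\xi_j$ on the annulus and the resulting reduction to the cancellation between $\eta_j$ and the $d_j$-term are correct; this is exactly the fact recorded in the paper's note after (\ref{4.13}), and your approach reconstructs the structure of the argument in Chen--Lin \cite{cl3}, to which the paper itself simply defers. The $j\in J_1$ case and the derivative bookkeeping are fine.

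The gap is in the asymptotic you state for $\zeta_{2,j}$. Integrating (\ref{4.9}) over $B_R$ and letting $R\to\infty$, with $a(y)=2\rho_1h_{p_j}(p_j)|y|^{2\alpha_j}e^{U_j'(y)}$ and $\zeta_{2,j}(y)\sim c_j\log|y|$ at infinity, gives
\begin{equation*}
2\pi c_j=-\int_{\mathbb{R}^2}a(y)\Bigl(\zeta_{2,j}(y)+Q(y)+\tfrac12\bigl(e_jy_1+\zeta_{1,j}(y)\bigr)^2\Bigr)\,dy;
\end{equation*}
the $\zeta_{2,j}$-term you dropped belongs in the integrand, so your closed-form for $c_j$ is wrong as written. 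If you want an expression free of $\zeta_{2,j}$, pair (\ref{4.9}) against the bounded radial kernel element $\phi_0$ of $\Delta+a$ (normalized so that $\phi_0(0)=-1$ and $\phi_0\to1$ at infinity); Green's identity then yields $2\pi c_j=-\int a\,\phi_0\bigl(Q+\tfrac12(e_jy_1+\zeta_{1,j})^2\bigr)$, and the weight $\phi_0$—which changes sign between the origin and infinity—is essential and absent from your formula, so the beta-function reduction does not go through as stated. Separately, the matching condition you need is $c_j=-d_j$, not $c_j=-d_j/(1+\alpha_j)$: from $\epsilon_j^2\zeta_{2,j}(\epsilon_j^{-1}(x-p_j))=c_j\tfrac{\lambda_j}{2(1+\alpha_j)}e^{-\lambda_j/(1+\alpha_j)}+O(e^{-\lambda_j/(1+\alpha_j)})$, cancellation against $\tfrac{d_j}{2(1+\alpha_j)}\lambda_je^{-\lambda_j/(1+\alpha_j)}$ forces $c_j+d_j=0$. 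These two items carry the entire nontrivial content of the Lemma, and your sketch does not establish either.
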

\noindent and
\begin{lemma}
\label{le4.2} For $v_{p_j},$ we have
\begin{enumerate}
  \item For $m+1\leq j\leq n,$ $\int_Mv_{p_j},~\int_M\partial_{\lambda_j}v_{p_j}=O(e^{-\frac{\lambda_{j}}{1+\alpha_j}}),$
  \item For $1\leq j\leq m,$ $\int_Mv_{p_j},~\int_M\partial_{\lambda_j}v_{p_j},~\int_M\partial_{p_j}v_{p_j}(x)=O(\lambda_je^{-\lambda_{j}}).$
\end{enumerate}
\end{lemma}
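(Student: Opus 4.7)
The plan is to exploit two structural features of $v_{p_j}$: (a) outside $B_{2r_0}(p_j)$, $v_{p_j}$ equals $8\pi(1+\alpha_j)G(x,p_j)$ exactly, and (b) $\int_M G(x,p_j)=0$. Thus
\[
\int_M v_{p_j}\,dx=\int_M\bigl(v_{p_j}(x)-8\pi(1+\alpha_j)G(x,p_j)\bigr)\,dx,
\]
and the integrand has support in $B_{2r_0}(p_j)$. Split the ball into $B_{r_0}(p_j)$ and the transition annulus $B_{2r_0}(p_j)\setminus B_{r_0}(p_j)$. On the annulus the integrand is exactly $\xi_j$ in the notation of Lemma~\ref{le4.1}, so the bound $|\xi_j|=O(e^{-\lambda_j/(1+\alpha_j)})$ for $j\in J\setminus J_1$, respectively $O(\lambda_j e^{-\lambda_j})$ for $j\in J_1$, combined with the $O(1)$ area of the annulus already gives a contribution of the claimed order.

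The main work is on $B_{r_0}(p_j)$. Inserting the local expansion $G(x,p_j)=-\tfrac{1}{2\pi}\log|x-p_j|+R(x,p_j)$ into the inner expression for $v_{p_j}$ and using the explicit definition (\ref{4.12}) of $s_j$, the $\lambda_j$, $2\log A$ and $8\pi(1+\alpha_j)R(p_j,p_j)$ contributions cancel (with $A=\rho_1 h_{p_j}(p_j)/(4(1+\alpha_j)^2)$), and the integrand reduces to
\[
-2\log\!\Bigl(1+\tfrac{1}{Ae^{\lambda_j}|x-p_j|^{2(1+\alpha_j)}}\Bigr)+\eta_j(x)+\tfrac{d_j}{2(1+\alpha_j)}\lambda_j e^{-\lambda_j/(1+\alpha_j)}.
\]
Rescale $y=\epsilon_j^{-1}(x-p_j)$, $\epsilon_j=e^{-\lambda_j/(2(1+\alpha_j))}$, $dx=\epsilon_j^2\,dy$. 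The first term becomes $\epsilon_j^2$ times an integral whose integrand decays like $|y|^{-2(1+\alpha_j)}$; for $j\in J\setminus J_1$ this is convergent on $\mathbb{R}^2$, giving $O(\epsilon_j^2)=O(e^{-\lambda_j/(1+\alpha_j)})$, while for $j\in J_1$ (where $\alpha_j=0$) the logarithmic divergence cut off at $|y|=r_0/\epsilon_j$ produces a factor $\log(1/\epsilon_j)\sim\lambda_j$, yielding $O(\lambda_j e^{-\lambda_j})$. The $\eta_j$ contribution is handled by substituting (\ref{4.10}): the $\zeta_{1,j}$ part integrates to zero by the odd-in-$y_1$ symmetry inherited from the inhomogeneous term $e_j y_1$ in (\ref{4.8}), and the $\zeta_{2,j}$ part, combined with the constant $d_j$ term, matches the claimed order thanks to the precise choice of $d_j$ in (\ref{4.12}).

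For $\int_M\partial_{\lambda_j}v_{p_j}$ observe that the outer piece $8\pi(1+\alpha_j)G(x,p_j)$ is independent of $\lambda_j$, so $\partial_{\lambda_j}v_{p_j}$ is already supported in $B_{2r_0}(p_j)$. Re-run the same decomposition: on the annulus invoke the bounds $\partial_{\lambda_j}\xi_j=O(\cdot)$ from Lemma~\ref{le4.1}; on $B_{r_0}(p_j)$ differentiate the explicit formula term by term and carry out the same rescaling (the $x$-independent $\partial_{\lambda_j}s_j$ piece only contributes a constant which combines with $\partial_{\lambda_j}U_j$ exactly as $s_j$ does with $U_j$). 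For $\int_M\partial_{p_j}v_{p_j}$ with $j\in J_1$, differentiate similarly, noting that $p_j$ enters through $U_j$, $s_j$, the Robin shift, and the cutoff $\sigma_j$; the cutoff derivatives $\partial_{p_j}\sigma_j$ are supported precisely where $\xi_j$ is controlled by Lemma~\ref{le4.1}.

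The principal obstacle is the constant-in-$x$ bookkeeping in $B_{r_0}(p_j)$ for $j\in J\setminus J_1$: one must verify that the logarithmically divergent tail of $\int_{|y|\le r_0/\epsilon_j}\zeta_{2,j}$, once multiplied by $\epsilon_j^4$ from the rescaling, cancels against $\pi r_0^2\cdot\tfrac{d_j}{2(1+\alpha_j)}\lambda_j e^{-\lambda_j/(1+\alpha_j)}$ modulo the advertised order. This reduces to a single constant identity linking the defining equation (\ref{4.9}) for $\zeta_{2,j}$ to the explicit prefactor in (\ref{4.12}); the factor $\pi/\bigl((1+\alpha_j)\sin(\pi/(1+\alpha_j))\bigr)$ appearing in $d_j$ is exactly the value of an integral that arises when evaluating the $\zeta_{2,j}$ contribution, which is why the constant is chosen as it is.
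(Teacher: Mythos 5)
The paper gives no in-text proof of this lemma; it points to Chen--Lin \cite{cl3}, so there is no paper-internal argument to compare against. Your proposal is structurally correct and follows the same lines as the estimates in that reference and elsewhere in this paper (e.g.\ (\ref{4.30}), (\ref{7.14}), (\ref{3.30})). The opening reduction is the right one: since $\int_M G(\cdot,p_j)=0$, one may replace $v_{p_j}$ by $v_{p_j}-8\pi(1+\alpha_j)G(\cdot,p_j)=\xi_j\sigma_j+\cdots$, which is supported in $B_{2r_0}(p_j)$, and the annulus piece is immediately controlled by Lemma~\ref{le4.1}. Your inner-ball computation and the cancellation of the $\lambda_j$, $2\log A$, $8\pi(1+\alpha_j)R(p_j,p_j)$ terms against $s_j$ are correct, and the rescaling gives the right orders in both cases.

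The one place where your wording is looser than the underlying mechanism deserves is the $\zeta_{2,j}$--versus--$d_j$ cancellation. The cancellation is not really a separate ``constant identity'' to check against the $\sin(\pi/(1+\alpha_j))$ prefactor; it is forced by the way $d_j$ is \emph{defined}. Integrating (\ref{4.9}) over $\mathbb{R}^2$ shows $\zeta_{2,j}(y)\sim c\log|y|$ with $2\pi c=-\int_{\mathbb{R}^2}2\rho_1h_{p_j}(p_j)|y|^{2\alpha_j}e^{U_j'}(\zeta_{2,j}+Q+\tfrac12(e_jy_1+\zeta_{1,j})^2)$, and the flux identity used in (\ref{4.30}), namely $-\tfrac12\int_{\partial B_{r_0}}\partial_\nu\eta_j=\pi d_j\epsilon_j^2+O(\epsilon_j^4)$, is precisely the statement $c=-d_j$. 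Once you know $\zeta_{2,j}\sim-d_j\log|y|$, the leading term $-d_j\pi R^2\log R$ of $\int_{|y|<R}\zeta_{2,j}\,dy$ (with $R=r_0/\epsilon_j$, so $\log R\sim\lambda_j/(2(1+\alpha_j))$) exactly annihilates the $\pi r_0^2\,\tfrac{d_j}{2(1+\alpha_j)}\lambda_j\epsilon_j^2$ coming from the constant correction in $s_j$, leaving $O(\epsilon_j^2)=O(e^{-\lambda_j/(1+\alpha_j)})$. The explicit $\pi/((1+\alpha_j)\sin(\pi/(1+\alpha_j)))$ is indeed the value of the relevant Beta-type integral, but the cancellation would hold regardless of whether a closed form exists; what matters is the identification $c=-d_j$, not the numerical value. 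You should also record, as the paper does implicitly around (\ref{4.30}), that $\zeta_{1,j}$ is odd in $y_1$ because the forcing $e_jy_1$ in (\ref{4.8}) is odd and the solution is taken to respect that symmetry; this is what kills the $\epsilon_j^3\int\zeta_{1,j}$ term. With those points made precise, the proposal is a complete proof.
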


Now we are going to construct a good approximation of the blow up solution to (\ref{1.7}). For each
$$P=(p_1,p_2,\cdots,p_n),\Lambda=(\lambda_1,\lambda_{2},\cdots,\lambda_{n})\in\mathbb{R}^{n},A=(a_{1},a_{2},\cdots,a_{n})\in\mathbb{R}^{n},$$
we set
$$v_{P,\Lambda,A}=\sum_{j=1}^{n}a_{j}\big(v_{p_j}-\overline{v}_{p_j}\big),$$
where $v_{p_j}$ is constructed in (\ref{4.13}) and $\overline{v}_{p_j}$ denotes the average of $v_{p_j}$. We define
\begin{equation}
\label{4.14}
t_j=\lambda_j+G_j^*(p_j)+2\log\Big(\frac{\rho_*h_{p_j}(p_j)}{4(1+\alpha_j)^2}\Big)+\frac{d_j}{2(1+\alpha_j)}\lambda_je^{-\frac{\lambda_j}{1+\alpha_j}}
-\sum_{j=1}^{n}\overline{v}_{p_j}~\mathrm{for}~1\leq j\leq n.
\end{equation}
We recall the term $l(Q):$
\begin{align}
\label{4.15}
l(Q)=&\sum_{j\in J_2}\big(\frac{h_{p_j}(p_{j}^0)\rho_*}{4(1+\alpha_{j})^2}\big)^{\frac{1}{1+\alpha_{j}}}
e^{\frac{G_j^*(p_j^0)}{1+\alpha_{j}}}\big(\Delta\log(h_1^*e^{-\frac12w})(p_j^0)+2\rho_*-N^*-2K(p_j^0)\big).
\end{align}
In the proof, we assume
\begin{align}
\label{4-Q}
l(Q)\neq0,
\end{align}
and then define $\lambda(P)$ by
\begin{align}
\label{4.16}
\rho_1-\rho_*
=\frac{\pi^2}{(1+\alpha_{m+1})\sin\frac{\pi}{1+\alpha_{m+1}}}\big(\frac{4(1+\alpha_{m+1})^2}{\rho_*h_{p_{m+1}}(p_{m+1}^0)}\big)^
{\frac{2}{1+\alpha_{m+1}}}e^{-\frac{G_{m+1}^*(p_{m+1}^0)}{1+\alpha_{m+1}}}l(Q)e^{-\frac{\lambda(P)}{1+\alpha_{m+1}}}.
\end{align}

\begin{remark}
\label{re3}
We note that the main components in $l(Q)$ are
\begin{equation}
\label{4-1}
\Delta\log(h_1^*e^{-\frac12w})(p_j^0)+2\rho_*-N^*-2K(p_j^0)=\Delta\log h_1^*-\rho_2+2\rho_*-N^*-2K(p_j^0).
\end{equation}
For $h_1^*,h_2^*$, using Theorem \ref{th2.1}, we can find a dense set in $C^{2,\alpha}(M)\times C^{2,\alpha}(M)$ to make all the solutions $(P_w,w)$ of (\ref{1.12}) non-degenerate. Based on this choice, we can choose such $h_1^*$ that
\begin{align}
\label{4-2}
&\|\Delta\log h_1^*-\rho_2-2K\|_{L^{\infty}(M)}<|2\rho_*-N^*|,~&\mathrm{if}~2\rho_*-N^*\neq0,\nonumber\\
&\|\Delta\log h_1^*-2K\|_{L^{\infty}(M)}<\rho_2,~&\mathrm{if}~2\rho_*-N^*=0,
\end{align}
where $\rho_*$ and $N^*$ are given in (\ref{4.2}). In conclusion, we can always choose $h_1^*,h_2^*$ such that the solutions of (\ref{1.12}) are
non-degenerate and the term $l(Q)\neq0.$
\end{remark}

For each $P,\Lambda$, we define the following function space
\begin{align}
\label{4.17}
O_{P,\Lambda}^{(1)}=&\Big\{\phi\in\mathring{H}^1(M)~\Big{|}~\int_M\nabla\phi\cdot\nabla v_{p_j}=\int_M\nabla\phi\cdot\nabla\partial_{p_j}v_{p_j}
=\int_M\nabla\phi\cdot\nabla\partial_{\lambda_j}v_{p_j}=0\nonumber\\
&\mathrm{for}~1\leq j\leq m,~\mathrm{and}~\int_M\nabla\phi\cdot\nabla v_{p_j}=\int_M\nabla\phi\cdot\nabla\partial_{\lambda_{j}}v_{p_j}=0\nonumber\\
&\mathrm{for}~m+1\leq j\leq n\Big\}
\end{align}
and
\begin{equation}
\label{4.18}
O_{P,\Lambda}^{(2)}=\Big\{\psi\in W^{2,\mathfrak{p}}(M)~\Big{|}~\int_M\psi=0\Big\}.
\end{equation}
\vspace{0.20cm}

\noindent Let $c_0$ be a positive constant, which will be chosen later. By using (\ref{4.13}), (\ref{4.14}), (\ref{4.16}), (\ref{4.17}) and (\ref{4.18}),
we define
\begin{align}
\label{4.19}
S_{\rho_1}(Q,w)=&\Big\{v_1=\frac12v_{P,\Lambda,A}+\phi~\Big{|}~|p_j-p_j^0|\leq c_0e^{-\frac{\lambda(P)}{1+\alpha_{m+1}}}~
\mathrm{for}~1\leq j\leq m,
\nonumber\\
&|\lambda_{m+1}-\lambda(P)|\leq c_0\lambda(P)^{-1},~|t_j-t_1|\leq c_0e^{-\frac{\lambda(P)}{1+\alpha_{m+1}}}~\mathrm{for}~2\leq j\leq n,\nonumber\\
&|a_j-1|\leq c_0\lambda(P)^{-\frac12}e^{-\frac{\lambda(P)}{1+\alpha_{m+1}}}~\mathrm{for}~1\leq j\leq n,\nonumber\\
&\phi\in O_{P,\Lambda}^{(1)}~\mathrm{and}~\|\phi\|_{H^1(M)}\leq c_0e^{-\frac{\lambda(P)}{1+\alpha_{m+1}}}\Big\}
\end{align}
and
\begin{equation}
\label{4.20}
S_{\rho_2}(Q,w)=\Big\{v_2=\frac12w+\psi~\Big{|}~\psi\in O_{P,\Lambda}^{(2)}~\mathrm{and}~\|\psi\|_{*}\leq c_0
e^{-\frac{\lambda(P)}{1+\alpha_{m+1}}}\Big\},
\end{equation}
where $\|\psi\|_*=\|\psi\|_{W^{2,\mathfrak{p}}(M)},~\mathfrak{p}> 2.$\\

Next, we want to reduce the computation of the topological degree contributed from
$\big(S_{\rho_1}(Q,w)\times S_{\rho_2}(Q,w)\big)$ to a easier problem.

Set
\begin{align*}
T(v_1,v_2)=\left(\begin{array}{l}T_1(v_1,v_2)\\T_2(v_1,v_2)\end{array}\right)
=\Delta^{-1}\left(\begin{array}{l}
2\rho_1\big(\frac{h_1e^{2v_1-v_2}}{\int_{M}h_1e^{2v_1-v_2}}-1\big)\\
2\rho_2\big(\frac{h_2e^{2v_2-v_1}}{\int_Mh_2e^{2v_2-v_1}}-1\big)
\end{array}\right).
\end{align*}
Since each solution in $v_1$ in $S_{\rho_1}(Q,w)$ can be represented by $(P,\Lambda,A,\phi),$ each solution in $v_2$ in $S_{\rho_2}(Q,w)$ can be
represented by $w$ and $\psi.$ Therefore the nonlinear operator $2v_1+T_1(v_1,v_2)$ can be split according to this representation.

Let $v_1=\frac12v_{P,\Lambda,A}+\phi\in S_{\rho_1}(Q,w).$ Recall
$$t_j=s_j+8\pi\sum_{i\neq j}(1+\alpha_i)G(p_j,p_i)-\sum_{i=1}^{n}\overline{v}_{p_i}$$
and for $x\in B_{r_0}(p_j)$
\begin{align*}
v_{P,\Lambda,A}(x)+\log\frac{h_{p_j}(x)}{h_{p_j}(p_j)}=&U_j+t_j+\log(H_j(x-p_j,\eta_j)+1)+(a_j-1)(U_j+s_j)\\
&+8\pi\sum_{i\neq j}(1+\alpha_i)(a_i-1)G(p_j,p_i)+O(|a_j-1|(|y|+|\eta_j|)),
\end{align*}
where $y=x-p_j.$ The above together with (\ref{4.6}) implies

\begin{align}
\label{4.21}
\rho_1h_1e^{2v_1-v_2-2\phi+\psi}=&\rho_1he^{v_{P,\Lambda,A}}=\rho_1h_{p_j}(p_j)|y|^{2\alpha_j}e^{U_j+t_j}\Big[1+(a_j-1)(U_j+s_j)\nonumber\\
&+\sum_{i\neq j,i=1}^{n}8\pi(1+\alpha_i)(a_i-1)G(p_j,p_i)+\eta_j+\nabla_yH_j(0,0)\cdot y\nonumber\\
&+Q_j(y)+(a_j-1)(U_j+s_j)(\nabla_yH_j(0,0)\cdot y+\eta_j)\nonumber\\
&+\frac12(\eta_j+\nabla_yH_j\cdot y)^2+O(\tilde{\beta}_j)\Big],
\end{align}
where
$$\tilde{\beta}_j=\lambda_{m+1}^2\sum_{i=1}^{n}\big(|a_i-1|^2+|a_i-1|(|\eta_j|+|y|)\big)+|\eta_j|^3+|y|^3.$$
Therefore we have on $B_{r_0}(p_j)$
\begin{align}
\label{4.22}
\rho_1h_1e^{2v_1-v_2}=&(1+\varphi)\rho_1he^{v_{P,\Lambda,A}}+(e^{\varphi}-1-\varphi)\rho_1he^{v_{P,\Lambda,A}}
\nonumber\\
=&\rho_1h_{p_j}(p_j)|y|^{2\alpha_j}e^{U_j+t_j}\Big[1+(a_j-1)(U_j+s_j)+\eta_j+\nabla_yH_j\cdot y+Q_j(y)\nonumber\\
&+(a_j-1)(U_j+s_j)(\nabla_yH_j(0,0)\cdot y+\eta_j)+\frac12(\eta_j+\nabla_yH_j\cdot y)^2\nonumber\\
&+\sum_{i=1,i\neq j}^{n}8\pi(a_i-1)(1+\alpha_i)G(p_j,p_i)+\varphi\Big]+\tilde{E}_j,
\end{align}
where
\begin{equation}
\label{4.23}
\tilde{E}_j=(e^{\varphi}-1-\varphi)\rho_1he^{v_{P,\Lambda,A}}+\rho_1h_{p_j}(p_j)|y|^{2\alpha_j}e^{U_j+t_j}O(\varphi^2+\tilde{\beta}_j),
\end{equation}
and $\varphi=2\phi-\psi.$

Let $\epsilon_2>0$ be small. $\tilde{E}_j$ can be written into two parts
$$\tilde{E}_j=\tilde{E}^+_j+\tilde{E}^-_j,$$
where
\begin{align*}
\tilde{E}_j^+=\left\{\begin{array}{lll}
\tilde{E}_j~&\mathrm{if}~&|\varphi|\geq\epsilon_2\\
0~&\mathrm{if}~&|\varphi|<\epsilon_2,
\end{array}\right.\quad
\tilde{E}_j^-=\left\{\begin{array}{lll}
0&\mathrm{if}~&|\varphi|\geq\epsilon_2\\
\tilde{E}_j~&\mathrm{if}~&|\varphi|<\epsilon_2.
\end{array}\right.
\end{align*}
Then
\begin{equation}
\label{4.24}
\tilde{E}^+_j=O(e^{|\varphi|+2\lambda_j})~\mathrm{if}~|\varphi|\geq\epsilon_2,
\end{equation}
and
\begin{equation}
\label{4.25}
\tilde{E}^-_j=\rho_1he^{U_j+\lambda_j}|y|^{2\alpha_j}O(\varphi^2+\tilde{\beta}_j).
\end{equation}
Using the expression for $\rho_1h_1e^{2v_1-v_2}$ above, we obtain the following estimate for $\int_M\rho_1h_1e^{2v_1-v_2}.$
\vspace{0.25cm}

\begin{lemma}
\label{le4.3}
Let $\rho_*=\sum_{j=1}^{n}4\pi(1+\alpha_j)$, $v_1=\frac12v_{P,\Lambda,A}+\phi\in S_{\rho_1}(Q,w)~\mathrm{and}~v_2=\frac{1}{2}w+\psi\in S_{\rho_2}(Q,w).$
Then as
$\rho_1\rightarrow \rho_*,\rho_2\notin \Sigma_2,$ we have
\begin{align}
\label{4.26}
\int_{M}\rho_1h_1e^{2v_1-v_2}=&\sum_{j=1}^{n}4\pi(1+\alpha_j)e^{t_j}(1-\psi(p_j))
+\sum_{j\in J_2}\pi d_je^{t_j}e^{-\frac{\lambda_j}{1+\alpha_j}}\nonumber\\
&+\sum_{j=1}^{n}8\pi(1+\alpha_j)\lambda_j(a_j-1)e^{t_j}
+O(\sum_{j=1}^{n}|a_j-1|e^{\lambda_1})\nonumber\\
&+O(e^{\lambda_{m+1}-\frac{\lambda_{m+1}}{1+\alpha_{m+1}}-\varepsilon\lambda_{m+1}})
\end{align}
for some $\varepsilon>0.$
\end{lemma}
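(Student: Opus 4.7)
The plan is to split $M$ into the balls $B_{r_0}(p_j)$ and their complement, and to evaluate $\int_M\rho_1h_1e^{2v_1-v_2}$ on each piece using the pointwise representation (\ref{4.22}). On $M\setminus\bigcup_j B_{r_0}(p_j)$, Lemma \ref{le4.1} gives $v_{P,\Lambda,A}=\sum_j 8\pi(1+\alpha_j)G(\cdot,p_j)+O(\lambda_{m+1}e^{-\lambda_{m+1}/(1+\alpha_{m+1})})$, so $\rho_1he^{v_{P,\Lambda,A}}$ is uniformly bounded there; splitting into the \emph{good set} $\{|\varphi|<\epsilon_2\}$ (handled by Taylor expansion of $e^{\varphi}$) and the \emph{bad set} $\{|\varphi|\geq\epsilon_2\}$ (of measure $O(e^{-c\epsilon_2^2/\|\varphi\|_{H^1}^2})$ by Moser--Trudinger), the whole contribution from the complement is absorbed into the stated error $O(e^{\lambda_{m+1}-\lambda_{m+1}/(1+\alpha_{m+1})-\varepsilon\lambda_{m+1}})$.

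On each $B_{r_0}(p_j)$ I would integrate the terms of (\ref{4.22}) against $\rho_1h_{p_j}(p_j)|y|^{2\alpha_j}e^{U_j+t_j}$ one at a time. Rescaling $z=\epsilon_j^{-1}y$ and using the identity $-\Delta U_j=2\rho_1h_{p_j}(p_j)|y|^{2\alpha_j}e^{U_j}$ yields
\begin{equation*}
\int_{B_{r_0}(p_j)}\rho_1h_{p_j}(p_j)|y|^{2\alpha_j}e^{U_j+t_j}\,dy=4\pi(1+\alpha_j)e^{t_j}+O\bigl(e^{t_j-\lambda_j/(1+\alpha_j)}\bigr),
\end{equation*}
which accounts for the leading $4\pi(1+\alpha_j)e^{t_j}$. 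The $(a_j-1)(U_j+s_j)$ term contributes $8\pi(1+\alpha_j)\lambda_j(a_j-1)e^{t_j}$, up to a logarithmic correction absorbed into $O(|a_j-1|e^{\lambda_1})$. The linear pieces $\nabla_yH_j(0,0)\cdot y$ and $\epsilon_j\zeta_{1,j}(\epsilon_j^{-1}y)$ integrate to zero to leading order by the radial symmetry of $e^{U_j}$ paired with the odd symmetry built into (\ref{4.8}), while the quadratic pieces $Q(y)+\tfrac12(e_jy_1+\eta_j)^2$ together with $\epsilon_j^2\zeta_{2,j}(\epsilon_j^{-1}y)$ combine, via integration by parts using (\ref{4.9}) and the explicit evaluation of a radial Beta-type integral that produces the factor $\pi/\bigl((1+\alpha_j)\sin\tfrac{\pi}{1+\alpha_j}\bigr)$ appearing in $d_j$, to give precisely $\pi d_je^{t_j}e^{-\lambda_j/(1+\alpha_j)}$ for $j\in J_2$ (and zero for $j\in J_1$).

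The contribution from $e^{\varphi}-1\approx \varphi=2\phi-\psi$ is split accordingly. The $-\psi$ piece yields $-\sum_j 4\pi(1+\alpha_j)e^{t_j}\psi(p_j)$ by Taylor expansion of $\psi\in W^{2,\mathfrak{p}}(M)\hookrightarrow C^{0,\gamma}(M)$ around each $p_j$, with the first-order remainder controlled by $\|\psi\|_*\cdot\epsilon_j^{2(1+\alpha_j)\gamma}$; this combines with the leading constant to produce the factor $(1-\psi(p_j))$. The $2\phi$ piece vanishes to leading order by the orthogonality $\int_M\nabla\phi\cdot\nabla v_{p_j}=0$ built into $O^{(1)}_{P,\Lambda}$: inside $B_{r_0}(p_j)$ we have $-\Delta v_{p_j}\approx 2\rho_1h_{p_j}(p_j)|y|^{2\alpha_j}e^{U_j}$, so integration by parts converts $\int\rho_1he^{v_{P,\Lambda,A}}\phi$ into $\tfrac12\int_M\nabla v_{p_j}\cdot\nabla\phi=0$ plus lower-order corrections that fit the error.

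The main obstacle I anticipate is the careful bookkeeping of the residual term $\tilde{E}_j=\tilde{E}_j^++\tilde{E}_j^-$ so that it fits into $O(\sum_j|a_j-1|e^{\lambda_1})+O(e^{\lambda_{m+1}-\lambda_{m+1}/(1+\alpha_{m+1})-\varepsilon\lambda_{m+1}})$. On the bad set one must integrate $\tilde{E}_j^+=O(e^{|\varphi|+2\lambda_j})$ against a set of exponentially small measure via Moser--Trudinger; on the good set the cubic-type remainder $\varphi^2+\tilde\beta_j$ of (\ref{4.25}) must be matched against the a priori sizes $|a_j-1|,|p_j-p_j^0|,\|\phi\|_{H^1},\|\psi\|_*,|\eta_j|$ prescribed by the definition of $S_{\rho_1}(Q,w)\times S_{\rho_2}(Q,w)$ to verify that a small $\varepsilon>0$ indeed suffices.
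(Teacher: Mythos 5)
Your overall strategy is essentially the paper's: split $M$ into the balls $B_{r_0}(p_j)$ and their complement, expand $\rho_1 h_1 e^{2v_1-v_2}$ via (\ref{4.22}) on each ball, integrate term by term (leading term, $(a_j-1)(U_j+s_j)$, the $\eta_j$/$H_j$/$Q_j$ block, $\varphi=2\phi-\psi$), use the orthogonality $\phi\in O^{(1)}_{P,\Lambda}$ together with an integration by parts against $\Delta v_{p_j}$ for the $\phi$ contribution, Taylor-expand $\psi$ at $p_j$ for the $\psi$ contribution, and handle $\tilde E_j^{\pm}$ by the $\{|\varphi|\gtrless\epsilon_2\}$ split plus Moser--Trudinger. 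The extraction of $\pi d_j e^{t_j}e^{-\lambda_j/(1+\alpha_j)}$ by converting the $\eta_j$-block into a boundary integral via the correction ODEs is also the same mechanism, though the paper integrates (\ref{4.11}) for $\eta_j$ directly rather than (\ref{4.9}); either route works.

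One stated bound, however, would sink the proof if taken literally. You write
\begin{align*}
\int_{B_{r_0}(p_j)}\rho_1 h_{p_j}(p_j)|y|^{2\alpha_j}e^{U_j+t_j}\,dy=4\pi(1+\alpha_j)e^{t_j}+O\bigl(e^{t_j-\lambda_j/(1+\alpha_j)}\bigr).
\end{align*}
But for $j\in J_2$ one has $t_j=\lambda_j+O(1)=\lambda_{m+1}+O(1)$ and $\alpha_j=\alpha_{m+1}$, so $e^{t_j-\lambda_j/(1+\alpha_j)}\sim e^{\lambda_{m+1}-\lambda_{m+1}/(1+\alpha_{m+1})}$, which is \emph{exactly} the size of the $\pi d_j e^{t_j}e^{-\lambda_j/(1+\alpha_j)}$ term the lemma is extracting and carries no $\varepsilon$-gain; with this error (\ref{4.26}) would be vacuous. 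The rescaling $y=\epsilon_j z$ you invoke in fact gives a much smaller tail: the rescaled integrand decays like $|z|^{-4-2\alpha_j}$, so the contribution of $|z|>r_0/\epsilon_j$ is $O(e^{-\lambda_j})$, and the error is $O(e^{t_j-\lambda_j})=O(1)$, as recorded in (\ref{4.28}) of the paper and as required for the remainder bookkeeping to close.
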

\begin{proof}
We note that $\lambda_{m+1}=\lambda_j+O(1)$ and $t_j=\lambda_j+O(1).$ By (\ref{4.22})
\begin{align*}
\int_{M}\rho_1h_1e^{2v_1-v_2}=&\sum_j\int_{B_{r_0}(p_j)}\{\rho_1h_{p_j}(p_j)|y|^{2\alpha_j}e^{U_j+t_j}[1+\cdots]+\tilde{E}_j\}\mathrm{d}y\nonumber\\
&+\int_{M\setminus\bigcup_jB_{r_0}(p_j)}\rho_1h_1e^{2v_1-v_2}.
\end{align*}
By the explicit expression of $U_j,$ we have
\begin{equation}
\label{4.27}
\int_{M\setminus\bigcup_jB_{r_0}(p_j)}\rho_1h_1e^{2v_1-v_2}=O(1),
\end{equation}
\begin{equation}
\label{4.28}
\int_{B_{r_0}(p_j)}\rho_1h_{p_j}(p_j)|y|^{2\alpha_j}e^{U_j+t_j}\mathrm{d}y=4\pi(1+\alpha_j)e^{t_j}+O(1),
\end{equation}
\begin{align}
\label{4.29}
\int_{B_{r_0}(p_j)}\rho_1h_{p_j}(p_j)|y|^{2\alpha_j}e^{U_j+t_j}(a_j-1)(U_j+s_j)\mathrm{d}y=&8\pi(1+\alpha_j)(a_j-1)\lambda_je^{t_j}\nonumber\\
&+O(|a_j-1|e^{\lambda_j}),
\end{align}
where $U_j+s_j=2\lambda_j-2\log\big(1+\frac{\rho_1h_{p_j}(p_j)}{4(1+\alpha_j)^2}e^{\lambda_j}|y|^{2(1+\alpha_j)}\big)+O(1)$ is used.
By equation (\ref{4.11}) of $\eta_j$ for $j\in J\setminus J_1$, and the fact $\zeta_{2,j}(y)y_1$ and $\zeta_{2,j}(y)\zeta_{1,j}(y)$ are odd functions, we have
\begin{align}
\label{4.30}
\int_{B_{r_0}(p_j)}&\rho_1h_j(p_j)|y|^{2\alpha_j}e^{U_j+t_j}\Big(\eta_j+\nabla_yH_j\cdot y+Q_j+\frac12(\eta_j+\nabla_yH_j\cdot y)^2\Big)dy\nonumber\\
&=-\frac12e^{t_j}\int_{B_{r_0}(p_j)}\Delta\eta_j\mathrm{d}y+O(e^{t_j}e^{-\frac{2\lambda_j}{1+\alpha_j}})\nonumber\\
&=-\frac12e^{t_j}\int_{\partial B_{r_0}(p_j)}\frac{\partial\eta_j}{\partial\nu}+O(e^{t_j}e^{-\frac{2\lambda_j}{1+\alpha_j}})\nonumber\\
&=\pi d_je^{t_j}e^{-\frac{\lambda_j}{1+\alpha_j}}+O(e^{t_j}e^{-\frac{2\lambda_j}{1+\alpha_j}}).
\end{align}
It is not difficult to see
\begin{equation}
\label{4.31}
\int_{B_{r_0}(p_j)}|y|^{2\alpha_j}e^{U_j+t_j}(a_i-1)G(p_j,p_i)\mathrm{d}y=O(\sum_{i=1}^{n}|a_i-1|e^{\lambda_j}).
\end{equation}
\begin{align}
\label{4.32}
\int_{B_{r_0}(p_j)}|y|^{2\alpha_j}e^{U_j+t_j}(a_j-1)&[(U_j+s_j)+1+\eta_j]\nabla_yH_j(0,0)\cdot y\mathrm{d}y\nonumber\\
&=O(|a_j-1|e^{\lambda_{m+1}-\frac{\lambda_{m+1}}{1+\alpha_{m+1}}}),
\end{align}
where the cancelation occurs due to the oddness of $\nabla_yH_j(0,0)\cdot y.$\\

To estimate the terms involving $\phi$ and $\psi$, we use (\ref{4.13}) to obtain
\begin{align*}
\Delta v_{p_j}=\Delta U_j+\Delta\eta_j+8\pi(1+\alpha_j)~\mathrm{for}~x\in B_{r_0}(p_j),
\end{align*}
and
\begin{align*}
\Delta v_{p_j}=\Delta(v_{p_j}-8\pi(1+\alpha_j)G(x,p_j))+8\pi(1+\alpha_j)~\mathrm{for}~x\notin B_{r_0}(p_j).
\end{align*}
This together with Lemma \ref{le4.1} implies
\begin{align}
\label{4.33}
\int_{B_{r_0}(p_j)}2\rho_1h_{p_j}(p_j)|y|^{2\alpha_j}e^{U_j}\phi=
&-\int_M\phi\Delta v_{p_j}+\int_{B_{r_0}(p_j)}\phi\Delta\eta_j+8\pi(1+\alpha_j)\int_M\phi\nonumber\\
&+\int_{M\setminus B_{r_0}(p_j)}\phi\Delta(v_{p_j}-8\pi(1+\alpha_j)G(x,p_j))\nonumber\\
=&\int_M\nabla\phi\nabla v_{p_j}+8\pi(1+\alpha_j)\int_M\phi+\int_{B_{r_0}(p_j)}\phi\Delta\eta_j\nonumber\\
&+\int_{M\setminus B_{r_0}(p_j)}\phi\Delta(v_{p_j}-8\pi(1+\alpha_j)G(x,p_j)).
\end{align}
To estimate $\int_{B_{r_0}(p_j)}\phi\Delta\eta_j,$ we choose $r_0'\in(\frac{r_0}{2},r_0)$ such that
\begin{align*}
\int_{\partial B_{r_0'}(p_j)}|\phi|\mathrm{d}\sigma\leq \frac{2}{r_0}\int_{B_{r_0}(p_j)}|\phi|\mathrm{d}x.
\end{align*}
Hence
\begin{align*}
\big|\int_{\partial B_{r_0'}(p_j)}\phi\frac{\partial\eta_j}{\partial\nu}\mathrm{d}\sigma\big|\leq C\max_{\partial B_{r_0'}(p_j)}
|\frac{\partial\eta_j}{\partial\nu}|\|\phi\|_{H^1}=O(e^{-\frac{\lambda_{m+1}}{1+\alpha_{m+1}}})\|\phi\|_{H^1},
\end{align*}
where by (\ref{4.11}),
\begin{align*}
-\int_{\partial B_{r_0'}(p_j)}\frac{\partial\eta_j}{\partial\nu}=
\int_{B_{r_0'}(p_j)}2\rho_1h_{p_j}(p_j)|y|^{2\alpha_j}e^{U_j}\tilde{H}(x,\eta_j)\mathrm{d}x
=O(e^{-\frac{\lambda_{m+1}}{1+\alpha_{m+1}}}).
\end{align*}
Thus
\begin{align*}
\int_{B_{r_0}(p_j)}\phi\Delta\eta_j=&\int_{B_{r_0'}(p_j)}\phi\Delta\eta_j
+O(e^{-\frac{\lambda_{m+1}}{1+\alpha_{m+1}}})\int_M|\phi|\\
=&-\int_{B_{r_0'}(p_j)}\nabla\phi\nabla\eta_j+O(e^{-\frac{\lambda_{m+1}}{1+\alpha_{m+1}}})\|\phi\|_{H^1}\\
\leq&~(\int_{B_{r_0'}(p_j)}|\nabla\phi|^2)^{\frac12}(\int_{B_{r_0'}(p_j)}|\nabla\eta_j|^2)^{\frac12}+O(e^{-\lambda_{m+1}})\int_M|\phi|\\
=&O(e^{-\frac{\lambda_{m+1}}{2(1+\alpha_{m+1})}})\|\phi\|_{H^1}.
\end{align*}
By (\ref{7.2}), we have for some $\epsilon>0$
\begin{align}
\label{4.34}
\Big|\int_{B_{r_0}(p_j)}\rho_1h_{p_j}(p_j)|y|^{2\alpha_j}e^{U_j}\phi\Big|=O(e^{-\epsilon\lambda_{m+1}})\|\phi\|_{H^1}.
\end{align}
While, for the terms involving $\psi,$ we have
\begin{align}
\label{4.35}
\int_{B_{r_0}(p_j)}\rho_1h_j(p_j)|y|^{2\alpha_j}e^{U_j}\psi=&\int_{B_{r_0}(p_j)}\rho_1h_{p_j}(p_j)|y|^{2\alpha_j}e^{U_j}\psi(p_j)\nonumber\\
&+\int_{B_{r_0}(p_j)}\rho_1h_{p_j}(p_j)|y|^{2\alpha_j}e^{U_j}(\psi-\psi(p_j))\nonumber\\
=&4\pi(1+\alpha_j)\psi(p_j)+O(e^{-\frac{3}{2(1+\alpha_{m+1})}\lambda_j}).
\end{align}

For $\tilde{E}_j^+,$ we have
\begin{align*}
\int_{B_{r_0}(p_j)}|\tilde{E}_j^+|=&O(1)\int_{B_{r_0}(p_j)\cap\{|\varphi-\overline{\varphi}|\geq\epsilon_2\}}e^{|\varphi|+2\lambda_j}\\
=&O(1)\int_{B_{r_0}(p_j)\cap\{|\varphi-\overline{\varphi}|\geq\epsilon_2\}}e^{|\varphi-\overline{\varphi}|+2\lambda_j},
\end{align*}
where $\overline{\varphi}=\frac{\int_{B_{r_0}(p_j)}\varphi}{\mathrm{vol}(B_{r_0}(p_j))}=O(\|\varphi\|_{H^1})=O(e^{-\frac{\lambda_{m+1}}{1+\alpha_{m+1}}})$
if $\lambda_{m+1}$ is large. Write
$$e^{|\varphi-\overline{\varphi}|}=e^{|\varphi-\overline{\varphi}|(1-\frac{4\pi|\varphi-\overline{\varphi}|}{\|\varphi-\overline{\varphi}\|^2})}
e^{\frac{4\pi|\varphi-\overline{\varphi}|^2}{\|\varphi-\overline{\varphi}\|^2}}.$$
Since $\|\varphi-\overline{\varphi}\|^{-2}=\|\varphi-\overline{\varphi}\|_{H^1}^{-2}\gg2\lambda_j,$ we have
\begin{align*}
e^{|\varphi-\overline{\varphi}|(1-\frac{4\pi|\varphi-\overline{\varphi}|}{\|\varphi-\overline{\varphi}\|^2})}\leq
e^{\frac{\epsilon_2}{2}(1-\frac{2\pi\epsilon_2}{\|\varphi-\overline{\varphi}\|^2})}\ll e^{-2\lambda_j}~\mathrm{for}~\|\varphi-\overline{\varphi}\|
\geq\frac{\epsilon_2}{2}.
\end{align*}
Hence, by Moser-Trudinger inequality
\begin{align}
\label{4.36}
\int_{B_{r_0}(p_j)\cap\{|\varphi-\overline{\varphi}|\geq\frac{\epsilon_2}{2}\}}e^{|\varphi-\overline{\varphi}|}\leq e^{-2\lambda_j}
\int_{B_{r_0}(p_j)}\exp\big(\frac{4\pi|\varphi-\overline{\varphi}|^2}{\|\varphi-\overline{\varphi}\|^2}\big)
\leq O(1)e^{-2\lambda_j},
\end{align}
which implies $\int_{B_{r_0}(p_j)}|\tilde{E}_j^+|\leq O(1).$

For $\tilde{E}_j^-,$ (\ref{4.25}) gives
\begin{equation}
\label{4.37}
\int_{B_{r_0}(p_j)}|\tilde{E}_j^-|\leq O(1)\int_{B_{r_0}(p_j)}(|\varphi|^2+\tilde{\beta}_j)\rho_1h_{p_j}(p_j)|y|^{2\alpha_j}e^{U_j+t_j}.
\end{equation}

\noindent By (\ref{7.3}) in section 8, we can estimate the first term on the right hand side of (\ref{4.37}) by
\begin{align*}
\int_{B_{r_0}(p_j)}\rho_1h_j(p_j)|y|^{2\alpha_j}e^{U_j+t_j}|\varphi|^2\leq& O(1)e^{t_j}
\Big{(}\int_{M}|\nabla\phi|^2+\|\psi\|_{*}^2\int_{B_{r_0}(p_j)}|y|^{2\alpha_j}e^{U_j}\Big{)}\\
=&O(1)e^{t_j}e^{-2\frac{\lambda_{m+1}}{1+\alpha_{m+1}}}=O(1)e^{\lambda_{m+1}-\frac{\lambda_{m+1}}{1+\alpha_{m+1}}-\epsilon_1\lambda_{m+1}}.
\end{align*}

Since $v_1\in S_{\rho_1}(Q,w)$, the term related to $\tilde{\beta}_j$ can be estimated as follows
\begin{align*}
&\int_{B_{r_0}(p_j)}\rho_1h_{p_j}(p_j)|y|^{2\alpha_j}e^{U_j+t_j}\tilde{\beta}_j\\
&\quad=O(1)e^{t_j}\Big{(}\sum_{i=1}^{n}\lambda_{m+1}^2(|a_i-1|)^2+\int_{B_{r_0}(p_j)}[|\eta_j|^3+|y|^3+|a_i-1|(|\eta_i|+|y|)]e^{U_j}\mathrm{d}y\Big{)}\\
&\quad=O(e^{t_j}e^{-\frac{\lambda_{m+1}}{1+\alpha_{m+1}}-\epsilon\lambda_{m+1}})
\end{align*}
for some $\epsilon>0$ and large $\lambda_{m+1}.$ Therefore, we have
\begin{equation}
\label{4.38}
\int_{B_{r_0}(p_j)}|\tilde{E}_j^-|=O(1)e^{\lambda_{m+1}-\frac{\lambda_{m+1}}{1+\alpha_{m+1}}-\epsilon\lambda_{m+1}}.
\end{equation}
By (\ref{4.22}) and (\ref{4.27})-(\ref{4.38}), we obtain (\ref{4.26}). Hence we finish the proof of Lemma \ref{le4.3}.
\end{proof}
\vspace{0.5cm}

Now we want to express $2v_1+T_1(v_1,v_2)$ in a formula similar to (\ref{4.22}). By Lemma \ref{le4.3},
we expect that $\frac{e^{t_j}}{\int_Mh_1e^{2v_1-v_2}}-1$ is small. Indeed, by definition of
$S_{\rho_1}(Q,w),$
$$|t_j-t_1|=O(1)e^{-\frac{\lambda_{m+1}}{1+\alpha_{m+1}}}.$$

By Lemma \ref{le4.3} and the Taylor expansion of the exponential function,
\begin{align}
\label{4.39}
e^{-t_j}\int_M\rho_1h_1e^{2v_1-v_2}
=&\rho_*+\sum_{i=1}^{n}4\pi(1+\alpha_i)(t_i-t_j-\psi(p_i))\nonumber\\
&+\sum_{i=1}^{n}\pi d_1e^{-\frac{\lambda_i}{1+\alpha_i}}+\sum_{i=1}^{n}8\pi(1+\alpha_i)\lambda_i(a_i-1)\nonumber\\
&+O(|a_i-1|)+O(e^{-\frac{\lambda_{m+1}}{1+\alpha_{m+1}}-\epsilon\lambda_{m+1}}).
\end{align}
Hence
\begin{align}
\label{4.40}
\frac{e^{t_j}}{\int_Mh_1e^{2v_1-v_2}}-1=&\frac{1}{e^{-t_j}\int_M\rho_1h_1e^{2v_1-v_2}}\big(\rho_1-\frac{\int_M\rho_1h_1e^{2v_1-v_2}}{e^{t_j}}\big)
\nonumber\\
=&\theta_j+O(|a_i-1|)+O(e^{-\frac{\lambda_{m+1}}{1+\alpha_{m+1}}-\epsilon\lambda_{m+1}}),
\end{align}
where $\theta_j$ is defined by
\begin{align}
\label{4.41}
\theta_j=&\frac{1}{\rho_*}\Big[(\rho_1-\rho_*)-\sum_{i=1}^{n}\pi d_ie^{-\frac{\lambda_i}{1+\alpha_i}}-\sum_{i=1}^{n}4\pi(1+\alpha_i)(t_i-t_j-\psi(p_i))
\nonumber\\
&-\sum_{i=1}^{n}8\pi(1+\alpha_i)\lambda_i(a_i-1)\Big].
\end{align}
Let
\begin{align}
\label{4.42}
\beta_j=\big|\frac{e^{t_j}}{\int_Mh_1e^{2v_1-v_2}}-1\big|^2+\tilde{\beta}_j,
\end{align}
and
\begin{align}
\label{4.43}
E_j=(e^{\varphi}-1-\varphi)\frac{2\rho_1h_1e^{2v_1-v_2}}{\int_Mh_1e^{2v_1-v_2}}+2\rho_1h_{p_j}(p_j)|y|^{2\alpha_j}e^{U_j}(O(\varphi^2)+O(\beta_j)).
\end{align}
Then in $B_{r_0}(p_j),$ we have by (\ref{4.22}),
\begin{align}
\label{4.44}
\frac{\rho_1h_1e^{2v_1-v_2}}{\int_Mh_1e^{2v_1-v_2}}=&
(1+\varphi)\frac{\rho_1he^{v_{P,\Lambda,A}}}{\int_Mh_1e^{2v_1-v_2}}
+(e^{\varphi}-1-\varphi)\frac{\rho_1he^{v_{P,\Lambda,A}}}{\int_Mh_1e^{2v_1-v_2}}\nonumber\\
=&\rho_1h_{p_j}(p_j)|y|^{2\alpha_j}e^{U_j}\Big[1+\big(\frac{e^{t_j}}{\int_Mh_1e^{2v_1-v_2}}-1\big)+(a_j-1)(U_j+s_j)\nonumber\\
&+\sum_{i=1,i\neq j}^n8\pi(1+\alpha_i)(a_i-1)G(p_j,p_i)+\eta_j+\nabla_yH_j\cdot y\nonumber\\
&+(a_j-1)(U_j+s_j)(\nabla_yH_j\cdot y+\eta_j)+\frac12(\eta_j+\nabla_yH_j\cdot y)^2\nonumber\\
&+Q_j(y)+\varphi\Big]+E_j.
\end{align}
Thus, we have in $B_{r_0}(p_j),$
\begin{align}
\label{4.45}
\Delta(2v_1+T_1(v_1,v_2))=&2\Delta v_1+\frac{2\rho_1h_1e^{2v_1-v_2}}{\int_Mh_1e^{2v_1-v_2}}-2\rho_1\nonumber\\
=&a_j(\Delta U_j+\Delta\eta_j)+2\Delta\phi+\sum_{j=1}^{n}8\pi(1+\alpha_i)a_i
+\frac{2\rho_1h_1e^{2v_1-v_2}}{\int_Mh_1e^{2v_1-v_2}}-2\rho_1\nonumber\\
=&-2a_j\rho_1h_{p_j}(p_j)|y|^{2\alpha_j}e^{U_j}\Big[1+\eta_j+\nabla_yH_j\cdot y+Q_j(y)\nonumber\\
&-\epsilon_0^2\zeta_{2,j}(\epsilon_0^{-1}y)(\epsilon_0\zeta_{1,j}(\epsilon_0^{-1}y)+\nabla_yH_j\cdot y)-\frac{\epsilon_0^4}{2}\zeta_{2,j}^2(\epsilon_0^{-1}y)
\nonumber\\
&+\frac12(\eta_j+\nabla_yH_j\cdot y)^2\Big]+2\Delta\phi+[\sum_{j=1}^{n}8\pi(1+\alpha_j)-2\rho_1]\nonumber\\
&+\sum_{j=1}^{n}8\pi(1+\alpha_j)(a_j-1)+\frac{2\rho_1h_1e^{2v_1-v_2}}{\int_Mh_1e^{2v_1-v_2}}\nonumber\\
=&2\Delta\phi+(2\rho_*-2\rho_1)+\sum_{j=1}^{n}8\pi(1+\alpha_j)(a_j-1)\nonumber\\
&+2\rho_1h_{p_j}(p_j)|y|^{2\alpha_j}e^{U_j}\Big[(U_j+s_j)(a_j-1)(\nabla_yH_j\cdot y+\eta_j)\nonumber\\
&+(a_j-1)(U_j+s_j-1)+\sum_{i\neq j}8\pi(1+\alpha_i)(a_i-1)G(p_j,p_i)\nonumber\\
&+(\frac{e^{t_j}}{\int_Mh_1e^{2v_1-v_2}}-1)+\varphi\Big]+\hat{E}_j,
\end{align}
where $\epsilon_0=e^{-\frac{\lambda_{m+1}}{2(1+\alpha_{m+1})}}$ and
$$\hat{E}_j=E_j+2a_j\rho_1h_{p_j}(p_j)|y|^{2\alpha_j}e^{U_j}
\Big[\epsilon_0^2\zeta_{2,j}(\epsilon_0^{-1}y)(\eta_j+\nabla_yH_j\cdot y)+\frac{\epsilon_0^4}{2}
\zeta_{2,j}^2(\epsilon_0^{-1}y)\Big].$$

\noindent On $B_{2r_0}(p_j)\setminus B_{r_0}(p_j),$ since $v_{p_j}-\overline{v}_{p_j}-8\pi(1+\alpha_j)G(x,p_j)$ is small, we write
$\Delta(2v_1+T_1(v_1,v_2))$ as
\begin{align}
\label{4.46}
&\Delta(2v_1+T_1(v_1,v_2))\nonumber\\
=&2\Delta\phi+a_j\Delta(v_{p_j}-8\pi(1+\alpha_j)G(x,p_j))\nonumber\\
&+2\rho_*-2\rho_1+\sum_{j=1}^{n}8\pi(1+\alpha_j)(a_j-1)\nonumber\\
&+\frac{2\rho_1h}{\int_Mh_1e^{2v_1-v_2}}
e^{a_j(v_{p_j}-\overline{v}_{p_j}-8\pi(1+\alpha_j)G(x,p_j))+\sum_{i=1}^{n}8\pi(1+\alpha_i)a_i G(x,p_i)+\varphi}.
\end{align}

\noindent On $M\setminus\bigcup_jB_{2r_0}(p_j),$ we have
\begin{align}
\label{4.47}
\Delta(2v_1+T_1(v_1,v_2))=&2\Delta\phi+2\rho_*-2\rho_1+\sum_{j=1}^{n}8\pi(1+\alpha_j)(a_j-1)\nonumber\\
&+\frac{2\rho_1h}{\int_{M}h_1e^{2v_1-v_2}}e^{\sum_{i=1}^{n}8\pi(1+\alpha_i)a_iG(x,p_i)+\varphi}.
\end{align}

From (\ref{4.45})-(\ref{4.47}), we have the following
\begin{lemma}
\label{le4.4}
Let $v_1=\frac12v_{P,\Lambda,A}+\phi\in S_{\rho_1}(Q,w),$ $v_2=\frac12w+\phi$. Then as $\rho_1\rightarrow \rho_*=\sum_{j=1}^{n}4\pi(1+\alpha_j),$
\begin{enumerate}
      \item
      \begin{align}
      \label{4.48}
      \l\nabla(2v_1+T_1(v_1,v_2)),\nabla\phi_1\r=2\mathfrak{B}(\phi,\phi_1)+O(e^{-\frac{\lambda_{m+1}}{1+\alpha_{m+1}}})\|\phi_1\|_{H^1_0(M)},
      \end{align}
      where
      $$\mathfrak{B}(\phi,\phi_1):=\int_{M}\nabla\phi\cdot\nabla\phi_1-\sum_j\int_{B_{r_0}(p_j)}2\rho_1h_{p_j}(p_j)|y|^{2\alpha_j}e^{U_j}\phi\phi_1,$$
      \item for $1\leq j\leq m,$
      \begin{align}
      \label{4.49}
      \l\nabla(&2v_1+T_1(v_1,v_2)),\nabla\partial_{p_j}v_{p_j}\r\nonumber\\
      =&-8\pi\nabla_yH_j(0,0)+8\pi\nabla\psi(p_j)+O\Big(|\frac{e^{t_j}}{\int_{M} h_1e^{2v_1-v_2}}-1-\psi(p_j)|\nonumber\\
      &+|a_j-1|\lambda_j+e^{-\frac{\lambda_{m+1}}{1+\alpha_{m+1}}}\Big),
      \end{align}
      \item for $1\leq j\leq n$,
      \begin{align}
      \label{4.50}
      \l\nabla(2v_1+&T_1(v_1,v_2)),\nabla\partial_{\lambda_j}v_{p_j}\r\nonumber\\
      =&-16\pi(1+\alpha_j)(a_j-1)\lambda_j-8\pi(1+\alpha_j)(\theta_j-\psi(p_j))\nonumber\\
        &+O(\max_i|a_i-1|+e^{-\frac{\lambda_{m+1}}{1+\alpha_{m+1}}-\epsilon\lambda_{m+1}}),
      \end{align}
      \item for $1\leq j\leq n$,
      \begin{align}
      \label{4.51}
      \l\nabla(2v_1+&T_1(v_1,v_2)),\nabla v_{p_j}\r\nonumber\\
      =&\Big(2\lambda_j-1+8\pi(1+\alpha_j)R(p_j,p_j)+2\log\frac{\rho_1h_{p_j}(p_j)}{4(1+\alpha_j)^2}\Big)\nonumber\\
        &\times\l\nabla(2v_1+T_1(v_1,v_2)),\nabla\partial_{\lambda_j}v_{p_j}\r+16\pi(1+\alpha_j)(a_j-1)\lambda_j\nonumber\\
        &+8\pi(1+\alpha_j)\sum_{i\neq j}G(p_i,p_j)\l\nabla(2v_1+T_1(v_1,v_2)),\nabla\partial_{\lambda_i}v_{p_i}\r\nonumber\\
      &+O(1)\|\phi\|_{H^1(M)}+O(e^{-\frac{\lambda_{m+1}}{1+\alpha_{m+1}}}),
      \end{align}
\end{enumerate}
\end{lemma}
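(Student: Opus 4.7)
The unifying idea is integration by parts: for any mean-zero $\Phi\in H^1(M)$,
\begin{equation*}
\langle\nabla(2v_1+T_1(v_1,v_2)),\nabla\Phi\rangle=-\int_M\Delta(2v_1+T_1(v_1,v_2))\,\Phi,
\end{equation*}
and one then inserts the three-region expressions (\ref{4.45})--(\ref{4.47}) that have already been derived. The plan is to take $\Phi$ in turn equal to $\phi_1$, $\partial_{p_j}v_{p_j}$, $\partial_{\lambda_j}v_{p_j}$, and $v_{p_j}$, collect the leading contributions, and absorb the remainders using Lemmas \ref{le4.1}--\ref{le4.3}, the smallness conditions defining $S_{\rho_1}(Q,w)\times S_{\rho_2}(Q,w)$, and the Moser--Trudinger-type bound (\ref{4.36}) for the subcritical tail $\tilde{E}_j^+$.

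For (1), the term $2\Delta\phi$ in (\ref{4.45}) contributes $2\int\nabla\phi\cdot\nabla\phi_1$, while the linearized coupling $2\rho_1 h_{p_j}(p_j)|y|^{2\alpha_j}e^{U_j}\varphi$ with $\varphi=2\phi-\psi$ produces $-4\sum_j\int_{B_{r_0}(p_j)}\rho_1 h_{p_j}(p_j)|y|^{2\alpha_j}e^{U_j}\phi\,\phi_1$; together these yield $2\mathfrak{B}(\phi,\phi_1)$. The constant pieces annihilate the mean-zero $\phi_1$, and the $(a_j-1)$, $\eta_j$, $\psi(p_j)$, and $\hat{E}_j$ terms are shown to be controlled in $H^{-1}$ by the bubble-scale parameter $e^{-\lambda_{m+1}/(1+\alpha_{m+1})}$. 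For (2), pair with the translation mode $\partial_{p_j}v_{p_j}$, which is approximately $\partial_{p_j}U_j$ localized near $p_j$. The only contributions that survive to leading order come from $2\rho_1 h_{p_j}(p_j)|y|^{2\alpha_j}e^{U_j}(\nabla_y H_j(0,0)\cdot y+\varphi)$, and direct integration against the odd mode $\partial_{p_j}U_j\sim -\partial_y U_j$ gives $-8\pi\nabla_y H_j(0,0)+8\pi\nabla\psi(p_j)$; all other pieces are either orthogonal in $\mathbb{R}^2$ or small enough to enter the stated remainder.

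Parts (3) and (4) require three Pohozaev-type identities for the radial bubble $U_j=\lambda_j-2\log(1+c_j e^{\lambda_j}|y|^{2(1+\alpha_j)})$: (a) $\int 2\rho_1h_{p_j}(p_j)|y|^{2\alpha_j}e^{U_j}\partial_{\lambda_j}U_j=-8\pi(1+\alpha_j)$ modulo exponentially small errors, (b) the same integrand multiplied by $(U_j+s_j)$ gives $-16\pi(1+\alpha_j)\lambda_j+O(1)$, (c) by symmetry the integral against $\nabla_yH_j\cdot y$ vanishes to leading order. Plugging these into (\ref{4.45})--(\ref{4.47}) with $\Phi=\partial_{\lambda_j}v_{p_j}$ and gathering the $\theta_j$ piece from (\ref{4.40})--(\ref{4.41}) gives (\ref{4.50}). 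For (4), the key algebraic observation is that on $B_{r_0}(p_j)$ one has the pointwise identity $U_j+s_j=(2\lambda_j-1+8\pi(1+\alpha_j)R(p_j,p_j)+2\log\tfrac{\rho_1 h_{p_j}(p_j)}{4(1+\alpha_j)^2})+(U_j+s_j-\text{constant})$, where the constant inside the parentheses reproduces the factor in (\ref{4.51}); this lets one rewrite $v_{p_j}$ as the leading coefficient times $\partial_{\lambda_j}v_{p_j}$ plus correction. Outside $B_{r_0}(p_j)$, the piece $8\pi(1+\alpha_j)G(x,p_j)$ couples to bubbles centered at $p_i$, $i\neq j$, generating the cross-sum $8\pi(1+\alpha_j)\sum_{i\neq j}G(p_i,p_j)\langle\nabla(2v_1+T_1),\nabla\partial_{\lambda_i}v_{p_i}\rangle$, and the extra $+16\pi(1+\alpha_j)(a_j-1)\lambda_j$ comes from the mismatch between the constant part of $U_j+s_j$ and its integral against $\partial_{\lambda_j}U_j$.

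The main obstacle is (4): one must cleanly isolate the ``scaling-mode projection'' of $v_{p_j}$ from its Green's-function tail without losing track of the $\lambda_j$-order coefficient, and then reconcile the resulting identity across the three regions (\ref{4.45})--(\ref{4.47}) where the explicit form of $\Delta(2v_1+T_1(v_1,v_2))$ changes. A secondary difficulty, already present in (1)--(3), is controlling the exponential remainder $\hat{E}_j$ uniformly in $\phi\in O^{(1)}_{P,\Lambda}$; this is where the Moser--Trudinger splitting of $\hat{E}_j^\pm$ at threshold $\epsilon_2$ and the bound (\ref{7.3}) on $\int|y|^{2\alpha_j}e^{U_j}|\varphi|^2$ are essential.
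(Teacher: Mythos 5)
Your plan reproduces the paper's Section~8 proof in both structure and ingredients: integration by parts via $\langle\nabla(2v_1+T_1),\nabla\Phi\rangle=-\int_M\Delta(2v_1+T_1)\,\Phi$, insertion of the three-region expansion (\ref{4.45})--(\ref{4.47}), pairing against each of $\phi_1$, $\partial_{p_j}v_{p_j}$, $\partial_{\lambda_j}v_{p_j}$, $v_{p_j}$, the odd-parity cancellations, the constant/logarithmic split of $U_j+s_j$, the cross-coupling of $8\pi(1+\alpha_j)G(x,p_j)$ with the bubbles at $p_i$, $i\neq j$, and the $\hat E_j^\pm$ Moser--Trudinger splitting combined with (\ref{7.2})--(\ref{7.3}) for the error. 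One slip in part (3) is worth flagging, because as written it would collapse the leading coefficient: identity (a),
\[
\int 2\rho_1 h_{p_j}(p_j)|y|^{2\alpha_j}e^{U_j}\,\partial_{\lambda_j}U_j\,dy=-8\pi(1+\alpha_j),
\]
is false; with $t=c_je^{\lambda_j}|y|^{2(1+\alpha_j)}$ the integrand is proportional to $(1-t)/(1+t)^3$, and $\int_0^\infty(1-t)(1+t)^{-3}\,dt=0$, so this integral vanishes to leading order. The nonzero mass comes from pairing against $\partial_{\lambda_j}v_{p_j}=(1+\partial_{\lambda_j}U_j+O(\lambda_je^{-\lambda_j/(1+\alpha_j)}))\sigma_j$, where the crucial ``$+1$'' is $\partial_{\lambda_j}s_j$ from (\ref{4.12}); this is what yields $\int\rho_1h_{p_j}(p_j)|y|^{2\alpha_j}e^{U_j}\partial_{\lambda_j}v_{p_j}=4\pi(1+\alpha_j)+O(\lambda_je^{-\lambda_j/(1+\alpha_j)})$ as in (\ref{7.16}), and hence (after the integration-by-parts sign) the coefficient $-8\pi(1+\alpha_j)$ multiplying $\theta_j-\psi(p_j)$ in (\ref{4.50}). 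With that correction the rest of your sketch goes through as in the paper.
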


We will prove Lemma \ref{le4.4} in the section 8 because
the proof contains a lot of computations.

In order to know the Morse index for the solutions in $S_{\rho_1}(Q,w),$ we have to compute the Morse index of the bilinear form
$\mathfrak{B}(\phi,\phi_1)$ in Lemma \ref{le4.4}. For such bilinear form $\mathfrak{B}(\phi,\phi_1)$, we have the following lemma, due to Chen-Lin
\cite[Lemma 3.3]{cl4}.
\begin{lemma}
\label{le4.5}
Assume that all $\alpha_j\geq0$, are not inters for $j\in J\setminus J_1$. Let $P=(p_1,p_2,\cdots,p_{n})$ and
$\Lambda=(\lambda_1,\lambda_2,\cdots,\lambda_{n})$, where
$\mathrm{dist}(p_i,p_j)>2r_0$ for $i\neq j$. If $\lambda(P)$ is large, then the symmetric bilinear form
$$\mathfrak{B}(\phi,\phi_1):=\int_{M}\nabla\phi\cdot\nabla\phi_1-\sum_{j=1}^n\int_{B_{r_0}(p_j)}2\rho_1h_{p_j}(p_j)|y|^{2\alpha_j}e^{U_j}\phi\phi_1$$
is non-degenerate and has Morse index $\sum_{j=m+1}^{n}2(1+[\alpha_j])$ in $O_{P,\Lambda}^{(1)}$, where $[\alpha_j]$ denotes the greatest integer less than
or equal to $\alpha_j.$
\end{lemma}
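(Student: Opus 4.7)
The plan is to establish Lemma 4.5 by a localization-plus-rescaling reduction that decouples the spectral problem for $\mathfrak{B}$ on $M$ into model linearized Liouville problems attached to each bubble center, whose spectra are classical.

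First I would show that when $\lambda(P)$ is large, the quadratic form $\mathfrak{B}$ essentially decouples across the bubble centers. The weight $|y|^{2\alpha_j} e^{U_j}$ is concentrated in the shrinking ball of radius $O(\epsilon_j) = O(e^{-\lambda_j/(2(1+\alpha_j))})$ around $p_j$; since $\mathrm{dist}(p_i,p_j) > 2r_0$ for $i \neq j$, the supports of the perturbation terms are disjoint and the cut-off $\sigma_j$ effectively separates the bubbles. A standard argument using Moser--Trudinger, together with the fact that a test function $\phi$ with bounded $H^1$ norm has exponentially small average on each $B_{\epsilon_j \cdot R}(p_j)$, reduces the analysis of the Morse index of $\mathfrak{B}$ on $O^{(1)}_{P,\Lambda}$ to the independent analysis of the restriction to each bubble.

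Next, for fixed $j$, I would perform the change of variable $y = \epsilon_j^{-1}(x-p_j)$, which sends $U_j$ to the entire singular Liouville bubble
\[
U_j'(y) = -2\log\Big(1 + \tfrac{\rho_1 h_{p_j}(p_j)}{4(1+\alpha_j)^2} |y|^{2(1+\alpha_j)}\Big)
\]
on $\mathbb{R}^2$. After rescaling, the local problem becomes the eigenvalue problem for the operator $L_j = -\Delta - 2\rho_1 h_{p_j}(p_j) |y|^{2\alpha_j} e^{U_j'}$ on a large ball $B_{\epsilon_j^{-1} r_0}$, which converges as $\lambda_j \to \infty$ to $L_j$ on the full plane. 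The spectrum of $L_j$ is completely explicit by separation of variables: writing $\phi(r,\theta) = \sum_{k \ge 0} \phi_{k,c}(r)\cos(k\theta) + \phi_{k,s}(r)\sin(k\theta)$, each angular mode gives a one-dimensional ODE in $r$ whose negative eigenvalues are indexed by $k$ satisfying $k < 1 + \alpha_j$ (plus the radial mode). For $\alpha_j \notin \mathbb{N}$, this yields exactly $2\lfloor 1 + \alpha_j \rfloor = 2(1 + [\alpha_j])$ angular negative eigenvalues together with one radial ground state; moreover the zero eigenvalue is simple and is spanned by $\partial_{\lambda_j} U_j'$, the scaling direction.

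Finally I would assemble the Morse index on $O^{(1)}_{P,\Lambda}$. For $j \in J_1$ (smooth case, $\alpha_j = 0$), the kernel of $L_j$ is three-dimensional (scaling plus two translations) and is precisely projected out by the constraints $\int \nabla\phi \cdot \nabla v_{p_j} = \int \nabla\phi \cdot \nabla \partial_{p_j} v_{p_j} = \int \nabla\phi \cdot \nabla \partial_{\lambda_j} v_{p_j} = 0$; after projection, the single negative eigenvalue is also killed and the remaining Morse index is $0$. For $j \in J \setminus J_1$ with $\alpha_j \notin \mathbb{N}$, only the one-dimensional scaling kernel is projected out via $\int \nabla\phi \cdot \nabla \partial_{\lambda_j} v_{p_j} = 0$, and the radial ground state is absorbed in this projection, so exactly $2(1 + [\alpha_j])$ negative eigenvalues survive. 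Summing over $j$ gives the stated Morse index $\sum_{j=m+1}^{n} 2(1+[\alpha_j])$, and non-degeneracy follows from the simplicity of the zero eigenvalue of each $L_j$ when $\alpha_j \notin \mathbb{N}$. The main obstacle is the spectral analysis of the singular model operator, namely verifying via a sharp ODE argument in each angular mode that the non-integer hypothesis on $\alpha_j$ is precisely what rules out extra kernel elements (for integer $\alpha_j$, additional Liouville-type kernel functions appear), and carefully tracking that the finite-$r_0$ cut-off introduces only $o(1)$ perturbation of the spectrum, which follows from the rapid decay $|y|^{-2(1+\alpha_j)}$ of $e^{U_j'}$.
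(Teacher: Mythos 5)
The paper itself does not prove Lemma~\ref{le4.5}; it attributes the statement to Chen--Lin and cites \cite[Lemma 3.3]{cl4}, so there is no internal argument to compare against. Your outline (localize at each bubble, rescale to the entire singular Liouville linearization on $\mathbb{R}^2$, separate angular variables, then project by the constraints defining $O^{(1)}_{P,\Lambda}$) is the expected route and the final count agrees with the lemma, but two steps as you have written them need repair.

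First, the constraint bookkeeping at a singular center is misstated. For $m+1\le j\le n$ the space in (\ref{4.17}) imposes \emph{two} scalar constraints, orthogonality to $v_{p_j}$ and to $\partial_{\lambda_j}v_{p_j}$; you invoke only the second and then assert the radial ground state is ``absorbed in this projection.'' A single linear constraint cannot accomplish both removals: in the two-dimensional radial subspace spanned by the positive ground state and the scaling kernel $Z_0=(1-|z|^{2\beta_j})/(1+|z|^{2\beta_j})$, $\beta_j=1+\alpha_j$, the quadratic form has signature $(-,0)$, and its restriction to any codimension-one subspace has signature $(-)$ or $(0)$ but never becomes trivial or positive definite. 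Both constraints are needed and act differently: $\int\nabla\phi\cdot\nabla\partial_{\lambda_j}v_{p_j}=0$ rescales to weighted orthogonality to $Z_0$ and removes the kernel, whereas $\int\nabla\phi\cdot\nabla v_{p_j}=0$, after integrating $\Delta v_{p_j}$ against $\phi$, becomes to leading order orthogonality to the positive weight $|y|^{2\alpha_j}e^{U_j}$, which pairs nontrivially with the positive ground state and removes the negative direction. Only then does $1+2(1+[\alpha_j])+1-2=2(1+[\alpha_j])$ come out, and correspondingly $1+3-4=0$ at each smooth center.

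Second, the angular eigenvalue count -- that each integer $k$ with $1\le k<1+\alpha_j$ contributes exactly one negative direction and none contributes more -- is the technical heart of the lemma and is asserted without verification. To close it one needs: (i) the radial function $r^{\beta_j}/(1+r^{2\beta_j})$ is a positive zero-eigenfunction of the $k=\beta_j$ angular Sturm--Liouville reduction, hence its ground state, so the bottom eigenvalue vanishes at $k=\beta_j$; (ii) the bottom eigenvalue is strictly increasing in the real parameter $k$ by pointwise monotonicity of the centrifugal term $k^2/r^2$, so it is negative exactly for $0\le k<\beta_j$; (iii) since the $k$-mode operator dominates the radial one as a quadratic form, its second eigenvalue is at least the second radial eigenvalue, which is zero and attained at $Z_0$, so no mode carries a second negative direction. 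You identify the ODE analysis as the main obstacle, which is fair, but as written this step, on which the Morse index formula rests, is left unjustified.
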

\vspace{1cm}

\section{Deformation and Degree counting formula}
In this section, we want to deform $2v_i+T_i(v_1,v_2)$ into a simple form which can be solvable. Obviously, $v_1=\frac12v_{P,\Lambda,A}+\phi,
~v_2=\frac12w+\psi$ is a solution of $2v_1+T_1(v_1,v_2)=0,$ iff the left hand sides of
(\ref{4.48})-(\ref{4.51}) vanish. To solve the system (\ref{4.48})-(\ref{4.51}) and $2v_2+T_2(v_1,v_2)=0$, we recall
$$\mathring{H}^1=O_{P,\Lambda}^{(1)}\bigoplus~\mathrm{the~linear~subspace~spanned~by}~v_{p_j},
\partial_{\lambda_j}v_{p_j}~\mathrm{and}~\partial_{p_j}v_{p_j}$$
and deform $2v_i+T_i(v_1,v_2)$ to a simpler operator $2v_i+T_i^0(v_1,v_2)$ by defining the operator
$2I+T_i^t,~0\leq t\leq1,~i=1,2$ through the following relations.

\begin{align}
\label{5.1}
\l\nabla(2v_1+T_1^t(v_1,v_2)),\nabla\phi_1\r=&t\l\nabla(2v_1+T_1(v_1,v_2)),\nabla\phi_1\r\nonumber\\
&+2(1-t)\mathfrak{B}(\phi,\phi_1)~\mathrm{for}~\phi_1\in O_{P,\Lambda}^{(1)};
\end{align}
\begin{align}
\label{5.2}
&\l\nabla(2v_1+T_1^t(v_1,v_2)),\nabla\partial_{p_j}v_{p_j}\r=t\l\nabla(2v_1+T_1(v_1,v_2)),\nabla\partial_{p_j}v_{p_j}\r\nonumber\\
&\quad\quad\quad\quad+(1-t)\big(-8\pi\nabla_yH_j(0,0)+8\pi\nabla\psi(p_j)\big)~\mathrm{for}~1\leq j\leq m;
\end{align}
\begin{align}
\label{5.3}
&\l\nabla(2v_1+T_1^t(v_1,v_2)),\nabla\partial_{\lambda_j}v_{p_j}\r=t\l\nabla(2v_1+T_1(v_1,v_2)),\nabla\partial_{\lambda_j}v_{p_j}\r\nonumber\\
&\quad\quad\quad\quad-(1-t)8\pi(1+\alpha_j)\Big[2(a_j-1)\lambda_j+(\theta_j-\psi(p_j))\Big],~\mathrm{for}~1\leq j\leq n;
\end{align}
\begin{align}
\label{5.4}
&\l\nabla(2v_1+T_1^t(v_1,v_2)),\nabla v_{p_j}\r
=t\Big{[}\big(2\lambda_j+O(1)\big)\l\nabla(2v_1+T_1^t(v_1,v_2)),\nabla\partial_{\lambda_j}v_{p_j}\r\nonumber\\
&\quad\quad\quad\quad+\sum_{i\neq j}O(1)\l\nabla(2v_1+T_1^t(v_1,v_2)),\nabla\partial_{\lambda_i}v_{p_i}\r+O(1)\|\phi\|_{H^1}\nonumber\\
&\quad\quad\quad\quad+O(e^{-\frac{\lambda_{m+1}}{1+\alpha_{m+1}}})\Big]+16\pi(1+\alpha_j)(a_j-1)\lambda_j,~\mathrm{for}~1\leq j\leq n;
\end{align}
\begin{align}
\label{5.5}
&2v_2+T_2^t(v_1,v_2)=t(2v_2+T_2(v_1,v_2))\nonumber\\
&\quad+(1-t)\Big(w+2\psi-2\rho_2(-\Delta)^{-1}\big(\frac{h_2e^{w+2\psi-\sum_{j=1}^{n}4\pi(1+\alpha_j)G(x,p_j)}}
      {\int_Mh_2e^{w+2\psi-\sum_{j=1}^{n}4\pi(1+\alpha_j)G(x,p_j)}}-1\big)\Big),
\end{align}
where those coefficients $O(1)$ are those terms appeared in (\ref{5.4}) so that $T_1^1(v_1,v_2)=T_1(v_1,v_2)$. From the construction above, we have
$$2v_i+T_i(v_1,v_2)=2v_i+T_i^1(v_1,v_2),~i=1,2.$$
When $t=0$, the operator $T_i^0$ is simpler than $T_i,~i=1,2.$ During the deformation from $T_i^1$ to $T_i^0,~i=1,2$ we have

\begin{lemma}
\label{le5.1}
Let $\rho_*=\sum_{j=1}^{n}4(1+\alpha_j)\pi.$ Assume $(\rho_1-\rho_*)\neq0$, and $\rho_2\notin\Sigma_2$. Then there is $\varepsilon_1>0$
such that $(2v_1+T_1^t(v_1,v_2),2v_2+T_2^t(v_1,v_2))\neq0$ for $(v_1,v_2)\in \partial\big(S_{\rho_1}(Q,w)\times S_{\rho_2}(Q,w)\big)$ and $0\leq t\leq1$
if $|\rho_1-\rho_*|<\varepsilon_1$ and $\rho_2$ is fixed.
\end{lemma}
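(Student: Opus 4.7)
\medskip
\noindent\textbf{Proof plan for Lemma \ref{le5.1}.} My plan is to argue by contradiction: assume there exist sequences $\rho_1^{(k)}\to\rho_*$, $t_k\in[0,1]$ and $(v_{1k},v_{2k})\in\partial(S_{\rho_1^{(k)}}(Q,w)\times S_{\rho_2}(Q,w))$ with $2v_{ik}+T_i^{t_k}(v_{1k},v_{2k})=0$ for $i=1,2$. Writing $v_{1k}=\tfrac12 v_{P_k,\Lambda_k,A_k}+\phi_k$ and $v_{2k}=\tfrac12 w+\psi_k$, being on the boundary means that at least one of the six defining inequalities in \eqref{4.19}--\eqref{4.20} is saturated. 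The plan is to show, by deriving quantitative estimates on each parameter in terms of constants that are \emph{independent of} $c_0$, that every one of these inequalities must be strict provided $c_0$ is chosen large enough; this gives the desired contradiction.

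\medskip
\noindent The $\phi_k$--estimate is obtained first. Applying the defining relation \eqref{5.1} to arbitrary $\phi_1\in O^{(1)}_{P_k,\Lambda_k}$ and using Lemma \ref{le4.4}(i), the vanishing of the left hand side gives
\[
2\mathfrak{B}(\phi_k,\phi_1)=-t_k\,O\!\bigl(e^{-\lambda(P_k)/(1+\alpha_{m+1})}\bigr)\|\phi_1\|_{H^1}.
\]
By the uniform non-degeneracy of $\mathfrak{B}$ established in Lemma \ref{le4.5}, one recovers $\|\phi_k\|_{H^1}\le C_1\, e^{-\lambda(P_k)/(1+\alpha_{m+1})}$ with $C_1$ independent of $c_0$. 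Thus the saturation $\|\phi_k\|_{H^1}=c_0 e^{-\lambda(P_k)/(1+\alpha_{m+1})}$ is ruled out once $c_0>C_1$.

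\medskip
\noindent Next, I handle $\psi_k$ and the centers $p_{j,k}$ for $j\in J_1$ simultaneously, mimicking the argument of Lemma \ref{le4.1}. The deformed equation $2v_2+T_2^{t_k}=0$, obtained from \eqref{5.5}, can be rewritten (after substituting the bubbling expression for $v_{1k}$ and using Lemma \ref{le4.3}) as
\[
\mathcal L \psi_k = O(|P_k-P_w|^2)+o(1)\|\psi_k\|_*+O\!\bigl(\|\phi_k\|_{H^1}+e^{-\lambda(P_k)/(1+\alpha_{m+1})}\bigr),
\]
where $\mathcal L$ is the linearization at $(P_w,w)$ of the shadow system, exactly as in \eqref{3.33}; note that the two ingredients making up $T_2^t$ both reduce to the shadow system at leading order, so the identity is uniform in $t_k\in[0,1]$. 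The second-order-in-$P$ information comes from Lemma \ref{le4.4}(ii), whose vanishing on the left-hand side together with \eqref{5.2} yields $\nabla H_j(0,0)=\nabla\psi_k(p_{j,k})+\text{small}$, i.e.\ the centering condition analogous to \eqref{3.36}. Applying the non-degeneracy assumption on $(P_w,w)$, one inverts the linearized system to conclude
\[
\|\psi_k\|_*+\!\!\sum_{j\in J_1}\!|p_{j,k}-p_j^0|\;\le\; C_2\,e^{-\lambda(P_k)/(1+\alpha_{m+1})},
\]
again with $C_2$ independent of $c_0$, which rules out the corresponding two boundary cases.

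\medskip
\noindent For the parameters $(a_j-1)$, $(t_j-t_1)$ and $\lambda_{m+1}-\lambda(P_k)$, I use \eqref{5.3}--\eqref{5.4} together with the definitions of $\theta_j$ in \eqref{4.41} and $t_j$ in \eqref{4.14}, and the relation \eqref{4.16} defining $\lambda(P)$ from $\rho_1-\rho_*$. The vanishing of the left-hand sides in Lemma \ref{le4.4}(iii)--(iv), modulo the $O(\|\phi_k\|+\|\psi_k\|)$ errors controlled in the previous steps, gives a finite linear system for these unknowns whose coefficient matrix is diagonally dominated (the diagonal entries of the $(a_j-1)$--block have size $\lambda_j$, while off-diagonal contributions are $O(1)$). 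Inverting this system produces
\[
|a_{j,k}-1|\le C_3\lambda^{-1/2}e^{-\lambda/(1+\alpha_{m+1})},\quad
|t_{j,k}-t_{1,k}|\le C_3 e^{-\lambda/(1+\alpha_{m+1})},\quad
|\lambda_{m+1,k}-\lambda(P_k)|\le C_3\lambda^{-1},
\]
with $C_3$ once more independent of $c_0$. Choosing $c_0>\max(C_1,C_2,C_3)$ at the outset makes every boundary constraint strict, contradicting $(v_{1k},v_{2k})\in\partial(S_{\rho_1^{(k)}}(Q,w)\times S_{\rho_2}(Q,w))$.

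\medskip
\noindent The main obstacle I expect is verifying that the linear system in the last step is uniformly invertible across $t\in[0,1]$: the deformation \eqref{5.3}--\eqref{5.4} is set up so that its $t=0$ limit preserves the dominant $\lambda_j$ coefficients while the $t=1$ endpoint matches Lemma \ref{le4.4} exactly, so by linearity in $t$ the diagonal dominance persists uniformly. Carrying out the bookkeeping that every $O(\cdot)$ constant above is genuinely independent of $c_0$ (hence of the parameter range in \eqref{4.19}--\eqref{4.20}) is the tedious but essential piece of the argument; it relies in an essential way both on $l(Q)\ne 0$ (so that $\lambda(P)$ is well defined by \eqref{4.16} and tends to $+\infty$ as $\rho_1\to\rho_*$) and on $\rho_2\notin\Sigma_2$ (so that the linearized operator $\mathcal L$ is uniformly invertible on $\mathring H^1$).
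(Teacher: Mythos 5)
Your proposal follows essentially the same scheme as the paper's proof: first the $\phi$-estimate from \eqref{5.1} and Lemma~\ref{le4.4}(1) via non-degeneracy of $\mathfrak{B}$, then the $a_j-1$ estimate directly from \eqref{5.3}--\eqref{5.4}, then $\psi$ and $|p_j-p_j^0|$ from the deformed second equation and non-degeneracy of $(P_w,w)$, and finally the $\lambda_{m+1}-\lambda(P)$ and $t_j-t_1$ estimates, with $c_0$ chosen larger than all the $c_0$-independent constants produced along the way. The contradiction framing versus the paper's direct ``if $2v_i+T^t_i=0$ then the point is interior'' is cosmetic.

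One place where you should be more precise: you attribute the bound $|\lambda_{m+1}-\lambda(P)|\le C_3\lambda^{-1}$ to inversion of a ``diagonally dominated linear system.'' That is not how the paper gets it, and it is not really a matrix-inversion step. Each $|a_j-1|$ bound comes individually and for free once $\|\phi\|_{H^1}$ is controlled, because \eqref{5.4} has the isolated term $16\pi(1+\alpha_j)(a_j-1)\lambda_j$ and the paired $\partial_{\lambda_i}$-inner products all vanish. The $\lambda_{m+1}-\lambda(P)$ control instead comes from summing the $n$ relations \eqref{5.9} against the weights $8\pi(1+\alpha_j)$: in that sum both the $t_i-t_j$ terms and the $\psi(p_i)$ terms cancel identically, leaving $2\bigl(\rho_1-\rho_*-\sum_j\pi d_je^{-\lambda_j/(1+\alpha_j)}\bigr)=O(\lambda_{m+1}^{-1}e^{-\lambda_{m+1}/(1+\alpha_{m+1})})$, and one then compares this with the defining equation \eqref{4.16} for $\lambda(P)$. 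This is exactly the place where $l(Q)\neq 0$ is used quantitatively (without it, $d_j\equiv 0$ and the comparison degenerates); your sketch names $l(Q)\ne 0$ as a hypothesis used somewhere, but the cancellation identity is the concrete mechanism and is worth stating explicitly since, as written, a reader following your ``linear system'' heuristic would not see where to find it.
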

\begin{proof}
Assume $(v_1,v_2)\in\overline{S}_{\rho_1}(Q,w)\times\overline{S}_{\rho_2}(Q,w)$, where $\overline{S}_{\rho_i}(Q,w)$ denotes the closure of
$S_{\rho_i}(Q,w),$ and $2v_i+T_i^t(v_1,v_2)=0,~i=1,2$ for some $0\leq t\leq1.$ We will show that
$(v_1,v_2)\notin\partial\big(S_{\rho_1}(Q,w)\times S_{\rho_2}(Q,w)\big).$

From $\l\nabla(2v_1+T_1^t(v_1,v_2)),\nabla\phi\r=0,$ we have by Lemma \ref{le4.4}
$$\|\phi\|_{H^1}^2\leq O(e^{-\frac{\lambda_{m+1}}{1+\alpha_{m+1}}})\|\phi\|_{H^1}.$$
This implies
\begin{equation}
\label{5.6}
\|\phi\|_{H^1}=O(e^{-\frac{\lambda_{m+1}}{1+\alpha_{m+1}}})\leq c_1e^{-\frac{\lambda_{m+1}}{1+\alpha_{m+1}}},
\end{equation}
for some constant $c_1$ \footnote{Here $c_1$ is independent of $\psi$, it can be shown in the proof of Lemma \ref{le4.4}.} independent
of $c_0$.

Using $\l\nabla(2v_1+T_1^t(v_1,v_2)),\nabla\partial_{\lambda_j}v_{p_j}\r=0$ and $\l\nabla(2v_1+T_1^t(v_1,v_2)),\nabla v_{p_j}\r=0,$ (\ref{5.4}) and
(\ref{5.6}) implies
\begin{equation}
\label{5.7}
16\pi\lambda_j(1+\alpha_j)(a_j-1)=O(e^{-\frac{\lambda_{m+1}}{1+\alpha_{m+1}}})~\mathrm{for}~j=1,\cdots,n,
\end{equation}
that is, when $\rho_1$ is close to $\rho_*,$
\begin{equation}
\label{5.8}
|a_j-1|=O(\lambda_{m+1}^{-1}e^{-\frac{\lambda_{m+1}}{1+\alpha_{m+1}}})<c_0\lambda_{m+1}^{-\frac12}(P)e^{-\frac{\lambda(P)}{1+\alpha_{m+1}}}
~\mathrm{for}~1\leq j\leq n.
\end{equation}

By $\l\nabla(2v_1+T_1^t(v_1,v_2)),\nabla_{\lambda_j}v_{p_j}\r=0,$ we conclude
from (\ref{4.50}) and (\ref{5.8}) that
\begin{equation}
\label{5.9}
\theta_j-\psi(p_j)+2\lambda_j(a_j-1)=O(\max|a_i-1|+e^{-\frac{\lambda_{m+1}}{1+\alpha_{m+1}}-\epsilon\lambda_{m+1}})=
O(\lambda_{m+1}^{-1}e^{-\frac{\lambda_{m+1}}{1+\alpha_{m+1}}}).
\end{equation}
\begin{equation}
\label{5.10}
e^{t_j}\big(\int_Mh_1e^{2v_1-v_2}\big)^{-1}-1-\theta_j=O(\max|a_i-1|+e^{-\frac{\lambda_{m+1}}{1+\alpha_{m+1}}-\epsilon\lambda_{m+1}})=
O(e^{-\frac{\lambda_{m+1}}{1+\alpha_{m+1}}}).
\end{equation}

Together with $\l\nabla(2v_1+T_1^t(v_1,v_2)),\nabla\partial_{p_j}v_{p_j}\r=0$ for $j\leq m$ and (\ref{5.8}), (\ref{5.10}) and part (2) of Lemma
\ref{le4.4}, we have
\begin{align*}
|\nabla_yH_j(0,0)-\nabla\psi(p_j)|=&~O\Big(\lambda_j|a_j-1|+|\frac{e^{t_j}}{\int_{M}h_1e^{2v_1-v_2}}-1-\psi(p_j)|
+e^{-\frac{\lambda_j}{1+\alpha_{m+1}}}\Big)
\nonumber\\
\leq&~ O(1)e^{-\frac{\lambda_{m+1}}{1+\alpha_{m+1}}},
\end{align*}
which implies
\begin{align}
\label{5.11}
&\Big|\nabla_y^2H_j\cdot(p_j-p_j^0)+8\pi\sum^m_{i=1,i\neq j}\nabla_x^2G(x,p_j^0)\mid_{x=p_i^0}\cdot(p_i-p_i^0)-\nabla\psi(p_j^0)\Big|\nonumber\\
&\leq O(1)e^{-\frac{\lambda_{m+1}}{1+\alpha_{m+1}}}+O(1)\|\psi\|_*|p_j-p_j^0|^{\gamma}+O(1)\sum_{i=1}^m|p_i-p_i^0|^2,
\end{align}
where we used $\nabla_yH_j(p_j^0-p_j,0)=0$ and $\mathfrak{p}>2.$

For the second component, by (\ref{5.5}), we have
\begin{align}
\label{5.12}
0=&(1-t)\Big(\Delta w+2\Delta\psi+2\rho_2\Big(\frac{h_2e^{w+2\psi-\sum_{j=1}^{n}4\pi(1+\alpha_j)G(x,p_j)}}
{\int_Mh_2e^{w+2\psi-\sum_{j=1}^{n}4\pi(1+\alpha_j)G(x,p_j)}}-1\Big)\Big)\nonumber\\
&+t\Big(\Delta w+2\Delta\psi+2\rho_2\Big(\frac{h_2e^{w+2\psi-\frac12v_{P,\Lambda,A}-\phi}}
{\int_Mh_2e^{w+2\psi-\frac12v_{P,\Lambda,A}-\phi}}-1\Big)\Big).
\end{align}
We set
\begin{equation*}
\Theta=2\rho_2\frac{h_2e^{w+2\psi-\frac12v_{P,\Lambda,A}-\phi}}
{\int_Mh_2e^{w+2\psi-\frac12v_{P,\Lambda,A}-\phi}}-2\rho_2\frac{h_2e^{w+2\psi-\sum_{j=1}^{n}4\pi(1+\alpha_j)G(x,p_j)}}
{\int_Mh_2e^{w+2\psi-\sum_{j=1}^{n}4\pi(1+\alpha_j)G(x,p_j)}},
\end{equation*}
and claim
\begin{equation}
\label{5.13}
\|\Theta\|_{L^{\mathfrak{p}}(M)}\leq c_2e^{-\frac{\lambda(P)}{1+\alpha_{m+1}}},
\end{equation}
where $c_2$ is a constant that independent of $c_0$ and $\mathfrak{p}$ is defined in $O_{P,\Lambda}^{(2)}.$
By (\ref{5.6}), it is not difficult to get
\begin{equation*}
\exp\big(w+2\psi-\frac12v_{P,\Lambda,A}-\phi\big)=\exp(w+2\psi-\frac12v_{P,\Lambda,A}\big)+\Theta_1,
\end{equation*}
where $\|\Theta_1\|_{\mathfrak{p}}\leq c_3e^{-\frac{\lambda(P)}{1+\alpha_{m+1}}}.$ By noting (\ref{5.8}),
it is enough for us to prove the following one.
\begin{equation}
\label{5.14}
\Big\|\exp(w+2\psi-\frac12 v_{P,\Lambda,A})-\exp\big(w+2\psi-\sum_{j=1}^n4\pi(1+\alpha_j)a_jG(x,p_j)\big)\Big\|_{L^{\infty}(M)}\leq
c_4e^{-\frac{\lambda(P)}{1+\alpha_{m+1}}}.
\end{equation}
Since the proof is long, we leave it in section 8. By (\ref{5.13}), (\ref{5.12}) can be written as
\begin{align}
\label{5.15}
\Delta w+2\Delta\psi+2\rho_2\Big(\frac{h_2e^{w+2\psi-\sum_{j=1}^{n}4\pi(1+\alpha_j)G(x,p_j)}}
{\int_Mh_2e^{w+2\psi-\sum_{j=1}^{n}4\pi(1+\alpha_j)G(x,p_j)}}-1\Big)+t\Theta=0.
\end{align}
We expand the above equation,
\begin{align}
\label{5.16}
\mathfrak{R}=&\Delta\psi+2\rho_2\frac{\overline{h}_2e^{w-4\pi\sum_{j=1}^{m}G(x,p_j^0)}}
{\int_M\overline{h}_2e^{w-4\pi\sum_{j=1}^{m}G(x,p_j^0)}}\psi\nonumber\\
&-2\rho_2\frac{\overline{h}_2e^{w-4\pi\sum_{j=1}^{m}G(x,p_j^0)}}
{\Big(\int_M\overline{h}_2e^{w-4\pi\sum_{j=1}^{m}G(x,p_j^0)}\Big)^2}\int_M\big(\overline{h}_2e^{w-4\pi\sum_{j=1}^{m}G(x,p_j^0)}\psi\big)\nonumber\\
&-4\pi\rho_2\frac{\overline{h}_2e^{w-4\pi\sum_{j=1}^{m}G(x,p_j^0)}}
{\int_M\overline{h}_2e^{w-4\pi\sum_{j=1}^{m}G(x,p_j^0)}}\big(\sum_{j=1}^m\nabla G(x,p_j^0)(p_j-p_j^0)\big)\nonumber\\
&+4\pi\rho_2\frac{\overline{h}_2e^{w-4\pi\sum_{j=1}^{m}G(x,p_j^0)}}
{\Big(\int_M\overline{h}_2e^{w-4\pi\sum_{j=1}^{m}G(x,p_j^0)}\Big)^2}
\int_M\Big(\overline{h}_2e^{w-4\pi\sum_{j=1}^{m}G(x,p_j^0)}\big(\sum_{j=1}^m\nabla G(x,p_j^0)(p_j-p_j^0)\big)\Big),
\end{align}
where $\mathfrak{R}=t\Theta+o(1)\|\psi\|_{*}+|p_j-p_j^0|^2.$ By the non-degeneracy of $(P_w,w)$ to (\ref{1.12}), (\ref{5.11}) and (\ref{5.16}), we can get
\begin{align}
\label{5.17}
\|\psi\|_{*}\leq c_5e^{-\frac{\lambda_{m+1}}{1+\alpha_{m+1}}}
~\mathrm{and}~
|p_j-p_j^0|\leq c_6e^{-\frac{\lambda_{m+1}}{1+\alpha_{m+1}}}.
\end{align}

Recall that
\begin{align*}
\theta_j=&\frac{1}{\rho_*}\Big((\rho_1-\rho_*)-\sum_{j=1}^{n}\pi d_je^{-\frac{\lambda_j}{1+\alpha_j}}-\sum_{i=1}^{n}4\pi(1+\alpha_i)(t_i-t_j-\psi(p_i))\\
&-\sum_{i=1}^{n}8\pi(1+\alpha_i)\lambda_i(a_i-1)\Big).
\end{align*}
From (\ref{5.9}), we obtain
\begin{align}
\label{5.18}
O(1)\lambda_{m+1}^{-1}e^{-\frac{\lambda_{m+1}}{1+\alpha_{m+1}}}=&\sum_j8\pi(1+\alpha_j)[\theta_j-\psi(p_j)+2\lambda_j(a_j-1)]\nonumber\\
=&2\Big(\rho_1-\rho_*-\sum_{j=1}^{n}\pi d_je^{-\frac{\lambda_j}{1+\alpha_j}}\Big),
\end{align}
where all $t_j$ cancel out with each other. Here $\rho_*=4\pi\sum_j(1+\alpha_j)$ is used. By the definition of $d_i$ in section 4, and the assumption
$$|t_j-t_1|\leq c_0e^{-\frac{\lambda(P)}{1+\alpha_{m+1}}},$$
we have
\begin{align}
\label{5.19}
\sum_{j=1}^{n}\pi d_je^{-\frac{\lambda_j}{1+\alpha_j}}=&
\frac{\pi^2}{(1+\alpha_{m+1})\sin\frac{\pi}{1+\alpha_{m+1}}}\big(\frac{4(1+\alpha_{m+1})^2}
{\rho_*h_{p_{m+1}}^*(p_{m+1})}\big)^{\frac{2}{1+\alpha_{m+1}}}e^{-\frac{G_{m+1}^*(p_{m+1})}{1+\alpha_{m+1}}}\nonumber\\
&\times l(Q)e^{-\frac{\lambda_{m+1}}{1+\alpha_{m+1}}}+O(e^{-\frac{\lambda_{m+1}}{1+\alpha_{m+1}}-\epsilon\lambda_{m+1}}).
\end{align}
Also, we have
$$\rho_1-\rho_*=\frac{\pi^2}{(1+\alpha_{m+1})\sin\frac{\pi}{1+\alpha_{m+1}}}
\big(\frac{4(1+\alpha_{m+1})^2}{\rho_*h_{p_{m+1}}^*(p_{m+1}^0)}\big)^{\frac{2}{1+\alpha_{m+1}}}
e^{-\frac{G_{m+1}^*(p_{m+1}^0)}{1+\alpha_{m+1}}}l(Q)e^{-\frac{\lambda(P)}{1+\alpha_{m+1}}}.$$
Therefore, (\ref{5.18}) and (\ref{5.19}) imply
$$O(1)\lambda_{m+1}^{-1}e^{-\frac{\lambda_{m+1}}{1+\alpha_{m+1}}}=c(e^{-\frac{\lambda(P)}{1+\alpha_{m+1}}}-e^{-\frac{\lambda_{m+1}}{1+\alpha_{m+1}}})$$
for some $c\neq0$. This in turn gives
\begin{align}
\label{5.20}
|\lambda_{m+1}-\lambda(P)|\leq c_7\lambda(P)^{-1}.
\end{align}
for some $c_7$ independent of $c_0.$

Using (\ref{5.17}) and (\ref{5.20}), we have
\begin{align}
\label{5.21}
\|\psi\|_{*}\leq c_8e^{-\frac{\lambda(P)}{1+\alpha_{m+1}}}
~\mathrm{and}~
|p_j-p_j^0|\leq c_9e^{-\frac{\lambda(P)}{1+\alpha_{m+1}}}.
\end{align}
To obtain estimates for $t_j-t_1,j\geq2,$ we note $\theta_j=O(e^{-\frac{\lambda_{m+1}}{1+\alpha_{m+1}}})$ by (\ref{5.9}). Combined with
(\ref{4.41}), we have
\begin{align*}
\Big|t_j-\frac{1}{\rho_*}\sum_j4\pi(1+\alpha_j)t_j\Big|=O(e^{-\frac{\lambda_{m+1}}{1+\alpha_{m+1}}})
\end{align*}
and
\begin{align}
\label{5.22}
|t_j-t_1|\leq&\Big|t_j-\frac{1}{\rho_*}\sum_j4\pi(1+\alpha_j)t_j\Big|+\Big|\frac{1}{\rho_*}\sum_j4\pi(1+\alpha_j)t_j-t_1\Big|\nonumber\\
=&O(e^{-\frac{\lambda_{m+1}}{1+\alpha_{m+1}}})\leq c_{10}e^{-\frac{\lambda_{m+1}}{1+\alpha_{m+1}}}
\end{align}
for $j\geq2$, where $c_{10}$ depends on $c_8,~c_9$ and is independent of $c_0.$ By choosing $$c_0>c_1,c_7,c_8,c_9,c_{10},$$ we can get
$v_2\notin\partial S_{\rho_2}(Q,w).$
From (\ref{5.6}), (\ref{5.8}), (\ref{5.20}), (\ref{5.21}), and
(\ref{5.22}), we obtain $v_1\not\in\partial S_{\rho_1}(Q,w)$. Therefore,
$$(v_1,v_2)\notin\partial\Big(S_{\rho_1}(Q,w),S_{\rho_2}(Q,w)\Big).$$
The proof is completed.
\end{proof}

Then, we want to apply Lemma \ref{le4.5} and Lemma \ref{le5.1} to get the degree of the linear operator in $S_{\rho_1}(Q,w)
\times S_{\rho_2}(Q,w)$ when $\rho_1$ crosses $\rho_*$.

To compute the term
\begin{align*}
\mathrm{deg}\Big(\big(2v_1+T_1(v_1,v_2),2v_2+T_2(v_1,v_2)\big);S_{\rho_1}(Q,w)\times S_{\rho_2}(Q,w),0\Big).
\end{align*}
We set
$$S_1^*(Q,w)=\Big\{(P,\Lambda,A):\frac12v_{P,\Lambda,A}+\phi\in S_{\rho_1}(Q,w), \phi\in O_{P,\Lambda}^1\Big\}$$
and define the map
$$\Phi_{Q}=(\Phi_{Q,1},\Phi_{Q,2},\Phi_{Q,3},\Phi_{Q,4}):$$
\begin{align*}
&\Phi_{Q,1}^{j}=\l\nabla(2v_1+T^0_1(v_1,v_2)),\nabla\partial_{p_j}v_{p_j}\r+\l\nabla2v_2+T^0_2(v_1,v_2),0\r,~\mathrm{for}~1\leq j\leq m,\\
&\Phi_{Q,2}^{j}=\l\nabla(2v_1+T^0_1(v_1,v_2)),\nabla\partial_{\lambda_j}v_{p_j}\r+\l\nabla(2v_2+T^0_2(v_1,v_2)),0\r,~\mathrm{for}~1\leq j\leq n,\\
&\Phi_{Q,3}^{j}=\l\nabla(2v_1+T^0_1(v_1,v_2)),\nabla v_{p_j}\r+\l\nabla(2v_2+T^0_2(v_1,v_2)),0\r,~\mathrm{for}~1\leq j\leq n,\\
&\Phi_{Q,4}=\l\nabla(2v_1+T^0_1(v_1,v_2)),0\r+(2v_2+T^0_2(v_1,v_2)).
\end{align*}
Clearly, by Lemma \ref{le4.5} and Lemma \ref{le5.1}, we have
\begin{align}
\label{5.23}
\mathrm{deg}\Big(\big(2v_1+&T_1(v_1,v_2),2v_2+T_2(v_1,v_2)\big);S_{\rho_1}(Q,w)\times S_{\rho_2}(Q,w),0\Big)\nonumber\\
&=\mathrm{deg}\Big(\Phi_{Q};S_1^*(Q,w)\times S_{\rho_2}(Q,w),0\Big).
\end{align}

Next, we study the right hand side of (\ref{5.23}) and prove Theorem \ref{th1.4}.\\

\noindent {\em Proof of Theorem \ref{th1.4}.}
To compute the degree, we can simplify the problem by replacing $\Phi_{Q}$ by a new map $\hat{\Phi}_{Q}$ defined as follows:
$\hat{\Phi}_{Q,1}=\Phi_{Q,1}$, $\hat{\Phi}_{Q,3}=\Phi_{Q,3},$ $\hat{\Phi}_{Q,4}=\Phi_{Q,4}$,
\begin{align}
\label{5.24}
\hat{\Phi}_{Q,2}^j=&\Phi_{Q,2}^j-\frac{8\pi(1+\alpha_j)}{2\rho_*}\sum_{i=1}^n\Phi_{Q,3}^i+\Phi_{Q,3}^j\nonumber\\
&=-\frac{8\pi(1+\alpha_j)}{\rho_*}\Big[\rho-\rho_*-4\pi\sum_i[(1+\alpha_i)(t_i-t_j)]-\pi\sum_id_ie^{-\frac{\lambda_i}{1+\alpha_i}}\Big].
\end{align}
Clearly, we have
\begin{align}
\label{5.25}
\frac{\partial\hat{\Phi}_{Q,1}}{\partial\Lambda}=\frac{\partial\hat{\Phi}_{Q,1}}{\partial A}
=\frac{\partial\hat{\Phi}_{Q,2}}{\partial A}=\frac{\partial\hat{\Phi}_{Q,2}}{\partial \psi}=\frac{\partial\hat{\Phi}_{Q,3}}{\partial \psi}
=\frac{\partial\hat{\Phi}_{Q,4}}{\partial A}=\frac{\partial\hat{\Phi}_{Q,4}}{\partial\Lambda}=0,
\end{align}
\begin{align}
\label{5.26}
\Phi_{Q}(P,\Lambda,A,\psi)=0~\mathrm{if~and~only~in}~
\hat{\Phi}_{Q}(P,\Lambda,A,\psi)=0,
\end{align}
and
\begin{align}
\label{5.27}
\mathrm{deg}\Big(\Phi_{Q};S_1^*(Q,w)\times S_{\rho_2}(Q,w),0\Big)=\mathrm{deg}\Big(\hat{\Phi}_{Q};S_1^*(Q,w)\times S_{\rho_2}(Q,w),0\Big).
\end{align}
Moreover if $\hat{\Phi}_{Q,1}=0$, $\hat{\Phi}_{Q,3}=0$ and $\hat{\Phi}_{Q,4}=0$ if and only if
\begin{align}
\label{5.28}
(p_1,p_2,\cdots,p_m)=(p_1^0,p_2^0,\cdots,p_m^0),~ A=(1,1,\cdots,1),~\psi=0,
\end{align}
and $\hat{\Phi}_{Q,2}=0$ if and only if
\begin{align}
\label{5.29}
\left\{\begin{array}{l}
t_1=t_2=\cdots=t_{n},\\
\rho_1-\rho_*=\pi\sum_jd_je^{-\frac{\lambda_j}{1+\alpha_j}}.
\end{array}\right.
\end{align}
It is not difficult to see that if $|\rho_1-\rho_*|$ is sufficiently small, equation (\ref{5.29}) possesses a unique solution
$$\Lambda(P)=(\lambda_1,\lambda_2,\cdots,\lambda_{n})$$
up to permutation. Hence $(P,\Lambda(P), A,0)$ is the solution of $\hat{\Phi}_{Q},$ where $ A=(1,1,\cdots,1).$ By (\ref{5.25}),
the degree of $\hat{\Phi}_{Q}$
at $(P,\Lambda(P), A,0)$ depends on the number of negative eigenvalue for the following matrix
\begin{align*}
\mathcal{M}=\left[\begin{array}{llll}
\frac{\partial\hat{\Phi}_{Q,1}}{\partial P},&\frac{\partial\hat{\Phi}_{Q,1}}{\partial \Lambda}, &\frac{\partial\hat{\Phi}_{Q,1}}{\partial A},
&\frac{\partial\hat{\Phi}_{Q,1}}{\partial \psi}\\
\frac{\partial\hat{\Phi}_{Q,2}}{\partial P},&\frac{\partial\hat{\Phi}_{Q,2}}{\partial \Lambda}, &\frac{\partial\hat{\Phi}_{Q,2}}{\partial A},
&\frac{\partial\hat{\Phi}_{Q,2}}{\partial \psi}\\
\frac{\partial\hat{\Phi}_{Q,3}}{\partial P},&\frac{\partial\hat{\Phi}_{Q,3}}{\partial \Lambda}, &\frac{\partial\hat{\Phi}_{Q,3}}{\partial A},
&\frac{\partial\hat{\Phi}_{Q,3}}{\partial \psi}\\
\frac{\partial\hat{\Phi}_{Q,4}}{\partial P},&\frac{\partial\hat{\Phi}_{Q,4}}{\partial \Lambda}, &\frac{\partial\hat{\Phi}_{Q,4}}{\partial A},
&\frac{\partial\hat{\Phi}_{Q,4}}{\partial \psi}\\
\end{array}\right]
\end{align*}
Here we say $\mu_{\mathcal{M}}$ is an eigenvalue of $\mathcal{M}$, if there exists $(\nu_1,\nu_2,\cdots,\nu_m)\in(\mathbb{R}^{2})^{m},$
$(\lambda_1,\lambda_2,\cdots,\lambda_{n})\in\mathbb{R}^{n},$ $(a_1,a_2,\cdots,a_{n})\in\mathbb{R}^{n},$ and $\Psi$ such that
\begin{align*}
\mathcal{M}\left[\begin{array}{l}\nu_1\\~\vdots\\\nu_m\\a_1\\~\vdots\\a_{n}\\ \lambda_1\\~\vdots\\ \lambda_{n}\\ ~\Psi\end{array}\right]=
\mu_{\mathcal{M}}\left[\begin{array}{l}~\quad \nu_1\\ \quad\quad\vdots\\ ~\quad \nu_m\\ ~\quad a_1\\ \quad\quad \vdots\\ ~\quad a_{n}\\
~\quad \lambda_1\\\quad\quad\vdots\\ ~\quad \lambda_{n}\\ (-\Delta)^{-1}\Psi\end{array}\right],
\end{align*}
where $\frac{\partial\hat{\Phi}_{Q,1}^j}{\partial \psi}[\Psi]=8\pi\nabla\Psi(p_j^0),$ and
\begin{align*}
\frac{\partial\hat{\Phi}_{Q,4}}{\partial \psi}[\Psi]=&\Psi-(-\Delta)^{-1}\Big(2\rho_2\frac{\overline{h}_2e^{w-4\pi\sum_{j=1}^{m}G(x,p_j^0)}}
{\int_M\overline{h}_2e^{w-4\pi\sum_{j=1}^{m}G(x,p_j^0)}}\Psi\\
&-2\rho_2\frac{\overline{h}_2e^{w-4\pi\sum_{j=1}^{m}G(x,p_j^0)}}
{\big(\int_M\overline{h}_2e^{w-4\pi\sum_{j=1}^{m}G(x,p_j^0)}\big)^2}
\int_M\big(\overline{h}_2e^{w-4\pi\sum_{j=1}^{m}G(x,p_j^0)}\Psi\big)\Big).
\end{align*}

We set $N(T)$ as the number of the negative eigenvalue of matrix $T$,
\begin{align*}
\mathcal{M}_1=\left[\begin{array}{ll}\frac{\partial\hat{\Phi}_{Q,1}}{\partial P},&\frac{\partial\hat{\Phi}_{Q,1}}{\partial \psi}\\
\frac{\partial\hat{\Phi}_{Q,4}}{\partial P},&\frac{\partial\hat{\Phi}_{Q,4}}{\partial \psi}\\
\end{array}\right]~
\mathrm{and}~
\mathcal{M}_2=\left[\begin{array}{ll}\frac{\partial\hat{\Phi}_{Q,2}}{\partial \Lambda},&\frac{\partial\hat{\Phi}_{Q,2}}{\partial A}\\
\frac{\partial\hat{\Phi}_{Q,3}}{\partial \Lambda},&\frac{\partial\hat{\Phi}_{Q,3}}{\partial A}\\
\end{array}\right].
\end{align*}
By using (\ref{5.25}),
\begin{align*}
N(\mathcal{M})=N(\mathcal{M}_1)+N(\mathcal{M}_2)=N(\mathcal{M}_1)+N\Big[\frac{\partial \hat{\Phi}_{Q,2}}{\partial\Lambda}\Big]+N\Big[\frac{\partial
\hat{\Phi}_{Q,3}}{\partial A}\Big],
\end{align*}
Therefore,
\begin{align*}
\deg\Big(\Phi_Q;S_1^*(Q,w)\times &S_{\rho_2}(Q,w),0\Big)=(-1)^{N(\mathcal{M})}=(-1)^{N(\mathcal{M}_1)}\times(-1)^{N(\mathcal{M}_2)}\\
&=(-1)^{N(\mathcal{M})}\times\mathrm{sgn}\Big(\det(\frac{\partial \hat{\Phi}_{Q,2}}{\partial\Lambda})\Big)\times\mathrm{sgn}
\Big(\det(\frac{\partial \hat{\Phi}_{Q,3}}{\partial A})\Big).
\end{align*}
We first consider the last two terms on the
right hand side of above equality. For $\mathrm{det}(\frac{\partial\hat{\Phi}_{Q,3}}{\partial A}),$ it is easy to see that the sign of this value is
positive, since it is a diagonal matrix with every term positive on diagonal. Therefore
$$\mathrm{sgn}~\mathrm{det}(\frac{\partial\hat{\Phi}_{Q,3}}{\partial A})=1.$$
To compute $\mathrm{det}(\frac{\partial\hat{\Phi}_{Q,2}}{\partial\Lambda}),$ we recall that
$$t_j=\lambda_j+\frac{d_j}{2(1+\alpha_j)^2}\lambda_je^{-\frac{\lambda_j}{1+\alpha_j}}-\sum_{j=1}^{n}\overline{v}_{p_j}+\mathrm{constant}.$$
Thus
$$\frac{\partial t_j}{\partial \lambda_i}=\Big[1+(\frac{d_j}{2(1+\alpha_j)}-\frac{d_j}{2(1+\alpha_j)}\lambda_j)e^{-\frac{\lambda_j}{1+\alpha_j}}\Big]
\delta_{ij}-\frac{\partial\overline{v}_{p_i}}{\partial\lambda_i}.$$
By (\ref{5.24}), we have
\begin{align*}
\frac{\partial\hat{\Phi}_{Q,2}^j}{\partial\lambda_j}=-\sum_{i\neq j}(1+\alpha_i)\frac{\partial t_j}{\partial\lambda_j}
+O(e^{-\frac{\lambda_j}{1+\alpha_j}})
=&-\sum_{i\neq j}(1+\alpha_i)[1-\frac{d_j}{2(1+\alpha_j)^2}\lambda_je^{-\frac{\lambda_j}{1+\alpha_j}}]\\
&+O(e^{-\frac{\lambda_j}{1+\alpha_j}})
\end{align*}
and
\begin{align*}
\frac{\partial\hat{\Phi}_{Q,2}^j}{\partial\lambda_i}=(1+\alpha_i)\frac{\partial t_i}{\partial\lambda_i}-\frac{d_i}{4(1+\alpha_i)}
e^{-\frac{\lambda_i}{1+\alpha_i}}
=&(1+\alpha_i)[1-\frac{d_i}{2(1+\alpha_i)^2}\lambda_ie^{-\frac{\lambda_i}{1+\alpha_i}}]\\
&+O(e^{-\frac{\lambda_i}{1+\alpha_i}})
\end{align*}
for $i\neq j$, here we replace $\hat{\Phi}_{Q,2}$ by $\frac{\rho^*}{32\pi^2(1+\alpha_j)}\hat{\Phi}_{Q,2}$ (still denoted by $\hat{\Phi}_{Q,2}$). Denote
\begin{align}
\label{5.30}
B=\sum_i(1+\alpha_i),~E_i=1-\frac{d_i}{2(1+\alpha_i)^2}\lambda_ie^{-\frac{\lambda_i}{1+\alpha_i}},~\delta_j=
\sum_i\frac{\partial\hat{\Phi}_{Q,2}^j}{\partial\lambda_i}.
\end{align}
Thus, we have
\begin{align*}
&\mathrm{det}\Big[(\frac{\partial\hat{\Phi}_{Q,2}^j}{\partial\Lambda})\Big]\\
=&\mathrm{det}\left[\begin{array}{lllll}
(1+\alpha_{1}-B)E_1+(*)  &(1+\alpha_2)E_2+(*)     &\cdots    &(1+\alpha_{n})E_{n}+(*)\\
(1+\alpha_{1})E_1+(*)    &(1+\alpha_2-B)E_2+(*)   &\cdots    &(1+\alpha_{n})E_{n}+(*)\\
\quad\quad\quad\vdots  &\quad\quad\quad\vdots   &~\vdots   &\quad\quad\quad\vdots\\
(1+\alpha_{1})E_1+(*)    &(1+\alpha_2)E_2+(*)     &\cdots    &(1+\alpha_{n}-B)E_{n}+(*)\\\end{array}\right]\\
=&\mathrm{det}\left[\begin{array}{lllll}
\delta_1      &(1+\alpha_2)E_2+(*)   &\cdots    &(1+\alpha_{n})E_{n}+(*)\\
\delta_2      &(1+\alpha_2-B)E_2+(*) &\cdots    &(1+\alpha_{n})E_{n}+(*)\\
~\vdots       &\quad\quad\quad\vdots &~\vdots   &\quad\quad\quad\vdots\\
\delta_{n}  &(1+\alpha_2)E_2+(*)   &\cdots    &(1+\alpha_{n}-B)E_{n}+(*)\\\end{array}\right]\\
=&\mathrm{det}\left[\begin{array}{llllll}
~\quad\delta_1        &(1+\alpha_2)E_2+(*)          &(1+\alpha_3)E_3+(*)    &\cdots     &(1+\alpha_{n})E_{n}+(*)\\
\delta_2-\delta_1     &\quad-BE_2+(*)               &\quad\quad\quad(*)     &\cdots     &\quad\quad\quad\quad(*)\\
\delta_3-\delta_1     &\quad\quad\quad(*)           &\quad-BE_3+(*)         &\cdots     &\quad\quad\quad\quad(*)\\
~\quad\vdots          &\quad\quad\quad\quad~\vdots  &~\quad\quad\quad\vdots &~\vdots    &~\quad\quad\quad\quad\vdots\\
\delta_{n}-\delta_1 &\quad\quad\quad(*)           &\quad\quad\quad(*)     &\cdots     &~\quad-BE_{n}+(*)
\end{array}\right]\\
=&\mathrm{det}\left[\begin{array}{llllll}
\sum_j\frac{1+\alpha_j}{B}\delta_j  &\quad\quad\quad(*)       &\quad\quad\quad(*)      &\quad\cdots   &\quad\quad\quad(*)\\
\quad\delta_2-\delta_1              &~-BE_2+(*)               &\quad\quad\quad(*)      &\quad\cdots   &\quad\quad\quad(*)\\
\quad\delta_3-\delta_1              &\quad\quad\quad(*)       &~-BE_3+(*)              &\quad\cdots   &\quad\quad\quad(*)\\
~\quad\quad\vdots                   &~\quad\quad\quad\vdots   &~\quad\quad\quad\vdots  &~\quad\vdots  &~\quad\quad\quad\vdots\\
\quad\delta_{n}-\delta_1          &\quad\quad\quad(*)       &\quad\quad\quad(*)      &\quad\cdots   &~-BE_{n}+(*)
\end{array}\right],
\end{align*}
where all the terms $(*)$ is bounded by $O(e^{-\frac{\lambda_{m+1}}{1+\alpha_{m+1}}})$. Next, we consider $\sum_j(1+\alpha_j)\delta_j,$
\begin{align*}
&\sum_j(1+\alpha_j)\delta_j\\
=&\sum_j(1+\alpha_j)\Big[-\sum_{i\neq j}(1+\alpha_i)\Big(1-\big(\frac{d_j}{2(1+\alpha_j)^2}\lambda_j-\frac{d_j}{2(1+\alpha_j)}\big)
e^{-\frac{\lambda_j}{1+\alpha_j}}-\frac{\partial\overline{v}_{p_j}}
{\partial\lambda_j}\Big)\\
&-\frac{d_j}{4(1+\alpha_j)}e^{-\frac{\lambda_j}{1+\alpha_j}}+\Big(\sum_{i\neq
j}(1+\alpha_i)\Big(1-\big(\frac{d_i}{2(1+\alpha_i)^2}\lambda_i-\frac{d_i}{2(1+\alpha_i)}\big)
e^{-\frac{\lambda_i}{1+\alpha_i}}-
\frac{\partial\overline{v}_{p_i}}{\partial\lambda_i}\Big)\\
&-\frac{d_i}{4(1+\alpha_i)}e^{-\frac{\lambda_i}{1+\alpha_i}}\Big)\Big]\\
=&-\sum_j(1+\alpha_j)\sum_j\frac{d_j}{4(1+\alpha_j)}e^{-\frac{\lambda_j}{1+\alpha_j}}
+O(e^{-\frac{\lambda_{m+1}}{1+\alpha_{m+1}}-\epsilon\lambda_{m+1}})\\
=&-\frac{B}{4(1+\alpha_{m+1})}\sum_jd_je^{-\frac{\lambda_j}{1+\alpha_j}}
+O(e^{-\frac{\lambda_{m+1}}{1+\alpha_{m+1}}-\epsilon\lambda_{m+1}})\\
=&-\frac{B}{4\pi(1+\alpha_{m+1})}(\rho-\rho_*)+O(e^{-\frac{\lambda_{m+1}}{1+\alpha_{m+1}}-\epsilon\lambda_{m+1}})
\end{align*}
for some $\epsilon>0$. Thus
\begin{equation}
\label{5.31}
\mathrm{det}\Big[(\frac{\partial\hat{\Phi}_{Q,2}^j}{\partial\Lambda})\Big]=(-1)^{n}\frac{B^{n-1}}{4\pi(1+\alpha_{m+1})}(\rho-\rho_*)
+O(e^{-\frac{\lambda_{m+1}}{1+\alpha_{m+1}}-\epsilon\lambda_{m+1}}).
\end{equation}
It remains to compute $N(\mathcal{M}_1).$ According to the definition, we have
\begin{align}
\label{5.32}
\Big[\frac{\partial(\hat{\Phi}_{Q,1},\hat{\Phi}_{Q,4})}{\partial(P,\psi)}\Big]
\left(\begin{array}{l}
\nu_1\\
\nu_2\\
\vdots\\
\nu_m\\
\Psi
\end{array}
\right)
=\left(\begin{array}{ll}
&-\mathcal{I}_1+\nabla\Psi(p_1^0)\\
&-\mathcal{I}_2+\nabla\Psi(p_2^0)\\
&~\quad\quad\vdots\\
&-\mathcal{I}_m+\nabla\Psi(p_m^0)\\
&\quad\quad-\mathcal{I}_0
\end{array}\right),
\end{align}
where
\begin{align*}
\mathcal{I}_i=\nabla^2_xH_i(0,0)\cdot\nu_i+8\pi\sum_{j=1,j\neq i}^m\nabla^2_xG(x,p_j^0)\mid_{x=p_i^0}\cdot\nu_j,~i=1,2,\cdots,m,
\end{align*}
and
\begin{align*}
\mathcal{I}_0=&-\Psi+(-\Delta)^{-1}\Big(2\rho_2\frac{\overline{h}_2e^{w-4\pi\sum_{j=1}^{m}G(x,p_j^0)}}
{\int_M\overline{h}_2e^{w-4\pi\sum_{j=1}^{m}G(x,p_j^0)}}\Psi\\
&-2\rho_2\frac{\overline{h}_2e^{w-4\pi\sum_{j=1}^{m}G(x,p_j^0)}}
{\big(\int_M\overline{h}_2e^{w-4\pi\sum_{j=1}^{m}G(x,p_j^0)}\big)^2}\int_M\big(\overline{h}_2e^{w-4\pi\sum_{j=1}^{m}G(x,p_j^0)}\Psi\big)\\
&-4\pi\rho_2\frac{\overline{h}_2e^{w-4\pi\sum_{j=1}^{m}G(x,p_j^0)}}
{\int_M\overline{h}_2e^{w-4\pi\sum_{j=1}^{m}G(x,p_j^0)}}\big(\sum_{j=1}^m\nabla G(x,p_j^0)\cdot \nu_j\big)\\
&+4\pi\rho_2\frac{\overline{h}_2e^{w-4\pi\sum_{j=1}^{m}G(x,p_j^0)}}
{\big(\int_M\overline{h}_2e^{w-4\pi\sum_{j=1}^{m}G(x,p_j^0)}\big)^2}
\int_M\Big[\overline{h}_2e^{w-4\pi\sum_{j=1}^{m}G(x,p_j^0)}\big(\sum_{j=1}^m\nabla G(x,p_j^0)\cdot \nu_j\big)\Big]\Big).
\end{align*}
According to the definition of the topological degree for the solution to the shadow system (\ref{1.12}), we can get $(-1)^{N(\mathcal{M}_1)}$ is exactly
the
Leray-Schauder topological degree contributed by $(P_w,w).$ Therefore, we proved Theorem \ref{th1.4}.\quad\quad\quad\quad\quad\quad\quad $\square$\\

\noindent{\em Proof of Theorem \ref{th1.3}.} Theorem \ref{th1.3} is a consequence of Theorem \ref{th1.4}.\quad\quad\quad\quad\quad$\square$

\vspace{1cm}
\section{Proof of Theorem \ref{th1.5} and Theorem \ref{1.6}}

This section is devoted to prove Theorem \ref{1.5}. We first introduce a deformation to decouple the system (\ref{1.11}).

\begin{equation}
\label{6.1}
(S_t)
\left\{\begin{array}{l}
\Delta w+2\rho_2(\frac{h_2e^{w-4\pi G(x,p)}}
{\int_Mh_2e^{w-4\pi G(x,p)}}-1)=0,\\
\nabla\big(\log(h_1e^{-\frac12w\cdot(1-t)})+4\pi R(x,x)\big)\mid_{x=p}=0.
\end{array}\right.
\end{equation}

We can easily see that the system (\ref{6.1}) is exactly (\ref{1.11}) when $t=0,$ and will be a decoupled system when $t=1.$ During the deformation
from $(S_1)$ to $(S_0),$ we have
\begin{lemma}
\label{le6.1}
Let $\rho_2\notin4\pi\mathbb{N}$. Then there is uniform constant $C_{\rho_2}$ such that for all solutions to (\ref{6.1}),
we have $|w|_{L^{\infty}(M)}<C_{\rho_2}.$
\end{lemma}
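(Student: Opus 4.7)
The plan is to argue by contradiction, reducing the question to a standard concentration-quantization dichotomy for the first equation of $(S_t)$, which, as a PDE in $w$ alone, does \emph{not} depend on the deformation parameter $t$.

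Assume the conclusion fails. Then there exist $t_k\in[0,1]$, $p_k\in M$, and solutions $w_k$ of $(S_{t_k})$ with $\|w_k\|_{L^\infty(M)}\to\infty$. After replacing $w_k$ by $w_k-\tfrac{1}{|M|}\int_M w_k$ (an admissible normalization, since the first equation depends on $w$ only through $e^w/\!\int\!e^w$ and through $\Delta w$, both shift-invariant), we may assume $\int_M w_k=0$. Passing to subsequences, $t_k\to t_\infty\in[0,1]$ and $p_k\to p_\infty\in M$. Now the first equation of $(S_{t_k})$ can be rewritten as a singular mean field equation
\[
\Delta w_k+2\rho_2\Big(\tfrac{\tilde h_k e^{w_k}}{\int_M\tilde h_k e^{w_k}}-1\Big)=0,\qquad \tilde h_k(x):=h_2(x)\,e^{-4\pi G(x,p_k)},
\]
where $\tilde h_k\ge 0$ and, near $p_k$, $\tilde h_k(x)\sim C|x-p_k|^{2}$ (a conical singularity of order $\alpha=1$), while $\tilde h_k$ is uniformly $C^{2,\alpha}$ bounded away from $p_k$ and converges locally to $\tilde h_\infty=h_2\,e^{-4\pi G(\cdot,p_\infty)}$.

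Next I would apply the blow-up theory for singular Liouville equations of Brezis–Merle, Li–Shafrir, and Bartolucci–Tarantello (as already invoked in Section~2 of the paper) to the sequence $w_k$. Since $\int_M w_k=0$ and $\|w_k\|_{L^\infty}\to\infty$, the solutions $w_k$ form a non-compact family and must concentrate: there is a finite blow-up set $\mathfrak{S}\subset M$ such that, up to subsequence,
\[
2\rho_2\frac{\tilde h_k e^{w_k}}{\int_M\tilde h_k e^{w_k}}\;\rightharpoonup\;\sum_{q\in\mathfrak{S}}m(q)\,\delta_q
\]
in the sense of measures, and the local masses $m(q)$ are quantized:
\[
m(q)=\begin{cases}8\pi,&q\in\mathfrak{S}\setminus\{p_\infty\},\\ 16\pi=8\pi(1+\alpha),&q=p_\infty\in\mathfrak{S},\ \alpha=1.\end{cases}
\]
The quantization at the moving singularity $p_k\to p_\infty$ is the one delicate step: one has to verify that a collapse of a regular blow-up point onto $p_\infty$ still produces a single singular bubble of mass $8\pi(1+\alpha)=16\pi$. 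This is handled by the Pohozaev identity / selection process exactly as in \cite{bt1,bm,ls}, applied to the scaled profile around $p_k$ with the weight $|x-p_k|^2$, and uses no information about the second equation of $(S_t)$.

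Summing the local masses against the total integral $2\rho_2$ yields
\[
2\rho_2\;=\;\sum_{q\in\mathfrak{S}}m(q)\;=\;8\pi N_{\mathrm{reg}}+16\pi\,\chi_{\{p_\infty\in\mathfrak{S}\}},
\]
for some integer $N_{\mathrm{reg}}\ge 0$. Hence $\rho_2\in 4\pi\mathbb{N}$, contradicting the hypothesis $\rho_2\notin 4\pi\mathbb{N}$. This contradiction forces a uniform $L^\infty$ bound, which is independent of $t\in[0,1]$ since $t$ never entered the first equation; standard elliptic regularity then upgrades this to whatever Sobolev/Hölder norm is needed in the subsequent homotopy argument. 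The main obstacle is the moving-singularity quantization sketched above; once that is in hand, the rest is a one-line bookkeeping of local masses.
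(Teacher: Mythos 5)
Your proposal is correct and takes essentially the same approach as the paper: the paper's proof is a one-line citation of the Bartolucci--Tarantello a priori bound for the singular mean field equation (here with a single weight $h_2e^{-4\pi G(x,p)}$ vanishing to order $2$ at the moving point $p$, so the critical set for $2\rho_2$ is $8\pi\mathbb{N}$, i.e.\ $4\pi\mathbb{N}$ for $\rho_2$), combined with the observation that the first equation of $(S_t)$ is independent of $t$. You spell out the underlying concentration/quantization argument and correctly flag the moving-singularity point $p_k\to p_\infty$, a subtlety the paper's terse proof glosses over.
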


\begin{proof}
Since $\rho_2\notin4\pi\mathbb{N},$ then we can see any solution for the following equation
\begin{equation}
\label{6.2}
\Delta w+2\rho_2(\frac{h_2e^{w-4\pi G(x,p)}}{\int_Mh_2e^{w-4\pi G(x,p)}}-1)=0
\end{equation}
is uniformly bounded above. By using the classical elliptic estimate, we have $|w|_{C^1(M)}<C.$ This constant $C$ depends on $\rho_2.$
\end{proof}

\noindent{\em Proof of Theorem \ref{th1.5}.} It is known that the topological degree is independent of $h_1$ and $h_2$ as long as they are positive $C^1$ functions. By Remark 3 in section 4, we always can choose $h_1$ and $h_2$ such that the hypothesis of Theorem \ref{th3.1} holds.

Let $d_S$ denote the Leray-Schauder degree for (\ref{1.1}). By Lemma \ref{le6.1}, computing the topological degree for (\ref{1.11}) is reduced to compute the
topological
degree for system (\ref{6.1}) when $t=1,$
\begin{align}
\label{6.3}
\left\{\begin{array}{l}
\Delta w+2\rho_2(\frac{h_2e^{w-4\pi G(x,p)}}
{\int_Mh_2e^{w-4\pi G(x,p)}}-1)=0,\\
\nabla[\log h_1+4\pi R(x,x)]\mid_{x=p}=0.
\end{array}\right.
\end{align}
Since this is a decoupled system, the topological degree of (\ref{6.3})
equals the product of the degree of first equation and degree contributed by the second equation. By Poincare-Hopf Theorem, the degree of the second
equation is $\chi(M).$ On the other hand, by Theorem A, the topological degree for the first equation is $b_k+b_{k-1},$ where $b_{k}$ is given (\ref{1.17}). Therefore,
\begin{align}
\label{6.4}
d_S=\chi(M)\cdot(b_k+b_{k-1}).
\end{align}
Combined with Lemma \ref{le6.1}, we get Theorem \ref{th1.5}.         \quad\quad\quad\quad\quad\quad\quad\quad\quad\quad\quad\quad\quad$\square$
\\

\noindent {\em Proof of Theorem \ref{th1.6}.} Theorem \ref{th1.6} is a consequence of Theorem \ref{th1.4}, Theorem \ref{th1.5} and Theorem A.   $\square$

\vspace{1cm}
\section{The Dirichlet Problem}
In this section we consider the Dirichlet problem of $SU(3)$ Toda system. Let $\Omega$ be a bounded smooth domain in $\mathbb{R}^2$ and
$h_1,~h_2$ are two positive $C^{2,\alpha}$ function in $\Omega.$ We consider
\begin{equation}
\label{9.1}
\left\{\begin{array}{l}
\Delta u_1+2\rho_1\frac{h_1e^{u_1}}{\int_Mh_1e^{u_1}}-\rho_2\frac{h_2e^{u_2}}{\int_{M}h_2e^{u_2}}=0,~u_1=0~\mathrm{on}~\partial\Omega,\\
\Delta u_2-\rho_1\frac{h_1e^{u_1}}{\int_Mh_1e^{u_1}}+2\rho_2\frac{h_2e^{u_2}}{\int_{M}h_2e^{u_2}}=0,~u_2=0~\mathrm{on}~\partial\Omega.
\end{array}
\right.
\end{equation}

By \cite[Theorem 1.1 ]{lwz1}, we know that the blow up never occurs on the boundary. Therefore, we can use all the arguments for (\ref{1.6}) with minor modification to get the corresponding result for Dirichlet boundary problem (\ref{9.1}).

\begin{theorem}
\label{th9.1}
Suppose $h_1,h_2$ are two positive $C^{2,\alpha}$ function in $\Omega$ and the assumption $(i),(ii)$. Then there exists a positive constant $c$
such that for any solution of equation (\ref{9.1}), there holds:
$$|u_1(x)|,|u_2(x)|\leq c,~\forall x\in M,~i=1,2.$$
\end{theorem}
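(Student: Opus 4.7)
The plan is to transfer the compact-surface proof of Theorem \ref{th1.1} to the Dirichlet setting essentially verbatim, with only one structural new ingredient: the no-boundary-blow-up result from \cite{lwz1}, which confines all concentration points to compact subsets of $\Omega$. Arguing by contradiction, I take a sequence of solutions $(u_{1k},u_{2k})$ with $\max_\Omega(|u_{1k}|,|u_{2k}|)\to\infty$, set $\tilde u_{ik}=u_{ik}-\log\int_\Omega h_ie^{u_{ik}}$, and observe that the pair satisfies the $constant$-source system
\begin{equation*}
\Delta\tilde u_{1k}+2\rho_1 h_1e^{\tilde u_{1k}}-\rho_2 h_2e^{\tilde u_{2k}}=0,\qquad
\Delta\tilde u_{2k}-\rho_1 h_1e^{\tilde u_{1k}}+2\rho_2 h_2e^{\tilde u_{2k}}=0,
\end{equation*}
with bounded Dirichlet-like data on $\partial\Omega$ (the values $-\log\int_\Omega h_ie^{u_{ik}}$ are constants on $\partial\Omega$). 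Defining the blow-up sets $\mathfrak{S}_i$ and local masses $\sigma_{ip}$ as in Section 2, the bound from \cite{lwz1} ensures $\mathfrak{S}\subset\subset\Omega$, so every blow-up analysis takes place in an interior ball $B_{r_0}(p)$.

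Next I would verify that Lemmas \ref{le2.1}, \ref{le2.2}, \ref{le2.3} transfer directly. These are purely local: Lemma \ref{le2.1} rests on the Brezis--Merle inequality and elliptic regularity on $B_{r_0}(p)$; Lemma \ref{le2.2} constructs auxiliary harmonic majorants and applies \cite[Corollary 3]{bm}; Lemma \ref{le2.3} uses Green's representation on $B_{r_0}(p)$ together with Jensen's inequality. Nothing in these arguments sees the global manifold structure, so each lemma holds word-for-word in the Dirichlet setting, provided one replaces the global Green function with the Dirichlet Green function on $\Omega$ in representation formulas (which only improves estimates, since the Dirichlet Green function is bounded from above).

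With these local tools in hand, the contradiction argument in the proof of Theorem \ref{th1.1} runs identically: first rule out the case $\mathfrak{S}_1=\emptyset$ or $\mathfrak{S}_2=\emptyset$ using the Li--Shafrir classification for the scalar Liouville equation with Dirichlet data (valid since boundary blow-up is excluded), forcing $\rho_i\in 4\pi\mathbb{N}$, a contradiction; then rule out $\mathfrak{S}_1\cap\mathfrak{S}_2=\emptyset$ by repeating the simple-blow-up analysis of the $\mathbb{I}_2$-type argument; finally, at a common blow-up point $p\in\mathfrak{S}_1\cap\mathfrak{S}_2$, Proposition 2.4 of \cite{jlw} together with assumption (ii) and $\rho_{1k}\to\rho_1<8\pi$ forces $(\sigma_{1p},\sigma_{2p})=(2,4)$, and the local Lemma \ref{le2.3} then forces concentration of $\tilde u_{2k}$, giving $\rho_2\in 4\pi\mathbb{N}$, a contradiction. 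This yields uniform upper bounds on $\tilde u_{ik}$.

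The only genuinely new step, and the one I expect to need some care, is converting the upper bound on $\tilde u_{ik}$ into a two-sided $L^\infty$-bound on $u_{ik}$. In the compact case one normalizes $\int_M u_{ik}=0$ and Jensen gives $\int h_ie^{u_{ik}}\ge C>0$. In the Dirichlet case one instead uses $u_{ik}=0$ on $\partial\Omega$: summing the two equations gives $-\Delta(u_{1k}+u_{2k})\ge 0$ (modulo constants of bounded total mass), so by standard elliptic estimates and the Brezis--Merle alternative applied to each scalar equation separately, once $\tilde u_{ik}$ is bounded above one obtains $\|h_ie^{u_{ik}}\|_{L^1(\Omega)}$ bounded both above and below, hence $\log\int h_ie^{u_{ik}}$ is bounded, and therefore $u_{ik}=\tilde u_{ik}+\log\int h_ie^{u_{ik}}$ is uniformly bounded in $L^\infty(\Omega)$, completing the proof.
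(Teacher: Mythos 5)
Your overall strategy is exactly the paper's: cite the boundary no‑blow‑up theorem of \cite{lwz1} to confine all concentration to the interior, then transfer the local Lemmas \ref{le2.1}--\ref{le2.3} and the contradiction argument of Theorem \ref{th1.1} verbatim. (The paper itself offers essentially nothing more than this one‑sentence reduction.) Your observation that the boundary values of $\tilde u_{ik}$ are constant and that the Dirichlet Green's function only improves the representation estimates is also correct.

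The one place where the logic slips is the final paragraph. The contradiction argument gives only an \emph{upper} bound on $\tilde u_{ik}$, so even after you establish that $\log\int_\Omega h_ie^{u_{ik}}$ is bounded, writing $u_{ik}=\tilde u_{ik}+\log\int h_ie^{u_{ik}}$ only yields an upper bound on $u_{ik}$, not the two‑sided $L^\infty$ bound claimed in the theorem. The clean route, which is also what the compact case implicitly does (``it is enough for us to prove $\tilde u_{ik}$ is uniformly bounded above''), is: the upper bound on $\tilde u_{ik}$ says $\frac{h_ie^{u_{ik}}}{\int_\Omega h_ie^{u_{ik}}}=h_ie^{\tilde u_{ik}}\leq C$ in $L^\infty(\Omega)$, hence by (\ref{9.1}) $|\Delta u_{ik}|\leq C$; combined with $u_{ik}=0$ on $\partial\Omega$, standard elliptic estimates give $\|u_{ik}\|_{L^\infty(\Omega)}\leq C$ directly, two‑sided. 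The detour through $-\Delta(u_{1k}+u_{2k})\geq 0$ and a Brezis--Merle alternative is neither needed nor quite the right tool here. With that repair the proposal is complete and coincides with the paper's argument.
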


%

In order to state our degree formula for (\ref{9.1}), we introduce the following generating function
\begin{align*}
\Xi_{\Omega}(x)=(1+x+x^2+x^3\cdots)^{-\chi(\Omega)+1}=\mathfrak{b}_0+\mathfrak{b}_1x^1+\mathfrak{b}_2x^2+\cdots+\mathfrak{b}_kx^k+\cdots,
\end{align*}
where $\chi(\Omega)$ denotes the Euler characteristic number for $\Omega$, then we have the following theorem

\begin{theorem}
\label{th9.2}
Suppose $d_{\rho_1,\rho_2}^{(2)}$ denotes the topological degree for (\ref{9.1}) when $\rho_2\in(4k\pi,4(k+1)\pi)$, then
\begin{align*}
d_{\rho_1,\rho_2}^{(2)}=
\left\{\begin{array}{ll}
\mathfrak{b}_k,&\rho_1\in(0,4\pi),\\
\mathfrak{b}_k-\chi(\Omega)(\mathfrak{b}_k+\mathfrak{b}_{k-1}),&\rho_1\in(4\pi,8\pi),
\end{array}\right.
\end{align*}
where $\mathfrak{b}_{-1}=0.$
\end{theorem}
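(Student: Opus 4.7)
The plan is to mirror the argument that led from Theorem A, Theorem \ref{th1.4} and Theorem \ref{th1.5} to Theorem \ref{th1.6}, replacing the compact surface $M$ by the bounded domain $\Omega$ throughout. By the boundary no-blow-up result in \cite{lwz1}, every concentration phenomenon for \eqref{9.1} happens strictly in the interior of $\Omega$, so the local analysis (local masses, Pohozaev identity, Brezis–Merle type estimates) developed in Sections 2--5 can be imported verbatim. In particular, Theorem \ref{th9.1} tells us that $d_{\rho_1,\rho_2}^{(2)}$ is well-defined for $\rho_1\in (0,4\pi)\cup(4\pi,8\pi)$ and $\rho_2\notin 4\pi\mathbb{N}$, and by the homotopic invariance of the Leray--Schauder degree it is constant on each of the two intervals for $\rho_1$.

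First I would handle the range $\rho_1\in(0,4\pi)$. Because $\rho_1$ is strictly below the first critical value, no blow-up of the $u_1$-component can occur as one deforms $\rho_1\to 0^+$, while the $u_2$-equation retains its a-priori bound since $\rho_2\notin 4\pi\mathbb{N}$. At $\rho_1=0$ the first equation becomes linear in $u_1$ with zero Dirichlet data and a smooth right-hand side depending on $u_2$, so it has a unique solution for each $u_2$. The Leray--Schauder degree therefore reduces to that of a single mean field equation of Liouville type on $\Omega$ with Dirichlet boundary condition, and the Chen--Lin degree-counting formula (the Dirichlet analog of Theorem A, whose generating function on $\Omega$ is exactly $\Xi_{\Omega}$) gives $d_{\rho_1,\rho_2}^{(2)}=\mathfrak{b}_k$ for $\rho_2\in(4k\pi,4(k+1)\pi)$.

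For $\rho_1\in(4\pi,8\pi)$ I would write $d_{\rho_1,\rho_2}^{(2)}=d_-^{(2)}+(d_+^{(2)}-d_-^{(2)})$, where $d_-^{(2)}=\mathfrak{b}_k$ is the degree computed in the previous paragraph, and the jump $d_+^{(2)}-d_-^{(2)}$ is the total topological degree contributed by bubbling solutions as $\rho_1$ crosses $4\pi$. The Dirichlet analog of Theorems \ref{th1.2}--\ref{th1.4} identifies this contribution with $-d_S$, where $d_S$ is the degree of the shadow system
\begin{equation*}
\begin{cases}
\Delta w+2\rho_2\!\left(\dfrac{h_2 e^{w-4\pi G(x,p)}}{\int_\Omega h_2 e^{w-4\pi G(x,p)}}-1\right)=0,\quad w=0~\text{on}~\partial\Omega,\\[4pt]
\nabla\big(\log(h_1 e^{-\frac12 w})+4\pi R(x,x)\big)\big|_{x=p}=0,
\end{cases}
\end{equation*}
where now $G$ and $R$ are the Dirichlet Green function of $\Omega$ and its Robin function. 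All the non-degeneracy and transversality arguments of Section 2 carry over because the Dirichlet Green function is smooth away from the diagonal and its Robin function is smooth in $\Omega$. The identity $d_T(Q,w)=(-1)^{n}d_S(Q,w)$ established in Theorem \ref{th1.4} depends only on local ingredients and on the Morse index of the bilinear form in Lemma \ref{le4.5}, both of which are unchanged in the Dirichlet set-up.

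Finally, to compute $d_S$ I would run the homotopy $(S_t)$ of Section 6, deforming the coupling in the constraint equation from $\log(h_1 e^{-\frac12 w\cdot(1-t)})$ at $t=0$ to $\log h_1$ at $t=1$, so that at $t=1$ the system decouples into a Liouville equation for $w$ and a critical point equation for $p$. The degree of the $w$-component is $\mathfrak{b}_k+\mathfrak{b}_{k-1}$ by the Dirichlet Chen--Lin formula, while the degree of the $p$-equation equals $\chi(\Omega)$ by the Poincaré--Hopf theorem applied to the gradient field $\nabla\big(\log h_1+4\pi R(x,x)\big)$; note that the Robin function $R(x,x)$ tends to $+\infty$ as $x\to \partial\Omega$, which makes the vector field point inward near $\partial\Omega$ so the Poincaré--Hopf index sum is indeed $\chi(\Omega)$. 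Multiplying gives $d_S=\chi(\Omega)(\mathfrak{b}_k+\mathfrak{b}_{k-1})$, and combining with the previous paragraph yields
\[
d_+^{(2)}=\mathfrak{b}_k-\chi(\Omega)(\mathfrak{b}_k+\mathfrak{b}_{k-1}).
\]
The main obstacle I anticipate is the last step: one needs the boundary behaviour of $R(x,x)$ to control the gradient flow near $\partial\Omega$ and to ensure that no critical points escape to the boundary along the homotopy, so that Poincaré--Hopf applies on the non-compact open manifold $\Omega$. All the other ingredients are direct Dirichlet translations of arguments already carried out in Sections 2--6.
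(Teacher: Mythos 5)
Your proposal is correct and takes essentially the same approach the paper intends: the paper's entire argument for Theorem~\ref{th9.2} is the remark that, thanks to the boundary non-blow-up result of \cite{lwz1}, the compact-surface analysis of Sections 2--6 carries over verbatim to $(\Omega,\text{Dirichlet})$, and you flesh out exactly that translation — degree for $\rho_1\in(0,4\pi)$ via the Dirichlet Chen--Lin formula, degree jump across $4\pi$ via the shadow system and Theorem~\ref{th1.4}, and decoupling via the homotopy $(S_t)$ plus Poincar\'e--Hopf for the location equation. One small slip worth noting: with the convention $G(x,p)=-\frac{1}{2\pi}\log|x-p|+R(x,p)$ and $G=0$ on $\partial\Omega$, the Robin function $R(x,x)$ tends to $-\infty$, not $+\infty$, as $x\to\partial\Omega$ (e.g.\ $R(x,x)=\frac{1}{2\pi}\log(1-|x|^2)$ on the unit disk); your stated conclusion that the gradient field points inward is nevertheless consistent with the correct sign, and in any case the Poincar\'e--Hopf index sum equals $\chi(\Omega)$ regardless of whether the field points in or out, since $\dim\Omega=2$ is even and $\chi(\partial\Omega)=0$. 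So the argument is sound as written.
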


\begin{corollary}
\label{co9.1}
If $\Omega$ is not simply connected, then (\ref{9.1}) has a solution for $\rho_1\in(0,4\pi)\cup(4\pi,8\pi)$ and $\rho_2\notin 4\pi\mathbb{N}.$
\end{corollary}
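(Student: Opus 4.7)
The plan is to derive the corollary as a direct consequence of the degree formula in Theorem~\ref{th9.2}, together with the topological fact that a non-simply-connected bounded planar domain has Euler characteristic $\chi(\Omega) \leq 0$. Indeed, for such a domain with $n \geq 1$ holes one has $\chi(\Omega) = 1 - n \leq 0$, so the hypothesis that $\Omega$ is not simply connected enters the proof only through this sign.

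Next I would analyze the coefficients $\mathfrak{b}_k$ of the generating function $\Xi_{\Omega}(x) = (1+x+x^2+\cdots)^{-\chi(\Omega)+1}$. The binomial identity used to derive (\ref{1.17}) gives $\mathfrak{b}_k = \binom{k-\chi(\Omega)}{k}$ for $k \geq 0$, with $\mathfrak{b}_{-1} = 0$. Since $\chi(\Omega) \leq 0$ forces the exponent $-\chi(\Omega)+1 \geq 1$, all the factors in this binomial are positive integers, and one reads off $\mathfrak{b}_k \geq 1$ for every $k \geq 0$. This positivity is the engine of the argument.

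With this in hand the two regimes of Theorem~\ref{th9.2} are immediate. For $\rho_1 \in (0,4\pi)$ the degree equals $\mathfrak{b}_k \geq 1$. For $\rho_1 \in (4\pi,8\pi)$ the degree equals $\mathfrak{b}_k - \chi(\Omega)\bigl(\mathfrak{b}_k + \mathfrak{b}_{k-1}\bigr)$; the factor $-\chi(\Omega) \geq 0$ makes the second term nonnegative while $\mathfrak{b}_k \geq 1$, so the total is again at least $1$. Combined with the a-priori bound of Theorem~\ref{th9.1} (which makes the Leray--Schauder degree well-defined for $\rho_2 \notin 4\pi\mathbb{N}$), the nonvanishing of the degree produces a solution of (\ref{9.1}). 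Since the argument reduces to the sign of $\chi(\Omega)$ plus the explicit formula, there is no real obstacle; the only minor point is checking the boundary case $k=0$, where $\mathfrak{b}_{-1}=0$ and $\mathfrak{b}_0 = 1$, both consistent with the positivity claim.
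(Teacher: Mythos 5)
Your proof is correct and is precisely the argument the paper intends (the paper states the corollary without writing out the deduction, leaving it as an immediate consequence of Theorem~\ref{th9.2} and Theorem~\ref{th9.1}). You correctly identify that non-simple-connectivity of a bounded planar domain gives $\chi(\Omega)\leq 0$, that this forces $\mathfrak{b}_k=\binom{k-\chi(\Omega)}{k}\geq 1$ for all $k\geq 0$, and that both branches of the degree formula are then strictly positive, so the Leray--Schauder degree is nonzero in every admissible region of parameters.
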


\vspace{1cm}
\section{Proof of Lemma \ref{le4.4} and (\ref{5.14})}
This section is devoted to prove Lemma \ref{le4.4} and (\ref{5.14}). Let
\begin{align*}
\overline{v}_{\alpha}:=&\frac{\int_{B_{r_0}(0)}\frac{|y|^{2\alpha}e^{\lambda}}{(1+e^{\lambda}|y|^{2+2\alpha})^2}v(y)\mathrm{d}y}
{\int_{\mathbb{R}^2}\frac{|y|^{2\alpha}e^{\lambda}}{(1+e^{\lambda}|y|^{2+2\alpha})^2}\mathrm{d}y}
=\frac{1+\alpha}{\pi}\int_{B_{r_0}(0)}\frac{|y|^{2\alpha}e^{\lambda}}{(1+e^{\lambda}|y|^{2+2\alpha})^2}v(y)\mathrm{d}y.
\end{align*}
Then we have the following Poincare-type inequality:
\begin{align}
\label{7.1}
\int_{B_{r_0}(0)}\frac{|y|^{2\alpha}e^{\lambda}}{(1+e^{\lambda}|y|^{2+2\alpha})^2}\phi^2(y)\mathrm{d}y
\leq c(\|\phi\|_{H^1(B_{r_0}(0))}^2+\overline{\phi}_{\alpha}^2)
\end{align}
for some constant $c$ independent of $\lambda$ (see \cite[Lemma 6.2]{cl3} for example). Using (\ref{7.1}) we can prove the following result.

\begin{lemma}
\label{le7.1}
Let $ P=(p_1,p_2,\cdots,p_{n})$ and $\Lambda=(\lambda_1,\cdots,\lambda_{n})$. Assume $\phi\in O_{P,\Lambda}^{(1)}$. Then there is a constant $c$ and
$\epsilon>0$ such that for large $\lambda_j$
\begin{equation}
\label{7.2}
\int_{B_{r_0}(p_j)}|y|^{2\alpha_j}e^{U_j}\phi\mathrm{d}y\leq ce^{-\epsilon\lambda_j}\|\phi\|_{H^1},
\end{equation}
and
\begin{equation}
\label{7.3}
\int_{B_{r_0}(p_j)}|y|^{2\alpha_j}e^{U_j}\phi^2\mathrm{d}y=O(1)\|\phi\|_{H^1}^2.
\end{equation}
\end{lemma}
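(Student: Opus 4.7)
The plan is to derive both estimates from the weighted Poincar\'e-type inequality \eqref{7.1} combined with the two orthogonality conditions packaged into $O^{(1)}_{P,\Lambda}$. Estimate \eqref{7.3} is really a corollary of \eqref{7.2}: once \eqref{7.2} tells us that the weighted average
$$
\overline\phi_{\alpha_j}=\tfrac{1+\alpha_j}{\pi}\int_{B_{r_0}(0)}\tfrac{|y|^{2\alpha_j}e^{\lambda_j}}{(1+e^{\lambda_j}|y|^{2+2\alpha_j})^{2}}\phi(y)\,\mathrm d y
$$
is controlled by $\|\phi\|_{H^{1}}$ (in fact is exponentially small), the inequality \eqref{7.1} yields at once
$$
\int_{B_{r_0}(p_j)}|y|^{2\alpha_j}e^{U_j}\phi^{2}\leq c\bigl(\|\phi\|_{H^{1}}^{2}+\overline\phi_{\alpha_j}^{2}\bigr)=O(1)\|\phi\|_{H^{1}}^{2},
$$
since the weight $|y|^{2\alpha_j}e^{U_j}$ differs from the kernel used to define $\overline\phi_{\alpha_j}$ only by a bounded multiplicative constant. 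So the whole game is to prove \eqref{7.2}.

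For \eqref{7.2}, I would exploit the orthogonality $\int_{M}\nabla\phi\cdot\nabla v_{p_j}=0$. Writing $v_{p_j}$ in pieces as in \eqref{4.13} and integrating by parts, one obtains
$$
0=\int_{M}\phi(-\Delta v_{p_j})=\int_{B_{r_0}(p_j)}\phi\cdot 2\rho_{1}h_{p_j}(p_j)|y|^{2\alpha_j}e^{U_j}\bigl(1+\tilde H_j(x,\eta_j)\bigr)-8\pi(1+\alpha_j)\int_{M\setminus(B_{2r_0}\setminus B_{r_0})}\phi+\mathcal R_j,
$$
where $\mathcal R_j$ collects the contributions from the cut-off annulus $B_{2r_0}(p_j)\setminus B_{r_0}(p_j)$. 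Using $\int_{M}\phi=0$ (so the global mean term equals $-8\pi(1+\alpha_j)\int_{B_{2r_0}\setminus B_{r_0}}\phi$) and the fact that $\tilde H_j$, $|\nabla v_{p_j}-8\pi(1+\alpha_j)\nabla G(\cdot,p_j)|$ and $|v_{p_j}-8\pi(1+\alpha_j)G|$ are all $O(\lambda_{j}e^{-\lambda_j/(1+\alpha_j)})$ on the transition annulus by Lemma \ref{le4.1}, a standard Cauchy--Schwarz and trace estimate (choosing $r_{0}$ along a good radius via Fubini so that $\|\phi\|_{L^{2}(\partial B_{r_0})}\lesssim r_{0}^{-1/2}\|\phi\|_{H^{1}}$) gives
$$
2\rho_{1}h_{p_j}(p_j)\!\int_{B_{r_0}(p_j)}\!\!|y|^{2\alpha_j}e^{U_j}\phi=-\!\int_{B_{r_0}(p_j)}\!\!2\rho_{1}h_{p_j}(p_j)|y|^{2\alpha_j}e^{U_j}\tilde H_j\cdot\phi+O(e^{-\epsilon\lambda_j})\|\phi\|_{H^{1}}.
$$
The remaining integral is then handled by Cauchy--Schwarz in the weight $|y|^{2\alpha_j}e^{U_j}$ and the pointwise smallness $|\tilde H_j|\leq c\,\epsilon_j(|z|+1)\log(|z|+2)$ after the rescaling $z=\epsilon_j^{-1}(x-p_j)$; this yields the bound $O(e^{-\epsilon\lambda_j})\|\phi\|_{H^{1}}$ once the $L^{2}$-control of $\phi$ in the weighted space has been bootstrapped from \eqref{7.1}.

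To close the loop and avoid circularity (since \eqref{7.1} already contains $\overline\phi_{\alpha_j}^{2}$ on its right-hand side), I would combine the above estimate with the second orthogonality $\int_{M}\nabla\phi\cdot\nabla\partial_{\lambda_j}v_{p_j}=0$. Because $\int_{\mathbb R^{2}}|y|^{2\alpha_j}e^{U_j}\partial_{\lambda_j}U_j=0$ and $\partial_{\lambda_j}U_j=\tfrac{1-c_j e^{\lambda_j}|y|^{2(1+\alpha_j)}}{1+c_j e^{\lambda_j}|y|^{2(1+\alpha_j)}}$ is the unique bounded zero-mode of the linearization in the radial class, this second orthogonality produces an independent relation whose combination with the first forces both $\overline\phi_{\alpha_j}$ and the weighted $L^{2}$-projection of $\phi$ onto $\partial_{\lambda_j}U_j$ to be $O(e^{-\epsilon\lambda_j})\|\phi\|_{H^{1}}$, completing the proof of \eqref{7.2}.

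The main technical obstacle is the careful bookkeeping on the transition annulus $B_{2r_0}(p_j)\setminus B_{r_0}(p_j)$: there both $\sigma_j$ and its derivatives appear, so the error terms in $-\Delta v_{p_j}$ are of size $O(\lambda_{j}e^{-\lambda_j/(1+\alpha_j)})$ times bounded functions, and one must pair them with $\phi$ via a trace inequality at a carefully chosen radius; the rescaling argument and the use of $\int_{M}\phi=0$ are what convert all these individually $O(1)$-looking integrals into the required exponentially small bound $e^{-\epsilon\lambda_j}\|\phi\|_{H^{1}}$.
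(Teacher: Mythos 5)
Your architecture---pair the orthogonality $\int_M\nabla\phi\cdot\nabla v_{p_j}=0$ with the Poincar\'e-type estimate (\ref{7.1})---is indeed the right one, and it follows the lines of the argument in \cite{cl3} to which the paper defers. The step I would flag as a genuine gap is your closing move. You assert that the bootstrap would be circular without the second orthogonality $\int_M\nabla\phi\cdot\nabla\partial_{\lambda_j}v_{p_j}=0$ and then invoke that orthogonality by saying it ``produces an independent relation whose combination \dots forces \dots to be $O(e^{-\epsilon\lambda_j})\|\phi\|_{H^1}$''; but no actual argument is given for how the two relations combine, and as written this would not compile into a proof. The good news is that the second orthogonality is not needed at all. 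Set $A=\int_{B_{r_0}(p_j)}|y|^{2\alpha_j}e^{U_j}\phi$ and $B^2=\int_{B_{r_0}(p_j)}|y|^{2\alpha_j}e^{U_j}\phi^2$. Your integration by parts, together with Cauchy--Schwarz in the weight $|y|^{2\alpha_j}e^{U_j}$ and the weighted bound $\int_{B_{r_0}}|y|^{2\alpha_j}e^{U_j}\tilde H_j^2=O(\epsilon_j^2)$ with $\epsilon_j=e^{-\lambda_j/(2(1+\alpha_j))}$, gives $|A|\le O(e^{-\epsilon\lambda_j})\|\phi\|_{H^1}+O(\epsilon_j)B$. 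Since $\overline\phi_{\alpha_j}$ is comparable to $A$, (\ref{7.1}) says $B^2\le c(\|\phi\|_{H^1}^2+A^2)$; substituting the first estimate into the second, the self-referential $B^2$ on the right acquires a factor $O(\epsilon_j^2)$, which is less than $\tfrac12$ once $\lambda_j$ is large, no matter how big the fixed constant $c$ in (\ref{7.1}) is. Thus the estimate closes, giving (\ref{7.3}), and substituting back gives (\ref{7.2}).

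Two smaller points. Your pointwise bound $|\tilde H_j|\le c\,\epsilon_j(|z|+1)\log(|z|+2)$ is too strong: the quadratic contribution $Q(\epsilon_j z)=O(\epsilon_j^2|z|^2)$ is $O(1)$, not $O(\epsilon_j)$, once $|z|\sim r_0/\epsilon_j$. What the Cauchy--Schwarz step actually needs, and what is true, is the \emph{weighted $L^2$} bound $\int_{B_{r_0}}|y|^{2\alpha_j}e^{U_j}\tilde H_j^2=O(\epsilon_j^2)$, which follows after rescaling because the kernel $|z|^{2\alpha_j}(1+c|z|^{2+2\alpha_j})^{-2}$ damps the growth of $\tilde H_j$. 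The Fubini-and-trace device at $\partial B_{r_0}$ is also unnecessary: after the constant $8\pi(1+\alpha_j)$ in $\Delta v_{p_j}$ cancels globally against $\int_M\phi=0$, the only surviving annulus term is $\int_{B_{2r_0}\setminus B_{r_0}}\phi\,\Delta\xi_j$, a volume integral directly estimated by Lemma~\ref{le4.1}.
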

\noindent For a proof, see \cite{cl3}.

\vspace{0.5cm}

\noindent{\em Proof of Lemma \ref{le4.4}}. We start with part (1). Let $\phi\in O_{P,\Lambda}^{(1)}$ and $\psi\in O_{P,\Lambda}^{(2)}.$ Recall
$2v_1=v_{P,\Lambda,A}+2\phi,$ $\phi\in O_{P,\Lambda}^{(1)}$ and $v_2=\frac{1}{2}w+\psi,$ $\psi\in O_{P,\Lambda}^{(2)}.$ We compute
$$\l\nabla(2v_1+T_1(v_1,v_2)),\nabla \phi_1\rangle=-\l\Delta(2v_1+T_1(v_1,v_2)),\phi_1\rangle.$$
Here we will use the decomposition of $\Delta(2v_1+T_1(v_1,v_2))$ in (\ref{4.45})-(\ref{4.47}).
\begin{align*}
\l\nabla(2v_1+T_1(v_1,v_2)),\nabla\phi_1\r=&\int2\nabla\phi\cdot\nabla\phi_1-\sum_j\int_{B_{r_0}(p_j)}4\rho_1h_{p_j}(p_j)e^{I_j}\phi\phi_1
\nonumber\\&+\mathrm{remainders}
\nonumber\\
:=&2\mathfrak{B}(\phi,\phi_1)+\mathrm{remainder~terms}.
\end{align*}
Clearly, $\mathfrak{B}$ is a symmetric bilinear form in $O_{P,\Lambda}^{(1)}$. For the remainder terms, by (\ref{7.2}) and $\phi_1\in O_{P,\Lambda}^{(1)}$, we have for large $\lambda_{m+1},$
\begin{align}
\label{7.4}
\Big|\Big(\frac{e^{t_j}}{\int_Mh_1e^{2v_1-v_2}}-1\Big)\int_{B_{r_0}(0)}\phi_1\rho_1h_{p_j}(p_j)|y|^{2\alpha_j}e^{U_j}\mathrm{d}y\Big|
=O(e^{-(\frac{1}{1+\alpha_{m+1}}+\epsilon)\lambda_{m+1}})\|\phi_1\|_{H^1},
\end{align}
\begin{align}
\label{7.5}
&\lambda_j(a_j-1)\int_{B_{r_0}(p_j)}\nabla H_j\cdot y\rho_1h_{p_j}(p_j)|y|^{2\alpha_j}e^{U_j}\phi_1\mathrm{d}y\nonumber\\
\leq& O(1)\lambda_j|a_j-1|e^{-\frac{\lambda_{m+1}}{2(1+\alpha_{m+1})}}\Big[\int_{B_{r_0}(p_j)}|y|^{2+2\alpha_j}e^{U_j}\mathrm{d}y\Big]^{\frac12}
\|\phi_1\|_{H^1}\nonumber\\
\leq& O(1)\lambda_j|a_j-1|e^{-\frac{\lambda_{m+1}}{2(1+\alpha_{m+1})}}\|\phi_1\|_{H^1}=o(1)e^{-\frac{\lambda_{m+1}}{1+\alpha_{m+1}}}\|\phi_1\|_{H^1}.
\end{align}
Similarly, we have
\begin{align}
\label{7.6}
\lambda_j(a_j-1)\int_{B_{r_0}(p_j)}|y|^{2\alpha_j}e^{U_j}\eta_j\phi_1\mathrm{d}y=O(1)e^{-\frac{\lambda_{m+1}}{1+\alpha_{m+1}}}\|\phi_1\|_{H^1}.
\end{align}
By Lemma \ref{le7.1}, we have for large $\lambda_{m+1}$
\begin{align}
\label{7.7}
&\int_{B_{r_0}(p_j)}\rho_1h_{p_j}(p_j)|y|^{2\alpha_j}e^{U_j}(a_j-1)(U_j+s_j-1)\phi_1\mathrm{d}y\nonumber\\
&\quad=2\lambda_j\int_{B_{r_0}(p_j)}\rho_1h_{p_j}(p_j)|y|^{2\alpha_j}e^{U_j}(a_j-1)\phi_1\mathrm{d}y\nonumber\\
&\quad\quad+\int_{B_{r_0}(p_j)}\rho_1h_{p_j}(p_j)|y|^{2\alpha_j}e^{U_j}(a_j-1)(U_j-\lambda_j+O(1))\phi_1\mathrm{d}y\nonumber\\
&\quad=2\lambda_j(a_j-1)O(e^{-\epsilon\lambda_j})\|\phi_1\|_{H^1}\nonumber\\
&\quad\quad+O(a_j-1)\Big(\int_{B_{r_0}(p_j)}|y|^{2\alpha_j}e^{U_j}(U_j-\lambda_j+O(1))^2\mathrm{d}y\Big)^{\frac12}
\Big(\int_{B_{r_0}(p_j)}|y|^{2\alpha_j}e^{U_j}\phi_1^2\Big)^{\frac12}\nonumber\\
&\quad=O(1)|a_j-1|\|\phi_1\|_{H^1}=O(1)e^{-\frac{\lambda_{m+1}}{1+\alpha_{m+1}}}\|\phi_1\|_{H^1}.
\end{align}
As for $\hat{E}_j,$ we define $E^+$ and $E^-$ as before:
\begin{equation*}
E^+=\left\{\begin{array}{ll}
\hat{E}_j~&\mathrm{if}~|\varphi|\geq\varepsilon_2\\
0~&\mathrm{if}~|\varphi|<\varepsilon_2,
\end{array}\right.
\quad
E^-=\left\{\begin{array}{ll}
\hat{E}_j~&\mathrm{if}~|\varphi|<\varepsilon_2\\
0~&\mathrm{if}~|\varphi|\geq\varepsilon_2,
\end{array}\right.
\end{equation*}
where $\varepsilon_2$ is a small number. Then we use (\ref{4.36}) and similar argument there to obtain
\begin{align}
\label{7.8}
\int_{B_{r_0}(p_j)}|E^+\phi_1|\mathrm{d}y\leq& \Big(\int_{B_{r_0}(p_j)}|E^+|^2\mathrm{d}y\Big)^{\frac12}\Big(\int_{B_{r_0}(p_j)}\phi_1^2\Big)^{\frac12}
\nonumber\\
=&O(e^{-b\lambda_j})\|\phi_1\|_{H^1}
\end{align}
for any fixed $b>0.$ For $E^-$, we use (\ref{4.25}) and Lemma \ref{le7.1} to obtain
\begin{align*}
\int_{B_{r_0}(p_j)}|E^-\phi_1|\mathrm{d}y
\leq& O(1)\int_{B_{r_0}(p_j)}h_{p_j}(p_j)|y|^{2\alpha_j}e^{U_j}(O(\phi^2)+O(\beta_j))\phi_1\mathrm{d}y\\
=&O(\varepsilon_2)\Big(\int_{B_{r_0}(0)}|y|^{2\alpha_j}e^{U_j}\phi^2\Big)^{\frac12}
\Big(\int_{B_{r_0}(0)}|y|^{2\alpha_j}e^{U_j}\phi_1^2\mathrm{d}y\Big)^{\frac12}\\
&+O(e^{-\frac{2\lambda_{m+1}}{1+\alpha_{m+1}}})\Big(\int_{B_{r_0}(0)}|y|^{2\alpha_j}e^{U_j}\phi_1^2\mathrm{d}y\Big)^{\frac12}\\
&+\int_{B_{r_0}(0)}|y|^{2\alpha_j}e^{U_j}|y|^3|\phi_1|\\
=&O(\varepsilon_2)\|\phi\|_{H^1}\|\phi_1\|_{H^1}+O(e^{-\frac{2\lambda_{m+1}}{1+\alpha_{m+1}}})\|\phi_1\|_{H^1}\\
&+\Big(\int_{B_{r_0}(0)}|y|^{2b\alpha_j}e^{bU_j}|y|^6\Big)^{\frac12}
\Big(\int_{B_{r_0}(0)}e^{(2-b)U_j}|y|^{2(2-b)\alpha_j}\phi_1^2\mathrm{d}y\Big)^{\frac12}\\
=&O(\varepsilon_2e^{-\frac{\lambda_{m+1}}{1+\alpha_{m+1}}})\|\phi_1\|_{H^1}+O(e^{-(2-\frac{b}{2})\frac{\lambda_{m+1}}{1+\alpha_{m+1}}})\|\phi_1\|_{H^1}
\end{align*}
for $2>b>\frac{4}{2+\alpha_j}$.
\begin{equation}
\label{7.9}
\int_{B_{r_0}(p_j)}|E^-\phi_1|\mathrm{d}y=O(e^{-\frac{\lambda_{m+1}}{1+\alpha_{m+1}}})\|\phi_1\|_{H^1},
\end{equation}
provided that $\varepsilon_2c_1<1$. By Lemma \ref{le4.1},
\begin{equation}
\label{7.10}
\int_{B_{2r_0}(p_j)\setminus B_{r_0}(p_j)}\Delta(v_{p_j}-4\pi(1+\alpha_j)G(x,p_j))\phi_1=O(e^{-\frac{\lambda_{m+1}}{1+\alpha_{m+1}}})\|\phi_1\|_{H^1}.
\end{equation}
For the nonlinear term in $\Delta T_1(v_1,v_2)$ on $M\setminus\bigcup_{j=1}^{n}B_{r_0}(p_j)$, we first note that
\begin{align*}
\int_{M\setminus\cup_{j=1}^{n}B_{r_0}(p_j)}|e^{\varphi}\phi_1|=&
O(1)\Big(\int_{|\varphi|\geq\varepsilon_2}|e^{\varphi}\phi_1|+\int_{|\varphi|<\varepsilon_2}|e^{\varepsilon_2}\phi_1|\Big)\\
=&O(1)\|\phi_1\|_{H^1}
\end{align*}
by (\ref{4.36}). Using $\int_Mh_1e^{2v_1-v_2}=O(e^{\lambda_{m+1}})$, we have
\begin{align}
\label{7.11}
\int_{M\setminus\cup_{j=1}^{n}B_{r_0}(p_j)}\frac{\rho_1h_1e^{2v_1-v_2}}{\int_Mh_1e^{2v_1-v_2}}|\phi_1|
&=O(e^{-\lambda_{m+1}})\int_{M\setminus\cup_{j=1}^{n}B_{r_0}(p_j)}e^{\varphi}|\phi_1|\nonumber\\
&=O(e^{-\lambda_{m+1}})\|\phi_1\|_{H^1}.
\end{align}
In the end, we need to consider the term $\int_{B_{r_0}(p_j)}2\rho_1h_{p_j}(p_j)e^{U_j}\psi\phi_1.$ We have
\begin{align}
\label{7.12}
\int_{B_{r_0(p_j)}}2\rho_1h_{p_j}(p_j)e^{U_j}\psi\phi_1=&\int_{B_{r_0(p_j)}}2\rho_1h_{p_j}(p_j)e^{U_j}\psi(p_j)\phi_1\nonumber\\
&+\int_{B_{r_0(p_j)}}2\rho_1h_{p_j}(p_j)e^{U_j}(\psi-\psi(p_j))\phi_1\nonumber\\
=&o(e^{-\frac{\lambda_{m+1}}{1+\alpha_{m+1}}})\|\phi_1\|_{H^1},
\end{align}
where we used (\ref{7.2}). Combining (\ref{7.4})-(\ref{7.12}), we obtain
\begin{align*}
\l\nabla(2v_1+T_1(v_1,v_2)),\nabla\phi_1\r=\mathfrak{B}(\phi,\phi_1)+O(e^{-\frac{\lambda_{m+1}}{1+\alpha_{m+1}}})\|\phi_1\|_{H^1}.
\end{align*}
This proves part (1).

Next, we prove part (3). On $B_{2r_0}(p_j),$ we have
\begin{align}
\label{7.13}
\partial_{\lambda_j}v_{p_j}=&\Big{(}2-\frac{\frac{\rho_1h_{p_j}(p_j)}{2(1+\alpha_j)^2}e^{\lambda_j}|x-p_j|^{2(1+\alpha_j)}}
{1+\frac{\rho_1h_{p_j}(p_j)}{4(1+\alpha_j)^2}e^{\lambda_j}|x-p_j|^{2(1+\alpha_j)}}+O(\lambda_je^{-\frac{\lambda_j}{1+\alpha_j}})\Big)\sigma_j\nonumber\\
=&\Big[(1+\partial_{\lambda_j}U_j)+O(\lambda_je^{-\frac{\lambda_j}{1+\alpha_j}})\Big]\sigma_j
\end{align}
by the setting of $v_{p_j}$. On $M\setminus\bigcup_jB_{2r_0}(p_j),$ $\partial_{\lambda_j}v_{p_j}=0.$
We compute $\l\nabla(2v_1+T_1(v_1,v_2)),\nabla\partial_{\lambda_j}v_{p_j}\r=-\l\Delta(2v_1+T_1(v_1,v_2)),\partial_{\lambda_j}v_{p_j}\r$ by using
(\ref{4.45})-(\ref{4.47}).

Since $\phi\in O_{P,\Lambda}^{(1)},$ we have
\begin{align*}
\int_{M}\nabla\phi\cdot\nabla\partial_{\lambda_j}v_{p_j}=0.
\end{align*}
Direct computation yields,
\begin{align}
\label{7.14}
\int_{B_{r_0}(p_j)}\partial_{\lambda_j}v_{p_j}=\int_{B_{r_0}(p_j)}(1+\partial_{\lambda_j}U_j)+O(\lambda_je^{-\frac{\lambda_j}{1+\alpha_j}})=
O(\lambda_je^{-\frac{\lambda_j}{1+\alpha_j}}).
\end{align}
Hence,
\begin{align}
\label{7.15}
(|\rho_*-\rho_1|+|a_i-1|)\int_{B_{r_0}(p_j)}\partial_{\lambda_j}v_{p_j}\mathrm{d}y=O(e^{-\frac{\lambda_{m+1}}{1+\alpha_{m+1}}-\epsilon\lambda_{m+1}})
\end{align}
for some $\epsilon>0$. By (\ref{7.13}),
\begin{align}
\label{7.16}
&\int_{B_{r_0}(p_j)}\rho_1h_{p_j}(p_j)|x-p_j|^{2\alpha_j}e^{U_j}\partial_{\lambda_j}v_{p_j}\mathrm{d}y\nonumber\\
&\quad\quad=\int_{B_{r_0}(p_j)}\rho_1h_{p_j}(p_j)|x-p_j|^{2\alpha_j}e^{U_j}\Big(1+\partial_{\lambda_j}U_j
+O(\lambda_je^{-\frac{\lambda_j}{1+\alpha_j}})\Big)\mathrm{d}y
\nonumber\\
&\quad\quad=\int_{\mathbb{R}^2}\rho_1h_{p_j}(p_j)|x-p_j|^{2\alpha_j}e^{U_j}(1+\partial_{\lambda_j}U_j)\mathrm{d}z
+O(\lambda_je^{-\frac{\lambda_j}{1+\alpha_j}})\nonumber\\
&\quad\quad=4\pi(1+\alpha_j)+O(\lambda_je^{-\frac{\lambda_j}{1+\alpha_j}}),
\end{align}
and
\begin{align}
\label{7.17}
&\int_{B_{r_0}(p_j)}2\rho_1h_{p_j}(p_j)|x-p_j|^{2\alpha_j}e^{U_j}\Big[-2\log\Big(1+\frac{\rho_1h_{p_j}(p_j)
e^{\lambda_j}}{4(1+\alpha_j)^2}|x-p_j|^{2(1+\alpha_j)}\Big)
\Big]\partial_{\lambda_j}v_{p_j}\nonumber\\
&=\int_{\mathbb{R}^2}\frac{8(1+\alpha_j)^2r^{2\alpha_j}}{(1+r^{2(1+\alpha_j)})^2}[-2\log(1+r^{2(1+\alpha_j)})]
\Big(\frac{2}{1+r^{2(1+\alpha_j)}}+O(\lambda_je^{-\frac{\lambda_j}{1+\alpha_j}})\Big)\mathrm{d}z\nonumber\\&\quad+O(e^{-\lambda_j})\nonumber\\
&=-8\pi(1+\alpha_j)+O(e^{-\frac{\lambda_j}{1+\alpha_j}}).
\end{align}
(\ref{7.16}) and (\ref{7.17}) together give
\begin{align}
\label{7.18}
&\int_{B_{r_0}(p_j)}\Big(2\rho_1h_{p_j}(p_j)|x-p_j|^{2\alpha_j}e^{U_j}\big[(a_j-1)(U_j+s_j-1)\nonumber\\
&\quad+\sum8\pi(1+\alpha_i)(a_i-1)G(p_j,p_i)+
(\frac{e^{t_j}}{\int_Mh_1e^{2v_1-v_2}}-1)\big]\Big)\mathrm{d}y\nonumber\\
&\quad=8\pi(1+\alpha_j)(2\lambda_j(a_j-1)+\frac{e^{t_j}}{\int_Mh_1e^{2v_1-v_2}}-1)+O(1)\big(\max_i|a_1-1|\big).
\end{align}

To estimate the term with $\phi\partial_{\lambda_j}v_{p_j}$ and $\psi\partial_{\lambda_j}v_{p_j},$ note that $\phi\in O_{P,\Lambda}^{(1)}$ implies
\begin{align*}
0=&\int_M\nabla\phi\nabla\partial_{\lambda_j}v_{p_j}=-\int_M\phi\Delta(\partial_{\lambda_j}v_{p_j})\\
=&-\int_{B_{r_0}(p_j)}\phi\Delta(\partial_{\lambda_j}U_j)\mathrm{d}y+O(\lambda_je^{-\frac{\lambda_j}{1+\alpha_j}}\|\phi\|_{H^1})\\
=&\int_{B_{r_0}(p_j)}2\rho_1h_{p_j}(p_j)|x-p_j|^{2\alpha_j}e^{U_j}\phi\partial_{\lambda_j}U_j\mathrm{d}y+O(\lambda_je^{-\frac{\lambda_j}{1+\alpha_j}}
\|\phi\|_{H^1})
\end{align*}
Together with Lemma \ref{le7.1}, we conclude from the above
\begin{align}
\label{7.19}
&\int_{B_{r_0}(p_j)}2\rho_1h_{p_j}(p_j)|x-p_j|^{2\alpha_j}e^{U_j}\phi\partial_{\lambda_j}v_{p_j}\nonumber\\
&\quad=\int_{B(r_0)(p_j)}2\rho_1h_{p_j}(p_j)|x-p_j|^{2\alpha_j}e^{U_j}\phi(1+\partial_{\lambda_j}U_j
+O(\lambda_je^{-\frac{\lambda_j}{1+\alpha_j}}))\mathrm{d}y
\nonumber\\
&\quad=O(e^{-\frac{\epsilon\lambda_{m+1}}{1+\alpha_{m+1}}})\|\phi\|_{H^1}=O(e^{-\epsilon\lambda_{m+1}-\frac{\lambda_{m+1}}{1+\alpha_{m+1}}}).
\end{align}
While for $\psi\partial_{\lambda_j}v_{p_j},$ we have
\begin{align}
\label{7.20}
&\int_{B_{r_0(p_j)}}\rho_1h_{p_j}(p_j)|x-p_j|^{2\alpha_j}e^{U_j}\psi\partial_{\lambda_j}v_{p_j}\nonumber\\
=&\int_{B_{r_0(p_j)}}\rho_1h_{p_j}(p_j)|x-p_j|^{2\alpha_j}e^{U_j}\psi(p_j)\partial_{\lambda_j}v_{p_j}\nonumber\\
&+\int_{B_{r_0(p_j)}}\rho_1h_{p_j}(p_j)|x-p_j|^{2\alpha_j}e^{U_j}(\psi-\psi(p_j))\partial_{\lambda_j}v_{p_j}\nonumber\\
=&4\pi(1+\alpha_j)\psi(p_j)+O(e^{-\epsilon\lambda_j-\frac{\lambda_j}{1+\alpha_j}}),
\end{align}
The other integrals of $\partial_{\lambda_j}v_{p_j}$ with other terms in (\ref{4.45}) would be smaller than
$O(\max|a_i-1|+e^{-\frac{\lambda_{m+1}}{1+\alpha_{m+1}}-\epsilon\lambda_{m+1}})$.
Since the computations are straightforward, we omit the details here.\\

Now we go to the integral over $M\setminus B_{r_0}(p_j).$ Since
$\partial_{\lambda_j}v_{p_j}=0$ on $M\setminus B_{2r_0}(p_j)$, we only need to consider the integrals on $B_{2r_0}(p_j).$
On $B_{2r_0}(p_j)\setminus B_{r_0}(p_j),$ $e^{2v_1-v_2}=O(e^{\varphi})$, $\frac{\rho_1h_1e^{2v_1-v_2}}{\int_Mh_1e^{2v_1-v_2}}=
O(e^{-\lambda_{m+1}})e^{\varphi}$
and $\partial_{\lambda_j}v_{p_j}=O(e^{-\frac{\lambda_{m+1}}{(1+\alpha_{m+1})}})$. By using Moser-Trudinger's inequality as in the proof in
(\ref{4.36}),
\begin{align}
\label{7.21}
\int_{B_{2r_0}(p_j)\cap\{\varphi\geq\epsilon_2\}}e^{\varphi}\leq e^{-2\lambda_j}\int_{B_{r_0}(p_j)}\exp(\frac{4\pi|\varphi-\overline{\varphi}|^2}
{\|\varphi-\overline{\varphi}\|^2})\mathrm{d}y\leq c_2e^{-2\lambda_j},
\end{align}
which implies
\begin{align}
\label{7.22}
\int_{B_{2r_0}(p_j)}e^{|\varphi|}=O(1)
\end{align}
and
\begin{align}
\label{7.23}
\int_{B_{2r_0}(p_j)\setminus B_{r_0}(p_j)}\frac{\rho_1h_1e^{2v_1-v_2}}{\int_Mh_1e^{2v_1-v_2}}\partial_{\lambda_j}v_{p_j}=
O(e^{-\epsilon\lambda_{m+1}-\frac{\lambda_{m+1}}{1+\alpha_{m+1}}}).
\end{align}
By Lemma \ref{le4.2}, we have
\begin{align}
\label{7.24}
\int_{B_{2r_0}(p_j)\setminus B_{r_0}(p_j)}\Delta(v_{p_j}-\overline{v}_{p_j}-8\pi(1+\alpha_j)G(x,p_j))\cdot\partial_{\lambda_j}v_{p_j}
=O(e^{-\frac{2\lambda_{m+1}}{1+\alpha_{m+1}}}).
\end{align}
By (\ref{4.40}), (\ref{7.15}), (\ref{7.18})-(\ref{7.24}), we obtain,
\begin{align*}
\l\nabla(2v_1+T_1(v_1,v_2)),\nabla\partial_{\lambda_j}v_{p_j}\r=
&-16\pi(1+\alpha_j)(a_j-1)\lambda_j-8\pi(1+\alpha_j)(\theta_j-\psi(p_j))\\
&+O(\max|a_i-1|+e^{-\epsilon\lambda_{m+1}-\frac{\lambda_{m+1}}{1+\alpha_{m+1}}}).
\end{align*}
This proves part (3).

\vspace{0.5cm}
For the proof of part (4), we write
\begin{align*}
\l\nabla(2v_1+T_1(v_1,v_2)),\nabla v_{p_j}\r&=\l\nabla(2v_1+T_1(v_1,v_2),\nabla(v_{p_j}-\overline{v}_{p_j})\r\\
&=-\l\Delta(2v_1+T_1(v_1,v_2)),v_{p_j}-\overline{v}_{p_j}\r.
\end{align*}
First, we have $\l1,v_{p_j}-\overline{v}_{p_j}\r=0$ and $\l\Delta\phi,v_{p_j}-\overline{v}_{p_j}\r=0$ on $M$ because $\phi\in O_{P,\Lambda}^{(1)}.$
To estimate
the other terms, we note that
\begin{align}
\label{7.25}
v_{p_j}-\overline{v}_{p_j}=&2\lambda_j-2\log\Big(1+\frac{\rho_1h_{p_j}(p_j)}{4(1+\alpha_j)^2}e^{\lambda_j}|x-p_j|^{2(1+\alpha_j)}\Big)\nonumber\\
&+8\pi(1+\alpha_j)R(p_j,p_j)+2\log\frac{\rho_1h_{p_j}(p_j)}{4(1+\alpha_j)^2}+O(|x-p_j|)+O(\lambda_je^{-\frac{\lambda_j}{1+\alpha_j}}).
\end{align}

By scaling, we have
\begin{align}
\label{7.26}
&(a_j-1)\lambda_j\int_{\mathbb{R}^2}\rho_1h_{p_j}(p_j)|x-p_j|^{2\alpha_j}e^{U_j}
\log\Big(1+\frac{\rho_1h_{p_j}(p_j)}{4(1+\alpha_j)^2}e^{\lambda_j}|x-p_j|^{2(1+\alpha_j)}
\Big)\mathrm{d}y\nonumber\\
&\quad=2\pi(1+\alpha_j)(a_j-1)\lambda_j.
\end{align}

Let $\vartheta$ represent any constant term in
$$(a_j-1)(U_j+s_j-1),\quad \sum_{i\neq j}(a_i-1)(1+\alpha_i)G(p_j,p_i),\quad\frac{e^{t_j}}{\int_Mh_1e^{2v_1-v_2}}-1.$$
For simplicity of notations, we set $W_j(x)=2\rho_1h_{p_j}(p_j)|x-p_j|^{2\alpha_j}e^{U_j}.$ By comparing (\ref{7.13}) and (\ref{7.25}), we have
\begin{align}
\label{7.27}
\int_{B_{r_0}(p_j)}W_j\vartheta(v_{p_j}-\overline{v}_{p_j})\mathrm{d}y=&\Big(2\lambda_j-1+8\pi(1+\alpha_j)R(p_j,p_j)\nonumber\\
&+2\log\frac{\rho_1h_{p_j}(p_j)}{4(1+\alpha_j)^2}\Big)
\int_{B_{r_0}(p_j)}W_j\vartheta\partial_{\lambda_j}v_{p_j}+O(e^{-\frac{\lambda_j}{1+\alpha_j}}),
\end{align}
where (\ref{7.18}) is used. It is also easy to see the integral of $v_{p_j}-\overline{v}_{p_j}$ with all remainder terms except $\psi$ in (\ref{4.45})
are smaller than
$O(e^{-\frac{\lambda_j}{1+\alpha_j}}+\|\phi\|_{H^1})$. For example, by Lemma \ref{le7.1},
\begin{align*}
&\int_{B_{r_0}(p_j)}2\rho_1h_{p_j}(p_j)|x-p_j|^{2\alpha_j}e^{U_j}\phi(v_{p_j}-\overline{v}_{p_j})\nonumber\\
&\quad\quad=
\int_{B_{r_0}(p_j)}2\rho_1h_{p_j}(p_j)|x-p_j|^{2\alpha_j}e^{U_j}\phi\nonumber\\
&\quad\quad\quad\times\Big[\lambda_j+s_j-2\log(1+\frac{\rho_1h_{p_j}(p_j)
e^{\lambda_j}|x-p_j|^{2(1+\alpha_j)}}{4(1+\alpha_j)^2})+O(|x-p_j|)\Big]\mathrm{d}y
\nonumber\\
&\quad\quad=O(\lambda_je^{-\epsilon\lambda_j})\|\phi\|_{H^1}+\Big(\int_{B_{r_0}(p_j)}2\rho_1h_{p_j}(p_j)e^{U_j}|x-p_j|^{2\alpha_j}\phi^2\Big)^{\frac12}
\nonumber\\
&\quad\quad\quad\times\Big(\int_{B_{r_0}(p_j)}2\rho_1h_{p_j}(p_j)e^{U_j}|x-p_j|^{2\alpha_j}
\Big[\log(1+\frac{\rho_1h_{p_j}(p_j)e^{\lambda_j}|x-p_j|^{2(1+\alpha_j)}}{4(1+\alpha_j)^2})\nonumber\\
&\quad\quad\quad\quad\quad+O(|x-p_j|)\Big]^2\mathrm{d}y\Big)^{\frac12}\nonumber\\
&\quad\quad=O(1)\|\phi\|_{H^1}.
\end{align*}
For $\psi,$ we have
\begin{align}
\label{7.28}
&\int_{B_{r_0}(p_j)}2\rho_1h_{p_j}(p_j)|x-p_j|^{2\alpha_j}e^{U_j}\psi(v_{p_j}-\overline{v}_{p_j})\nonumber\\
&\quad=\int_{B_{r_0}(p_j)}2\rho_1h_{p_j}(p_j)|x-p_j|^{2\alpha_j}e^{U_j}\psi(p_j)(v_{p_j}-\overline{v}_{p_j})\nonumber\\
&\quad\quad+\int_{B_{r_0}(p_j)}2\rho_1h_{p_j}(p_j)|x-p_j|^{2\alpha_j}e^{U_j}(\psi-\psi(p_j))(v_{p_j}-\overline{v}_{p_j})\nonumber\\
&\quad=\int_{B_{r_0}(p_j)}2\rho_1h_{p_j}(p_j)|x-p_j|^{2\alpha_j}e^{U_j}\psi(p_j)\times\Big[\lambda_j+s_j\nonumber\\
&\quad\quad-2\log(1+\frac{\rho_1h_{p_j}(p_j)e^{\lambda_j}|x-p_j|^{2(1+\alpha_j)}}{4(1+\alpha_j)^2})\Big]
+O(e^{-\epsilon\lambda_{m+1}-\frac{\lambda_{m+1}}{1+\alpha_{m+1}}})\nonumber\\
&\quad=8\pi\Big(2\lambda_j-1+8\pi(1+\alpha_j)R(p_j,p_j)+2\log\frac{\rho_1h_{p_j}(p_j)}{4(1+\alpha_j)^2}\Big)(1+\alpha_j)\psi(p_j)
\nonumber\\&\quad\quad+O(e^{-\epsilon\lambda_{m+1}-\frac{\lambda_{m+1}}{1+\alpha_{m+1}}}).
\end{align}

Since $v_{p_j}-\overline{v}_{p_j}=O(1)$ on $M\setminus B_{r_0}(p_j),$ by Lemma \ref{le4.1},
\begin{align}
\label{7.29}
\int_{B_{2r_0}(p_j)\setminus B_{r_0}(p_j)}\Delta(v_{p_j}-8\pi(1+\alpha_j)G(x,p_j))(v_{p_j}-\overline{v}_{p_j})
=O(e^{-\frac{\lambda_{m+1}}{1+\alpha_{m+1}}}).
\end{align}
For the integration on $B_{r_0}(p_i),i\neq j,$ the dominant term can be estimated by
\begin{align}
\label{7.30}
&\int_{B_{r_0}(p_i)}2\rho_1h_{p_i}(p_i)|x-p_i|^{2\alpha_i}e^{U_i}\nonumber\\
&\quad\times\Big[(a_i-1)(U_i-s_i-1)+\big(\frac{e^{t_i}}{\int_Mh_1e^{2v_1-v_2}}-1-\psi\big)\Big](v_{p_j}-\overline{v}_{p_j})\mathrm{d}y\nonumber\\
&=(1+\alpha_j)(1+\alpha_i)\Big[128\pi^2G(p_i,p_j)(a_i-1)\lambda_i\nonumber\\
&\quad+64\pi^2G(p_i,p_j)(\frac{e^{t_i}}{\int_Mh_1e^{2v_1-v_2}}-1-\psi(p_i))\Big]\nonumber\\
&\quad+O(|a_i-1|)+O(e^{-\epsilon\lambda_{m+1}-\frac{\lambda_{m+1}}{1+\alpha_{m+1}}})\nonumber\\
&=64\pi^2(1+\alpha_i)(1+\alpha_j)G(p_i,p_j)\Big[(\frac{e^{t_i}}{\int_Mh_1e^{2v_1-v_2}}-1-\psi(p_i))+2(a_i-1)\lambda_i\Big]\nonumber\\
&\quad+O(|a_i-1|+e^{-\epsilon\lambda_{m+1}-\frac{\lambda_{m+1}}{1+\alpha_{m+1}}})\nonumber\\
&=-8\pi(1+\alpha_j)G(p_i,p_j)\l\nabla(2v_1+T_1(v_1,v_2)),\nabla\partial_{\lambda_i}v_{p_i}\r\nonumber\\
&\quad+O(|a_i-1|+e^{-\epsilon\lambda_{m+1}
-\frac{\lambda_{m+1}}{1+\alpha_{m+1}}}),
\end{align}
where we use the proof part (3) of Lemma \ref{le4.4} in the above, it is easy to see that the other terms on $B_{r_0}(p_i)$ are bounded by
$e^{-\epsilon\lambda_{m+1}-\frac{\lambda_{m+1}}{1+\alpha_{m+1}}}$.

For the integration outside $\bigcup_jB_{r_0}(p_j),$ we have $(\int_Mh_1e^{2v_1-v_2})^{-1}=O(e^{-\lambda_j})$ and
\begin{align}
\label{7.31}
\int_{B_{2r_0}(p_j)\setminus B_{r_0}(p_j)}\frac{\rho_1h_1e^{2v_1-v_2}}{\int_Mh_1e^{2v_1-v_2}}(v_{p_j}-\overline{v}_{p_j})=
\int_{B_{2r_0}(p_j)\setminus B_{r_0}(p_j)}O(e^{-\lambda_j})e^{\varphi}=O(e^{-\lambda_j}).
\end{align}
Similarly
\begin{align}
\label{7.32}
\int_{M\setminus\bigcup_jB_{r_0}(p_j)}\frac{\rho_1h_1e^{2v_1-v_2}}{\int_Mh_1e^{2v_1-v_2}}(v_{p_j}-\overline{v}_{p_j})
=O(e^{-\lambda_j}).
\end{align}
Therefore, by (\ref{7.26})-(\ref{7.32}), we have
\begin{align}
\label{7.33}
&\l\nabla(2v_1+T_1(v_1,v_2)),\nabla(v_{p_j}-\overline{v}_{p_j})\r\nonumber\\
=&\Big(2\lambda_j-1+8\pi(1+\alpha_j)R(p_j,p_j)+2\log\frac{\rho_1h_{p_j}(p_j)}{4(1+\alpha_j)^2}\Big)
\l\nabla(2v_1+T_1(v_1,v_2)),\nabla_{\lambda_j}v_{p_j}\r\nonumber\\
&+8\pi(1+\alpha_j)\sum_{i\neq j}G(p_j,p_i)\l\nabla(2v_1+T_1(v_1,v_2)),\nabla_{\lambda_i}v_{p_i}\r
+16\pi(a_j-1)\lambda_j(1+\alpha_j)\nonumber\\
&+O(1)\|\phi\|_{H^1}+O(e^{-\frac{\lambda_{m+1}}{1+\alpha_{m+1}}}).
\end{align}
The proof of part (4) is complete.
\vspace{0.5cm}

Finally, we prove part (2), we note that
$$\l\nabla(2v_1+T_1(v_1,v_2)),\nabla\partial_{p_j}v_{p_j}\r=
\l\nabla(2v_1+T_1(v_1,v_2)),\nabla\partial_{p_j}(v_{p_j}-\overline{v}_{p_j})\r.$$
From $\phi\in O_{P,\Lambda}^{(1)},$ we have
\begin{align}
\label{7.34}
\int_M\nabla\phi\nabla\partial_{p_j}(v_{p_j}-\overline{v}_{p_j})=0.
\end{align}
Since $\int_M(v_{p_j}-\overline{v}_{p_j})=0,$ we have $\int_M\partial_{p_j}(v_{p_j}-\overline{v}_{p_j})=0$ and
\begin{equation}
\label{7.35}
\int_M\Big[\rho_*-\rho_1+\sum_{j=1}^{n}4\pi(1+\alpha_j)(a_j-1)\Big]
\partial_{p_j}(v_{p_j}-\overline{v}_{p_j})=0.
\end{equation}
On $B_{r_0}(p_j),$ by Lemma \ref{le4.1}
\begin{equation}
\label{7.36}
\partial_{p_j}v_{p_j}=-\nabla_yU_j+\frac{\partial_{p_j}h(p_j)}{h(p_j)}(\partial_{\lambda_j}
U_j-1)+2\partial_{p_j}\log h(p_j)+8\pi\partial_{p_j}R(x,p_j)+O(|x-p_j|).
\end{equation}
Since $\nabla_yU_j$ is an odd function, we have
\begin{align}
\label{7.37}
\int_{B_{r_0}(p_j)}\rho_1h_{p_j}(p_j)|x-p_j|^{2\alpha_j}e^{U_j}(U_j+s_j-1)\nabla_yU_j\mathrm{d}y
=O(1).
\end{align}
Hence, by Lemma \ref{le4.2} and the fact that $\partial_{\lambda_j}U_j$ is bounded,
\begin{align*}
\int_{B_{r_0}(p_j)}\rho_1h_{p_j}(p_j)e^{U_j}(U_j+s_j-1)\partial_{p_j}(v_{p_j}-\overline{v}_{p_j})
=O(\lambda_j),
\end{align*}
where (\ref{4.29}) was used. For the other terms in (\ref{4.45}), we have the following estimates.
\begin{align}
\label{7.38}
\int_{B_{r_0}(p_j)}\rho_1h_{p_j}(p_j)e^{U_j}\partial_{p_j}(v_{p_j}-\overline{v}_{p_j})\mathrm{d}y
=O(1),
\end{align}
\begin{align}
\label{7.39}
\int_{B_{r_0}(p_j)}\rho_1h_{p_j}(p_j)e^{U_j}O(|x-p_j|)\partial_{p_j}(v_{p_j}-\overline{v}_{p_j})
\mathrm{d}y=O(1),
\end{align}
\begin{align}
\label{7.40}
\int_{B_{r_0}(p_j)}2\rho_1h_{p_j}(p_j)e^{U_j}\nabla H_j(p_j)\cdot(x-p_j)\nabla_yU_j\mathrm{d}y
=(8\pi+O(e^{-\lambda_j}))\nabla H_j(p_j),
\end{align}
and
\begin{align}
\label{7.41}
&\int_{B_{r_0}(p_j)}2\rho_1h_{p_j}(p_j)e^{U_j}\nabla H_j(p_j)\cdot(x-p_j)\partial_{p_j}(v_{p_j}-\overline{v}_{p_j})\nonumber\\
&=8\pi\nabla H_j(p_j)+O(\lambda_je^{-\frac32\lambda_j}),
\end{align}
where we used $\nabla H_j(p_j)=O(\lambda_je^{-\lambda_j})$ for $v_1\in S_{\rho_1}(Q,w).$

By Lemma \ref{le4.2} and (\ref{7.36}),
\begin{align}
\label{7.42}
\int_{B_{r_0}(p_j)}\rho_1h_{p_j}(p_j)e^{U_j}\phi\partial_{p_j}(v_{p_j}-\overline{v}_{p_j})\mathrm{d}y
=O(e^{-\epsilon\lambda_{m+1}})\|\phi\|_{H^1(M)}.
\end{align}
While for the term $\frac{e^{t_j}}{\int_Mh_1e^{2v_1-v_2}}-1$ and $\psi$, we have
\begin{align}
\label{7.43}
&\int_{B_{r_0}(p_j)}2\rho_1h_{p_j}(p_j)e^{U_j}\Big(\frac{e^{t_j}}{\int_Mh_1e^{2v_1-v_2}}-1-\psi\Big)\partial_{p_j}(v_{p_j}-\overline{v}_{p_j})\nonumber\\
&=\int_{B_{r_0}(p_j)}2\rho_1h_{p_j}(p_j)e^{U_j}\Big(\frac{e^{t_j}}{\int_Mh_1e^{2v_1-v_2}}-1-\psi(p_j)\Big)\partial_{p_j}(v_{p_j}-\overline{v}_{p_j})
\nonumber\\
&\quad-\int_{B_{r_0}(p_j)}2\rho_1h_{p_j}(p_j)e^{U_j}(\psi-\psi(p_j))\partial_{p_j}(v_{p_j}-\overline{v}_{p_j})\nonumber\\
&=-8\pi\nabla\psi(p_j)+O(|\frac{e^{t_j}}{\int_Mh_1e^{2v_1-v_2}}-1-\psi(p_j)|)
\end{align}
where we used
\begin{align*}
\int_{B_{r_0}(p_j)}2\rho_1h_{p_j}(p_j)e^{U_j}\nabla \psi(p_j)\cdot(x-p_j)\nabla_yU_j\mathrm{d}y
=(8\pi+O(e^{-\lambda_j}))\nabla\psi(p_j),
\end{align*}
and (\ref{7.38}). Since $\partial_{p_j}(v_{p_j}-\overline{v}_{p_j})=O(e^{\frac12\lambda_j}),$ as in the proof of part (3), we have
\begin{align}
\label{7.44}
\int_{B_{r_0}(p_j)}E\partial_{p_j}(v_{p_j}-\overline{v}_{p_j})\mathrm{d}y=O(e^{-\epsilon\lambda_{m+1}-\lambda_j}).
\end{align}
On $M\setminus\bigcup_jB_{r_0}(p_j),$ $\partial_{p_j}(v_{p_j}-\overline{v}_{p_j})=O(1).$ Hence by Lemma \ref{le4.1},
\begin{align*}
\int_{B_{2r_0}(p_j)\setminus B_{r_0}(p_j)}\Delta(v_{p_j}-8\pi(1+\alpha_j)G(x,p_j))\cdot\partial_{p_j}(v_{p_j}-\overline{v}_{p_j})
=O(\lambda_je^{-\lambda_j}).
\end{align*}
Since $(\int_Mh_1e^{2v_1-v_2})^{-1}=O(e^{-\lambda_j})$, the integral of the products of $\partial_{p_j}(v_{p_j}-\overline{v}_{p_j})$ and
the nonlinear term in (\ref{4.45})-(\ref{4.46}) are of order
$$O(e^{-\lambda_j})\int_Me^{\varphi}=O(e^{-\lambda_j}).$$
The estimates above imply
\begin{align}
\label{7.45}
\l\nabla(2v_1+T_1(v_1,v_2)),\nabla\partial_{p_j}(v_{p_j}-\overline{v}_{p_j})\r=&-8\pi\nabla H_j(p_j)+8\pi\nabla\psi(p_j)\nonumber\\
&+O\big(|\frac{e^{t_j}}{\int_Mh_1e^{2v_1-v_2}}-1-\psi(p_j)|\nonumber\\
&+|a_j-1|\lambda_j+e^{-\frac{\lambda_{m+1}}{1+\alpha_{m+1}}}\big).
\end{align}
This proves part (2) and hence the proof of Lemma \ref{le4.4} is complete.      $\square$\\

Next, we give a proof of (\ref{5.14}).\\

\noindent {\em Proof of (\ref{5.14}):} For convenience, we denote
$$\mathfrak{E}_2=\exp(w+2\psi-\frac12 v_{P,\Lambda,A})-\exp\big(w+2\psi-\sum_{j=1}^n4\pi(1+\alpha_j)a_jG(x,p_j)\big).$$

\noindent For $x\in M\setminus\bigcup_{j=1}^{n}B_{r_0}(p_{j}).$ By Lemma \ref{le4.1}, we have
\begin{equation*}
\big|\frac12v_{P,\Lambda,A}-4\pi\sum_{j=1}^n(1+\alpha_j)a_jG(x,p_j)\big|\leq \tilde{c}e^{-\frac{\lambda(p)}{1+\alpha_{m+1}}}
\end{equation*}
for some $\tilde{c}$ independent of $c_0$. Thus $|\mathfrak{E}_2|\leq c_4e^{-\frac{\lambda(p)}{1+\alpha_{m+1}}}$ in $M\setminus\bigcup_{j=1}^{n}B_{r_0}(p_{j})$.\\

\noindent For $x\in B_{r_0}(p_{j}), j\in J_1$, we note
\begin{align*}
4\pi\sum_{j=1}^{n}a_jG(x,p_{j})-\frac12 v_{P,\Lambda,A}=&4\pi a_jG(x,p_{j})-4\pi a_jR(x,p_{j})-a_j\log(\frac{\rho_1h_{p_j}(p_{j})e^{\lambda_{j}}}{4})\\
&+a_j\log\Big(1+\frac{\rho_1h_{p_j}(p_{j})e^{\lambda_{j}}}{4}|x-p_{j}|^2\Big)+O(e^{-\frac{\lambda(P)}{1+\alpha_{m+1}}})\\
=&a_j\log\Big(\frac{4}{\rho_1h_{p_j}(p_{j})e^{\lambda_{j}}|x-p_{j}|^2}+1\Big)+O(e^{-\frac{\lambda(P)}{1+\alpha_{m+1}}}),
\end{align*}
where we have used $\overline{v}_{p_{j}}=O(e^{-\frac{\lambda(P)}{1+\alpha_{m+1}}})$. Then, we have
\begin{align}
\label{8.1}
&\exp\big(w+2\psi-4\pi\sum_{j=1}^na_j(1+\alpha_j)G(x,p_{j})\big)-\exp\big(w+2\psi-\frac12 v_{P,\Lambda,A}\big)\nonumber\\
&=|x-p_j|^{2a_j}\Big(1-\exp\big(4\pi\sum_{j=1}^{n}a_jG(x,p_{j})-\frac12 v_{P,\Lambda,A}\big)\Big)\nonumber\\
&=|x-p_j|^{2a_j}\Big(1-\exp\big(a_j\log(1+\frac{4}{\rho_1h_{p_j}(p_j)e^{\lambda_{j}}|x-p_{j}|^2})
+O(e^{-\frac{\lambda(P)}{1+\alpha_{m+1}}})\big)\Big),
\end{align}
where we used
$$\exp\big(w+2\psi_{k}-4\pi\sum_{j=1}^n(1+\alpha_j)a_jG(x,p_{j})\big)\sim|x-p_{j}|^{2a_j}~\mathrm{for}~x\in B_{r_0}(p_{j}),~j\in J_1.$$
When $|x-p_{j}|=O(e^{-\frac{\lambda_{j}}{2a_j}}),$ we have
$$\exp\big(w+2\psi-4\pi\sum_{j=1}^na_jG(x,p_{j})\big)=O(|x-p_j|^{2a_j}),$$
and
$$1-\exp\big(a_j\log(1+\frac{4}{\rho_1h_{p_j}e^{\lambda_{j}}|x-p_{j}|^2})
+O(e^{-\frac{\lambda(P)}{1+\alpha_{m+1}}})\big)=O(e^{-a_j\lambda_{j}}|x-p_{j}|^{-2a_j}),$$
which implies
$$\mathfrak{E}_2=O(e^{-\frac{\lambda(P)}{1+\alpha_{m+1}}})~\mathrm{for}~|x-p_{j}|=O(e^{-\frac{\lambda_{j}}{2a_j}}),~j\in J_1.$$
When $|x-q_{j}|\gg e^{-\frac{\lambda_{j}}{2a_j}},$ then
\begin{align*}
1-\exp\Big(a_j\log(1+\frac{4}{\rho_1h_{p_j}(p_{j})e^{\lambda_{j}}|x-p_{j}|^2})
\Big)=O(\frac{1}{e^{a_j\lambda_{j}}|x-p_{j}|^{2a_j}}).
\end{align*}
As a result, we have the right hand side of (\ref{8.1}) are of order $O(e^{-\frac{\lambda(P)}{1+\alpha_{m+1}}}).$
Therefore $$\mathfrak{E}_2=O(e^{-\frac{\lambda(P)}{1+\alpha_{m+1}}})~\mathrm{for}~x\in B_{r_0}(p_{j}),~j\in J_1.$$

\noindent For $x\in B_{r_0}(p_{j}),j\in J\setminus J_1,$ we have
\begin{align*}
&4\pi\sum_{j=1}^{n}(1+\alpha_j)a_jG(x,p_{j})-\frac12v_{P,\Lambda,A}\\
=&4\pi(1+\alpha_j)a_jG(x,p_{j})-4\pi(1+\alpha_j)a_jR(x,p_{j})-\frac12a_j\lambda_{j}-a_j\log\Big(\frac{\rho_1h_{p_j}(p_{j})}{4(1+\alpha_j)^2}\Big)\\
&-a_j\Big(\frac12v_{p_{j}}(x)+\frac12\eta_{j}+\frac{d_{j}}{4(1+\alpha_j)}\lambda_{j}e^{-\frac{\lambda_{j}}{1+\alpha_j}}\Big)\\
=&a_j\log\Big(\frac{4(1+\alpha_j)^2}{\rho_1h_{p_j}(p_{j})e^{\lambda_{j}}|x-p_{j}|^{2(1+\alpha_j)}}+1\Big)-
\frac12a_j\big(\eta_{j}+\frac{d_{j}}{2(1+\alpha_j)}\lambda_{j}e^{-\frac{\lambda_{j}}{1+\alpha_j}}\big)\\
&+O(e^{-\frac{\lambda(P)}{1+\alpha_{m+1}}}).
\end{align*}
Therefore
\begin{align*}
&\exp\big(w+2\psi-4\pi\sum_{j=1}^n(1+\alpha_j)a_jG(x,p_{j})\big)-\exp\big(w+2\psi-\frac12v_{P,\Lambda,A}\big)\nonumber\\
=&O(1)|x-p_{j}|^{2a_j(1+\alpha_j)}\Big(1-\exp\big(4\pi\sum_{j=1}^{n}(1+\alpha_j)a_jG(x,p_{j})
-\frac12v_{P,\Lambda,A}\big)\Big)\\
=&O(1)|x-p_{j}|^{2a_j(1+\alpha_j)}\Big(1-\exp\Big[a_j\log\big(\frac{4(1+\alpha_j)^2}{\rho_1h_{p_{j}}(p_{j})e^{\lambda_{j}}|x-p_{j}|^{2(1+\alpha_j)}}+1\big)\\
&+\frac12a_j\big(\eta_{j}+\frac{d_{j}}{2(1+\alpha_j)}\lambda_{j}e^{-\frac{\lambda_{j}}{1+\alpha_j}}\big)
+O\big(e^{-\frac{\lambda(P)}{1+\alpha_{m+1}}}\big)\Big]\Big),
\end{align*}
where we used
$$\exp\big(w+2\psi_{k}-4\pi\sum_{j=1}^n(1+\alpha_j)a_jG(x,p_{j})\big)\sim|x-p_{j}|^{2a_j(1+\alpha_j)}~\mathrm{for}~x\in B_{r_0}(p_{j}),~j\in J\setminus J_1.$$
If $|x-p_{j}|=O(e^{-\frac{\lambda_{j}}{2a_j(1+\alpha_j)}}),$ we have $$\frac12\big(\eta_{j}+\frac{d_{j}}{2(1+\alpha_j)}\lambda_{j}e^{-\frac{\lambda_{j}}{1+\alpha_j}}\big)=O(1),$$
and
\begin{align*}
&|x-p_{j}|^{2a_j(1+\alpha_j)}\exp\Big(a_j\log\big(\frac{4(1+\alpha_j)^2}{\rho_1h_{p_{j}}(p_{j})
e^{\lambda_{j}}|x-p_{j}|^{2(1+\alpha_j)}}+1\big)+O(1)\Big)\\
&=O(e^{-\frac{\lambda(P)}{1+\alpha_{m+1}}}).
\end{align*}
If $|x-p_{j}|\gg e^{-\frac{\lambda(P)}{2a_j(1+\alpha_j)}},$ we have $$\frac12\big(\eta_{j}+\frac{d_{j}}{2(1+\alpha_j)}\lambda_{j}e^{-\frac{\lambda_{j}}{1+\alpha_j}}\big)=O(e^{-\frac{\lambda(P)}{1+\alpha_{m+1}}}
|x-p_j|^{-2(1+\alpha_j)}),$$
and
\begin{align*}
&\exp\Big(a_j\log\big(\frac{4(1+\alpha_j)^2}{\rho_1h_{p_{j}}(p_{j})
e^{\lambda_{j}}|x-p_{j}|^{2(1+\alpha_j)}}+1\big)+O(e^{-\frac{\lambda(P)}{1+\alpha_{m+1}}}
|x-p_j|^{-2(1+\alpha_j)})\Big)\nonumber\\
&=1+O\Big(\frac{1}{e^{a_j\lambda_{j}}|x-p_{j}|^{2a_j(1+\alpha_j)}}+
e^{-a_j\frac{\lambda(P)}{1+\alpha_{m+1}}}|x-p_j|^{-2a_j(1+\alpha_j)}\Big).
\end{align*}
Then
\begin{align*}
\mathfrak{E}_2=&|x-p_{j}|^{2a_j(1+\alpha_j)}\Big(\frac{1}{
e^{a_j\lambda_{j}}|x-p_{j}|^{2a_j(1+\alpha_j)}}+
e^{-\frac{a_j\lambda(P)}{1+\alpha_{m+1}}}|x-p_j|^{-2a_j(1+\alpha_j)}\Big)\\
=&O(e^{-\frac{\lambda(P)}{1+\alpha_{m+1}}}),
\end{align*}
where we used (\ref{5.8}). As a conclusion, we have
$$\mathfrak{E}_2=O(e^{-\frac{\lambda(P)}{1+\alpha_{m+1}}})~\mathrm{provided}~x\in B_{r_0}(p_{j}),~j\in J\setminus J_1.$$
Therefore, we get (\ref{5.14}).


\vspace{1cm}

\end{document}